\theoremstyle{plain}
\newtheorem{proposition}{Proposition}
\newtheorem{theorem}[proposition]{Theorem}
\newtheorem{lemma}[proposition]{Lemma}
\newtheorem{corollary}[proposition]{Corollary}
\theoremstyle{definition}
\newtheorem{definition}[proposition]{Definition}
\theoremstyle{definition}
\newtheorem{remark}[proposition]{Remark}
\numberwithin{equation}{section}
\numberwithin{proposition}{section}
\gdef\myletter{}
\let\savetheequation\theequation
\def\theequation{\savetheequation\myletter}
\newcommand{\CC}{{\mathbb C}}
\newcommand{\RR}{{\mathbb R}}
\newcommand{\PP}{{\mathbb P}}
\renewcommand{\Re}{\mbox{Re}}
\renewcommand{\date}{\today}
\def \bar{\overline}
\def \hat{\widehat}
\begin{document}


\title[Berman-Boucksom]{\bf Weighted Pluripotential Theory Results of Berman-Boucksom}

\maketitle
\author{Compiled by N. Levenberg}

\date

\section{Introduction.}\label{sec:intro}

The main goal of these notes is to present a more-or-less self-contained discussion of some of the recent results and techniques of Berman-Boucksom in the setting of weighted pluripotential theory. We follow mainly the arguments of Berman-Boucksom in \cite{[BB]}, \cite{BBnew} and \cite{[BB2]} as well as the Berman paper \cite{[B1]} and the paper of Berman-Boucksom-Nystrom \cite{[BBN]} {\it These notes are for comprehension purposes only, not for publication; the key results are from \cite{[BB]}, \cite{BBnew}, \cite{[BB2]}, \cite{[BBN]}  and \cite{[B1]}}. In particular, these notes provide 
\begin{enumerate}
\item a proof of {\it Rumely's formula} relating the transfinite diameter of a compact set $K$ in $\CC^d$ with certain integrals involving the Robin function $\rho_K$ of $K$, as well as weighted versions of the formula;
\item a proof that {\it asymptotically Fekete arrays} distribute asymptotically to the Monge-Ampere measure $\mu_K:=\frac{1}{(2\pi)^d}(dd^cV_{K}^*)^d$ of the global extremal function $V_{K}^*$, and, more generally, that {\it asymptotically weighted Fekete arrays} distribute asymptotically to the weighted Monge-Ampere measure $\mu_{K,Q}:=\frac{1}{(2\pi)^d}(dd^cV_{K,Q}^*)^d$ of the weighted global extremal function $V_{K,Q}^*$;
\item a proof that (weighted) {\it optimal measures} distribute asymptotically like the (weighted) Monge-Ampere measure $\mu_K$ ($\mu_{K,Q}$);
\item general results on {\it strong Bergman asymptotics} for Bernstein-Markov pairs $(K,\mu)$ where $\mu$ is a measure on $K$, as well as for weighted Bernstein-Markov triples $(K,\mu,Q)$.
\end{enumerate}

We provide some  background results on pluripotential theory and weighted pluripotential theory. Many items which are nowadays considered fundamental (albeit not necessarily elementary to prove) are stated without proof but with references. In particular, Appendix B in \cite{safftotik} as well as \cite{bloom} are good sources for weighted pluripotential theory. A note to those not yet initiated in this area: the weighted theory is {\it essential} for proving even the unweighted versions of (1)-(4).

We would like to thank several people for valuable comments regarding portions of this material; in alphabetical order, here is a subset: Muhammed Ali Alan, Tom Bagby, Tom Bloom, Len Bos, Dan Coman and Eugene Poletsky. The biggest thanks, of course, go to Robert Berman and S\'ebastien Boucksom for their deep and beautiful work, as well as their patience with my questions. My main reason for writing these notes is to advertise their contributions to the pluripotential theory community.   

\setcounter{tocdepth}{2}
\tableofcontents

\section{Background.}\label{sec:back}

\subsection {Extremal functions and extremal measures.} In  pluripotential theory, one considers $E\subset \CC^d$ compact or, slightly more general, $E\subset \CC^d$ is a bounded Borel set. The global extremal function or global pluricomplex Green function of $E$ is given by $V^*_{E}(z):=\limsup_{\zeta \to z}V_{E}(\zeta)$ where
$$V_E(z):=\sup \{u(z):u\in L(\CC^d), \ u\leq 0 \ \hbox{on} \ E\}.$$
Here, $L(\CC^d)$ is the set of all plurisubharmonic (psh) functions $u$ on $\CC^d$ with the property that $u(z) - \log |z| = 0(1), \ |z| \to \infty$. Either $V^*_{E}\in L^+(\CC^d)$ where
$$L^+(\CC^d)=\{u\in L(\CC^d): u(z)\geq \log^+|z| + C\}$$
and $C$ is a constant depending on $u$, or $V^*_{E}\equiv +\infty$. This latter situation happens precisely when $E$ is pluripolar. If $E$ is compact,
$$V_E(z) = \sup \{\frac{1}{deg (p)}\log |p(z)|: ||p||_E \leq 1\}$$
where $p$ is a polynomial. We call 
$$\mu_E:= \frac{1}{(2\pi)^d}(dd^cV_E^*)^d$$
the {\it extremal measure} for $E$ if $E$ is not pluripolar. Here, $dd^c = 2i \partial \bar \partial$ although in Appendix 2 we incorporate a factor of $2\pi$. 

In the weighted theory, one restricts to {\it closed} sets but, for certain weights, these sets may be unbounded. To be precise, let $K\subset \CC^d$ be closed and let $w$ be an admissible weight function on $K$:  $w$ is a nonnegative, usc function with
$\{z\in K:w(z)>0\}$ nonpluripolar; if $K$ is unbounded, we require that $w$ satisfies the growth property
\begin{equation} \label{grprop} |z|w(z)\to 0 \ \hbox{as} \ |z|\to \infty, \ z\in K. 
\end{equation}
Let $Q:= -\log w$ -- we use $Q$ and $w$ interchangeably -- and define the weighted extremal function or weighted
pluricomplex Green function $V^*_{K,Q}(z):=\limsup_{\zeta \to z}V_{K,Q}(\zeta)$ where
$$V_{K,Q}(z):=\sup \{u(z):u\in L(\CC^d), \ u\leq Q \ \hbox{on} \ K\}. $$
We have $V^*_{K,Q}\in L^+(\CC^d)$. In the unbounded case, note that   property (\ref{grprop}) is equivalent to 
$$Q(z)-\log |z| \to \ +\infty \ \hbox{as} \ |z|\to \infty \ \hbox{through points in} \ K.$$ 
Due to this growth assumption for $Q$, $V_{K,Q}$ is well-defined and equals $V_{K\cap \mathcal B_R,Q}$ for $R>0$ sufficiently large where $\mathcal B_R=\{z:|z|\leq R\}$ (Definition 2.1 and Lemma 2.2 of Appendix B in \cite{safftotik}). It is known that the support
$$S_w:=\hbox{supp}(\mu_{K,Q})$$ 
of the {\it weighted extremal measure} 
$$\mu_{K,Q}:=\frac{1}{(2\pi)^d}(dd^cV_{K,Q}^*)^d$$
is compact -- this plays a very important role in what follows; 
\begin{equation} \label{suppw}
S_w \subset S_w^*:=\{z\in K: V_{K,Q}^*(z)\geq Q(z)\};
\end{equation}
moreover, 
$$V_{K,Q}^*=Q \ \hbox{q.e. on} \ S_w$$
(i.e., $V_{K,Q}^*=Q$ on $S_w - F$ where $F$ is pluripolar); and if $u\in L(\CC^d)$ satisfies $u\leq Q$ q.e. on $S_w$ then $u\leq V_{K,Q}^*$ on $\CC^d$. Indeed,
\begin{equation}
\label{equivdef}
V_{K,Q}(z)=\sup \{\frac{1}{deg(p)}\log |p(z)|:||w^{deg(p)}p||_{S_w}\leq 1, \  p \ \hbox{polynomial} \}
\end{equation}
and
$$||w^{deg(p)}p||_{S_w} = ||w^{deg(p)}p||_{K}.$$
Theorem 2.8 of Appendix B in \cite{safftotik} includes the slightly stronger statement that
$$V_{K,Q}^*(z)=\bigl[\sup \{\frac{1}{deg(p)}\log |p(z)|:||w^{deg(p)}p||_{K}^*\leq 1, \  p \ \hbox{polynomial} \}\bigr]^*$$ where
$$||w^{deg(p)}p||_{K}^*:=\inf \{||w^{deg(p)}p||_{K\setminus F}: F \subset K \ \hbox{pluripolar}\}.$$

The unweighted case is when $K$ is compact and $w\equiv 1$ ($Q\equiv 0$); we then write $V_K:=V_{K,0}$ to be consistent with the previous notation. 

Even in one variable ($d=1$) the weighted theory introduces new phenomena from the unweighted case. As an elementary example, $\mu_K$ puts no mass on the interior of $K$ (in one variable, the support of $\mu_K$ is the outer boundary of $K$); but this is not necessarily true in the weighted setting. As a simple but illustrative example, taking $K$ to be the closed unit ball $\{z:|z|\leq 1\}$ and $Q(z)=|z|^2$, it is easy to see that $V_{K,Q}=Q$ on the ball $\{z:|z|\leq 1/\sqrt 2\}$ and $V_{K,Q}(z)=\log |z| + 1/2 - \log (1/\sqrt 2)$ outside this ball. Indeed, taking $K=\CC^d$ and the same weight function $Q(z)=|z|^2$, one obtains the same weighted extremal function $V_{K,Q}$; this illustrates one of the results in subsection \ref{sec:bergcd}.

A compact set $K$ is called {\it regular} if $V_K=V_K^*$; i.e., $V_K$ is continuous; and $K$ is {\it locally regular} if for each $z\in K$, the set $K$ is locally regular at $z$; i.e., the sets $K\cap \overline{\mathcal B_r(z)}$ are regular for $r>0$ where $\mathcal B_r(z)$ denotes the ball of radius $r$ centered at $z$. If $K$ is locally regular and $Q$ is continuous, then $V_{K,Q}$ is continuous (cf, \cite{siciak}). Indeed, if $K$ is compact and $V_{K,Q}$ is continuous for {\it every} continuous admissible weight $Q$ on $K$, then $K$ is locally regular (Proposition 6.1 \cite{NQD}). If $K$ is an arbitrary compact set, then for $\epsilon >0$, 
$$K_{\epsilon}:=\{z\in \CC^d: {\rm dist}(z,K)\leq \epsilon\}$$
is a regular compact set and 
$$\lim_{\epsilon \downarrow 0}V_{K_{\epsilon}}=V_K;$$
indeed, $V_{K_{1/j}}\uparrow V_K$ as $j\uparrow \infty$ (cf., Corollary 5.1.5 of \cite{[K]}). In the weighted case, given a compact, nonpluripolar set $K$ and an admissible weight $w$ on $K$, we can find a sequence of locally regular compacta $\{K_j\}$ decreasing to $K$ and a sequence of weights $\{w_j\}$ with $w_j$ continuous and admissible on $K_j$ such that $w_j \downarrow w$ on $K$. In this setting,
\begin{equation} \label{wtdapprox}
V_{K_j,Q_j} \uparrow V_{K,Q}.
\end{equation}
A proof of (\ref{wtdapprox}) can be found in \cite{bloomlev2} (equation (7.4)). We give a construction of such locally regular compacta $\{K_j\}$  and weights $\{w_j\}$ in Proposition \ref{polypoly} of Appendix 1.

A bit of notation: if $Q$ is an admissible weight on $\CC^d$, then we write $V_Q:=V_{\CC^d,Q}$. Note that an admissible weight $Q$ on a closed set $K$ can always be extended in a trivial way to all of $\CC^d$ by setting $Q:=+\infty$ on $\CC^d\setminus K$ so that $V_Q=V_{K,Q}$. 

We often want to emphasize the relation between the weight $Q$ and the weighted extremal function $V_{K,Q}^*$, so we may write 
\begin{equation}
\label{pq}
P(Q) =P_K(Q) :=V_{K,Q}^*
\end{equation}
especially in section \ref{diffep}. Two elementary observations are that this  ``projection'' operation $P$ is increasing and concave. Precisely, 
if $Q_1\leq Q_2$ are admissible weights on $E$, then trivially $P(Q_1)\leq P(Q_2)$; and if $0\leq s \leq 1$ and $a,a'$ are admissible weights on $E$,
\begin{equation}
\label{projcon}
P(sa + (1-s)a') \geq sP(a) + (1-s)P(a').
\end{equation}
Note that $sa + (1-s)a'$, being a convex combination of $a, a'$, is an admissible weight on $E$. Then (\ref{projcon}) follows since the right-hand-side is a competitor for the weighted extremal function on the left-hand-side. Also, for future use, we show that $P$ is Lipschitz; i.e., if $a,b$ are  admissible weights on $E$ and $0\leq t\leq 1$ then on $\CC^d$, 
\begin{equation}
\label{loclip}
|P(a+t(b-a)) -P(a)|\leq Ct
\end{equation}
where $C=C(a,b)$. Similarly, if $u\in C(E)$, we have, for $t\in \RR$, 
\begin{equation}
\label{loclip2}
|P(a+tu) -P(a)|\leq C|t|
\end{equation}
where $C=C(u)$. 
To see these, first observe that 
$$P(a+t(b-a))\leq a+t(b-a)$$
on $\CC^d$ so that, on $D(0):=\{P(a)=a\}$, 
$$P(a+t(b-a))\leq a +|t|\sup_{D(0)}|b-a|$$
which implies, by the definition of $P(a)$, equation (\ref{suppw}) and the remark following, that 
$$P(a+t(b-a))\leq P(a) + |t|\sup_{D(0)}|b-a|$$
on $\CC^d$. Similarly,
$$P(a)\leq a$$
on $\CC^d$ so that 
$$P(a) \leq a= P(a+t(b-a))-t(b-a)$$
on $D(t):=\{P(a+t(b-a))=a+t(b-a)\}$ which implies
$$P(a)\leq P(a+t(b-a)) +|t|\sup_{D(t)}|b-a|$$
on $D(t)$. Thus again by the definition of $P(a+t(b-a))$, equation (\ref{suppw}) and the remark following,
$$P(a)\leq P(a+t(b-a)) +|t|\sup_{D(t)}|b-a|$$
on $\CC^d$. This gives (\ref{loclip}) with $C=\max[\sup_{D(0)}|b-a|,\sup_{D(t)}|b-a|]$ or (\ref{loclip2}) with $C=\sup_{E}|u|$. In the former case, if $E$ is unbounded, in order that $\max[\sup_{D(0)}|b-a|,\sup_{D(t)}|b-a|]$ is a {\it finite} constant which is independent of $t$, we assume that 
\begin{equation}
\label{unbhyp}
\cup_{0\leq t\leq 1} D(t) \ \hbox{is bounded and}  \ u:=b-a\in L^{\infty}(\cup_{0\leq t\leq 1} D(t)).
\end{equation}
Then (\ref{loclip}) holds. This observation will be used in the proof of Theorem \ref{keycn}. Of course in both cases, if $E$ is compact, $C$ is finite.

Associated to the asymptotic behavior of the weighted
pluricomplex Green function $V^*_{K,Q}$ is its {\it Robin function}
$$\rho_{K,Q}(z):=\limsup_{|\lambda| \to \infty} [V^*_{K,Q}(\lambda z)- \log |\lambda|].$$
We write $\rho_K:= \rho_{K,0}$ in the unweighted case. 
This is a logarithmically homogeneous psh function in $L(\CC^d)$. A ``projectivized'' version is 
\begin{equation}
\label{projrob} \tilde \rho_{K,Q}(z):=\limsup_{|\lambda| \to \infty} [V^*_{K,Q}(\lambda z)- \log |\lambda z|].
\end{equation}
Indeed, given any function $u\in L(\CC^d)$, we can form the Robin function
 $$\rho_u(z):=\limsup_{|\lambda| \to \infty} [u(\lambda z)- \log |\lambda|]$$
 and the ``projectivized'' Robin function
$$\tilde \rho_u(z):=\limsup_{|\lambda| \to \infty} [u(\lambda z)- \log |\lambda z|].$$
Since $\tilde \rho(tz)=\tilde \rho(z)$ for $t\in \CC\setminus \{0\}$, we can consider $\tilde \rho$ as a function on lines through the origin in $\CC^d$; i.e., as a function on $\PP^{d-1}$. In general, either $\rho_u\equiv -\infty$ or $\rho_u$ is a logarithmically homogeneous psh function in $L(\CC^d)$. This latter case always occurs if, e.g., $u\in L^+(\CC^d)$. In this case $\tilde \rho_u$ is bounded (above and below).

We record two results related to the classes $L(\CC^d)$ and $L^+(\CC^d)$. The first is a domination principle and the second is a comparison principle in $L^+(\CC^d)$. The first result is Lemma 6.5 in \cite{[BT]}.

\begin{proposition}
\label{lplusdom}
Let $u\in L^+(\CC^d)$ and $v\in L(\CC^d)$. If $v\leq u$ a.e-$(dd^cu)^d$, then $v\leq u$ in $\CC^d$.
\end{proposition}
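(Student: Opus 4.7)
The approach is proof by contradiction: assume $v(z_0)>u(z_0)$ at some $z_0\in\CC^d$. I first reduce to the case where $v$ is locally bounded. Since $u\in L^+(\CC^d)$ is bounded below by some constant $C_0$, replacing $v$ by $\max(v,C_0-1)$ yields a locally bounded psh function in $L(\CC^d)$ that still satisfies the hypothesis (since $C_0-1<u$ pointwise) and still exceeds $u(z_0)$ at $z_0$. So we may assume $v$ is locally bounded henceforth.

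Next, I show $(dd^cv)^d(\{v>u\})=0$ via the Bedford-Taylor max formula combined with mass conservation. Set $w:=\max(u,v)$. Because $w-u=(v-u)^+$ is globally bounded and $u\in L^+$, we have $w\in L^+(\CC^d)$ with the same logarithmic growth as $u$ at infinity; hence $\int(dd^cw)^d=\int(dd^cu)^d=(2\pi)^d$. Plurifine-locality of the Monge-Amp\`ere operator on locally bounded psh functions gives $(dd^cw)^d=(dd^cv)^d$ on the plurifine-open set $\{v>u\}$, $(dd^cw)^d=(dd^cu)^d$ on $\{u>v\}$, and $(dd^cw)^d\geq(dd^cu)^d$ on the contact set $\{u=v\}$. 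Combining with the hypothesis $(dd^cu)^d(\{v>u\})=0$,
\begin{equation*}
(2\pi)^d=\int(dd^cw)^d\geq\int_{\{v>u\}}(dd^cv)^d+\int_{\{u\geq v\}}(dd^cu)^d=\int_{\{v>u\}}(dd^cv)^d+(2\pi)^d,
\end{equation*}
forcing $(dd^cv)^d(\{v>u\})=0$.

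For the final step, I localize. Set $\rho(z):=\log(1+|z|^2)^{1/2}\in L^+$ and $u_\epsilon:=u+\epsilon\rho\in L^+$ for $\epsilon>0$. Since $u_\epsilon(z)\geq(1+\epsilon)\log|z|+O(1)$ at infinity while $v(z)\leq\log|z|+O(1)$, the set $\Omega_\epsilon:=\{v>u_\epsilon\}$ is relatively compact in a ball $B_R$ and contains $z_0$ for small $\epsilon$. Because $\Omega_\epsilon\subset\{v>u\}$, the previous step gives $(dd^cv)^d(\Omega_\epsilon)=0$, so $v$ is maximal on $\Omega_\epsilon$. Invoking the Bedford-Taylor comparison principle on $B_R$ (where $u_\epsilon>v$ on $\partial B_R$) together with the $(dd^cv)^d$-a.e.\ domination principle on bounded domains for the pair $(v,u_\epsilon)$, we conclude $v\leq u_\epsilon$ on $B_R$; hence $v(z_0)\leq u(z_0)+\epsilon\rho(z_0)$, and letting $\epsilon\downarrow 0$ gives $v(z_0)\leq u(z_0)$, the desired contradiction.

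The main obstacles are (i) the careful handling of the contact set $\{u=v\}$ in the max-formula computation of the second step, which relies on the inequality $(dd^c\max(u,v))^d\geq(dd^cu)^d$ on $\{u=v\}$ and the plurifine-locality of the Monge-Amp\`ere operator, and (ii) the justification of the $(dd^cv)^d$-a.e.\ version of the domination principle on bounded domains in the final step, which is a refinement of the standard $(dd^cu)^d$-version and can itself be derived from the Bedford-Taylor comparison principle via perturbation.
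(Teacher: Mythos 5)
The paper does not actually prove Proposition \ref{lplusdom}; it is stated as a known fact with a citation to Lemma 6.5 of \cite{[BT]}. So the comparison here is with a cited external result rather than an in-paper argument.

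Your steps 1 and 2 are correct and cleanly done. The reduction to locally bounded $v$ via $\max(v,C_0-1)$ is valid, and the Demailly max-formula argument combined with the conservation of total mass $(2\pi)^d$ for $L^+$-functions correctly yields $(dd^cv)^d(\{v>u\})=0$.

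The gap is in step 3. The ``$(dd^cv)^d$-a.e.\ domination principle on bounded domains'' that you invoke is \emph{false}. Concretely: on the unit disc, take $v\equiv-\delta$ (a small negative constant) and $u_\epsilon(z)=|z|^2-1$. Then $v<u_\epsilon$ on the boundary, $(dd^cv)(\{v>u_\epsilon\})=0$ trivially since $v$ is constant, and yet $v>u_\epsilon$ on the whole disc $\{|z|^2<1-\delta\}$. This exactly fits the hypotheses you state for that pair, but the conclusion fails. The reason your perturbation heuristic does not repair it is that the Bedford--Taylor comparison theorem on a bounded domain, applied to the pair $(u_\epsilon,v)$ with $u_\epsilon\geq v$ near $\partial B_R$, reads
\begin{equation*}
\int_{\{u_\epsilon<v\}}(dd^cv)^d\leq\int_{\{u_\epsilon<v\}}(dd^cu_\epsilon)^d,
\end{equation*}
i.e.\ it bounds the Monge--Amp\`ere mass of the upper function $v$ on the bad set by that of the lower one. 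Since you already know the left side vanishes, this yields only the vacuous $0\leq\int_{\{u_\epsilon<v\}}(dd^cu_\epsilon)^d$; it cannot force $\{u_\epsilon<v\}$ to be empty. Any domination principle usable here must be driven by vanishing of $(dd^cu)^d$ (the ``correct'' measure, which the hypothesis supplies), not of $(dd^cv)^d$; but after replacing $u$ by $u_\epsilon=u+\epsilon\rho$, the measure $(dd^cu_\epsilon)^d$ picks up strictly positive cross-terms $\epsilon^{d-k}(dd^cu)^k\wedge(dd^c\rho)^{d-k}$ and need not vanish on $\{v>u_\epsilon\}$, so the localization step also spoils the one piece of information that could have closed the argument. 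The final step therefore needs to be replaced by a genuinely different closing argument (for instance, showing that $w:=\max(u,v)$ has $(dd^cw)^d=(dd^cu)^d$ as measures and then invoking a uniqueness statement in $L^+(\CC^d)$, or following the original argument in Lemma 6.5 of \cite{[BT]}).
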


\begin{proposition} 
\label{L+comp}
Let $u_1,u_2\in L^+(\CC^d)$. Then
$$\int_{\{u_1 < u_2\}}(dd^cu_2)^d \leq \int_{\{u_1 < u_2\}}(dd^cu_1)^d.$$
\end{proposition}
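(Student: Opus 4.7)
The plan is to reduce the global comparison to Bedford--Taylor's local comparison principle by perturbing $u_1$ at infinity so that the set $\{u_1<u_2\}$ becomes relatively compact in a ball.

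First, since $u_i\in L^+(\CC^d)$, we have $u_i(z)=\log|z|+O(1)$ as $|z|\to\infty$ (the upper bound from $L(\CC^d)$, the lower from the $L^+$ growth condition), and both $u_i$ are locally bounded. In particular $u_1-u_2$ is bounded on all of $\CC^d$. However $\{u_1<u_2\}$ need not be relatively compact, which is the essential obstruction to applying the local principle directly.

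Second, for $\epsilon>0$, set $u_1^\epsilon(z):=u_1(z)+\epsilon\log(1+|z|^2)$. This is psh, locally bounded, and decreases pointwise to $u_1$ as $\epsilon\downarrow 0$. Since $u_1^\epsilon$ has growth $(1+2\epsilon)\log|z|+O(1)$ at infinity, strictly dominating that of $u_2$, there exists $R_\epsilon$ such that $u_1^\epsilon>u_2$ on $\{|z|\geq R_\epsilon\}$; hence $\{u_1^\epsilon<u_2\}$ is relatively compact in $B_{R_\epsilon}$, and $u_1^\epsilon\geq u_2$ on $\partial B_{R_\epsilon}$. Applying the Bedford--Taylor local comparison principle \cite{[BT]} to the locally bounded psh functions $u_1^\epsilon,u_2$ on $B_{R_\epsilon}$ yields
$$\int_{\{u_1^\epsilon<u_2\}}(dd^cu_2)^d\leq\int_{\{u_1^\epsilon<u_2\}}(dd^cu_1^\epsilon)^d.$$

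Third, I would let $\epsilon\downarrow 0$. The sets $\{u_1^\epsilon<u_2\}$ increase monotonically to $\{u_1<u_2\}$ (at any $z$ with $u_1(z)<u_2(z)$, one has $u_1^\epsilon(z)<u_2(z)$ as soon as $\epsilon\log(1+|z|^2)<u_2(z)-u_1(z)$), so by monotone convergence the left side tends to $\int_{\{u_1<u_2\}}(dd^cu_2)^d$. On the right, expand
$$(dd^cu_1^\epsilon)^d=(dd^cu_1)^d+\sum_{k=1}^d\binom{d}{k}\epsilon^k(dd^c\log(1+|z|^2))^k\wedge(dd^cu_1)^{d-k};$$
the main term yields $\int_{\{u_1<u_2\}}(dd^cu_1)^d$ in the limit by monotone convergence.

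The hard part will be the error terms ($k\geq 1$): the ball radius $R_\epsilon$ grows (in fact exponentially) as $\epsilon\downarrow 0$, so naive local bounds on the error fail. The key observation is that each mixed Monge--Amp\`ere mass $\int_{\CC^d}(dd^c\log(1+|z|^2))^k\wedge(dd^cu_1)^{d-k}$ is globally finite---$\tfrac{1}{2}\log(1+|z|^2)\in L^+(\CC^d)$, and the mixed Monge--Amp\`ere mass of $L^+$ functions is a cohomological invariant (equal to $(2\pi)^d$ up to combinatorial factors), as one sees by passing to $\PP^d$. Thus each error term is $O(\epsilon^k)$ uniformly in $R_\epsilon$ and vanishes in the limit, delivering the desired inequality.
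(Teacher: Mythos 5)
Your proof is correct, and it takes a genuinely different route from the paper's. The paper first normalizes $u_1\geq 0$ by adding a constant and then uses the \emph{multiplicative} perturbation $(1+\epsilon)u_1$: since $(1+\epsilon)u_1\geq u_1$ one has $\{(1+\epsilon)u_1<u_2\}\subset\{u_1<u_2\}$, the growth mismatch makes this sublevel set bounded, and the Monge--Amp\`ere operator scales cleanly as $(dd^c(1+\epsilon)u_1)^d=(1+\epsilon)^d(dd^cu_1)^d$, so the local comparison principle on the bounded set gives the inequality with a $(1+\epsilon)^d$ factor that trivially passes to $1$ as $\epsilon\downarrow 0$ (no error-term analysis, just monotone convergence over the increasing union $\bigcup_j\{(1+1/j)u_1<u_2\}=\{u_1<u_2\}$). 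You use an \emph{additive} perturbation $u_1+\epsilon\log(1+|z|^2)$ instead, which dispenses with the normalization of $u_1$ and yields the same bounded sublevel set, but at the price of a multinomial expansion of $(dd^cu_1^\epsilon)^d$; you then kill the cross terms by invoking the global finiteness (indeed the $(2\pi)^d$-valued cohomological invariance) of mixed Monge--Amp\`ere masses of $L^+(\CC^d)$ functions, applied to $u_1$ and $\tfrac{1}{2}\log(1+|z|^2)$. That finiteness is well known (and is in fact already quoted in the remark right after the proposition statement), so your argument is sound; still, the paper's multiplicative trick is the tidier of the two because the scaling is exact and no estimate of mixed masses is needed.
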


\noindent Note that the integrand may be unbounded but since $u_1,u_2\in L^+(\CC^d)$ each integral is finite (indeed, bounded above by $(2\pi)^d$). 

\begin{proof} By adding a constant to $u_1$, if necessary, we may assume $u_1\geq 0$. Then for $\epsilon >0$, we have 
$$\{(1+\epsilon)u_1 < u_2\} \subset \{u_1 < u_2\}$$ 
and $\{(1+\epsilon)u_1 < u_2\}$ is bounded. By the standard comparison theorem for locally bounded psh functions on bounded domains (cf., Theorem 3.7.1, \cite{[K]}),
\begin{equation}
\label{compus}
\int_{\{(1+\epsilon)u_1 < u_2\}}(dd^cu_2)^d \leq (1+\epsilon)^d\int_{\{(1+\epsilon)u_1 < u_2\}}(dd^cu_1)^d.
\end{equation}
Clearly
$$\bigcup_{j=1}^{\infty} \{(1+1/j)u_1 < u_2\} = \{u_1 < u_2\}$$
so applying (\ref{compus}) with $\epsilon = 1/j$, the result follows by monotone convergence upon letting $j\to \infty$.
\end{proof}

\subsection{Transfinite diameter.}

We let $\mathcal P_n$ denote the complex vector space of holomorphic polynomials of degree at most $n$ and we adopt the convention that 
$$N =N(n) :=\hbox{dim} \mathcal P_n =   {n+d \choose n}.$$
Thus 
$$\mathcal P_n= \hbox{span} \{e_1,...,e_N\}$$ 
where $\{e_j(z):=z^{\alpha(j)}\}$ are the standard basis monomials. For 
points $\zeta_1,...,\zeta_N\in \CC^d$, let
\begin{equation} \label{vdm}VDM(\zeta_1,...,\zeta_N)=\det [e_i(\zeta_j)]_{i,j=1,...,N}  \end{equation}
$$= \det
\left[
\begin{array}{ccccc}
 e_1(\zeta_1) &e_1(\zeta_2) &\ldots  &e_1(\zeta_N)\\
  \vdots  & \vdots & \ddots  & \vdots \\
e_N(\zeta_1) &e_N(\zeta_2) &\ldots  &e_N(\zeta_N)
\end{array}
\right]$$
and for a compact subset $K\subset \CC^d$ let
$$V_n =V_n(K):=\max_{\zeta_1,...,\zeta_N\in K}|VDM(\zeta_1,...,\zeta_N)|.$$
Then 
\begin{equation} \label{tdlim} \delta(K)=\delta^1(K)=\lim_{n\to \infty}V_{n}^{\frac{d+1}{dnN}} \end{equation} where $\frac{dnN}{d+1}$ is the sum of the degrees of a set of basis monomials for $ \mathcal P_n$, is the {\it transfinite diameter} of $K$. The temporary superscript ``$1$'' refers to a weight $w\equiv 1$. Zaharjuta \cite{zah} showed that the limit exists; we outline his proof in Appendix 1.

The {\it Rumely formula}, relating the transfinite diameter 
$\delta(K)$ with $V_{K}^*$ and $\tilde \rho_{K}$, can be stated as follows:
\begin{equation}\label{unwtdtruerumely}
-\log \delta(K)=\frac{1}{d(2\pi)^{d-1}} \int_{\PP^{d-1}}[\tilde \rho_{K}- \tilde \rho_{T}]\sum_{j=0}^{d-1}(dd^c \tilde \rho_{K}+\omega)^j \wedge (dd^c \tilde \rho_{T}+\omega)^{d-j-1}.
\end{equation}
Here, $\omega$ is the standard K\"ahler form on $\PP^d$ and $T$ is the unit torus in $\CC^d$. We will prove a generalization of (\ref{unwtdtruerumely}) in section \ref{bf}.

More generally, let $K\subset \CC^d$ be compact and let
$w$ be an admissible weight function on
$K$.  Given $\zeta_1,...,\zeta_N\in K$, let
$$W(\zeta_1,...,\zeta_N):=VDM(\zeta_1,...,\zeta_N)w(\zeta_1)^{n}\cdots w(\zeta_N)^{n}$$
$$= \det
\left[
\begin{array}{ccccc}
 e_1(\zeta_1) &e_1(\zeta_2) &\ldots  &e_1(\zeta_N)\\
  \vdots  & \vdots & \ddots  & \vdots \\
e_N(\zeta_1) &e_N(\zeta_2) &\ldots  &e_N(\zeta_N)
\end{array}
\right]\cdot w(\zeta_1)^{n}\cdots w(\zeta_N)^{n}$$
be a {\it weighted Vandermonde determinant}. Let
\begin{equation} \label{wn} W_n(K):=\max_{\zeta_1,...,\zeta_N\in K}|W(\zeta_1,...,\zeta_N)|
\end{equation}
and define an {\it $n-$th weighted Fekete set for $K$ and $w$} to be a set of $N$ points $\zeta_1,...,\zeta_ N\in K$ with the property that
$$|W(\zeta_1,...,\zeta_N)|=W_n(K).$$
We also write $\delta^{w,n}(K):=W_n(K)^{\frac{d+1}{dnN}}$ and define
\begin{equation} \label{deltaw} \delta^w(K):=\lim_{n\to \infty}\delta^{w,n}(K):=\lim_{n\to \infty}W_{n}(K)^{\frac{d+1}{dnN}}. \end{equation}
A proof of the existence of the limit may be found in \cite{[BB]} or \cite{[BL]}; we give this latter proof in Appendix 1 (see Proposition \ref{wtdlimit}). 

The weighted Fekete conjecture is that {\it for each $n$, let $x_1^{(n)},...,x_N^{(n)}$ be an $n-$th weighted Fekete set for $K$ and $w$ and let $\mu_n:= \frac{1}{N}\sum_{j=1}^N \delta_{x_j^{(n)}}$. Then}
$$
\mu_n \to \frac{1}{(2\pi)^d}(dd^c V_{K,Q}^*)^d \ \hbox{weak}-*.
$$
This is one of the applications of the Berman-Boucksom theory, to be presented in section \ref{sec:fob}. Indeed, we prove a slightly stronger statement on {\it asymptotically weighted Fekete sets} in Proposition \ref{asympwtdfek}.

A Rumely-type formula relating the weighted transfinite diameter 
$\delta^w(K)$ with $V_{K,Q}^*$ and $\tilde \rho_{K,Q}$, can be stated as follows:
\begin{equation}\label{truerumely}
-\log \delta^w(K)=\frac{1}{d(2\pi)^{d}}\int_{\CC^d} V_{K,Q}^*  (dd^c V_{K,Q}^*)^d\end{equation} $$ +\frac{1}{d(2\pi)^{d-1}}\int_{\PP^{d-1}}[\tilde \rho_{K,Q}- \tilde \rho_{T}]\sum_{j=0}^{d-1}(dd^c \tilde \rho_{K,Q}+\omega)^j \wedge (dd^c \tilde \rho_{T}+\omega)^{d-j-1}.$$
Here we assume $K$ is contained in the unit polydisk $U$. We will prove an equivalent version of (\ref{truerumely}) in section \ref{bf}. Rumely proved a weighted version of (\ref{unwtdtruerumely}): 
\begin{equation}\label{realtruerumely}
-\log d^w(K)=\frac{1}{d(2\pi)^{d-1}} \int_{\PP^{d-1}}[\tilde \rho_{K,Q}- \tilde \rho_{T}]\sum_{j=0}^{d-1}(dd^c \tilde \rho_{K,Q}+\omega)^j \wedge (dd^c \tilde \rho_{T}+\omega)^{d-j-1}
\end{equation}
where $d^w(K)$ is another weighted transfinite diameter, defined in \cite{bloomlev2}. The term $$\int_{\CC^d} V_{K,Q}^*  (dd^c V_{K,Q}^*)^d=\int_{K} Q(dd^cV^*_{K,Q})^{d}$$ in (\ref{truerumely}) arises due to the relationship between $\delta^w(K)$ and $d^w(K)$; this relationship,
\begin{equation} \label{dwdeltw}
\delta^w(K)=[\exp {(-\int_{K} Q(dd^cV^*_{K,Q})^{d})}]^{1/d}\cdot d^w(K),
\end{equation}
as well as the definition of $d^w(K)$, will be deferred to the appendices (Appendix 1 for the definition; Appendix 2 for (\ref{dwdeltw})). Indeed, to be precise, the versions (\ref{unwtdtruerumely}) and (\ref{realtruerumely}) of the Rumely formulas here are symmetrized versions of Rumely's original formulas, due to Demarco and Rumely \cite{demrum}. We prove the equivalence of these symmetrized versions with the ``originals'' in Appendix 2. 

\subsection {Bernstein-Markov properties.}\label{bernmarprop}

Given a compact set $E\subset \CC^d$ and a measure $\nu$ on $E$, we say that $(E,\nu)$ satisfies the Bernstein-Markov inequality if there is a strong comparability between $L^2$ and $L^{\infty}$ norms of holomorphic polynomials on $E$. Precisely, for all $p_n\in \mathcal P_n$, 	
$$||p_n||_E\leq  M_n||p_n||_{L^2(\nu)}  \ \hbox{with} \ \limsup_{n\to \infty} M_n^{1/n} =1.$$
If $E$ is regular, $(E,\mu_E)$ satisfies the Bernstein-Markov inequality (cf., \cite{NTVZ}). It is known if $(E,\nu)$ satisfies the Bernstein-Markov inequality that
\begin{equation}
\label{unwtdweakasym}
\lim_{n\to \infty} \frac{1}{2n} \log K_n^{\nu}(z,z) = V_{E}(z)
\end{equation}
locally uniformly on $\CC^d$ where 
$$
B_n^{\nu}(z):=K_n^{\nu}(z,z)=\sum_{j=1}^N |q_j^{(n)}(z)|^2
$$
is the $n-th$ {\it Bergman function} of $E,\nu$ (cf., \cite{bloomshiff}) and 
$$K_n^{\nu}(z,\zeta):=\sum_{j=1}^N q_j^{(n)}(z)\overline{q_j^{(n)}(\zeta)}$$
where $\{q_j^{(n)}\}_{j=1,...,N}$ is an orthonormal basis for $\mathcal P_n$ with respect to $L^2(\nu)$. 

More generally, for $K\subset \CC^d$ compact, $w=e^{- Q}$ an admissible weight function on $K$, and $\nu$ a measure on $K$, we say that the triple $(K,\nu,Q)$ satisfies a weighted Bernstein-Markov property if there is a strong comparability between $L^2$ and $L^{\infty}$ norms of {\it weighted} polynomials on $K$; precisely, for all $p_n\in \mathcal P_n$, 
writing $||w^np_n||_K:=\sup_{z\in K} |w(z)^np_n(z)|$ and $||w^np_n||_{L^2(\nu)}^2:=\int_K |p_n(z)|^2  |w(z)|^{2n} d\nu(z)$, 
$$||w^np_n||_K \leq M_n ||w^np_n||_{L^2(\nu)} \ \hbox{with} \ \limsup_{n\to \infty} M_n^{1/n} =1.$$
If $K$ is locally regular and $w$ is continuous, taking $\nu = (dd^cV_{K,Q})^d$ we have $(K,\nu,Q)$ satisfies a weighted Bernstein-Markov property (cf., \cite{bloom}). Now if $(K,\nu,Q)$ satisfies a weighted Bernstein-Markov property we have that
\begin{equation}
\label{weakasym}
\lim_{n\to \infty} \frac{1}{2n} \log K_n^{\nu,w}(z,z) = V_{K,Q}(z)
\end{equation}
locally uniformly on $\CC^d$ where 
\begin{equation}
\label{nthberg}
B_n^{\nu,w}(z):=K_n^{\nu,w}(z,z)w(z)^{2n}=\sum_{j=1}^N |q_j^{(n)}(z)|^2w(z)^{2n},
\end{equation}
is the $n-th$ {\it Bergman function} of $K,w,\nu$ (cf., \cite{bloompol}) and 
$$K_n^{\nu,w}(z,\zeta):=\sum_{j=1}^N q_j^{(n)}(z)\overline{q_j^{(n)}(\zeta)}.$$
Here, $\{q_j^{(n)}\}_{j=1,...,N}$ is an orthonormal basis for $\mathcal P_n$ with respect to the weighted $L^2-$norm $p\to ||w^np_n||_{L^2(\nu)}$. A sketch of the proof of (\ref{weakasym}) and/or (\ref{unwtdweakasym}) runs as follows: first, one shows that if 
$$\Phi_{K,Q,n}(z):=\sup \{|p(z)|: ||w^{\deg p}p||_K\leq 1, \ p \in \cup_{k=0}^n{\mathcal P}_k\},$$
then
$$\frac{1}{n}\log \Phi_{K,Q,n}\to V_{K,Q}$$
locally uniformly on $\CC^d$. Next, one verifies the inequality
$$\frac{[\Phi_{K,Q,n}(z)]^2}{N} \leq K_n^{\nu,w}(z,z) \leq N \cdot M_n [\Phi_{K,Q,n}(z)]^2.$$
The left-hand inequality follows simply from the reproducing property of the kernel function $K_n^{\nu,w}(z,\zeta)$; i.e., for any $p\in \mathcal P_n$,
$$p(z)=\int_K K_n^{\nu,w}(z,\zeta) p(\zeta)w(\zeta)^{2n}d\nu(\zeta),$$
and the Cauchy-Schwartz inequality; it is the right-side inequality which utilizes the weighted Bernstein-Markov property. Indeed, for an element $q_j^{(n)}\in {\mathcal P}_n$ in the orthonormal basis,  
$$||w^nq_j^{(n)}||_K \leq M_n \ \hbox{and} \ \frac{|q_j^{(n)}(z)|}{||w^nq_j^{(n)}||_K} \leq \Phi_{K,Q,n}(z)$$
imply
$$|q_j^{(n)}(z)|\leq M_n \Phi_{K,Q,n}(z)$$
so that
$$K_n^{\nu,w}(z,z)=\sum_{j=1}^N |q_j^{(n)}(z)|^2\leq N\cdot M_n^2 [\Phi_{K,Q,n}(z)]^2.$$
This was proved in the unweighted case; i.e., (\ref{unwtdweakasym}), by Bloom and Shiffman \cite{bloomshiff} and in the general (weighted) case; i.e., (\ref{weakasym}), by Bloom \cite{bloompol}. 

From the local uniform convergence in (\ref{weakasym}) follows the weak-* convergence of the Monge-Ampere measures
$$[dd^c\frac{1}{2n} \log K_n^{\nu,w}(z,z)]^d \to (dd^cV_{K,Q}^*)^d \ \hbox{weak-}*.$$
One of the main results of this paper is a much stronger version of ``Bergman asymptotics'' to be proved in Corollary \ref{strongasymp}: 
if $(K,\nu,w)$ satisfies a weighted Bernstein-Markov inequality, then 
$$\frac{1}{N}B_n^{\nu,w} d\nu \to \mu_{K,Q}:=\frac{1}{(2\pi)^d}(dd^cV_{K,Q}^*)^d \ \hbox{weak-}*.$$
This was proved in the one variable case ($d=1$) in \cite{blchrist}.

We will have occasion to consider the case of unbounded sets. If $E\subset \CC^d$ is closed and unbounded, $w=e^{- Q}$ is an admissible weight on $E$ and $\mu$ is a positive measure carried by $E$ we say the triple $(E,\mu,Q)$ satisfies a weighted Bernstein-Markov property if there exists an integer $n_0$ such that for each $n>n_0$ and all $p_n\in \mathcal P_n$, we have 
$$\int_{E}|p_n|^2w^{2n}d\mu< +\infty$$
and
$$\sup_{E}|p_n w^n|\leq M_n [\int_{E}|p_n|^2w^{2n}d\mu]^{1/2}
 \ \hbox{with} \ \limsup_{n\to \infty} M_n^{1/n} =1.$$
(cf., (\ref{weightedleb}) in subsection \ref{sec:bergcd}). Indeed, an important special case of Corollary \ref{strongasymp}, Theorem \ref{bermancn}, involves strong Bergman asymptotics with $E=\CC^d$.

Which pairs $(K,\mu)$ satisfy a Bernstein-Markov property? Which triples $(K,\mu,Q)$ satisfy a weighted Bernstein-Markov property? As mentioned, if $K$ is regular ($V_K$ is continuous), the extremal measure $\mu_K:= \frac{1}{(2\pi)^d}(dd^cV_K)^d$ works. In the weighted case, if $K$ is locally regular and $w$ is continuous (so that $V_{K,Q}$ is continuous), $\mu_{K,Q}:=\frac{1}{(2\pi)^d}(dd^cV_{K,Q})^d$ works. These claims are a special case of a more general result. We begin with a definition.

\begin{definition} \label{determined} Let $K\subset \CC^d$ be compact and let $\mu$ be a probability measure on $K$. We say that $\mu$ is a {\it determined measure} for $K$ if for each Borel set $E\subset K$ with $\mu(E) = \mu(K)$, we have $V_E^* = V_K^*$. If $w$ is an admissible weight on $K$, we say that $\mu$ is a {\it determined measure} for $K,w$ if for each Borel set $E\subset K$ with $\mu(E) = \mu(K)$, we have $V_{E,Q}^* = V_{K,Q}^*$.
\end{definition}

\noindent Equivalently, if $u\in L(\CC^d)$ and $u\leq Q$ $\mu-$a.e. on $K$ then $u\leq V_{K,Q}^*.$

\begin{proposition}
Let $K\subset \CC^d$ be compact and let $w$ be an admissible weight on $K$. Then $\mu_{K,Q}$ is a determined measure for $K,w$.
\end{proposition}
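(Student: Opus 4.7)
The plan is to reduce the proposition to the equivalent formulation recorded immediately after Definition 2.5: it suffices to show that every $u\in L(\CC^d)$ satisfying $u\leq Q$ almost everywhere with respect to $\mu_{K,Q}$ on $K$ actually satisfies $u\leq V_{K,Q}^*$ everywhere on $\CC^d$. (The original form of the statement then follows by taking $u=V_{E,Q}^*$ for a Borel $E\subset K$ with $\mu_{K,Q}(E)=\mu_{K,Q}(K)$: by hypothesis $V_{E,Q}^*\leq Q$ on $E$, hence $\mu_{K,Q}$-a.e.\ on $K$, so the equivalent form yields $V_{E,Q}^*\leq V_{K,Q}^*$, while the reverse inequality is trivial from $E\subset K$.)

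I would assemble the proof from three ingredients already in the excerpt, plus one standard fact. The first two are: (i) $\mu_{K,Q}$ is supported on $S_w$; and (ii) $V_{K,Q}^*=Q$ quasi-everywhere on $S_w$. The third is the domination principle (Proposition 2.1). The tacit additional input is the Bedford--Taylor fact that the Monge--Ampère measure of a locally bounded psh function charges no pluripolar set; since $V_{K,Q}^*\in L^+(\CC^d)$ is locally bounded, $\mu_{K,Q}$ puts no mass on pluripolar sets.

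Given $u\in L(\CC^d)$ with $u\leq Q$ $\mu_{K,Q}$-a.e.\ on $K$, the pluripolar exceptional set in (ii) is $\mu_{K,Q}$-null, so $V_{K,Q}^*=Q$ holds $\mu_{K,Q}$-a.e.\ on $S_w$. Combining with the hypothesis, $u\leq V_{K,Q}^*$ $\mu_{K,Q}$-a.e.\ on $S_w$, and by (i) this is the same as $u\leq V_{K,Q}^*$ almost everywhere with respect to $(dd^cV_{K,Q}^*)^d=(2\pi)^d\mu_{K,Q}$. Proposition 2.1, applied with $u_1:=V_{K,Q}^*\in L^+(\CC^d)$ and $v:=u\in L(\CC^d)$, then upgrades this to $u\leq V_{K,Q}^*$ pointwise on $\CC^d$, which is what we wanted.

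The main (mild) obstacle is simply the step from ``quasi-everywhere'' in (ii) to ``$\mu_{K,Q}$-a.e.'', which relies on the non-charging property of Monge--Ampère measures of locally bounded psh functions. Once that classical fact is in hand, the argument is a formal combination of the description of $V_{K,Q}^*$ on the support of its Monge--Ampère measure with the domination principle; no extra estimates or approximation arguments are required.
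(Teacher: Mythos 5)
Your proof is correct and follows the same route as the paper: reduce to the equivalent formulation, observe $u\leq V_{K,Q}^*$ $\mu_{K,Q}$-a.e. (using that $V_{K,Q}^*=Q$ q.e. on $S_w$ and that $(dd^cV_{K,Q}^*)^d$ charges no pluripolar set), then apply the domination principle, Proposition \ref{lplusdom}. The paper's proof is just more terse — it cites Proposition \ref{lplusdom} directly and leaves the intermediate ``q.e.\ implies $\mu_{K,Q}$-a.e.'' step implicit, which you have usefully made explicit.
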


\begin{proof}
Fix $u\in L(\CC^d)$ with $u\leq Q$ $\mu_{K,Q}-$a.e. on $K$. By the  $L^+(\CC^d)$ version of the domination principle, Proposition \ref{lplusdom}, 
$u\leq V_{K,Q}^*.$
\end{proof}

The claim in the paragraph before Definition \ref{determined} follows from the general result below (cf., \cite{sicdet}).

\begin{proposition}\label{detdprop}
If $V_{K,Q}$ is continuous, and if a probability measure $\mu$ is a determined measure for $K,w$, then $(K,\mu,Q)$ satisfies the Bernstein-Markov property.
\end{proposition}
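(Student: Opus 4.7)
The plan is a proof by contradiction. Suppose the weighted Bernstein--Markov property fails: then for some $\epsilon>0$ there exist polynomials $p_{n_k}\in\mathcal P_{n_k}$ (passing to a subsequence so that $\sum_k e^{-n_k\epsilon}<\infty$) normalized by $\|w^{n_k}p_{n_k}\|_K=1$ while $\|w^{n_k}p_{n_k}\|_{L^2(\mu)}\le e^{-n_k\epsilon}$. Setting $u_k:=\frac{1}{n_k}\log|p_{n_k}|$, I would note $u_k\in L(\CC^d)$ with $u_k\le Q$ on $K$ and $u_k\le V_{K,Q}$ on $\CC^d$. A Chebyshev estimate yields
$$\mu\bigl(\{z\in K:\,u_k(z)>Q(z)-\epsilon/2\}\bigr)\le e^{-n_k\epsilon},$$
and Borel--Cantelli then produces a Borel $E\subset K$ of full $\mu$-measure on which $\limsup_k u_k\le Q-\epsilon/2$.

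Continuity of $V_{K,Q}$ makes $\{u_k\}$ locally uniformly bounded above, so after a further subsequence $u_k\to u$ in $L^1_{\mathrm{loc}}(\CC^d)$ with $u:=(\limsup_k u_k)^*\in L(\CC^d)$. Since $u$ agrees with $\limsup_k u_k$ off a pluripolar set, one obtains $u\le Q-\epsilon/2$ $\mu$-a.e.\ on $K$, whereupon the equivalent form of the determined-measure hypothesis (applied to $u+\epsilon/2\in L(\CC^d)$) yields
$$u+\epsilon/2\le V_{K,Q}^*=V_{K,Q}\quad\text{on }\CC^d.$$

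To finish, upper semicontinuity of $w$ on the compact set $K$ makes $\|w^{n_k}p_{n_k}\|_K=1$ attained at some $z_k\in K$, so $u_k(z_k)=Q(z_k)$ and in particular $w(z_k)>0$, making $Q(z_k)$ finite. Extract a subsequence with $z_k\to z_*\in K$; lower semicontinuity of $Q$ gives $\liminf_k Q(z_k)\ge Q(z_*)>-\infty$ (which also rules out $u\equiv -\infty$), and Hartogs's lemma for the $u_k$ on balls $\bar B_r(z_*)$ together with upper semicontinuity of $u$ produces $u(z_*)\ge Q(z_*)$. Combined with $u(z_*)\le V_{K,Q}(z_*)-\epsilon/2$ from the preceding paragraph and with $V_{K,Q}(z_*)\le Q(z_*)$ (immediate from $z_*\in K$ and the definition of $V_{K,Q}$), one gets $Q(z_*)\le Q(z_*)-\epsilon/2$, the desired contradiction.

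The main obstacle is justifying $u\le Q-\epsilon/2$ \emph{$\mu$-a.e.}\ rather than merely off a pluripolar set: since the upper regularization $u$ may exceed $\limsup_k u_k$ on such a set, one needs $\mu$ to assign it no mass. I would handle this by establishing separately that a determined measure cannot charge pluripolar subsets of $K$; alternatively, one may avoid the single regularization step by working throughout with the decreasing sequence of upper envelopes $(\sup_{k\ge N}u_k)^*\in L(\CC^d)$, invoking the determined-measure condition on each before letting $N\to\infty$.
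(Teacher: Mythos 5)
Your argument takes a genuinely different route from the paper's. The paper normalizes the $L^2$-norms, introduces the increasing sublevel sets $K_m=\{z\in K:\sup_k|p_ke^{-kQ}(z)|\le m\}$, observes that $K':=\cup_m K_m$ has full $\mu$-measure, invokes the determined-measure hypothesis to get $V^*_{K',Q}=V_{K,Q}$, and then uses the monotone limit $V^*_{K_m,Q}\downarrow V^*_{K',Q}$ together with Dini's theorem to upgrade to a \emph{uniform} bound $e^{V^*_{K_m,Q}-Q}\le 1+\epsilon$ on $K$, which directly contradicts the assumed growth of the sup-norms. You instead normalize the sup-norms, pass through the psh functions $u_k=\frac{1}{n_k}\log|p_{n_k}|$, and contradict at a single accumulation point of sup-norm maximizers via Hartogs. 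Both are legitimate strategies; the paper's avoids all regularization issues by never forming an upper envelope of psh functions, whereas yours leans on $L^1_{\mathrm{loc}}$ compactness and Hartogs's lemma. The Chebyshev and Borel--Cantelli steps, the identification $u_k(z_k)=Q(z_k)$ at the maximizer, and the final Hartogs contradiction are all correct.

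However, the gap you flag in the last paragraph is real, and neither of your two proposed repairs works as stated. The first one is simply false: a determined measure for $(K,w)$ can charge pluripolar subsets of $K$. Indeed take $K$ regular, $\mu_0=\mu_K$, fix $p\in K$, and set $\mu=\frac{1}{2}\mu_0+\frac{1}{2}\delta_p$. If $E\subset K$ has $\mu(E)=\mu(K)$ then $\mu_0(E)=\mu_0(K)$ (and $p\in E$), so $V^*_E=V^*_K$ by determinedness of $\mu_0$; hence $\mu$ is determined, yet $\mu(\{p\})=\frac{1}{2}>0$. The second repair does not help either, because the sets $D_N=\{z\in K:\sup_{k\ge N}u_k(z)\le Q(z)-\epsilon/2\}$ that control $(\sup_{k\ge N}u_k)^*$ need not have full $\mu$-measure for any fixed $N$; only their union does, so there is nothing to which the determined-measure hypothesis can be applied at each stage.

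The correct way to close the gap is to use the \emph{set} form of Definition~\ref{determined} together with the standard invariance of the regularized weighted extremal function under deletion of pluripolar sets. You have $u=(\limsup_k u_k)^*\le Q-\epsilon/2$ on $E\setminus P$, where $\mu(E)=\mu(K)$ and $P$ is pluripolar (the negligible set from the Brelot--Cartan/Bedford--Taylor theorem). Thus $u+\epsilon/2\in L(\CC^d)$ satisfies $u+\epsilon/2\le Q$ on $E\setminus P$, so $u+\epsilon/2\le V_{E\setminus P,Q}\le V^*_{E\setminus P,Q}=V^*_{E,Q}$, the last equality holding because $P$ is pluripolar. Since $\mu(E)=\mu(K)$ and $\mu$ is determined, $V^*_{E,Q}=V^*_{K,Q}=V_{K,Q}$ by continuity, giving exactly the bound $u\le V_{K,Q}-\epsilon/2$ you need for the Hartogs contradiction; at no point is it required that $\mu$ ignore pluripolar sets.
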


\begin{proof} Suppose that $(K,\mu,Q)$ does not satisfy the Bernstein-Markov property. Then there is an $\epsilon >0$ and a sequence of polynomials $\{p_k\}$ with deg$p_k=k$ satisfying
\begin{equation}\label{l2norm}
||p_ke^{-kQ}||_{L^2(\mu)}=(1+\epsilon)^{-k}
\end{equation}
while
\begin{equation}\label{linfnorm}
||p_ke^{-kQ}||_K=\sup_{z\in K}|p_k(z)e^{-kQ(z)}|\geq k(1+\epsilon)^{k}.
\end{equation}
For $m>0$, define
$$K_m:=\{z\in K: \sup_{k}|p_k(z)e^{-kQ(z)}|\leq m\}.$$
Then clearly for each $k$
$$\frac{1}{k}\log \frac{|p_k|}{m}\leq V_{K_m,Q}\leq V_{K_m,Q}^*$$
so that
\begin{equation}\label{forcont}|p_k(z)e^{-kQ(z)}|\leq me^{k[V_{K_m,Q}^*(z)-Q(z)]} \ \hbox{on} \ K.
\end{equation}
Let 
$$K':=\cup_{m>0}K_m=\{z\in K: \sup_{k}|p_k(z)e^{-kQ(z)}|<  +\infty\}.$$
Then $V_{K_m,Q}^*\downarrow V_{K',Q}^*$ (here we simply restrict $Q$ to $K_m,K'$). Since $K'\subset K$ and $V_{K,Q}$ is continuous, $V_{K,Q}=V_{K,Q}^*\leq V_{K',Q}^*$. We show equality holds; i.e.,
\begin{equation}\label{equality}
V_{K,Q}= V_{K',Q}^*.
\end{equation}

To verify (\ref{equality}), we first claim that $\mu(K\setminus K')=0$. For by (\ref{l2norm}), $\sum_k |p_k(z)e^{-kQ(z)}|^2$ converges in $L^1(\mu)$ and hence $|p_k(z)e^{-kQ(z)}|\to 0$ $\mu-$a.e. as $k\to \infty$; hence $\sup_{k}|p_k(z)e^{-kQ(z)}|< +\infty$ $\mu-$a.e., proving the claim. Now since by hypothesis, $\mu$ is a determined measure for $K,w$, (\ref{equality}) follows.

Finally, we conclude that the usc functions $\{V_{K_m,Q}^*\}$ decrease to $V_{K,Q}$ pointwise on the compact set $K$; since $V_{K,Q}$ is continuous, we conclude, by Dini's theorem, that $V_{K_m,Q}^*\to V_{K,Q}$ uniformly on $K$. In particular, for $m$ sufficiently large, 
$$e^{V_{K_m,Q}^*- Q}\leq 1+\epsilon \ \hbox{on} \ K.$$
Fixing such an $m$ and using (\ref{forcont}), we have
$$|p_k(z)e^{-kQ(z)}|\leq m(1+\epsilon)^k \ \hbox{on} \ K.$$
This contradicts (\ref{linfnorm}) for $k$ sufficiently large.
\end{proof}

\subsection {Gram matrices and optimal measures.} \label{gramsec} We begin with some motivation from \cite{bblw}. Suppose that $K\subset \CC^d$ is compact and non-pluripolar. Note that if we write, for points $\zeta_1,...,\zeta_N\in K$, 
$$V_n:=[e_i(\zeta_j)]_{i,j=1,...,N},$$
then 
$$\frac{1}{N}V_nV_n^* =[\frac{1}{N} \sum_{k=1}^Ne_i(\zeta_k)\overline{e_j(\zeta_k)}]_{i,j=1,...,N}$$
which is a {\it Gram matrix} of inner products using the measure $\mu_n:=\frac{1}{N}\sum_{k=1}^N \delta_{\zeta_k}$ with respect to the monomial basis $\{e_1,...,e_N\}$. More generally, let  $\mu$ be a probability measure on $K.$ We assume that $\mu$ is non-degenerate on ${\mathcal P}_n$. This means that with the associated inner-product
\begin{equation}\label{ip}
\langle f,g\rangle_\mu:=\int_K f\overline{g}d\mu 
\end{equation}
and $L^2(\mu)$ norm, $\|f\|_{L^2(\mu)}=\sqrt{\langle f,f\rangle_\mu}$, we have $\|p\|_{L^2(\mu)}=0$ for $p\in {\mathcal P}_n$ implies that $p=0$. It follows that $\mu$ is
non-degenerate on ${\mathcal P}_n$ if and only if supp$(\mu)$ is not contained in an algebraic variety of degree $n$. 
Then ${\mathcal P}_n$ equipped with the inner-product (\ref{ip}) is a finite dimensional Hilbert
space of dimension $N$ and, for any fixed basis $\beta_n:=\{p_1,...,p_N\}$ of ${\mathcal P}_n$, we can form the {\it Gram matrix of $\mu$ with respect to the basis $\beta_n$}:
$$G_n^{\mu}(\beta_n):=[\langle p_i,p_j\rangle_\mu]_{i,j=1,...,N}.$$
If we change the
basis $\beta_n=\{p_1,...,p_N\}$ to $C_n:=\{q_1,...,q_N\}$ by $p_i=\sum_{j=1}^Na_{ij}q_j,$ 
then the Gram matrices transform (see e.g. \cite{D}, \S8.7) by
\begin{equation}\label{grambasis}
G_n^{\mu}(\beta_n)=AG_n^{\mu}(C_n)A^*
\end{equation}
where $A=[a_{ij}]\in \CC^{N\times N}.$ Hence the following definition is independent of the basis chosen.

\begin{definition}
\label{opt}
If a probability measure $\mu$ has the property that
\begin{equation}\label{(a)}
{\rm det}(G_n^{\mu'})\le {\rm det}(G_n^{\mu})
\end{equation}
for all other probability measures $\mu'$ on $K$ then $\mu$ is said to be an optimal measure of degree $n$ for $K$.
\end{definition}

We make some observations. 
We may compare the uniform norm on $K$ with the $L^2(\mu)$ norm for $p\in{\mathcal P}_n.$ For $\mu$ a probability measure we have
\[\|p\|_{L^2(\mu)}\le \|p\|_K,\]
and since ${\mathcal P}_n$ is finite dimensional there is a constant $C=C(n,\mu,K)$ such that
\[\|p\|_K\le C \|p\|_{L^2(\mu)}.\]
The {\it best} constant $C$ is given by 
\[C=\sup_{p\in {\mathcal P}_n,\,p\ne0}{\|p\|_K\over \|p\|_{L^2(\mu)}}=\max_{z\in K}\sqrt{B_n^\mu(z)}\]
where recall
\begin{equation}\label{Kn}
B_n^\mu(z):=\sum_{j=1}^N|q_j(z)|^2
\end{equation}
is the $n-$th Bergman function for $K,\mu$ and 
$Q_n=\{q_1,q_2,\cdots,q_N\}$ is an orthonormal basis for ${\mathcal P}_n.$ 

It is natural to ask among all probability measures on $K,$ which one provides the smallest such factor, and this leads to an equivalent characterization of optimal measures: {\it Suppose that the probability measure $\mu$ has the property that 
\[\max_{z\in K}\sqrt{B_n^\mu(z)}\le \max_{z\in K}\sqrt{B_n^{\mu'}(z)}\]
for all other probability measures $\mu'$ on $K.$ Then 
$\mu$ is an optimal measure of degree $n$ for $K.$}

Indeed, for {\it any} probability measure $\mu,$  $\displaystyle{\int_K B_n^\mu(z) d\mu=N},$ so that
\[\max _{z\in K} B_n^\mu(z)\ge N.\]
We will show that for an optimal measure 
\begin{equation}\label{max=N}
\max _{z\in K} B_n^\mu(z) = N.
\end{equation}
In fact, we will verify a more general (weighted) version of this (Proposition \ref{KW}). 

We make an observation which will be useful. With a basis $\beta_n=\{p_1,...,p_N\}$ of ${\mathcal P}_n$, if we write
\begin{equation}\label{P}
P(x)=\left[\begin{array}{c}p_1(x)\cr p_2(x)\cr\cdot\cr\cdot\cr p_N(x)\end{array}\right]\in \CC^{ N}
\end{equation}
then 
\begin{equation}
\label{useful}
P(x)^*\bigl(G_n^\mu(\beta_n)\bigr)^{-1}P(x)=B_n^\mu(x).
\end{equation}
To see this, $G:=G_n^\mu(\beta_n)$ and $G^{-1}$ are positive definite, Hermitian matrices; hence $G^{1/2}, \ G^{-1/2}:=(G^{-1})^{1/2}$ exist; writing $P:=P(x)$, we have
$$P^*G^{-1}P = P^*G^{-1/2}G^{-1/2}P=(G^{-1/2}P)^*G^{-1/2}P.$$
To verify that the right-hand-side yields $B_n^\mu(x)$, note that since $G=\int_K PP^*d\mu$, the polynomials $\{\tilde p_1,\tilde p_2,\cdots, \tilde p_N\}$ defined by
\begin{equation}
G^{-1/2}P:=\left[\begin{array}{c}\tilde p_1(x)\cr 
\tilde p_2(x)\cr\cdot\cr\cdot\cr \tilde p_N(x)\end{array}\right]\in \CC^{ N}
\end{equation}
form an {\it orthonormal} basis for ${\mathcal P}_n$ in $L^2(\mu)$: for
$$\int_K G^{-1/2}P \cdot (G^{-1/2}P)^*d\mu =  G^{-1/2}\bigl[\int_K PP^* d\mu \bigr] G^{-1/2}$$
$$ = G^{-1/2} G G^{1/2}=I,$$
the $N\times N$ identity matrix. Thus 
$$B_n^\mu(x)= \sum_{j=1}^N |\tilde p_j(x)|^2 = (G^{-1/2}P)^*G^{-1/2}P.$$

We turn to weighted versions of Gram matrices and optimal measures.  Let $K\subset \CC^d$ be compact and non-pluripolar. Fix $\mu$ a probability measure on $K$ and $w$ an admissible weight on $K$. For each $n$ we have the weighted inner product of degree $n$
\begin{equation}\label{Wip}
\langle f,g\rangle_{\mu,w}:=\int_K f(z)\overline{g(z)}w(z)^{2n}d\mu.
\end{equation}
Again fixing a basis $\beta_n=\{p_1,p_2,\cdots, p_N\}$ of ${\mathcal P}_n$ we form the Gram matrix
$$
G_n^{\mu,w}=G_n^{\mu,w}(\beta_n):=[\langle p_i,p_j\rangle_{\mu,w}]\in\CC^{N\times N}
$$
and the associated $n-$th Bergman function
\begin{equation}\label{WKn}
B_n^{\mu,w}(z):=\sum_{j=1}^N|q_j(z)|^2w(z)^{2n}
\end{equation}
(see (\ref{nthberg})) 
where $Q_n=\{q_1,q_2,\cdots,q_N\}$ is an orthonormal basis for ${\mathcal P}_n$ with respect
to the inner-product (\ref{Wip}). If a probability measure $\mu$ has the property that
\begin{equation}\label{wa} {\rm det}(G_n^{\mu',w})\le {\rm det}(G_n^{\mu,w})
\end{equation}
for all other probability measures $\mu'$ on $K$ then $\mu$ is said to be an optimal measure of
degree $n$ for $K$ and $w.$

From Lemma 2.1 of \cite{KS}, Chapter X, the set of matrices
\[\{G_n^{\mu,w}\,:\,\mu\,\,\hbox{is a probability measure on}\,\,K\}\]
is compact (and convex). Hence {\it an optimal measure of
degree $n$ for $K$ and $w$ always exists}. They need not be unique. An equivalent characterization of optimality is given by the Kiefer-Wolfowitz Equivalence Theorem \cite {[KW]}.

\begin{proposition}
\label{KW}
Let $w$ be an admissible weight on $K.$ A probability measure $\mu$
is an optimal measure of degree $n$ for $K$ and $w$ if and only if
\begin{equation} \label{wb} \max_{z\in K}B_n^{\mu,w}(z)=N.
\end{equation}
\end{proposition}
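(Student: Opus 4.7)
The plan is to reduce the claim to an analysis of how $\det G_n^{\mu,w}$ changes under one-parameter perturbations of $\mu$, using the fact that $\log\det$ is concave on the cone of positive-definite Hermitian matrices. First I would record the weighted analog of (\ref{useful}): for any basis $\beta_n$ with column vector $P(z)$ as in (\ref{P}),
\begin{equation*}
B_n^{\mu,w}(z)=w(z)^{2n}\,P(z)^{*}\bigl(G_n^{\mu,w}(\beta_n)\bigr)^{-1}P(z),
\end{equation*}
which is verified exactly as for (\ref{useful}) by passing to the orthonormal basis $G^{-1/2}P$, now with respect to the weighted inner product (\ref{Wip}). Together with the orthonormality of $\{q_j\}$, this immediately gives the fundamental identity
\begin{equation*}
\int_K B_n^{\mu,w}(z)\,d\mu(z)=\sum_{j=1}^{N}\int_K|q_j(z)|^{2}w(z)^{2n}d\mu=N,
\end{equation*}
so that $\max_K B_n^{\mu,w}\ge N$ for every probability measure $\mu$.

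Next I would compute the effect on the Gram matrix of replacing $\mu$ by $\mu_t:=(1-t)\mu+t\nu$ for another probability measure $\nu$ on $K$ and $t\in[0,1]$. Linearity of the inner product (\ref{Wip}) in $\mu$ gives
\begin{equation*}
G_n^{\mu_t,w}=(1-t)G_n^{\mu,w}+t\,G_n^{\nu,w}.
\end{equation*}
Setting $F(t):=\log\det G_n^{\mu_t,w}$, concavity of $\log\det$ yields $F(1)\le F(0)+F'(0)$, and a standard computation using Jacobi's formula gives
\begin{equation*}
F'(0)=\mathrm{tr}\bigl((G_n^{\mu,w})^{-1}G_n^{\nu,w}\bigr)-N.
\end{equation*}
Expanding the trace by inserting the identity $G_n^{\nu,w}=\int_K P(z)P(z)^{*}w(z)^{2n}d\nu$ and applying the formula from the first paragraph gives
\begin{equation*}
\mathrm{tr}\bigl((G_n^{\mu,w})^{-1}G_n^{\nu,w}\bigr)=\int_K B_n^{\mu,w}(z)\,d\nu(z),
\end{equation*}
so that
\begin{equation*}
F'(0)=\int_K B_n^{\mu,w}(z)\,d\nu(z)-N.
\end{equation*}

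Now the two directions fall out. For sufficiency, assume $\max_K B_n^{\mu,w}=N$. Then $\int_K B_n^{\mu,w}\,d\nu\le N$ for every probability measure $\nu$, so $F'(0)\le 0$ and concavity gives $\log\det G_n^{\nu,w}=F(1)\le F(0)=\log\det G_n^{\mu,w}$; hence $\mu$ is optimal. For necessity, suppose $\mu$ is optimal and take $\nu=\delta_{z_0}$ for an arbitrary $z_0\in K$; since $t=0$ is a maximum of $F$ on $[0,1]$, we need $F'(0)\le 0$, which unwinds to $B_n^{\mu,w}(z_0)\le N$. Thus $B_n^{\mu,w}\le N$ on $K$, and combined with the fundamental identity $\int_K B_n^{\mu,w}\,d\mu=N$ this forces $B_n^{\mu,w}=N$ $\mu$-a.e., so in particular $\max_K B_n^{\mu,w}=N$.

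The only step requiring real care is justifying the derivative computation and checking that point masses $\delta_{z_0}$ are legitimate test measures; the former is just Jacobi's formula applied to the analytic family $G_n^{\mu_t,w}$, while the latter is fine because $G_n^{\mu,w}$ is invertible by the non-degeneracy assumption, so $G_n^{\mu_t,w}$ remains positive definite at least for small $t>0$ (and we only use $F'(0)$). The rest of the argument is formal manipulation once the identity $B_n^{\mu,w}(z)=w(z)^{2n}P(z)^{*}(G_n^{\mu,w})^{-1}P(z)$ is in hand.
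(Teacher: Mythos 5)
Your proof is correct and follows essentially the same route as the paper's: both rest on concavity of $\mu\mapsto\log\det G_n^{\mu,w}$ along convex paths $\mu_t=(1-t)\mu+t\nu$, the derivative identity $F'(0)=\int_K B_n^{\mu,w}\,d\nu-N$ (the paper obtains it by differentiating $\mathrm{trace}\log G$ directly while you invoke Jacobi's formula, which is the same computation), and then testing with point masses together with the orthonormality identity $\int_K B_n^{\mu,w}\,d\mu=N$. The only cosmetic difference is that the paper verifies concavity by explicitly simultaneously diagonalizing the two Gram matrices, whereas you cite the general concavity of $\log\det$ on positive-definite Hermitian matrices.
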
 

\begin{proof} We give a proof of the equivalence of conditions (\ref{wa}) and (\ref{wb}) following [Bo] (see also \cite{KS}, Theorem 2.1, Chapter X).  First, with $P$ defined as in (\ref{P}), the proof of (\ref{useful}) gives 
\begin{equation}\label{OtherKn}
w^{2n}P^*(G_n^{\mu,w})^{-1}P = B_n^{\mu,w}.
\end{equation}

A computation shows that 
$$\mu \to \log \det G_n^{\mu,w}$$
is concave on the space of probability measures; i.e., if 
$$h(t):= \log \det G_n^{t\mu_1+(1-t)\mu_2,w}$$
for two probability measures $\mu_1$ and $\mu_2$, then $h''(t)\leq 0$. Indeed, since $G_n^{\mu_1,w}$ and $G_n^{\mu_2,w}$ are positive definite Hermitian matrices, we can find a nonsingular matrix $A$ so that  
$$A^*G_n^{\mu_1,w}A = {\rm diag}[a_1 \cdots a_N]  \ \hbox{and} \ A^*G_n^{\mu_2,w}A = {\rm diag}[b_1 \cdots b_N] $$
Then
$$\det G_n^{t\mu_1+(1-t)\mu_2,w}= |\det A|^2 \det {\rm diag}[ta_1 +(1-t)b_1\cdots ta_N +(1-t)b_N]$$
and, computing,
$$ \frac{d^2}{dt^2}  \log \det G_n^{t\mu_1+(1-t)\mu_2,w}=-\sum_{j=1}^N \frac{(b_j-a_j)^2}{[ta_j+(1-t)b_j]^2}\leq 0.$$

Hence $\mu_1$ is optimal in the sense of (\ref{wa}) if and only if $h'(t)\leq 0$ for all $\mu_2$. Computing this derivative one sees that $\mu_1$ is optimal in the sense of (\ref{wa}) if and only if
\begin{equation}
\label{kwmain}
{\rm trace} \bigl[\bigl(G_n^{\mu_1,w}\bigr)^{-1}G_n^{\mu_2,w}\bigr]=\int_K w^{2n}P^*(G_n^{\mu_1,w})^{-1}Pd\mu_2 =\int B_n^{\mu_1,w} d\mu_2\leq N
\end{equation}
for all $\mu_2$. Here we use (\ref{OtherKn}) and the fact that, for an $N\times N$ matrix $A$, an $N\times 1$ matrix $B$, and a $1\times N$ matrix $C$, 
$${\rm trace} (ABC) = {\rm trace} (CAB)= CAB;$$
thus, writing $G_j:= G_n^{\mu_j,w}$ and using $G_2=\int_Kw^{2n}PP^*d\mu_2$,  
$${\rm trace} \bigl[\bigl(G_n^{\mu_1,w}\bigr)^{-1}G_n^{\mu_2,w}\bigr]$$
$$= {\rm trace} \bigl[G_1^{-1}\int_Kw^{2n}PP^*d\mu_2\bigr]=\int_Kw^{2n}P^*G_1^{-1}Pd\mu_2.$$
Taking $\mu_2$ to be a point mass at a point $z \in K$ in (\ref{kwmain}) gives $B_n^{\mu_1,w}(z)\leq N$; then taking $\mu_2 = \mu_1$ gives $\int B_n^{\mu_1,w} d\mu_1= N$ by  orthonormality. This proves the equivalence of (\ref{wa}) and (\ref{wb}).
\end{proof}

Indeed, the end of this argument yields the following key property of optimal measures.
\begin{lemma}\label{MaxIsN}
Suppose that $\mu$ is optimal for $K$ and $w.$ Then
\[B_n^{\mu,w}(z)=N,\quad a.e.\,\, [\mu].\]
\end{lemma}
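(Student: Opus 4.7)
The plan is to combine two facts already extracted in the proof of Proposition \ref{KW}. By the Kiefer--Wolfowitz characterization, optimality of $\mu$ gives the pointwise bound
\[
B_n^{\mu,w}(z)\le N \quad \text{for all } z\in K.
\]
On the other hand, since $\{q_1,\dots,q_N\}$ is an orthonormal basis for $\mathcal{P}_n$ with respect to the weighted inner product \eqref{Wip}, we have $\int_K |q_j(z)|^2 w(z)^{2n}\,d\mu(z)=1$ for each $j$, so summing yields
\[
\int_K B_n^{\mu,w}(z)\,d\mu(z) \;=\; \sum_{j=1}^{N}\int_K |q_j(z)|^2 w(z)^{2n}\,d\mu(z) \;=\; N.
\]

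Now the function $N-B_n^{\mu,w}$ is nonnegative on $K$ by the first display, and its integral against the probability measure $\mu$ is zero by the second. Hence $N-B_n^{\mu,w}=0$ $\mu$-a.e., which is the desired conclusion.

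There is no real obstacle here: both ingredients were produced at the end of the proof of Proposition \ref{KW}, and the lemma is simply the observation that a nonnegative function bounded above by its mean must equal that mean almost everywhere.
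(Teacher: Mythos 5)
Your proof is correct and follows exactly the same route as the paper: the upper bound $B_n^{\mu,w}\le N$ from the Kiefer--Wolfowitz characterization combined with $\int_K B_n^{\mu,w}\,d\mu=N$ from orthonormality, which forces equality $\mu$-a.e. You simply spell out the final averaging step a bit more explicitly than the paper does.
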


\begin{proof} On the one hand
\[\max_{z\in K}B_n^{\mu,w}(z)=N\]
while on the other hand, again by orthonormality of the $q_j,$
\[ \int_K B_n^{\mu,w}\, d\mu=\int_K\sum_{j=1}^N|q_j(z)|^2w(z)^{2n}\,d\mu(z)=N,\]
and the result follows. 
\end{proof}

We relate the (weighted) Bernstein-Markov property with (weighted) Gram determinants and (weighted) transfinite diameter.

\begin{proposition}
\label{weightedtd}
Let $K\subset \CC^d$ be a compact set and let $w$ be an admissible weight function on $K$. If $\nu$ is a measure on $K$ with $(K,\nu,Q)$ satisfying a weighted Bernstein-Markov property, then 
$$\lim_{n\to \infty} \frac{d+1}{2dnN} \cdot \log \det G_n^{\nu,w} = \log \delta^{w}(K).$$
\end{proposition}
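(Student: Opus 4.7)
The plan is to reduce the Gram determinant to an $L^2$-norm of the weighted Vandermonde determinant $W$, and then to squeeze this $L^2$ quantity between $W_n(K)^2$ (its sup norm squared) using the weighted Bernstein--Markov property on one side and trivial majorization on the other.

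First I would record the Gram--Vandermonde (Heine) identity: taking the monomial basis $\{e_1,\ldots,e_N\}$, multilinearity of the determinant together with symmetrization gives
\[
\det G_n^{\nu,w} \;=\; \frac{1}{N!}\int_{K^N}|VDM(\zeta_1,\ldots,\zeta_N)|^2\,w(\zeta_1)^{2n}\cdots w(\zeta_N)^{2n}\,d\nu(\zeta_1)\cdots d\nu(\zeta_N)
\;=\;\frac{1}{N!}\int_{K^N}|W(\zeta_1,\ldots,\zeta_N)|^2\,d\nu^{\otimes N}.
\]
(This is independent of the basis by the transformation rule \eqref{grambasis}, since changing basis multiplies $\det G_n^{\nu,w}$ by $|\det A|^2$, and the analogous change-of-basis multiplies $|VDM|^2$ by the same factor.) The trivial upper bound is then
\[
\det G_n^{\nu,w} \;\le\; \frac{\nu(K)^N}{N!}\,W_n(K)^2.
\]

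For the lower bound I would apply the weighted Bernstein--Markov property in each variable separately. For fixed $\zeta_2,\ldots,\zeta_N$, the map $z\mapsto W(z,\zeta_2,\ldots,\zeta_N)$ is $w(z)^n$ times a polynomial in $z$ of degree at most $n$, multiplied by constants; thus
\[
\sup_{z\in K}|W(z,\zeta_2,\ldots,\zeta_N)|^2 \;\le\; M_n^2 \int_K|W(z,\zeta_2,\ldots,\zeta_N)|^2\,d\nu(z).
\]
Iterating this in the remaining $N-1$ variables (each time the dependence is polynomial of degree $\le n$ times $w^n$, and the weighted BM property applies) and using Fubini yields
\[
W_n(K)^2 \;\le\; M_n^{2N}\int_{K^N}|W|^2\,d\nu^{\otimes N} \;=\; M_n^{2N}\,N!\,\det G_n^{\nu,w}.
\]

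Combining the two bounds,
\[
\frac{W_n(K)^2}{M_n^{2N}\,N!} \;\le\; \det G_n^{\nu,w} \;\le\; \frac{\nu(K)^N}{N!}\,W_n(K)^2,
\]
I would take logarithms, multiply by $\tfrac{d+1}{2dnN}$, and pass to the limit. By the definition \eqref{deltaw} of $\delta^w(K)$, the $W_n(K)^2$ term contributes $\log\delta^w(K)$. The remaining contributions vanish: $\tfrac{d+1}{2dnN}\log M_n^{2N}=\tfrac{d+1}{dn}\log M_n\to 0$ since $M_n^{1/n}\to 1$; $\tfrac{d+1}{2dnN}\log\nu(K)^N\to 0$; and $\tfrac{d+1}{2dnN}\log N!\sim \tfrac{d+1}{2d}\cdot\tfrac{\log N}{n}\to 0$ by Stirling, because $N=\binom{n+d}{n}$ is polynomial in $n$. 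This gives the stated limit.

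The main obstacle is the iterative application of the weighted Bernstein--Markov inequality in each variable. One has to be careful that the ``polynomial of degree $\le n$'' structure really survives after freezing the other variables, and that the constants $M_n^{2N}$ still satisfy $(M_n^{2N})^{1/(nN)} = M_n^{2/n}\to 1$, which is exactly what is needed for the factor $\tfrac{d+1}{2dnN}\log M_n^{2N}$ to disappear in the limit. Everything else is routine asymptotic bookkeeping.
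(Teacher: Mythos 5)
Your proof is correct and follows essentially the same route as the paper's: both reduce $\det G_n^{\nu,w}$ to the $N$-fold integral of $|W|^2$ (what you call the Heine identity, and what the paper obtains via the Gram--Schmidt orthogonal basis), then squeeze that integral between $W_n(K)^2$ with a trivial upper estimate and an iterative application of the weighted Bernstein--Markov inequality, finishing with the same asymptotic bookkeeping. The only cosmetic difference is that you state the identity $\det G_n^{\nu,w}=\tfrac{1}{N!}\int_{K^N}|W|^2\,d\nu^{\otimes N}$ directly, whereas the paper routes it through $Z_n = N!\prod_j\|r_j\|^2_{L^2(w^{2n}\nu)}$.
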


Here $G_n^{\nu,w} =[\int_K p_i \bar p_jw^{2n}d\mu]_{i,j=1,...,N}$ where $\{p_1,...,p_N\}$ is either the standard basis monomials $\{e_1,...,e_N\}$ for $\mathcal P_n$ or, e.g., an orthogonal basis $\{r_1,...,r_N\}$ obtained by applying Gram-Schmidt to the standard basis. Recall that $\frac{dnN}{d+1}$ is the sum of the degrees of a set of basis monomials for $ \mathcal P_n$ and the weighted $n-$th order diameter of $K,w$ is
$$\delta^{w,n}(K):=\bigl(\max_{x_1,...,x_N\in K} \det [e_i(x_j) w(x_1)^n\cdots w(x_N)^n]_{i,j=1,...,N}\bigr)^{\frac{d+1}{dnN}}$$
$$:= \bigl(\max_{x_1,...,x_N\in K} |VDM(x_1,...,x_N)| w(x_1)^{n} \cdots w(x_N)^{n}\bigr)^{\frac{d+1}{dnN}}.$$

\begin{proof} Note first that $\det G_n^{\nu,w} =\prod_{j=1}^N ||r_j||^2_{L^2(w^{2n}\nu)}$ where $\{r_1,...,r_N\}$ are an orthogonal basis of 
$ \mathcal P_n$ obtained by applying Gram-Schmidt to the standard basis monomials of $ \mathcal P_n$. Defining
$$Z_n:=Z_n(K,w,\mu)$$
$$:= \int_K \cdots \int_K |VDM(z_1,...,z_N)|^2 w(z_1)^{2n} \cdots w(z_N)^{2n}d\mu(z_1) \cdots d\mu(z_N)$$
we show that
$$\lim_{n\to \infty} Z_n^{\frac{d+1}{2dnN}} =  \delta^w(K).$$
To see this, clearly 
\begin{equation}
\label{bound1}
Z_n \leq \delta^{w,n}(K)^{\frac{2dnN}{d+1}}\nu(K)^N.
\end{equation}
On the other hand, taking points $x_1,...,x_N$ achieving the maximum in $\delta^{w,n}(K)$, we have, upon applying the weighted Bernstein-Markov property to the weighted polynomial $$z_1 \to VDM(z_1,x_2...,x_N) w(z_1)^{n} \cdots w(x_N)^{n},$$
$$\delta^{w,n}(K)^{\frac{2dnN}{d+1}}=|VDM(x_1,...,x_N)|^2 w(x_1)^{2n} \cdots w(x_N)^{2n}$$
$$\leq M_n^2 \int_K \cdots \int_K |VDM(z_1,x_2...,x_N)|^2 w(z_1)^{2n} \cdots w(x_N)^{2n}d\mu(z_1).$$
Repeating this argument in each variable we obtain
\begin{equation}
\label{bound2}
\delta^{w,n}(K)^{\frac{2dnN}{d+1}}\leq M_n^{2N} Z_n.
\end{equation}
Note that (\ref{bound1}) and (\ref{bound2}) give
$$Z_n \leq \delta^{w,n}(K)^{\frac{2dnN}{d+1}}\nu(K)^N \leq \nu(K)^N M_n^{2N} Z_n.$$
Since $[\nu(K)^N M_n^{2N}]^{\frac{d+1}{2dnN}}\to 1$, using (\ref{deltaw}) 
$$\lim_{n\to \infty} Z_n^{\frac{d+1}{2dnN}}$$
exists and equals
$$\lim_{n\to \infty} \delta^{w,n}(K)^{\frac{d+1}{dnN}}.$$

Using elementary row operations in $|VDM(z_1,...,z_N)|^2$ in the integrand of $Z_n$, we can replace the monomials $\{e_j\}$ by the orthogonal basis $\{r_1,...,r_N\}$ and obtain
\begin{equation}\label{zn}
Z_n = N!\prod_{j=1}^N ||r_j||^2_{L^2(w^{2n}\nu)}.
\end{equation}
Putting everything together gives the result.
\end{proof}

The quantity $Z_n$ occurring in the proof of Proposition \ref{weightedtd} is called the $n-$th {\it free energy} of $K,\mu$. A similar argument shows that the Gram determinants associated to a sequence of weighted optimal measures also converges to $\delta^w(K)$. In this proposition, we again compute the Gram determinant with respect to the standard
basis monomials.  

\begin{proposition}\label{tfd}
Let $K$ be compact and $w$ an admissible weight function. For $n=1,2,...$, let $\mu_n$ be an optimal measure of order $n$ for $K$ and $w.$ Then
\[\lim_{n\to\infty}{\rm det}(G_n^{\mu_n,w})^{\frac{d+1}{2dnN}}=\delta^w(K).\]
\end{proposition}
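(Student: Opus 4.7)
The plan is to mimic the argument of Proposition \ref{weightedtd}, but using the optimality of $\mu_n$ in place of the weighted Bernstein-Markov property. The starting observation is that Lemma \ref{MaxIsN} (together with the ``best constant'' characterization preceding it) implies that for every $p \in \mathcal{P}_n$,
\begin{equation*}
\|w^n p\|_K \leq \sqrt{\max_{z\in K} B_n^{\mu_n,w}(z)} \cdot \|w^n p\|_{L^2(\mu_n)} = \sqrt{N}\,\|w^n p\|_{L^2(\mu_n)}.
\end{equation*}
Thus each optimal measure $\mu_n$ automatically plays the role of a degree-$n$ weighted Bernstein-Markov measure, with constant $M_n = \sqrt{N}$. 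Since $N = \binom{n+d}{n}$ grows only polynomially in $n$, we still have $M_n^{1/n} \to 1$.

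Next I would recall from (\ref{zn}) that, writing
\begin{equation*}
Z_n(K,w,\mu_n) := \int_K\!\!\cdots\!\!\int_K |VDM(z_1,\dots,z_N)|^2\, w(z_1)^{2n}\cdots w(z_N)^{2n}\,d\mu_n(z_1)\cdots d\mu_n(z_N),
\end{equation*}
one has $Z_n = N!\det G_n^{\mu_n,w}$, independent of whether the Gram determinant is formed in the monomial basis or after Gram--Schmidt (the change of basis matrix is unipotent triangular, so its determinant is $1$). The upper bound is immediate: since $\mu_n$ is a probability measure,
\begin{equation*}
Z_n(K,w,\mu_n) \leq \delta^{w,n}(K)^{\frac{2dnN}{d+1}},
\end{equation*}
which after extracting the $\frac{d+1}{2dnN}$-th root and letting $n\to\infty$ (noting $(N!)^{\frac{d+1}{2dnN}}\to 1$) yields $\limsup_n \det(G_n^{\mu_n,w})^{\frac{d+1}{2dnN}} \leq \delta^w(K)$.

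For the lower bound, I would pick a weighted Fekete set $x_1,\dots,x_N \in K$ of order $n$ and apply the polynomial inequality above, variable by variable, to the weighted Vandermonde determinant viewed as a polynomial of degree $\leq n$ in each slot. Since the inequality costs a factor of $N$ each time it is applied, iterating gives
\begin{equation*}
\delta^{w,n}(K)^{\frac{2dnN}{d+1}} = |W(x_1,\dots,x_N)|^2 \leq N^N\, Z_n(K,w,\mu_n) = N^N \cdot N!\,\det G_n^{\mu_n,w}.
\end{equation*}
Taking the $\frac{d+1}{2dnN}$-th root and using that $(N^N \cdot N!)^{\frac{d+1}{2dnN}} \to 1$ (because $\log N = O(\log n)$), I would conclude $\liminf_n \det(G_n^{\mu_n,w})^{\frac{d+1}{2dnN}} \geq \delta^w(K)$, and combining with the upper bound finishes the proof.

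The main obstacle is really just the conceptual step of recognizing that optimality gives a polynomial-in-$n$ Bernstein-Markov constant for free via the Kiefer--Wolfowitz identity $\max_K B_n^{\mu_n,w} = N$; once that is in hand the argument is a direct transcription of the proof of Proposition \ref{weightedtd}, with $M_n^{2N}$ replaced by the harmless factor $N^N$ which disappears under the $\frac{d+1}{2dnN}$-th root.
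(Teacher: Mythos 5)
Your proof is correct, but the lower-bound argument is genuinely different from the paper's. For the upper bound $Z_n \leq \delta^{w,n}(K)^{2dnN/(d+1)}$ both arguments coincide. For the lower bound the paper does not pass through any Bernstein--Markov-type inequality at all: it observes that the discrete measure $\nu_n=\frac{1}{N}\sum\delta_{f_k}$ supported at an $n$-th weighted Fekete set is a competitor in the definition of an optimal measure, computes $\det G_n^{\nu_n,w}=\frac{1}{N^N}(\delta^{w,n}(K))^{2dnN/(d+1)}$ exactly, and invokes only the defining inequality $\det G_n^{\nu_n,w}\leq\det G_n^{\mu_n,w}$. Your route instead extracts from the Kiefer--Wolfowitz characterization (Proposition \ref{KW}, which is the correct reference rather than Lemma \ref{MaxIsN}: you need $\max_K B_n^{\mu_n,w}=N$, not the a.e.\ statement) the degree-$n$ bound $\|w^n p\|_K\leq\sqrt{N}\,\|w^n p\|_{L^2(\mu_n)}$ and then iterates it in each slot of the weighted Vandermonde exactly as in Proposition \ref{weightedtd}. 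Both end with the same harmless $N^N$ factor that vanishes under the $\frac{d+1}{2dnN}$-th root. What each buys: the paper's proof is shorter and depends only on the variational definition of optimality (equation (\ref{wa})), not on the Kiefer--Wolfowitz theorem; your proof is a conceptually clean reduction to the Bernstein--Markov case, making explicit that optimal measures carry a polynomially-bounded BM constant $M_n=\sqrt{N}$ for free, which is a reusable observation (and indeed the same mechanism underlies Corollary \ref{asympwtdfek}, where $B_n^{\mu_n,w}\equiv N$ on the support).
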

\begin{proof}
Replacing the the standard
basis monomials by an orthogonal basis $\{r_1,...,r_N\}$ with respect to $L^2(w^{2n}\mu_n)$, the formula (\ref{zn}) says that
$$\int_{K^N}|VDM(z_1,\cdots,z_N)|^2w(z_1)^{2n}\cdots w(z_N)^{2n} d\mu_n(z_1)\cdots d\mu_n(z_N)$$
$$=N!\,{\rm det}(G_n^{\mu_n,w}).$$ 
It follows, since $\mu_n$ is a probability measure, that
\begin{equation}
{\rm det}(G_n^{\mu_n,w})\le {1\over N!}(\delta_n^w(K))^{\frac{2dnN}{d+1}}.
\end{equation}
Now if $f_1,f_2,\cdots,f_N\in K$ are weighted Fekete points of order $n$ for $K,$
i.e., points in $K$ for which 
\[|VDM(z_1,\cdots,z_N)|w^n(z_1)w^n(z_2)\cdots w^n(z_N)\]
is maximal, then the discrete measure 
\begin{equation}
\nu_n={1\over N}\sum_{k=1}^N \delta_{f_k}
\end{equation}
is a candidate for an optimal measure of order $n$; hence
\[ {\rm det}(G_n^{\nu_n,w})\le {\rm det}(G_n^{\mu_n,w}).\]
But
\begin{eqnarray*}
{\rm det}(G_n^{\nu_n,w})&=&{1\over N^N}|VDM(f_1,\cdots,f_N)|^2w(f_1)^{2n}w(f_2)^{2n}\cdots w(f_N)^{2n} \\
&=&\left(\max_{z_i\in K}|VDM(z_1,\cdots,z_N)|w^n(z_1)w^n(z_2)\cdots w^n(z_N)\right)^2 \\
&=&{1\over N^N}(\delta_n^w(K))^{\frac{2dnN}{d+1}}
\end{eqnarray*}
so that
\[{1\over N^N}(\delta_n^w(K))^{\frac{2dnN}{d+1}} \le {\rm det}(G_n^{\mu_n,w}) \]
and the result follows from (\ref{deltaw}) . 
\end{proof}

We will see in section \ref{sec:fob} that if $\{\mu_n\}$ are a sequence of optimal measures for $K,w$ then $\mu_n \to \mu_{K,Q}$ weak-*.

\subsection {Ball volumes.} Given a (complex) $N-$dimensional vector space $V$, and two subsets $A,B$ in $V$, we write
$$[A:B]:=\log \frac{vol(A)}{vol(B)}.$$
Here, ``vol'' denotes any (Haar) measure on $V$ (as taking the ratio makes $[A:B]$ independent of this choice). In particular, if $V$ is equipped with two Hermitian inner products $h,h'$, and $B,B'$ are the corresponding unit balls, then a linear algebra exercise shows that 
\begin{equation}
\label{gramvolume}
[B:B'] = \log \det [h'(e_i,e_j)]_{i,j=1,...,N}
\end{equation}
where $e_1,...,e_N$ is an $h-$orthonormal basis for $V$. In other words, $[B:B'] $ is a {\it Gram determinant} with respect to the $h'$ inner product relative to the $h-$orthonormal basis. Indeed, $[B:B']$ is independent of the $h-$orthonormal basis chosen for $V$, as can be seen by (\ref{grambasis}). 

{\it We will generally take $V=\mathcal P_n$ and our subsets to be unit balls with respect to norms on $\mathcal P_n$}. In particular, let $\mu$ be a probability measure on a compact set $K\subset \CC^d$ which is non-degenerate on $\mathcal P_n$. Observe that for the unit torus $T$, 
$$V_T(z_1,...,z_d)=\max_{j=1,...,d} \log^+|z_j|,$$
and the standard basis monomials $\beta_n:=\{e_1,...,e_N\}$ form an orthonormal basis for $\mathcal P_n$ with respect to $\mu_T:=\frac{1}{(2\pi)^d}(dd^cV_T)^d$, which is the standard Haar measure on $T$. Letting
$$B_n=\{p_n\in \mathcal P_n: ||p_n||_{L^2(\mu)}\leq 1 \}$$
and
$$B_n'=\{p_n\in \mathcal P_n: ||p_n||_{L^2(\mu_T)}\leq 1 \}$$
be $L^{2}-$balls in $\mathcal P_n$, we have
$$[B_n:B_n'] = \log \det G_n^{\mu}(\beta_n).$$
We will also use $L^{\infty}-$balls in $\mathcal P_n$; e.g., with
$$\tilde B_n=\{p_n\in \mathcal P_n: ||p_n||_{K}\leq 1 \}$$
and
$$\tilde B_n'=\{p_n\in \mathcal P_n: ||p_n||_{T}\leq 1 \},$$
we consider $[\tilde B_n:\tilde B_n']$. If $(K,\mu)$ satisfies a Bernstein-Markov property, then the asymptotics ($n\to \infty$) of the sequence of ball-volume ratios
\begin{equation}\label{l2linf}
\frac{1}{2nN}[B_n:B_n'] \ \hbox{and} \ \frac{1}{2nN}[\tilde B_n:\tilde B_n']\end{equation}
will be the same (note $(T,\mu_T)$ satisfies a Bernstein-Markov property) and the limit is related to the relative energy of $V_K^*$ with $V_T$, to be defined in the next section. This is a special (unweighted) case of the main result of these notes, Theorem \ref{keycn}, and will immediately provide us with a version of the Rumely formula.

For future use we note that $[A:B]=-[B:A]$; the ball volume ratios trivially satisfy the cocycle  condition:
$$[A:B]+[B:C]+[C:A]=0;$$
and they are ``monotone'' in the first slot in the sense that for any $B\subset {\mathcal P}_n$, if $E\subset \CC^d$ is closed with admissible weights $Q_1\leq Q_2$ and 
$$\mathcal B^{\infty}(E,nQ_i):=\{p_n\in \mathcal P_n: ||p_ne^{-nQ_i}||_{E}\leq 1 \}, \ i=1,2$$
then
\begin{equation}\label{bvmon}[\mathcal B^{\infty}(E,nQ_1):B]\leq [\mathcal B^{\infty}(E,nQ_2):B].
\end{equation}
The analogous properties will also hold for the relative energies,  defined next.

\section {Energy.}\label{enprop}

We define the {\it Monge-Ampere energy ${\mathcal E} (u,v)$ of $u$ relative to $v$} for $u,v \in L^+(\CC^d)$ as 
\begin{equation}
\label{relendef}
\mathcal E (u,v):= \int_{\CC^d} (u-v)\sum_{j=0}^d (dd^cu)^j\wedge (dd^cv)^{d-j}.
\end{equation}
This will be utilized in the version of the Rumely formula given in section \ref{bf}.  To see the relation with (\ref{truerumely}),  we begin with the following Bedford-Taylor integral formula (Theorem 5.5 of \cite{[BT]}). Given $v_1,v_2,u_1,...,u_{d-1} \in L^+(\CC^d)$ we have
\begin{equation} \label{bedtayfor}
\int_{\CC^d}(v_1dd^cv_2 - v_2dd^cv_1)\wedge dd^cu_1 \wedge \cdots \wedge dd^cu_{d-1}$$ $$ = 2\pi \int_{\PP^{d-1}}(\tilde \rho_{v_1}- \tilde \rho_{v_2})(dd^c \tilde \rho_{u_1}+\omega) \wedge \cdots \wedge (dd^c \tilde \rho_{u_{d-1}}+\omega). 
\end{equation}
Here, given $u\in L^+(\CC^d)$, recall that 
$$\tilde \rho_{u}(z):=\limsup_{|\lambda| \to \infty} [u(\lambda z)- \log |\lambda z|]$$
is the (projectivized) Robin function of $u$ (cf., (\ref{projrob})) and $\omega$ is the standard K\"ahler form on $\PP^{d-1}$. 

Using (\ref{bedtayfor}) we prove an important formula relating $\mathcal E (u,v)$ with a projectivized version.

\begin{proposition}
\label{reduction}
Let $u,v\in L^+(\CC^d)$ with supp$(dd^cu)^d$ and supp$(dd^cv)^d$ compact. Then
\begin{equation}\label{redform}
\mathcal E (u,v)=\int_{\CC^d}u(dd^cu)^d - \int_{\CC^d}v(dd^cv)^d 
\end{equation}
$$+ 2\pi \int_{\PP^{d-1}}(\tilde \rho_{u}- \tilde \rho_{v})\sum_{j=0}^{d-1}(dd^c \tilde \rho_{u}+\omega)^j \wedge (dd^c \tilde \rho_{v}+\omega)^{d-j-1}.$$
\end{proposition}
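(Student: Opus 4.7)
The plan is to derive the formula from the Bedford--Taylor identity (\ref{bedtayfor}) by a careful telescoping argument. The key observation is that the right-hand side of the claimed identity is essentially a sum of $d$ copies of (\ref{bedtayfor}), one for each way of distributing $u$'s and $v$'s among the factors of $dd^c$, and that summing these produces exactly the cross terms making up $\mathcal{E}(u,v)$ after removing the ``boundary'' contributions $\int u(dd^cu)^d$ and $-\int v(dd^cv)^d$.

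More concretely, for each $j=0,1,\dots,d-1$ I would apply (\ref{bedtayfor}) with $v_1=u$, $v_2=v$, and with $u_1=\cdots=u_j=u$, $u_{j+1}=\cdots=u_{d-1}=v$. Since $u,v\in L^+(\CC^d)$ this is legitimate, and the compactness of the supports of $(dd^cu)^d$ and $(dd^cv)^d$ ensures everything in sight is finite. The left-hand side becomes
\[
\int_{\CC^d}u\,(dd^cu)^j\wedge(dd^cv)^{d-j} \;-\; \int_{\CC^d}v\,(dd^cu)^{j+1}\wedge(dd^cv)^{d-1-j},
\]
and the right-hand side is
\[
R_j := 2\pi\int_{\PP^{d-1}}(\tilde\rho_u-\tilde\rho_v)\,(dd^c\tilde\rho_u+\omega)^j\wedge(dd^c\tilde\rho_v+\omega)^{d-1-j}.
\]

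Summing over $j=0,\dots,d-1$, I would shift the index $k=j+1$ in the second sum; the two sums then telescope so that every ``mixed'' term $(dd^cu)^j\wedge(dd^cv)^{d-j}$ for $1\le j\le d-1$ appears once with weight $u$ and once with weight $-v$, leaving only the extreme pieces $\int u(dd^cv)^d$ and $-\int v(dd^cu)^d$ unpaired. Explicitly,
\[
\sum_{j=0}^{d-1}R_j \;=\; \int u(dd^cv)^d - \int v(dd^cu)^d + \sum_{j=1}^{d-1}\int(u-v)(dd^cu)^j\wedge(dd^cv)^{d-j}.
\]

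Finally I would compare this with the definition of $\mathcal{E}(u,v)$. Splitting off the $j=0$ and $j=d$ terms in (\ref{relendef}) as $\int u(dd^cv)^d-\int v(dd^cv)^d$ and $\int u(dd^cu)^d-\int v(dd^cu)^d$ respectively, and then subtracting the telescoping identity above, the ``mixed'' integrals and the cross terms $\int u(dd^cv)^d$, $-\int v(dd^cu)^d$ cancel on the nose, leaving precisely
\[
\mathcal{E}(u,v)\;-\;\sum_{j=0}^{d-1}R_j\;=\;\int_{\CC^d}u(dd^cu)^d-\int_{\CC^d}v(dd^cv)^d,
\]
which rearranges to (\ref{redform}). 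The only real work is verifying the telescoping bookkeeping; the potential-theoretic content is entirely encoded in (\ref{bedtayfor}), so the main ``obstacle'' is simply to match indices carefully and to confirm that the compact-support hypothesis guarantees all integrals on $\CC^d$ are absolutely convergent so that these manipulations are justified.
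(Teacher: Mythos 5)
Your proof is correct and is essentially the paper's argument rearranged: both telescope the $d$ applications of the Bedford--Taylor formula (\ref{bedtayfor}) corresponding to the terms of $T=\sum_{j=0}^{d-1}(dd^cu)^j\wedge(dd^cv)^{d-j-1}$, the paper packaging the bookkeeping through the identity $(dd^cu)^d-(dd^cv)^d=dd^c(u-v)\wedge T$ while you reindex the sums directly. The one caveat is that the split integrals $\int_{\CC^d} u\,(dd^cu)^j\wedge(dd^cv)^{d-j}$ for $0<j<d$ need not be individually finite under the compact-support hypothesis (which controls only $(dd^cu)^d$ and $(dd^cv)^d$), so the telescoping should be carried out at the level of integrands and grouped into the manifestly finite quantities $\int u(dd^cu)^d$, $\int v(dd^cv)^d$, $\int u(dd^cv)^d$, $\int v(dd^cu)^d$, and $\int (u-v)(dd^cu)^j\wedge(dd^cv)^{d-j}$ (the last being finite because $u-v$ is bounded), rather than asserting absolute convergence of every piece.
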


\begin{proof} We begin with the algebraic formula
\begin{equation}\label{algform}(dd^cu)^d -(dd^cv)^d=dd^c(u-v)\wedge T
\end{equation}
where
$$T:=\sum_{j=0}^{d-1}(dd^c u)^j \wedge (dd^c v)^{d-j-1}.$$
Then we can write
$$\mathcal E (u,v)=\int_{\CC^d}(u-v)\bigl[(dd^cu)^d+dd^cv\wedge T\bigr].$$
We now use the hypothesis that supp$(dd^cu)^d$ and supp$(dd^cv)^d$ are compact to write this as 
$$=\int_{\CC^d}u(dd^cu)^d - \int_{\CC^d}v(dd^cu)^d+ \int_{\CC^d}(u-v)dd^cv\wedge T$$
$$=\int_{\CC^d}u(dd^cu)^d - \int_{\CC^d}v(dd^cv)^d + \int_{\CC^d}v\bigl[(dd^cv)^d -(dd^cu)^d\bigr]$$
$$+\int_{\CC^d}(u-v)dd^cv\wedge T.$$
Using the algebraic formula (\ref{algform}), we obtain
$$\mathcal E (u,v)=\int_{\CC^d}u(dd^cu)^d - \int_{\CC^d}v(dd^cv)^d$$
$$ + \int_{\CC^d}vdd^c(v-u)\wedge T+\int_{\CC^d}(u-v)dd^cv\wedge T$$
$$=\int_{\CC^d}u(dd^cu)^d - \int_{\CC^d}v(dd^cv)^d +\int_{\CC^d}(udd^cv -vdd^cu)\wedge T.$$
The result now follows from (\ref{bedtayfor}).
\end{proof}

Now suppose, as in the setting of (\ref{truerumely}), that $K$ is contained in the unit polydisk $U$. Then, with $T$ the unit torus,
applying Proposition \ref{reduction}, 
$$\mathcal E(V_{K,Q}^*,V_T)=\int_{\CC^d} (V_{K,Q}^* -V_T)\sum_{j=0}^d (dd^c V_{K,Q}^*)^j\wedge (dd^cV_T)^{d-j}$$
$$=\int_{\CC^d} V_{K,Q}^*  (dd^c V_{K,Q}^*)^d$$ $$ +2\pi \int_{\PP^{d-1}}[\tilde \rho_{K,Q}- \tilde \rho_{T}]\sum_{j=0}^{d-1}(dd^c \tilde \rho_{K,Q}+\omega)^j \wedge (dd^c \tilde \rho_{T}+\omega)^{d-j-1}$$
where we have used $\int_{\CC^d} V_T (dd^c V_{K,Q}^*)^d=0$ by the assumption that $K\subset U$. To prove (\ref{truerumely}), we need to verify, then, an ``energy version'' of Rumely's formula; i.e.,
$$-\log \delta^w(K)=  \frac{1}{d(2\pi)^d}\mathcal E(V^*_{K,Q},V_T).$$
This we will do in section \ref{bf}.  

Next we prove a corollary of the Bedford-Taylor formula. We begin with four functions $A,B,C,D\in L^+(\CC^d)$ and let $u_1,...,u_{d-1} \in L^{+}(\CC^d)$ (so that $T:=dd^cu_1 \wedge \cdots \wedge dd^cu_{d-1}$ is a positive closed $(d-1,d-1)$ current). Then
\begin{equation}
\label{ibyp1}
\int_{\CC^d}(A-B)(dd^cC - dd^cD)\wedge dd^cu_1 \wedge \cdots \wedge dd^cu_{d-1}
\end{equation} 
$$ =\int_{\CC^d}(C-D)(dd^cA - dd^cB)\wedge dd^cu_1 \wedge \cdots \wedge dd^cu_{d-1}.$$
To prove this, we have
$$\int_{\CC^d}(A-B)(dd^cC - dd^cD)\wedge dd^cu_1 \wedge \cdots \wedge dd^cu_{d-1}$$ $$-\int_{\CC^d}(C-D)(dd^cA - dd^cB)\wedge dd^cu_1 \wedge \cdots \wedge dd^cu_{d-1}$$ $$ =\int_{\CC^d}(Add^cC-Cdd^cA)\wedge T+\int_{\CC^d}(Ddd^cA-Add^cD)\wedge T$$
$$+\int_{\CC^d}(Bdd^cD-Ddd^cB)\wedge T + \int_{\CC^d}(Cdd^cB-Bdd^cC)\wedge T$$
which equals, by the integral formula (\ref{bedtayfor}), $2\pi$ times 
$$\int_{\PP^{d-1}}(\tilde \rho_{A}- \tilde \rho_{C})\tilde T+\int_{\PP^{d-1}}(\tilde \rho_{D}- \tilde \rho_{A})\tilde T+\int_{\PP^{d-1}}(\tilde \rho_{B}- \tilde \rho_{D})\tilde T+\int_{\PP^{d-1}}(\tilde \rho_{C}- \tilde \rho_{B})\tilde T.$$
Clearly this sum vanishes.

Another formula which will be used is
\begin{equation}
\label{ibyp}
\int_{\CC^d}(A-B)(dd^cC - dd^cD)\wedge dd^cu_1 \wedge \cdots \wedge dd^cu_{d-1}
\end{equation}
$$=-\int_{\CC^d}d(A-B)\wedge d^c(C - D)\wedge dd^cu_1 \wedge \cdots \wedge dd^cu_{d-1}.
$$

\noindent {\bf Remark}.  The following integration by parts ``formula'' {\bf is not valid}:
$$\int_{\CC^d}(A-B)(dd^cC)\wedge dd^cu_1 \wedge \cdots \wedge dd^cu_{d-1}$$ $$ =\int_{\CC^d}C(dd^cA - dd^cB)\wedge dd^cu_1 \wedge \cdots \wedge dd^cu_{d-1}.$$
For take $B=C=u_1=\cdots =u_{d-1}=\log^+|z|$ and $A=\log^+|z| +1$: the top line equals a positive multiple of the area of the unit sphere in $\CC^d$ while the bottom line vanishes. Thus it is {\bf essential} to use {\bf differences} in (\ref{ibyp}).

We prove a fundamental differentiability property of the energy.

\begin{proposition} 
\label{diffprop}
Let $u,u',v\in L^+(\CC^d)$. For $0\leq t\leq 1$, let 
$$f(t):= {\mathcal E} (u+t(u'-u),v).$$
Then $f'(t)$ exists for $0\leq t \leq 1$ and 
\begin{equation}
\label{difft} 
f'(t)= (d+1)\int_{\CC^d} (u'-u) (dd^c(u+t(u'-u)))^d.
\end{equation}
\end{proposition}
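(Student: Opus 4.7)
The plan is to differentiate under the integral in the definition $(\ref{relendef})$ and then collapse the resulting expression by an application of the integration by parts identity $(\ref{ibyp1})$.

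Write $u_t := (1-t)u + tu'$, which lies in $L^+(\CC^d)$ as a convex combination. By the multilinearity of the Bedford-Taylor Monge-Ampere operator on locally bounded psh functions, one has
$$(dd^c u_t)^j = \sum_{k=0}^j \binom{j}{k}(1-t)^{j-k}t^k\,(dd^c u)^{j-k}\wedge (dd^c u')^k,$$
so each mixed current $(dd^c u_t)^j\wedge(dd^c v)^{d-j}$ is a polynomial in $t$ whose coefficients are finite positive measures of total mass $(2\pi)^d$. Because $u_t - v = (1-t)(u-v) + t(u'-v)$ is uniformly bounded (as a combination of differences of $L^+(\CC^d)$ functions) and affine in $t$, the function
$$g_j(t) := \int_{\CC^d}(u_t - v)(dd^c u_t)^j\wedge (dd^c v)^{d-j}$$
is itself a polynomial in $t$, so $f(t) = \sum_{j=0}^d g_j(t)$ is smooth and differentiation under the integral yields
$$g_j'(t) = \int(u'-u)(dd^c u_t)^j\wedge (dd^c v)^{d-j} + j\int(u_t-v)\,dd^c(u'-u)\wedge (dd^c u_t)^{j-1}\wedge (dd^c v)^{d-j}.$$

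Next I would apply the identity $(\ref{ibyp1})$ to the second summand with $A = u_t$, $B = v$, $C = u'$, $D = u$ and $T := (dd^c u_t)^{j-1}\wedge (dd^c v)^{d-j}$, a positive closed $(d-1,d-1)$ current built from $L^+$ data. This converts the summand into $j\int(u'-u)(dd^c u_t - dd^c v)\wedge T$, and expanding gives
$$g_j'(t) = (j+1)\int(u'-u)(dd^c u_t)^j\wedge(dd^c v)^{d-j} - j\int(u'-u)(dd^c u_t)^{j-1}\wedge (dd^c v)^{d-j+1}.$$
Summing over $j=0,1,\dots,d$ and reindexing the second sum via $k=j-1$ produces a telescoping cancellation, leaving only the $j=d$ contribution of the first sum, namely $(d+1)\int(u'-u)(dd^c u_t)^d$, which is exactly $(\ref{difft})$.

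The principal technical point to check is the hypothesis of $(\ref{ibyp1})$: that the identity applies to the mixed current $T = (dd^c u_t)^{j-1}\wedge (dd^c v)^{d-j}$. Since $u, u', v \in L^+(\CC^d)$ are locally bounded psh, the Bedford-Taylor theory makes all the wedge products and the integration by parts meaningful, and the uniform bound on $u_t-v$ together with the finite mass $(2\pi)^d$ of the relevant (signed) measures guarantee that every integral in the argument converges absolutely, justifying the formal differentiation and the passage through $(\ref{ibyp1})$.
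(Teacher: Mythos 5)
Your proof is correct and follows essentially the same strategy as the paper: expand the energy as a polynomial in $t$, differentiate, and then use the integration-by-parts identity (\ref{ibyp1}) to convert the ``$dd^c(u'-u)$'' terms into ``$(u'-u)$'' terms, after which an algebraic collapse yields $(d+1)\int(u'-u)(dd^cu_t)^d$. The differences are purely organizational. The paper localizes at $t=0$, writes $\mathcal E(u+tw,v)-\mathcal E(u,v)$ as $t\cdot(\cdot)+O(t^2)$, applies (\ref{ibyp1}), and finishes by verifying the algebraic identity $\sum_{j=0}^d j\,dd^c(u-v)\wedge(dd^cu)^{j-1}\wedge(dd^cv)^{d-j}+\sum_{j=0}^d(dd^cu)^j\wedge(dd^cv)^{d-j}=(d+1)(dd^cu)^d$; it then obtains the derivative at general $t$ by a shift. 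You instead record up front that each $g_j(t)$ is a genuine polynomial in $t$ (by multilinearity of the Bedford--Taylor wedge products and the affine dependence of $u_t-v$ on $t$), differentiate each $g_j$ classically, apply (\ref{ibyp1}) in the same way, and then telescope the sum over $j$. Your telescoping is exactly the same computation as the paper's closing algebraic identity, read in a different order. The small advantage of your presentation is that the polynomiality of $f$ gives two-sided smoothness immediately, without the reduce-to-$t=0$-then-shift step, and it avoids tracking the $O(t^2)$ remainder. Both arguments rest on the same pluripotential-theoretic inputs: finiteness of the mixed Monge--Amp\`ere masses for $L^+(\CC^d)$ functions and the validity of (\ref{ibyp1}) for positive closed $(d-1,d-1)$ currents built from $L^+$ data.
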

\begin{proof} Here we mean the appropriate one-sided derivatives at $t=0$ and $t=1$; e.g., 
\begin{equation}
\label{diff0} 
f'(0):=\lim_{t\to 0^+}\frac{f(t)-f(0)}{t}= (d+1)\int_{\CC^d} (u'-u) (dd^cu)^d.
\end{equation}
We prove this last statement. This implies the first; i.e., (\ref{difft}). For if $s$ is fixed,
$$g(t):=f(s+t)=   {\mathcal E} (u+(s+t)(u'-u),v)= {\mathcal E} (u+s(u'-u)+t(u'-u),v)$$
and applying (\ref{diff0}) to $g$ (so $u\to u+s(u'-u)$) we get 
$$g'(0)=f'(s)= (d+1)\int_{\CC^d} (u'-u) (dd^c(u+s(u'-u)))^d.$$

We begin with the observation that if $w:=u'-u$, then
$$\sum_{j=0}^d [dd^c(u+tw)]^j\wedge (dd^cv)^{d-j}-\sum_{j=0}^d (dd^cu)^j\wedge (dd^cv)^{d-j} $$
$$= t\sum_{j=0}^d j[dd^c w\wedge (dd^cu)^{j-1}\wedge (dd^cv)^{d-j}+0(t^2).$$
Then (all integrals are over $\CC^d$)
$${\mathcal E}(u+t(u'-u),v)-{\mathcal E}(u,v)={\mathcal E}(u+tw,v)-{\mathcal E}(u,v)$$
$$=\int [u+tw-v]\sum_{j=0}^d [dd^c(u+tw)]^j\wedge (dd^cv)^{d-j}$$
$$-\int (u-v)\sum_{j=0}^d (dd^cu)^j\wedge (dd^cv)^{d-j} $$
$$=t\int (u-v)\sum_{j=0}^d j[dd^c w\wedge (dd^cu)^{j-1}\wedge (dd^cv)^{d-j}+0(t^2)$$
$$+\int tw\sum_{j=0}^d [dd^c(u+tw)]^j\wedge (dd^cv)^{d-j}$$
$$=t\bigl[\int (u-v)\sum_{j=0}^d j[dd^c w\wedge (dd^cu)^{j-1}\wedge (dd^cv)^{d-j}]$$
$$+\int w\sum_{j=0}^d(dd^cu)^j\wedge (dd^cv)^{d-j}\bigr]+0(t^2)$$
$$=t\bigl[\int w\sum_{j=0}^d j[dd^c (u-v)\wedge (dd^cu)^{j-1}\wedge (dd^cv)^{d-j}]$$
$$+\int w\sum_{j=0}^d(dd^cu)^j\wedge (dd^cv)^{d-j}\bigr]+0(t^2).$$
Here we have used (\ref{ibyp1}) in the last step. Now check that
$$\sum_{j=0}^d j dd^c (u-v)\wedge (dd^cu)^{j-1}\wedge (dd^cv)^{d-j}+\sum_{j=0}^d(dd^cu)^j\wedge (dd^cv)^{d-j}$$
$$=(d+1)(dd^cu)^d$$
(try the case $d=2$!) and the result follows.
\end{proof}

We sometimes write (\ref{diff0}) in ``directional derivative'' notation as 
\begin{equation}
\label{diff02}
<\mathcal E' (u), u'-u>=(d+1)\int (u'-u)(dd^cu)^d.
\end{equation}
Note that the differentiation formula (\ref{difft}) is independent of $v$. This will also follow from the {\bf cocycle property}, which we now prove using (\ref{difft}).

\begin{proposition} 
\label{cocycleprop}
Let $u,v,w\in L^+(\CC^d)$. Then
$${\mathcal E}(u,v) +{\mathcal E}(v,w) + {\mathcal E}(w,u)=0.$$
\end{proposition}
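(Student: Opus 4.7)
The plan is to exploit the key feature of the differentiation formula (\ref{difft}) from Proposition \ref{diffprop}: the derivative does not involve the reference function $v$ at all. First, I observe that antisymmetry, $\mathcal{E}(u,v) = -\mathcal{E}(v,u)$, is immediate from the definition (\ref{relendef}), because reindexing $j \to d-j$ in the sum gives
$$\sum_{j=0}^d (dd^c v)^j \wedge (dd^c u)^{d-j} = \sum_{j=0}^d (dd^c u)^j \wedge (dd^c v)^{d-j},$$
so the factor $(u-v)$ is the only source of sign change when $u$ and $v$ are swapped. Also trivially $\mathcal{E}(u,u) = 0$.

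Next, fix an arbitrary $w \in L^+(\CC^d)$ and set $u_t := u + t(v-u)$ for $t \in [0,1]$. Proposition \ref{diffprop} gives
$$\frac{d}{dt}\mathcal{E}(u_t, w) = (d+1) \int_{\CC^d} (v-u)(dd^c u_t)^d,$$
and, crucially, the right-hand side depends on $u$ and $v$ but not on $w$. Integrating in $t$ from $0$ to $1$ therefore yields
$$\mathcal{E}(v,w) - \mathcal{E}(u,w) = C(u,v),$$
a quantity independent of $w$.

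To identify $C(u,v)$, specialize to $w = u$, which gives $C(u,v) = \mathcal{E}(v,u) - \mathcal{E}(u,u) = \mathcal{E}(v,u)$. Hence
$$\mathcal{E}(v,w) - \mathcal{E}(u,w) = \mathcal{E}(v,u) \qquad \text{for every } w \in L^+(\CC^d).$$
Applying antisymmetry to both $\mathcal{E}(v,u)$ and $\mathcal{E}(u,w)$ on the right rewrites this as $\mathcal{E}(u,v) + \mathcal{E}(v,w) + \mathcal{E}(w,u) = 0$, the desired cocycle identity.

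The argument encounters no real obstacle: once Proposition \ref{diffprop} is in hand, the whole proof rests on the single observation that the derivative of $t \mapsto \mathcal{E}(u_t,w)$ does not depend on $w$, so the difference $\mathcal{E}(v,w) - \mathcal{E}(u,w)$ is completely pinned down by its value at any convenient choice of $w$.
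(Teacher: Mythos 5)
Your proof is correct and follows essentially the same route as the paper's: both hinge on the fact that the derivative formula in Proposition \ref{diffprop} is independent of the reference function in the second slot, combined with antisymmetry. The paper packages this by introducing two auxiliary functions $f(t) = \mathcal{E}(u+t(w-u),v) + \mathcal{E}(v,u)$ and $g(t) = \mathcal{E}(u+t(w-u),w) + \mathcal{E}(w,u)$, observing they have equal derivatives and equal values at $t=0$, hence are equal at $t=1$; you instead integrate the derivative directly, note the result $C(u,v)$ is independent of the second slot, and pin it down by specializing to $w=u$ — a slightly cleaner way of writing the same argument.
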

\begin{proof} Let
$$f(t):={\mathcal E}(u+t(w-u),v)+{\mathcal E}(v,u)$$ 
and
$$g(t):={\mathcal E}(u+t(w-u),w)+{\mathcal E}(w,u).$$
Then $f(0)=g(0)=0$ by antisymmetry of ${\mathcal E}$. 
From the previous proposition, specifically, (\ref{difft}),
$$f'(t)= (d+1)\int_{\CC^d} (w-u) (dd^c(u+t(w-u)))^d$$
and
$$g'(t)= (d+1)\int_{\CC^d} (w-u) (dd^c(u+t(w-u)))^d;$$
i.e., $f'(t)=g'(t)$ for all $t$, Thus $f(1)=g(1)$; i.e.,
$${\mathcal E}(w,v) + {\mathcal E}(v,u)={\mathcal E}(w,w) + {\mathcal E}(w,u)={\mathcal E}(w,u).$$
\end{proof}

The independence of (\ref{difft}) on $v$ also follows from the cocycle property: if $v,v'\in L^+(\CC^d)$, then 
$${\mathcal E} (u+t(u'-u),v')+{\mathcal E} (v',v)+{\mathcal E} (v,u+t(u'-u))=0$$
so that the difference 
$${\mathcal E} (u+t(u'-u),v')-{\mathcal E} (u+t(u'-u),v)={\mathcal E} (v,v')$$
is independent of $t$. Thus one should consider ${\mathcal E}$ as a functional on the first slot with the second fixed. As such, as with the projection operator $P$, it is {\bf increasing and concave}:

\begin{proposition} Let $u,v,w\in L^+(\CC^d)$. Then
$$u\geq v \ \hbox{implies} \ {\mathcal E}(u,w) \geq 
{\mathcal E}(v,w)$$
and for $0\leq t\leq 1$
$${\mathcal E}(tu+(1-t)v,w)\geq t{\mathcal E}(u,w) +(1-t){\mathcal E}(v,w);$$
i.e., $g(t):={\mathcal E}(tu+(1-t)v,w)$ satisfies $g''(t)\leq 0$. 
\end{proposition}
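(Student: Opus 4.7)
The plan is to derive both statements from the differentiation formula in Proposition \ref{diffprop} together with the integration-by-parts identity (\ref{ibyp}).

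For monotonicity, suppose $u \geq v$ and set $f(t) := {\mathcal E}(v + t(u-v), w)$ for $0 \leq t \leq 1$. By Proposition \ref{diffprop},
$$f'(t) = (d+1)\int_{\CC^d} (u-v)\bigl(dd^c(v+t(u-v))\bigr)^d.$$
Since $v+t(u-v) = (1-t)v + tu$ lies in $L^+(\CC^d)$, its Monge--Amp\`ere measure is a positive Borel measure, and $u - v \geq 0$ by assumption, so $f'(t) \geq 0$. Thus $f(1) \geq f(0)$, i.e., $\mathcal E(u,w) \geq \mathcal E(v,w)$.

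For concavity, write $u_t := v + t(u-v) = (1-t)v + tu \in L^+(\CC^d)$ and $g(t) := \mathcal E(u_t, w)$. By Proposition \ref{diffprop} again,
$$g'(t) = (d+1)\int_{\CC^d} (u-v)\,(dd^c u_t)^d.$$
I would show $g'$ is nonincreasing. For $0 \leq t_1 < t_2 \leq 1$, the algebraic identity $A^d - B^d = (A-B)\wedge \sum_{k=0}^{d-1} A^k \wedge B^{d-1-k}$ (used already in (\ref{algform})) gives
$$(dd^c u_{t_2})^d - (dd^c u_{t_1})^d = (t_2 - t_1)\, dd^c(u-v) \wedge S_{t_1,t_2},$$
where $S_{t_1,t_2} := \sum_{k=0}^{d-1}(dd^c u_{t_2})^k \wedge (dd^c u_{t_1})^{d-1-k}$ is a positive closed current. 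Hence
$$g'(t_2) - g'(t_1) = (d+1)(t_2-t_1)\int_{\CC^d} (u-v)\bigl(dd^c u - dd^c v\bigr)\wedge S_{t_1,t_2}.$$

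Now I would apply the integration by parts formula (\ref{ibyp}), with the differences $A-B = u-v$ and $C-D = u-v$, one positive closed factor at a time from $S_{t_1,t_2}$, to rewrite each summand as
$$\int_{\CC^d} (u-v)\, dd^c(u-v)\wedge (dd^c u_{t_2})^k \wedge (dd^c u_{t_1})^{d-1-k}$$
$$= -\int_{\CC^d} d(u-v)\wedge d^c(u-v)\wedge (dd^c u_{t_2})^k \wedge (dd^c u_{t_1})^{d-1-k}.$$
Since $d\phi \wedge d^c\phi = 2i\,\partial\phi \wedge \bar\partial\phi$ is a positive $(1,1)$-form for any real $\phi$, wedging against the positive closed current $(dd^c u_{t_2})^k \wedge (dd^c u_{t_1})^{d-1-k}$ yields a nonnegative measure. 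Therefore every summand is $\leq 0$, so $g'(t_2) \leq g'(t_1)$; equivalently $g''(t) \leq 0$ in the distributional sense, which is the concavity assertion.

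The main technical obstacle will be justifying the integration by parts on noncompactly supported currents built from elements of $L^+(\CC^d)$ and, relatedly, making the differentiation of $g'(t)$ rigorous; both are handled exactly as in the proof of Proposition \ref{diffprop}, using that all relevant masses are finite on $\CC^d$ (bounded above by $(2\pi)^d$) and that (\ref{ibyp}) is stated for differences of $L^+(\CC^d)$ functions, which is precisely our situation since $u-v$ appears on both sides.
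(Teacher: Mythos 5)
Your proof is correct. The two halves have a slightly different character relative to the paper, so let me compare.

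For concavity, your argument is essentially the paper's: both rest on Proposition \ref{diffprop} to identify $g'(t)$ and then on the integration-by-parts formula (\ref{ibyp}) to rewrite the second-order quantity as $-\int d(u-v)\wedge d^c(u-v)\wedge(\text{positive closed current})\leq 0$. The one genuine refinement you make is working with the finite difference $g'(t_2)-g'(t_1)$ via the telescoping identity $(dd^cu_{t_2})^d-(dd^cu_{t_1})^d=(t_2-t_1)\,dd^c(u-v)\wedge S_{t_1,t_2}$, instead of formally differentiating $(dd^c u_t)^d$ under the integral sign as the paper does. This is slightly more careful: you directly prove $g'$ is nonincreasing without needing to justify differentiating the Monge--Amp\`ere current in $t$ inside the integral, and the positivity argument is identical.

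For monotonicity, your route is genuinely different and, in fact, heavier than necessary. You invoke Proposition \ref{diffprop} to get $f'(t)=(d+1)\int(u-v)(dd^cu_t)^d\geq 0$ and integrate from $0$ to $1$. The paper instead gets monotonicity in one line from the cocycle property: ${\mathcal E}(u,w)-{\mathcal E}(v,w)={\mathcal E}(u,v)=\int(u-v)\sum_{j=0}^d(dd^cu)^{d-j}\wedge(dd^cv)^j\geq 0$, which uses only the definition of ${\mathcal E}$ and Proposition \ref{cocycleprop} and none of the differentiability machinery. Your proof is correct, but it's worth noticing that monotonicity does not really require the derivative formula; the cocycle identity already places the difference ${\mathcal E}(u,w)-{\mathcal E}(v,w)$ in a manifestly nonnegative form.
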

\begin{proof} The monotonicity is trivial from the cocycle property and the definition of ${\mathcal E}$:
$${\mathcal E}(u,w) -{\mathcal E}(v,w)= {\mathcal E}(u,w) +{\mathcal E}(w,v)={\mathcal E}(u,v)$$
$$=\int(u-v)\sum_{j=0}^d (dd^cu)^{d-j}\wedge (dd^cv)^j \geq 0.$$
For the concavity, let 
$$g(t):={\mathcal E}(tu+(1-t)v,w)={\mathcal E}(v+t(u-v),w).$$
We show $g''(t)\leq 0$. From the differentiability result (\ref{difft}) in Proposition \ref{diffprop},
$$g'(t)= (d+1)\int_{\CC^d} (u-v) (dd^c(v+t(u-v)))^d.$$
To compute $g''(t)$, note that
$$\frac {d}{dt}(dd^c(a +tb))^d=d\cdot dd^cb\wedge 
(dd^c(a +tb))^{d-1}$$ so that 
$$g''(t)=(d+1)d\cdot \int_{\CC^d} (u-v) dd^c(u-v) \wedge (dd^c(v+t(u-v)))^{d-1}$$
$$=-(d+1)d\cdot \int_{\CC^d} d(u-v) \wedge d^c(u-v) \wedge (dd^c(v+t(u-v)))^{d-1}\leq 0$$
where the last equality comes from the integration by parts formula (\ref{ibyp}).
\end{proof}

A consequence of concavity is the following. Let $u_1,u_2,v\in L^+(\CC^d)$. Letting
$$g(s):= \mathcal E(u_1+s(u_2-u_1),v)$$
for $0\leq s\leq 1$, we have concavity of $g$ so that $g(s)\leq g(0)+g'(0)s$. In particular, at $s=1$, we have 
\begin{equation}\label{gconcav}
g(1)\leq g(0)+g'(0);
\end{equation}
 i.e.,
\begin{equation}\label{gconcave}\mathcal E(u_2,v)\leq \mathcal E(u_1,v)+(d+1)\int_{\CC^d}(u_2-u_1)(dd^cu_1)^d.
\end{equation}

For future use, we record the following. 

\begin{lemma}
\label{BT63} Let $\{w_j\},\{v_j\}\subset L^+(\CC^d)$ with $w_j\uparrow w\in L^+(\CC^d)$ and $v_j \uparrow v \in L^+(\CC^d)$. Then
$$\mathcal E(w_j,v) \to \mathcal E(w,v) \ \hbox{and} \ \mathcal E(w_j,v_j) \to \mathcal E(w,v).$$
\end{lemma}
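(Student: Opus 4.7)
The plan is to establish the first convergence $\mathcal E(w_j,v) \to \mathcal E(w,v)$ by a direct sandwich coming from the concavity inequality (\ref{gconcave}), and then reduce the second convergence $\mathcal E(w_j, v_j) \to \mathcal E(w,v)$ to the first via the cocycle property of Proposition \ref{cocycleprop}.

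For the first assertion, I would apply (\ref{gconcave}) with fixed second slot $v$ in both orientations. Taking $u_1 = w$, $u_2 = w_j$ and using $w_j \leq w$ yields $\mathcal E(w_j, v) \leq \mathcal E(w, v)$, while taking $u_1 = w_j$, $u_2 = w$ gives
$$0 \leq \mathcal E(w, v) - \mathcal E(w_j, v) \leq (d+1) \int_{\CC^d} (w - w_j)(dd^c w_j)^d.$$
So the task reduces to showing $\int_{\CC^d} (w - w_j)(dd^c w_j)^d \to 0$. I would split this as $\int w \,(dd^c w_j)^d - \int w_j \,(dd^c w_j)^d$. By the Bedford-Taylor monotone convergence theorem, along the increasing sequence $w_j \uparrow w$ of locally bounded psh functions we have both $(dd^c w_j)^d \to (dd^c w)^d$ and $w_j \,(dd^c w_j)^d \to w \,(dd^c w)^d$ in the weak sense of measures; pairing with the constant function $1$ (legal because all these positive measures have common finite total mass $(2\pi)^d$, a standard feature of the class $L^+(\CC^d)$), both integrals tend to $\int w \,(dd^c w)^d$, and the difference vanishes in the limit.

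For the second assertion, the cocycle identity $\mathcal E(w_j, v_j) + \mathcal E(v_j, v) + \mathcal E(v, w_j) = 0$ from Proposition \ref{cocycleprop}, combined with the antisymmetry $\mathcal E(v, w_j) = -\mathcal E(w_j, v)$ that follows from that same cocycle (take $w = u$), rearranges to
$$\mathcal E(w_j, v_j) = \mathcal E(w_j, v) - \mathcal E(v_j, v).$$
Applying the first assertion to the sequence $w_j \uparrow w$ with second slot $v$ gives $\mathcal E(w_j, v) \to \mathcal E(w, v)$, and applying the first assertion to the sequence $v_j \uparrow v$ with second slot $v$ gives $\mathcal E(v_j, v) \to \mathcal E(v, v) = 0$; subtraction yields $\mathcal E(w_j, v_j) \to \mathcal E(w, v)$.

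The delicate point in this plan is the step where one pairs weakly convergent currents with the constant $1$ on the non-compact space $\CC^d$; this is where the $L^+$ hypothesis is indispensable, since the logarithmic growth condition both pins the total mass $(2\pi)^d$ of every $(dd^c w_j)^d$ and provides uniform tail control preventing mass from escaping to infinity, which is what legitimizes the global integration against the essentially unbounded function $w$ that behaves like $\log|z|$ at infinity. If one prefers to avoid this tightness argument, one could instead localize using a cutoff $\chi_R$ supported in $\{|z| \leq 2R\}$ and exploit the fact that $w - w_j$ is bounded on $\CC^d$ (both summands lie within $O(1)$ of $\log|z|$) to control the tails uniformly in $j$.
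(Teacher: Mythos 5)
Your reduction of the second convergence to the first via the cocycle property (Proposition \ref{cocycleprop}) is exactly the reduction the paper uses. For the first convergence, however, the paper simply invokes Lemma 6.3 of \cite{[BT]}, whereas you offer a self-contained argument via the concavity inequality (\ref{gconcave}); the resulting sandwich
$0\le \mathcal E(w,v)-\mathcal E(w_j,v)\le(d+1)\int_{\CC^d}(w-w_j)(dd^cw_j)^d$
is correct and is an appealing reduction, since it isolates a single concrete limit to establish.

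However, your treatment of $\int_{\CC^d}(w-w_j)(dd^cw_j)^d\to 0$ has a genuine gap. You split the integral as $\int w\,(dd^cw_j)^d-\int w_j\,(dd^cw_j)^d$ and invoke Bedford--Taylor monotone convergence of $(dd^cw_j)^d$ and of $w_j(dd^cw_j)^d$, which is a \emph{local} weak-$*$ statement (convergence against $C_c(\CC^d)$ test functions). You then assert that ``pairing with the constant $1$'' is legal because all the measures have common total mass $(2\pi)^d$. This is not enough: on a non-compact space, constant total mass does not prevent mass escaping to infinity (unit masses $\delta_{z_n}$ with $|z_n|\to\infty$ have total mass $1$, converge weak-$*$ to $0$, and pairing with $1$ gives $1\ne 0$). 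What is actually required is tightness of the family $\{(dd^cw_j)^d\}$ and, because $w$ grows like $\log|z|$ at infinity, uniform integrability of $w$ against these measures; neither is implied by the $L^+$ normalization in the way you suggest. Your fallback truncation by $\chi_R$ reduces to the same issue: the uniform bound $w-w_j\le\sup_{\CC^d}(w-w_1)=:M$ gives $\big|\int(1-\chi_R)(w-w_j)(dd^cw_j)^d\big|\le M\int_{|z|>R}(dd^cw_j)^d$, but showing this tail is small uniformly in $j$ \emph{is} the tightness claim, which you do not prove. These global no-escape estimates are precisely the non-trivial content packaged in \cite{[BT]}, Lemma 6.3 (which does apply here after using antisymmetry to write $\mathcal E(w_j,v)=-\int(v-w_j)\sum_{k}(dd^cv)^k\wedge(dd^cw_j)^{d-k}$, putting the increasing functions in the positions that lemma handles). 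As written, your concavity route is a nice idea but it either needs that lemma anyway or needs a tightness argument that is currently missing.
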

\begin{proof} From the cocycle condition (Proposition \ref{cocycleprop}), it suffices to prove the first statement. This follows directly from Lemma 6.3 of \cite{[BT]}: {\it given $$w,\{v_j\},v, \{u_{1,j}\},u_1,...,\{u_{d,j}\},u_d \ \hbox{in} \ L^+(\CC^d)$$
with $v_j\uparrow v, \ u_{1,j}\uparrow u_1, ..., u_{d,j}\uparrow u_d$, 
$$\lim_{j\to \infty} \int_{\CC^d} (w-v_j)dd^cu_{1,j}\wedge \cdots \wedge u_{d,j}=\int_{\CC^d} (w-v)dd^cu_{1}\wedge \cdots \wedge u_{d}.$$}
\end{proof}

We remark that if $w_j\downarrow w\in L^+(\CC^d)$ and $v_j \downarrow v \in L^+(\CC^d)$ then we still have
\begin{equation}
\label{downlemma}
\mathcal E(w_j,v) \to \mathcal E(w,v) \ \hbox{and} \ \mathcal E(w_j,v_j) \to \mathcal E(w,v).
\end{equation}
The first statement is standard and the second follows from the first by Proposition \ref{cocycleprop}.

\begin{remark} A nonnegative functional on $L^+(\CC^d)$ bearing a closer resemblance to a classical ``energy'' is defined in section 5 of \cite{[BB]}. Fix $v\in L^+(\CC^d)$. For $u\in L^+(\CC^d)$, define
\begin{equation}
\label{ien}
I(u)=I_v(u):=\mathcal E(u,v) +(d+1)\int_{\CC^d}(v-u)(dd^cu)^d
\end{equation}
$$=\int_{\CC^d}(v-u)\bigl([(dd^cv)^d+\cdots + dd^cv\wedge (dd^cu)^{d-1}] -d\cdot (dd^cu)^d\bigr).$$
Note that $I(v) = I(v+c)=0$ for any constant $c$. It is not immediately obvious from this definition that $I(u)\geq 0$, but this follows trivially from concavity of $\mathcal E$. Indeed, let 
$$f(t):= \mathcal E(u+t(v-u),v)$$
so that $f(0)=\mathcal E(u,v)$ and $f(1)=\mathcal E(v,v)=0$. Concavity 
of $\mathcal E$ implies that $f(1)\leq f(0)+f'(0)$ from (\ref{gconcav}); i.e., using Proposition \ref{diffprop}, 
$$0\leq  \mathcal E(u,v) +   (d+1)\int_{\CC^d}(v-u)(dd^cu)^d =I(u).$$  
\end{remark}

\section {The Main Theorem.}\label{mainth}

In this section, we state and prove the main result of Berman and Boucksom, which relates asymptotics of certain ball-volume ratios with energies associated with extremal measures. For $E\subset \CC^d$ closed and $\phi$ an admissible weight on $E$, let 
$$\mathcal B^{\infty}(E,n\phi):=\{p_n\in \mathcal P_n: |p_n(z)^2e^{-2n\phi(z)}|\leq 1 \ \hbox{on}  \ E\}$$
be an $L^{\infty}-$ball and, if $\mu$ is a measure on $E$, 
$$\mathcal B^{2}(E,\mu,n\phi):=\{p_n\in \mathcal P_n: \int_E |p_n|^2e^{-2n\phi}d\mu\leq 1 \}$$
be an $L^{2}-$ball in $\mathcal P_n$. The key result is the following.

\begin{theorem}
\label{keycn}
Given $\phi,\phi'$ admissible weights on $E,E'$,
$$\frac{1}{2nN} [\mathcal B^{\infty}(E,n\phi):\mathcal B^{\infty}(E',n\phi')]\to \frac{1}{(d+1)(2\pi)^d}\mathcal E(V_{E,\phi}^*, V_{E',\phi'}^*).$$
If $\mu,\mu'$ are measures on $E,E'$ with $(E,\mu,\phi)$ and $(E',\mu',\phi')$ satisfying a weighted Bernstein-Markov property, then
$$\frac{1}{2nN} [\mathcal B^{2}(E,\mu,n\phi):\mathcal B^{2}(E',\mu',n\phi')]\to \frac{1}{(d+1)(2\pi)^d}\mathcal E(V_{E,\phi}^*,V_{E',\phi'}^*).$$
\end{theorem}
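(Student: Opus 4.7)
My approach would be to prove the $L^2$ statement first and then deduce the $L^\infty$ statement by a Bernstein-Markov sandwich. For the $L^\infty$-to-$L^2$ comparison: if $(E,\mu,\phi)$ is weighted Bernstein-Markov, then the norms $\|pw^n\|_E$ and $\|pw^n\|_{L^2(\mu)}$ differ by factors at most $M_n$ and $\mu(E)^{1/2}$, both sub-exponential in $n$, giving inclusions $c_n\mathcal B^\infty\subseteq \mathcal B^2\subseteq C_n\mathcal B^\infty$ with $\log c_n,\log C_n = o(n)$. Since the balls live in complex dimension $N$, the induced log-volume discrepancy is of order $o(nN)$, which vanishes after dividing by $2nN$. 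Hence the $L^\infty$ and $L^2$ limits, if they exist, coincide.

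For the $L^2$ statement, formula (\ref{gramvolume}) rewrites
$$[\mathcal B^2(E,\mu,n\phi):\mathcal B^2(E',\mu',n\phi')]=\log\det G_n^{\mu',w'}(\gamma)-\log\det G_n^{\mu,w}(\gamma)$$
in any common basis $\gamma$. Proposition \ref{weightedtd} then gives
$$\lim_{n\to\infty}\frac{1}{2nN}[\mathcal B^2:\mathcal B'^2]=\frac{d}{d+1}\bigl[\log\delta^{w'}(E')-\log\delta^w(E)\bigr].$$
So the theorem reduces to the identity $d[\log\delta^{w'}(E')-\log\delta^w(E)] = -\frac{1}{(2\pi)^d}\mathcal E(V_{E,\phi}^*,V_{E',\phi'}^*)$. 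By the cocycle properties of both $\mathcal E$ (Proposition \ref{cocycleprop}) and $\log\delta^w$ (which follows from the same cocycle for ball volumes applied to $L^2$ balls), it suffices to prove the single-set identity $-d\log\delta^w(E)=\frac{1}{(2\pi)^d}\mathcal E(V_{E,\phi}^*,V_T)$ with $T$ the unit torus as reference; this is the "energy version" of Rumely's formula.

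I would establish this identity by differentiating both sides along a path $\phi_t=\phi_0+t(\phi-\phi_0)$ of admissible weights on $E$, starting at a base $\phi_0$ where both sides vanish (e.g., chosen so that $V_{E,\phi_0}^*=V_T$). Both sides are Lipschitz in $\phi$: the RHS via the Lipschitz estimate (\ref{loclip2}) applied to $P$, using hypothesis (\ref{unbhyp}) to secure uniform bounds in the unbounded case; the LHS via the same Gram-determinant presentation. Both sides are concave in $\phi$: concavity of $\mathcal E$ is established in Section 3, and concavity of $\log\det G_n^{\mu,w_t}$ in $t$ follows from the matrix calculation used in the proof of Proposition \ref{KW}. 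The derivative of the RHS at $\phi_t$ in direction $\psi=\phi-\phi_0$ equals $(d+1)\int\psi\,d\mu_{E,\phi_t}$ by Proposition \ref{diffprop}, where the projection through $P$ is handled by sandwiching $P(\phi_t+s\psi)$ between $P(\phi_t)\pm |s|\sup_{D}|\psi|$ on the contact sets (exactly as in the derivation of (\ref{loclip})).

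The hard part will be the derivative of the LHS. Using Proposition \ref{tfd} with optimal measures $\mu_n^{(t)}$ and the envelope/optimality identity $B_n^{\mu_n^{(t)},w_t}=N$ almost everywhere (Lemma \ref{MaxIsN}), the $n$-th approximation satisfies
$$\frac{d}{dt}\Bigl(\frac{d+1}{2dnN}\log\det G_n^{\mu_n^{(t)},w_t}\Bigr)=-\frac{d+1}{d}\int\psi\,d\mu_n^{(t)}.$$
Matching this to $(d+1)\int\psi\,d\mu_{E,\phi_t}$ as $n\to\infty$ requires the weak-$*$ convergence $\mu_n^{(t)}\to\mu_{E,\phi_t}$, which is itself an intended corollary of Theorem \ref{keycn}. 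The way out is the classical fact that pointwise convergence of concave functions on an interval forces convergence of their one-sided derivatives at every point of differentiability of the limit; combined with the Lipschitz envelope and a monotone-class argument on test directions $\psi$, this yields derivative matching almost everywhere in $t$. Integrating from $t=0$, where both sides vanish by construction, to $t=1$ then completes the proof. A careful choice of the base $\phi_0$ (and a verification that the concave limit is indeed differentiable a.e., which follows from concavity alone) closes the argument without circularity.
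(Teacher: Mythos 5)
Your reduction of the statement to the Rumely-type identity $-d\log\delta^w(E)=\frac{1}{(2\pi)^d}\mathcal E(V_{E,\phi}^*,V_T)$ inverts the paper's logical order — in the paper Theorem~\ref{energyrumely} is a \emph{corollary} of Theorem~\ref{keycn}, not an input to it — and your proposed escape from the resulting circularity does not work. The concavity argument (convergence of concave functions forces convergence of their derivatives at points of differentiability of the limit) only tells you that $f_n'(t)\to g'(t)$; it does not independently identify what $g'(t)$ is. The only explicit handle you have on $f_n'(t)$ is through the envelope computation $f_n'(t)=\pm\tfrac{d+1}{d}\int\psi\,d\mu_n^{(t)}$ with $\mu_n^{(t)}$ the optimal measures, so matching $g'(t)$ with $F'(t)=\tfrac{d+1}{d(2\pi)^d}\int\psi\,(dd^cP(\phi_t))^d$ requires precisely the weak-$*$ convergence $\mu_n^{(t)}\to\mu_{E,\phi_t}$, which is a downstream corollary of the very theorem you are proving. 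Knowing that the concave limit is differentiable a.e.\ is not the missing step; what is missing is an independent evaluation of that derivative. There is also a second, subordinate circularity in the proposed base point: choosing $\phi_0$ so that $V_{E,\phi_0}^*=V_T$ does give $F(0)=\mathcal E(V_T,V_T)=0$, but the matching claim $g(0)=-\log\delta^{w_0}(E)=0$ is itself a special case of the Rumely formula.

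The paper escapes exactly this circularity through Theorem~\ref{bermancn}: for $E=\CC^d$ with a strongly admissible $C^{1,1}$ weight $\phi$, the convergence $\tfrac{1}{N}B_{n,\phi}\omega_d\to\tfrac{1}{(2\pi)^d}(dd^cP(\phi))^d$ is established by purely analytic means (the $C^{1,1}$ regularity of the envelope $P(\phi)$, the local holomorphic Morse inequality obtained by scaling and sub-averaging, and an $L^1$ dominated-convergence argument), with no reference to energies, transfinite diameter, or optimal measures. With that weak-$*$ convergence in hand for a fixed test direction $u$ with compact support, Lemma~\ref{1stderiv} plus Proposition~\ref{smoothample} lets one \emph{integrate} $f_n'(t)\to(d+1)\int u\,(dd^cP(\phi_t))^d/(2\pi)^d$ over $t\in[0,1]$ to obtain the $L^2$ version of the conclusion for $(\CC^d,\omega_d,\phi)$, $(\CC^d,\omega_d,\phi')$; then the cocycle identity for ball volumes and energies reduces the case of a general closed $E$ (and general admissible $\phi$) to this $\CC^d$ case by monotone approximation from above and below (using Lemma~\ref{BT63} and~(\ref{downlemma}), and the monotonicity~(\ref{bvmon})). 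Your proposal replaces this analytic base case by nothing, which is why the derivative-matching step cannot be closed. If you wish to salvage the outline, you would have to supply some independent identification of $\lim_n\int\psi\,d\mu_n^{(t)}$ — e.g.\ exactly the $\CC^d$ Bergman-kernel asymptotics the paper proves first.
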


\noindent {\bf Remark}. We will prove the $L^{\infty}-$version and the $L^{2}-$ version follows as noted in the case of (\ref{l2linf}). From this, we will prove a version of strong Bergman asymptotics (Corollary \ref{strongasymp}) which says that if $(K,\mu,w)$ satisfies a weighted Bernstein-Markov inequality, then 
$$\frac{1}{N}B_n^{\mu,w} d\mu \to \mu_{K,Q} \ \hbox{weak-}*.
$$
However, the first step in the proof of Theorem \ref{keycn}, section \ref{sec:bergcd} below, is the special case of strong Bergman asymptotics in the case $K=\CC^d$ (and $Q$ is a sufficiently smooth ``strongly admissible'' weight). From a purely logical point of view, it would be interesting to find either 
\begin{enumerate}
\item a direct proof of Corollary \ref{strongasymp} that did not appeal to Theorem \ref{keycn} and/or
\item a direct proof of Theorem \ref{keycn} which did not require this special case of Corollary \ref{strongasymp}.
\end{enumerate}

\subsection {Bergman asymptotics in $\CC^d$.}\label{sec:bergcd}

Results on Bergman asymptotics in the Berman paper \cite{[B1]}, which apply to global weights on $\CC^d$, form the basis for an essential step in the proof of Theorem \ref{keycn}. The setting in \cite{[B1]} is this: $\phi\in C^{1,1}(\CC^d)$ with 
\begin{equation}
\label{stradm}
\phi (z) \geq (1+\epsilon)\log |z| \ \hbox{for}  \ |z|>>1 \ \hbox{for some}  \ \epsilon >0.
\end{equation}
We write, following (\ref{pq}), $P(\phi):=V_{\CC^d,\phi}$. We will call a global admissible weight $\phi$ satisfying (\ref{stradm}) {\it strongly admissible}. For $p_n\in \mathcal P_n$, our notation for the (varying weighted) $L^2-$norm is 
$$||p_n||_{n\phi}^2:=||p_n||_{\omega_d,n\phi}^2=\int_{\CC^d} |p_n(z)|^2 e^{-2n\phi(z)}\omega_d(z)$$
where $\omega_d(z)=(\frac{dd^c|z|^2}{2\pi })^d/d!$ on $\CC^d$. Under the growth assumption on $\phi$, it is easy to see that if $n>d/\epsilon$ then for each polynomial $p_n\in \mathcal P_n$, $||p_n||_{n\phi}<+\infty$. Next, given an orthonormal basis $\{q_1,...,q_N\}$ of $\mathcal P_n$, in this section we use the notation
$$B_n(z):=B_{n,\phi}(z):=K_{n,\phi}(z,z)e^{-2n\phi(z)}:=[\sum_{j=1}^N |q_j(z)|^2]e^{-2n\phi(z)}$$
for the $n$-th Bergman function; and we recall that
$$B_n(z)=\sup_{p_n\in \mathcal P_n\setminus \{0\}} |p_n(z)|^2e^{-2n\phi(z)}/||p_n||_{n\phi}^2.$$
Finally, let 
$$P:=\{z\in \CC^d: dd^c\phi(z) \ \hbox{exists and} \ dd^c\phi(z)>0\}$$
and if $u$ is a $C^{1,1}$ function such that $(dd^cu)^d$ is absolutely continuous with respect to Lebesgue measure, we write
$$\det (dd^c u)\omega_d :=\frac{1}{(2\pi)^d}(dd^c u)^d.$$

\begin{theorem}
\label{bermancn}
Given $\phi\in C^{1,1}(\CC^d)$ with $\phi (z) \geq (1+\epsilon)\log |z|$ for $|z|$ large, $P(\phi)\in C^{1,1}(\CC^d)\cap L^+(\CC^d)$;  $(dd^cP(\phi))^d$ is absolutely continuous with respect to Lebesgue measure; 
$$(dd^cP(\phi))^d=\det (dd^cP(\phi))\omega_d$$
as $(d,d)-$forms with $L^{\infty}_{loc}(\CC^d)$ coefficients; and a.e. on $D:=\{P(\phi) =\phi\}$ we have $\det (dd^c\phi)=\det (dd^cP(\phi))$. Moreover, 
$$\frac{1}{N}B_n\to \chi_{D\cap P} \det (dd^c\phi) \ \hbox{in} \  L^1(\CC^d)$$ and 
$$\frac{1}{N}B_n\omega_d \to \frac{1}{(2\pi)^d}(dd^cP(\phi))^d \ \hbox{weak}-*.$$
\end{theorem}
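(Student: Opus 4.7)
The plan is to combine classical results on the regularity of psh envelopes with sharp pointwise Bergman asymptotics and then upgrade pointwise to $L^1$ convergence via Scheff\'e's lemma. I would organize the argument into four stages.

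\textbf{Stage 1: Regularity of the envelope.} First I would establish that $P(\phi)\in C^{1,1}(\CC^d)\cap L^+(\CC^d)$. Membership in $L^+(\CC^d)$ is immediate since $\phi$ is strongly admissible and $\log^+|z|$ is a competitor. The $C^{1,1}$ regularity is the Berman regularity theorem for psh envelopes of $C^{1,1}$ weights with super-logarithmic growth; it is proved via a one-sided quasi-convexity estimate obtained from the extremal characterization (\ref{equivdef}) and a local barrier construction. Granting $C^{1,1}$, the derivatives $\partial\bar\partial P(\phi)$ exist a.e., lie in $L^\infty_{\rm loc}$, and the classical Monge-Amp\`ere $(dd^cP(\phi))^d$ agrees with $\det(dd^c P(\phi))\omega_d$ computed pointwise a.e.

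\textbf{Stage 2: Structure of $(dd^c P(\phi))^d$ and the contact identity.} Off the contact set $D=\{P(\phi)=\phi\}$, $P(\phi)$ is locally maximal (as the usc upper envelope of psh competitors bounded above by $\phi$, the inequality is strict in a neighborhood), so $(dd^c P(\phi))^d \equiv 0$ on $\CC^d\setminus D$. On $D$, the inequality $P(\phi)\le\phi$ with equality at a.e.\ point of $D$ forces the Hessians to coincide Lebesgue-a.e.\ on $D$ (the standard ``touching from below'' Alexandrov-type argument, carried out in the $C^{1,1}$ setting by elementary second-order Taylor expansion, as in Berman's argument). Combined with the vanishing of $\det(dd^c\phi)$ on the complement of $P$, this yields
\[
(dd^c P(\phi))^d=\chi_{D\cap P}\det(dd^c\phi)\,(2\pi)^d\omega_d.
\]

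\textbf{Stage 3: Pointwise Bergman asymptotics.} For the upper bound $\limsup_n\frac{1}{N}B_n(z)\le \chi_{D\cap P}(z)\det(dd^c\phi(z))$, I would use the extremal formula $B_n(z)=\sup\{|p_n(z)|^2e^{-2n\phi(z)}/\|p_n\|^2_{n\phi}\}$ together with a localized submean-value estimate: writing $|p_n|^2e^{-2n\phi}$ as a subharmonic-type density and averaging over balls of radius $\sim 1/\sqrt n$ on which, by the $C^{1,1}$ bound, $\phi$ is quadratic up to $O(1/n)$ error, one reduces to the Gaussian model and obtains the upper bound. Off $D$, strict inequality $P(\phi)(z)<\phi(z)$ forces $B_n(z)\to 0$ exponentially via (\ref{weakasym}). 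For the lower bound at $z_0\in D\cap P$, I would construct peaked sections: choose affine coordinates diagonalizing $dd^c\phi(z_0)$, and build polynomials of degree $n$ that after the rescaling $z\mapsto z_0+w/\sqrt n$ approximate Gaussians $e^{-\pi\sum\lambda_j|w_j|^2}$; an Ohsawa-Takegoshi $L^2$-extension (or a direct H\"ormander $\bar\partial$-construction with a peak weight) then produces a holomorphic polynomial realizing, up to $1+o(1)$, the expected Bergman density.

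\textbf{Stage 4: From pointwise to $L^1$.} Orthonormality gives $\int_{\CC^d}\frac{1}{N}B_n\,\omega_d=1$ for every $n$, while Stage 2 gives $\int_{\CC^d}\chi_{D\cap P}\det(dd^c\phi)\omega_d=1$ as well. Together with the pointwise a.e.\ convergence from Stage 3 and the nonnegativity of $B_n$, Scheff\'e's lemma yields $\frac{1}{N}B_n\to\chi_{D\cap P}\det(dd^c\phi)$ in $L^1(\CC^d)$. Weak-$*$ convergence $\frac{1}{N}B_n\omega_d\to\frac{1}{(2\pi)^d}(dd^cP(\phi))^d$ is then an immediate consequence.

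The main obstacle is Stage 3, especially the sharp lower bound on $D\cap P$: the upper bound is a soft submean-value estimate once $P(\phi)\in C^{1,1}$, but producing peak sections that saturate the bound requires either Ohsawa-Takegoshi extension adapted to the weight $n\phi$ with correct curvature hypotheses, or a careful $\bar\partial$-construction with quantitative $L^2$-estimates. Once Stage 3 is in hand, Stages 2 and 4 are largely bookkeeping.
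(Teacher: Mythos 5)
Your Stages 1, 2, and the upper-bound half of Stage 3 (the local holomorphic Morse inequality via submean-value averaging on shrinking balls, with the exponential decay off $D$ coming from $P(\phi)<\phi$) match the paper's argument closely. The genuine divergence is in how you close the gap from the upper bound to the $L^1$ limit. You propose establishing a \emph{pointwise lower bound} at $z_0\in D\cap P$ by constructing peak sections via Ohsawa--Takegoshi extension or a quantitative H\"ormander $\bar\partial$-argument, and then invoking the full Scheff\'e lemma (which requires pointwise a.e.\ convergence). The paper avoids the lower bound entirely: it observes that $\int_{\CC^d}\frac{1}{N}B_n\,\omega_d=1$ exactly, that the contribution off $D$ tends to zero, and that the total mass of $(dd^cP(\phi))^d$ over $\CC^d$ (equivalently, over $D\cap P$, by the contact identity and the support inclusion (\ref{suppw})) is $(2\pi)^d$ for any $L^+$-function. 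A Fatou argument together with this mass identity forces $\lim_n\int\frac{1}{n^d}B_n\omega_d=\int\chi_{D\cap P}\det(dd^c\phi)\omega_d$; a one-sided Scheff\'e-type lemma (requiring only $\limsup f_n\le f$ a.e.\ and convergence of integrals, not pointwise convergence) then yields $L^1$ convergence. This is strictly cheaper than your route: it sidesteps the curvature hypotheses and localization needed to run $\bar\partial$-estimates with the merely $C^{1,1}$ weight $n\phi$, which is not globally positively curved. Your route is viable in principle (it is essentially the peak-section strategy used elsewhere in the Bergman-kernel literature), but the lower-bound construction is substantially harder than the ``bookkeeping'' you suggest: one must either modify the global weight to make it psh without changing the local model, or appeal to a careful localization of Ohsawa--Takegoshi, and both require more than what you sketch. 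If you want to keep the pointwise-limit-plus-Scheff\'e structure, you should at minimum quote a precise off-diagonal localization or extension theorem adapted to strongly admissible $C^{1,1}$ weights; otherwise the paper's mass-counting shortcut is the cleaner path.
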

\begin{proof} We give the proof on pp. 6-13 in \cite{[B1]}. First of all, it is easy to see that $P(\phi)$ is Lipschitz. Since $P(\phi)\in L(\CC^d)$, 
$$P(\phi)(z+h)\leq \log |z+h| +C\leq  \log |z| + \log [1+|h|/|z|] +C$$
$$\leq \log |z| +C +2|h|/|z|\leq (1+\epsilon)\ln |z| +2|h|/|z|\leq \phi(z) +2|h|/R$$
for $|z|\geq R$ sufficiently large; while for $|z|<R$
$$P(\phi)(z+h)\leq \phi(z+h)\leq \phi(z)+[\sup_{|z|\leq R}|d\phi\|]|h|;$$
combining these estimates, we get that 
 $$P(\phi)(z+h)-\max [2/R,\sup_{|z|\leq R}|d\phi|]\cdot |h| \leq P(\phi)(z) $$
on $\CC^d$ as the left-hand-side is a competitor for $P(\phi)$. Applying this inequality with $z\to z+h$ and $h\to -h$ gives
$$|P(\phi)(z+h)-P(\phi)(z)|\leq \max [2/R,\sup_{|z|\leq R}|d\phi|]\cdot |h|.$$

To verify that the first order partial derivatives of $P(\phi)$ exist and are Lipschitz, we will show that each real second order (weak) partial derivative $\frac{\partial ^2P(\phi)}{\partial x_j  \partial x_k}, \ j,k\in \{1,...,2d\}$ of the real Hessian $D^2P(\phi)$ is representable by an $L^{\infty}_{loc}(\CC^d)$ function; i.e., for short, we say that $D^2P(\phi)$ has an $L^{\infty}_{loc}(\CC^d)$ density. We first claim that there is a constant $C=C(\phi)$ with
\begin{equation}
\label{c11}
[P(\phi)(z+h)+P(\phi)(z-h)]/2 -P(\phi)(z) \leq C|h|^2.
\end{equation}
To see this, let
$$g(z):=[P(\phi)(z+h)+P(\phi)(z-h)]/2 \in L(\CC^d).$$
Arguing as above we want to get 
$$g(z)\leq [\phi(z+h)+\phi(z-h)]/2 \leq P(\phi)(z) +C |h|^2$$
in order to prove (\ref{c11}). Precisely, looking at the first order Taylor polynomial of $\phi$ at $z$, on the ball $\mathcal B_R$ of radius $R$ centered at $0$ we have
$$[\phi(z+h)+\phi(z-h)]/2 \leq \phi(z) +C'|h|^2$$ 
where $C'=C'(R)$ so that
\begin{equation}
\label{c11a}
g(z)=[P(\phi)(z+h)+P(\phi)(z-h)]/2 \leq \phi(z) +C'|h|^2
\end{equation}
on this ball. Here, 
$$C'=\sup_{\mathcal B_R} |D^2\phi|$$
where $D^2\phi$ is the real Hessian of $\phi$. 
Using the growth of $\phi$ as in the previous paragraph we can get an estimate outside a large ball of the form
\begin{equation}
\label{c11b}
g(z)=[P(\phi)(z+h)+P(\phi)(z-h)]/2 \leq \phi(z) +C''|h|^2.
\end{equation}
To see this, recall for $|z|$ large,
$$P(\phi)(z+h)\leq \log |z+h| +C\leq  \log |z| + \log [1+|h|/|z|] +C;$$
similarly,
$$P(\phi)(z-h)\leq \log (|z|-|h|+2|h|)+C$$
$$\leq  \log |z| + \log [1-|h|/|z| +2|h|/|z|] +C$$
$$\leq  \log |z| + \log [1-|h|/|z|]+2\epsilon \log |z| +C$$
for $|z|$ large. Thus 
$$[P(\phi)(z+h)+P(\phi)(z-h)]/2 \leq (1+\epsilon) \log |z| +C_1|h|^2/|z|^2 + C_2$$
$$ \leq \phi(z) +C''|h|^2$$
for $|z|$ large; this gives (\ref{c11b}); together with (\ref{c11a}), we get (\ref{c11}).

To deduce the $C^{1,1}$ regularity from (\ref{c11}), we follow the  arguments of \cite{dem}. Taking regularizations $(P(\phi))_{\epsilon}:=\phi* \chi_{\epsilon}$ of 
$P(\phi)$, we have the same estimate for $(P(\phi))_{\epsilon}$ as we have for $P(\phi)$ in (\ref{c11}):
$$[(P(\phi))_{\epsilon}(z+h)+(P(\phi))_{\epsilon}(z-h)]/2 -(P(\phi))_{\epsilon}(z) \leq C|h|^2.$$
Forming a Taylor expansion of degree $2$ of $(P(\phi))_{\epsilon}$ around $z$ gives 
$$D^2(P(\phi))_{\epsilon}(z)\cdot h^2 \leq C|h|^2.$$
We now use the fact that $(P(\phi))_{\epsilon}$ is psh, so that its complex Hessian is positive semi-definite. Thus
$$D^2(P(\phi))_{\epsilon}(z)\cdot h^2 + D^2(P(\phi))_{\epsilon}(z)\cdot (ih)^2\geq 0$$
so that
$$D^2(P(\phi))_{\epsilon}(z)\cdot h^2 \geq - D^2(P(\phi))_{\epsilon}(z)\cdot (ih)^2\geq -C|h|^2$$
and we conclude that 
$$|D^2(P(\phi))_{\epsilon}(z)|\leq C.$$
Letting $\epsilon \to 0$, we conclude that $D^2P(\phi)$ has an $L^{\infty}_{loc}(\CC^d)$ density and we have a local estimate
$$|D^2P(\phi)|\leq C'=C'(K,\phi)$$
on a compact set $K$ where $C'(K,\phi)$ depends on $K$, $\sup_K|D^2\phi|$, and the growth of $\phi$. This shows that $P(\phi)\in C^{1,1}(\CC^d)$.

Now since, e.g., $(P(\phi))_{\epsilon}\downarrow P(\phi)$, we have the Monge-Ampere measures $(dd^c(P(\phi))_{\epsilon})^d$ converge weak-* to $(dd^cP(\phi))^d$ and hence $$(dd^cP(\phi))^d=\det (dd^cP(\phi))\omega_d$$ as forms with $L^{\infty}_{loc}(\CC^d)$ coefficients. The fact that 
$$\det (dd^cP(\phi))^d=\det (dd^c\phi)^d \ \hbox{a.e. on}  \ D=\{P(\phi) =\phi\}$$ will follow by showing that 
$$\frac{\partial ^2(P(\phi) -\phi)}{\partial z_j  \partial \bar z_k}=0 \ \hbox{a.e. on} \ D.$$
Indeed, we show that for all the real second order derivatives  
\begin{equation}
\label{secondpartials}
\frac{\partial ^2(P(\phi) -\phi)}{\partial x_j  \partial x_k}=0 \ \hbox{a.e. on} \ D.
\end{equation}
We will use the lemma on p. 53 of \cite{KS}. This result implies that if a real-valued function $u$ is Lipschitz on an open set $\Omega$, then $\frac{\partial u}{\partial x_j }=0$ a.e on $\{x\in \Omega:u(x)=0\}$. Apply this to the $C^{1,1}$ function $P(\phi) -\phi$ to conclude $\frac{\partial (P(\phi) -\phi)}{\partial x_j }=0$ a.e on $D$. Since $\frac{\partial (P(\phi) -\phi)}{\partial x_j }$ is Lipschitz, $\frac{\partial (P(\phi) -\phi)}{\partial x_j }=0$ on all of $D$ and we can apply the lemma to $\frac{\partial (P(\phi) -\phi)}{\partial x_j }$ to obtain $\frac{\partial ^2(P(\phi) -\phi)}{\partial x_j  \partial x_k}=0$ a.e on $\{\frac{\partial (P(\phi) -\phi)}{\partial x_j }=0\}$; in particular, we obtain (\ref{secondpartials}).

We turn to the Bergman asymptotics; i.e., the behavior of $\{\frac{1}{N}B_n\}$ for $n$ large. The first step is an asymptotic upper bound (the ``local holomorphic Morse inequality'') which is a general fact about a $C^{1,1}$ function $\phi$ satisfying (\ref{stradm})  (i.e., independent of (weighted) pluripotential theory). Recall that 
$$||p_n||_{n\phi}^2:=\int_{\CC^d} |p_n(z)|^2 e^{-2n\phi(z)}\omega_d(z)$$
and
$$B_n(z)=B_{n,\phi}(z)=\sup_{p_n\in \mathcal P_n\setminus \{0\}} |p_n(z)|^2e^{-2n\phi(z)}/||p_n||_{n\phi}^2.$$
We can also consider $B_n(z)$ as the supremum in the class of all weighted polynomials $\mathcal Q_{n,\phi}:=p_ne^{-n\phi}$:
$$B_{n,\phi}(z)=\sup_{\mathcal Q_{n,\phi}\not \equiv 0}|\mathcal Q_{n,\phi}(z)|^2/||\mathcal Q_{n,\phi}||_{L^2(\CC^d)}.$$
If we let $\tilde{\phi}:=\phi+\Re (g)$ where $g$ is an entire function and 
set $ \tilde{p}_n:=p_n e^{ng}$, then 
$$|\tilde{p}_n(z)|^2e^{-2n\tilde{\phi}(z)} = |p_n(z)|^2e^{-2n\phi(z)}=|\mathcal Q_{n,\phi}(z)|^2$$ 
so that $B_{n,\phi}(z)=B_{n,\tilde \phi}(z)$. In particular, to study asymptotic behavior of $\{\frac{1}{N}B_n\}$, by taking an affine $g$ ($g(z)=a+\sum_{j=1}^d a_jz_j$) we may assume, working near the origin $0$ for convenience,  that $\phi(0)$ vanishes and $d\phi(0)=0$. If the second order partial derivatives exist at $0$ then by taking a quadratic $g$ ($g(z)=\sum_{i,j} a_{ij}z_iz_j$) and a unitary change of coordinates, we can assume that, near $0$,
\begin{equation}
\label{near0}
\phi(z) =\sum_{j=1}^d\lambda_j |z_j|^2 +0(|z|^3).
\end{equation}

We verify two estimates which will be useful. First of all, regardless of the existence of second order partial derivatives at $0$, on a fixed ball $\mathcal B_R$ centered at $0$, there is a constant $C=C(R, \phi)$ with
\begin{equation}
\label{phibound}
|\phi(z)|\leq C|z|^2 \ \hbox{on} \ \mathcal B_R.
\end{equation}
This estimate is sufficient to prove local uniform boundedness of $\{\frac{1}{N} B_n\}$: 
\begin{equation}
\label{locbound}
\frac{1}{N} B_n(z)\leq C=C(K) \ \hbox{on a compact set}  \ K.
\end{equation} 
More to the point, we show that if the second derivatives of $\phi$ {\it do} exist at a point, say $0$, then for {\bf any} $R>0$,
\begin{equation}
\label{c11more}
\lim_{n\to \infty} \bigl[\sup_{z\in \mathcal B_R} \bigl|n\phi(z/\sqrt n)-\sum_{j=1}^d\lambda_j|z_j|^2\bigr|\bigr]=0.
\end{equation}
This will be used to prove the pointwise upper bound asymptotics 
\begin{equation}
\label{asymp0}
\limsup_{n\to \infty} \frac{1}{Nd!}B_n(0)\leq \chi_P(0)\det (dd^c\phi(0)).\end{equation}
To prove (\ref{phibound}) and (\ref{c11more}), define, for $0\leq t\leq 1$, 
$$\psi(t):=\phi(tz)-\sum_{j=1}^d \lambda_j |tz_j|^2.$$
Then $\psi(0)=0$ and $\psi$ is of class $C^1$ so that
$$\psi(1)=\int_0^1 \psi'(t)dt = \phi(z)-\sum_{j=1}^d \lambda_j |z_j|^2.$$
Now 
$$\psi'(t)=\sum_{j=1}^d [z_j\frac{\partial \phi}{\partial z_j}(tz)+\bar z_j\frac{\partial \phi}{\partial \bar z_j}(tz)]-2\sum_{j=1}^d \lambda_j t|z_j|^2.$$
Since $d\phi(0)=0$, $\frac{\partial \phi}{\partial z_j}(tz),\frac{\partial \phi}{\partial \bar z_j}(tz)=0(|z|)$ so that 
$$|\psi'(t)|\leq C|z|^2, \ 0\leq t\leq 1$$
which gives (\ref{phibound}):
$$|\phi(z)|= |\phi(z)-\sum_{j=1}^d \lambda_j |z_j|^2+\sum_{j=1}^d \lambda_j |z_j|^2|$$
$$\le |\phi(z)-\sum_{j=1}^d \lambda_j |z_j|^2| + |\sum_{j=1}^d \lambda_j |z_j|^2| \leq C|z|^2.$$
For (\ref{c11more}), if the second derivatives of $\phi(z)$ exist at $z=0$, then the second derivatives of $\phi(z)-\sum_{j=1}^d \lambda_j |z_j|^2 $ vanish at $z=0$. Hence we get
$$|\psi'(t)|= o(|z|^2), \ 0\leq t\leq 1;$$
i.e., given $\epsilon >0$ there exists $\delta >0$ such that if $|z| < \delta$ then $|\psi'(t)|\leq \epsilon \cdot |z|^2$. This gives
$$|\psi(1)|=|\int_0^1 \psi'(t)dt|= |\phi(z)-\sum_{j=1}^d \lambda_j |z_j|^2|\leq \epsilon \cdot |z|^2.$$
Thus if $|z|\leq R$ and $n$ is sufficiently large
$$|n\phi(z/\sqrt n)-\sum_{j=1}^d\lambda_j|z_j|^2|=n|\phi(z/\sqrt n)-\sum_{j=1}^d\lambda_j|z_j/\sqrt n|^2|$$
$$\leq n\epsilon |z|^2/n \leq n\epsilon (R/\sqrt n)^2=\epsilon R.$$
This is true for all $\epsilon >0$, giving (\ref{c11more}).

 Recall that
\begin{equation}
\label{bn} B_n(z)=\sup_{p_n\in \mathcal P_n\setminus \{0\}} |p_n(z)|^2e^{-2n\phi(z)}/||p_n||_{n\phi}^2.
\end{equation}
Thus, working near a fixed point which we take to be the origin, $0$, and taking an extremal $p_n$, 
$$\frac{1}{N}B_n(0)= |p_n(0)|^2e^{-2n\phi(0)}/N||p_n||_{n\phi}^2=|p_n(0)|^2/N||p_n||_{n\phi}^2$$
$$=\frac{|p_n(0)|^2}{N\int_{\CC^d} |p_n(z)|^2 e^{-2n\phi(z)}\omega_d(z)}$$
$$\leq \frac{|p_n(0)|^2}{N\int_{|z|\leq R/\sqrt n} |p_n(z)|^2 e^{-2n\phi(z)}\omega_d(z)}$$
for any $R>0$. Choosing $R$ as in (\ref{phibound}), we can replace $\phi(z)$ by $C|z|^2$ in the integrand:
$$\frac{1}{N}B_n(0)\leq \frac{|p_n(0)|^2}{N\int_{|z|\leq R/\sqrt n} |p_n(z)|^2 e^{-2nC|z|^2}\omega_d(z)}.$$
Applying the subaveraging property to the psh function $|p_n|^2$ on the ball $\{|z|\leq R/\sqrt n\}$ with respect to the radial probability measure $e^{-2nC|z|^2}\omega_d(z)/\int_{|z|\leq R/\sqrt n}  e^{-2nC|z|^2}\omega_d(z)$, we obtain
$$\frac{1}{N}B_n(0)\leq \frac{1}{N\int_{|z|\leq R/\sqrt n}  e^{-2nC|z|^2}\omega_d(z)}\leq \frac{d!}{\int_{|z'|\leq R}  e^{-2C|z'|^2}\omega_d(z')}$$
which is finite. The last inequality comes via the change of variables $ z\to z':=z\sqrt n$, noting that $\omega_d(z') = n^d\omega_d(z)$ and $N={d+n\choose d}\geq n^d/d!$. Letting the point ``$0$'' vary over a compact set $K$, we get a $C=C(K)$ with
$$\frac{1}{N}B_n(z)\leq C=C(K) \ \hbox{for} \ z\in K;$$
i.e., (\ref{locbound}) holds.

Now start with
$$\frac{1}{N}B_n(0)\leq \frac{|p_n(0)|^2}{N\int_{|z|\leq R/\sqrt n} |p_n(z)|^2 e^{-2n\phi(z)}\omega_d(z)}$$
which is valid for any $R>0$. Letting $ z\to z':=z\sqrt n$ we have
$$\frac{1}{N}B_n(0)\leq \frac{d!|p_n(0)|^2}{\int_{|z'|\leq R} |p_n(z'/\sqrt n)|^2 e^{-2n\phi(z'/\sqrt n)}\omega_d(z')}.$$
Define
$$\rho_{n,R}:=\exp \bigl [2 \sup_{|z'|\leq R} \bigl|n\phi(z'/\sqrt n)-\sum_{j=1}^d\lambda_j|z'_j|^2\bigr|\bigr ].$$
Then
$$\frac{1}{N}B_n(0)\leq \rho_{n,R}\cdot \frac{d!|p_n(0)|^2}{\int_{|z'|\leq R} |p_n(z'/\sqrt n)|^2 e^{-2\sum_{j=1}^d\lambda_j|z'_j|^2}\omega_d(z')}.$$
Applying the subaveraging property to the psh function $|p_n|^2$ on the ball $\{|z'|\leq R\}$ with respect to the radial probability measure 
$$e^{-2\sum_{j=1}^d\lambda_j|z'_j|^2}\omega_d(z)/\int_{|z'|\leq R}  e^{-2\sum_{j=1}^d\lambda_j|z'_j|^2}\omega_d(z')$$ we obtain
$$\frac{1}{N}B_n(0)\leq  \rho_{n,R}\cdot \frac{d!}{\int_{|z'|\leq R}  e^{-2\sum_{j=1}^d\lambda_j|z'_j|^2}\omega_d(z')}.$$
We now assume the second order partial derivatives of $\phi$ exist at $0$. By (\ref{c11more}), $\rho_{n,R}\to 1$ as $n\to \infty$; thus for all $R>0$
$$\limsup_{n\to \infty} \frac{1}{Nd!}B_n(0)\leq \frac{1}{\int_{|z'|\leq R}  e^{-2\sum_{j=1}^d\lambda_j|z'_j|^2}\omega_d(z')}.$$
Letting $R\to \infty$ the Gaussian integral goes to $\frac{\pi^d}{2^d\lambda_1 \cdots \lambda_d}$ if all $\lambda_j >0$ and to $+\infty$ otherwise. Thus we have verified (\ref{asymp0}); i.e., we have shown:
\begin{equation}
\label{morse}
\limsup_{n\to \infty} \frac{1}{Nd!}B_n(z)=\limsup_{n\to \infty} \frac{1}{n^d}B_n(z)\leq \chi_P(z)\det (dd^c\phi(z))
\end{equation}
for a.e. $z\in \CC^d$.

Finally, we utilize (weighted) pluripotential theory to obtain the final results. By (\ref{locbound}), 
\begin{equation}
\label{oldmorse2}
\frac{1}{n^d}|p_n(z)|^2e^{-2n\phi(z)}/||p_n||_{n\phi}^2\leq \frac{1}{n^d}B_n(z)\leq C_D \ \hbox{for} \ z\in D.
\end{equation}
We claim that
\begin{equation}
\label{forld}
 \frac{1}{n^d}B_n(z)\leq C_De^{-2n(\phi(z)-P(\phi)(z))} \ \hbox{on} \ \CC^d.
\end{equation}
To see this, note that for any $p_n\in \mathcal P_n$ with $||p_n||^2_{n\phi}=n^{-d}$, by (\ref{oldmorse2}) 
$$  |p_n(z)|^2e^{-2n\phi(z)}\leq C_D \ \hbox{on} \ D.$$
But then 
$$\frac{1}{2n}\log |p_n(z)|^2\leq \phi(z)+\frac{1}{2n}\log C_D  \ \hbox{on} \ D$$
so that, from (\ref{equivdef}), 
$$\frac{1}{2n}\log |p_n(z)|^2\leq P(\phi)(z)+\frac{1}{2n}\log C_D  \ \hbox{on} \ \CC^d.$$
Thus from (\ref{bn})
$$ \frac{1}{n^d}B_n(z)=\sup_{||p_n||^2_{n\phi}=n^{-d}} |p_n(z)|^2e^{-2n\phi(z)}\leq 
C_De^{-2n(\phi(z)-P(\phi)(z))} \ \hbox{on} \ \CC^d.$$
In particular, 
$$ \lim_{n\to \infty}\frac{1}{n^d}B_n(z) =0 \  \hbox{on} \ \CC^d \setminus D.$$
Note then (\ref{morse}) can be strengthened to 
\begin{equation}
\label{morse2}
\limsup_{n\to \infty} \frac{1}{Nd!}B_n(z)=\limsup_{n\to \infty} \frac{1}{n^d}B_n(z)\leq \chi_{D\cap P}(z)\det (dd^c\phi(z))
\end{equation}
for a.e. $z\in \CC^d$.

From (\ref{forld}) and the growth assumption on $\phi$, for a sufficiently large $R$, there is a $C$ with 
$$\frac{1}{n^d}B_n(z)\leq C|z|^{-2n\epsilon} \ \hbox{for} \ |z|>R;$$ 
then dominated convergence (recall (\ref{locbound})) shows that
\begin{equation}
\label{offd}
\lim_{n\to \infty}\int_{\CC^d \setminus D}\frac{1}{n^d}B_n\omega_d =0.\end{equation}

Next we show
\begin{equation}
\label{ond}
\lim_{n\to \infty}\int_{D}\frac{1}{n^d}B_n\omega_d =\int_{D\cap P}
(2\pi dd^c\phi)^d/d!.
\end{equation}
To prove (\ref{ond}), we know that
$$\int_{\CC^d}B_n\omega_d = N\asymp n^d/d!$$
and using (\ref{offd}) we have
$$1/d! = \lim_{n\to \infty}\int_{\CC^d}\frac{1}{n^d}B_n\omega_d= \lim_{n\to \infty}\int_{D}\frac{1}{n^d}B_n\omega_d.$$
On the other hand, by (\ref{morse}) applied to $D$ and Fatou's lemma, we have
$$1/d! = \lim_{n\to \infty}\int_{D}\frac{1}{n^d}B_n\omega_d\leq \int_{D\cap P}\frac{(dd^c\phi)^d}{(2\pi)^dd!}.$$
But from the first part of this theorem, we can replace $(dd^c\phi)^d$ by $(dd^cP(\phi))^d$ which has total mass $(2\pi)^d$ on $D\cap P$; hence
$$1/d! = \lim_{n\to \infty}\int_{D}\frac{1}{n^d}B_n\omega_d\leq \int_{D\cap P}\frac{(dd^cP(\phi))^d}{(2\pi)^dd!} =1/d!.$$
This gives (\ref{ond}). We will use this relation, together with (\ref{morse2}), to show that 
$$\frac{1}{n^d}B_n \to \chi_{D\cap P}\det (dd^c\phi) \ 
\hbox{in} \ L^1(\CC^d).$$
First we prove the following measure-theoretic lemma (cf., \cite{[toep]}).
\begin{lemma}
Let $(X,\mu)$ be a measure space and let $\{f_n\}$ be a sequence of uniformly bounded, integrable functions on $X$. If $f$ is a bounded, integrable function on $X$ with
\begin{enumerate}
\item $\lim_{n\to \infty} \int_X f_n d\mu =  \int_X f d\mu$ and
\item $\limsup_{n\to \infty} f_n \leq f \ \hbox{a.e.} \ \mu,$
\end{enumerate}
then $f_n\to f$ in $L^1(X,\mu)$.
\end{lemma}
\begin{proof}
Let $\chi_n$ be the characteristic function of $\{x\in X: f_n - f\geq 0\}$. Since
$$\int_X |f_n-f|d\mu =  \int_X \chi_n (f_n-f)d\mu +  \int_X (1-\chi_n) (f-f_n)d\mu$$
$$ = 2 \int_X \chi_n (f_n-f)d\mu  + \int_X (f-f_n)d\mu,$$ 
using (1) we have 
$$\limsup_{n\to \infty} \int_X |f_n-f|d\mu =2\limsup_{n\to \infty} \int_X \chi_n (f_n-f)d\mu.$$
By Fatou's lemma, 
$$\limsup_{n\to \infty} \int_X \chi_n (f_n-f)d\mu \leq \int_X [\limsup_{n\to \infty} \chi_n (f_n-f)]d\mu.$$
Now the result follows from (2).
\end{proof}
We set $f_n := \frac{1}{n^d}B_n$ and $f:= \chi_{D\cap P}\det (dd^c\phi)$ then from (\ref{ond}) and (\ref{morse2}) we get the convergence $\frac{1}{n^d}B_n \to \chi_{D\cap P}\det (dd^c\phi)$ in $L^1(\CC^d)$. This implies weak-* convergence of $\frac{1}{n^d}B_n\omega_d$ to $\chi_{D\cap P}\det (dd^c\phi)\omega_d$ and completes the proof of the theorem.
\end{proof}

We make two important remarks. 
\begin{enumerate}
\item The above argument yields, since $\lim_{n\to \infty}\int_{\CC^d \setminus D}\frac{1}{n^d}B_n\omega_d =0$, that the compactly supported measures
\begin{equation}
\label{weakstarleb}
\frac{1}{N}B_n\cdot \chi_S\omega_d \to \frac{1}{(2\pi)^d}(dd^cP(\phi))^d \ \hbox{weak}-*
\end{equation}
where $S$ is any set containing $D$.
\item We have a weighted Bernstein-Markov property:
\begin{equation}
\label{weightedleb}
\sup_{\CC^d}|p_n e^{-n\phi}|\leq C_n [\int_{\CC^d}|p_n|^2e^{-2n\phi}\omega_d]^{1/2}<+\infty
\end{equation}
for $p_n\in \mathcal P_n$ and $n>n_0(\epsilon,d)=d/\epsilon$ where $C_n^{1/n}\to 1$. For 
$$\sup_{\CC^d}|p_n e^{-n\phi}|=\sup_{D}|p_n e^{-n\phi}|$$
$$\leq C_n[\int_{D}|p_n|^2e^{-2n\phi}\omega_d]^{1/2}\leq C_n [\int_{\CC^d}|p_n|^2e^{-2n\phi}\omega_d]^{1/2}$$
since $(D,\omega_d|_D,\phi|_D)$ satisfies a weighted Bernstein-Markov property. This follows from Proposition \ref{detdprop} or see \cite{bloom}.

\end{enumerate}

\subsection {Differentiability of $\mathcal E \circ P$.} \label{diffep}

We turn to the main ``differentiability'' result. Generally we will fix a function $v\in L^+(\CC^d)$ which will be in the second slot of all energy terms and we simply write, for any $\tilde v\in L^+(\CC^d)$,
$${\mathcal E}(\tilde v):={\mathcal E}(\tilde v,v).$$
If we take a specific $v$, we revert to the notation on the right-hand-side of this equation. For a closed subset $E\subset \CC^d$ and an admissible weight $a$ on $E$, we write $P(a)$ (sometimes $P_E(a)$) to denote the regularized weighted extremal function $V_{E,a}^*$.

We state two versions of differentiability of $\mathcal E \circ P$. One version, Proposition \ref{mainprop}, is for a second admissible weight $b$ on $E$ where we consider the perturbed weight $a+t(b-a)$ and the associated weighted extremal function $P(a+t(b-a))$ and show the differentiability of 
$$F(t):=\mathcal E( P(a+t(b-a))).$$
If $E$ is unbounded, we will need to make an additional assumption on $u:=b-a$ so that (\ref{loclip}) below holds; also, in this case, we restrict to $0\leq t\leq 1$ so that $a+t(b-a)=tb+(1-t)a$, being a convex combination of $a, b$, is admissible on $E$. The other version, Proposition \ref{diffpropcor}, is for a compact set $E$ and an arbitrary real $t$. We take a function $u\in C(E)$, consider the perturbed weight $a+tu$, and show the differentiability of 
$$F(t):=\mathcal E( P(a+tu)).$$
Apriori, since $t\in \RR$, we must assume $u$ is continuous so that $a+tu$ is an admissible (lowersemicontinuous) weight. However, to rigorously prove the results, we assume some regularity of $a,b$ and/or $u$: $C^2-$smoothness (or at least $C^{1,1}-$smoothness) will suffice.

\begin{proposition} 
\label{mainprop}
Let $v\in L^+(\CC^d)$. For admissible weights $a,b\in C^2(E)$ on a closed set $E\subset \CC^d$, let $u:=b-a$ and let 
$$F(t):={\mathcal E}(P(a+tu),v))$$
for $t\in \RR$. If $E$ is unbounded, we assume (\ref{unbhyp}) holds and $0\leq t\leq 1$. Then
\begin{equation}
\label{Pdifft}
F'(t)= (d+1)\int_{\CC^d} u (dd^c P(a+tu))^d.
\end{equation}

\end{proposition}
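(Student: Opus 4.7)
The plan is to sandwich the difference quotient $(F(t+s) - F(t))/s$ between two integrals, both tending to $(d+1)\int u\, (dd^c P(a+tu))^d$ as $s \to 0$. The two essential tools are the concavity of $\mathcal E(\cdot, v)$ on its first slot (recorded in (\ref{gconcave})) and the coincidence principle, which from (\ref{suppw}) tells us that $\mathrm{supp}\,(dd^c P(a+tu))^d \subset D(t) := \{P(a+tu) = a+tu\}$. Proposition \ref{diffprop} cannot be invoked directly, because $t \mapsto P(a+tu)$ is nonlinear in $t$.

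Applying (\ref{gconcave}) with $(u_1,u_2) = (P(a+tu), P(a+(t+s)u))$, and again with the roles swapped, produces
\begin{align*}
F(t+s)-F(t) &\leq (d+1) \int_{\CC^d} [P(a+(t+s)u) - P(a+tu)] (dd^c P(a+tu))^d, \\
F(t+s)-F(t) &\geq (d+1) \int_{\CC^d} [P(a+(t+s)u) - P(a+tu)] (dd^c P(a+(t+s)u))^d.
\end{align*}
On $D(t)$ one has $P(a+tu) = a+tu$, while $P(a+(t+s)u) \leq a+(t+s)u$ holds globally, so pointwise on $D(t)$,
$$P(a+(t+s)u) - P(a+tu) \leq (a+(t+s)u)-(a+tu) = su,$$
with the reversed inequality on $D(t+s)$. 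Integrating these pointwise bounds against the respective Monge--Amp\`ere measures (which are supported in $D(t)$ and $D(t+s)$ by (\ref{suppw})) collapses the sandwich to
$$(d+1) s \int u (dd^c P(a+(t+s)u))^d \leq F(t+s)-F(t) \leq (d+1) s \int u (dd^c P(a+tu))^d,$$
valid for any $s$ of either sign.

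Dividing by $s$ (with the obvious reversal of inequalities when $s<0$) reduces the problem to showing $\int u (dd^c P(a+(t+s)u))^d \to \int u (dd^c P(a+tu))^d$ as $s \to 0$. The Lipschitz estimate (\ref{loclip}) gives uniform convergence $P(a+(t+s)u) \to P(a+tu)$ on $\CC^d$, whence weak-$*$ convergence of the Monge--Amp\`ere measures by Bedford--Taylor continuity. Since $u = b-a$ is continuous (indeed $C^2$) on $E$ and each measure lives in $E$, testing against $u$, or against a compactly supported continuous cutoff, will close the argument --- provided all these measures sit inside one common compact set.

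That last compactness proviso is the principal obstacle, and the hypothesis (\ref{unbhyp}) is designed exactly to handle it in the unbounded case: it packages $\bigcup_{0 \leq \tau \leq 1} D(\tau)$ inside a fixed bounded set on which $u$ lies in $L^\infty$, which is precisely what makes the weak-$*$ passage legitimate after cutoff. The $C^2$ smoothness of $a$ and $b$ enters only in guaranteeing that $u$ is a valid continuous test function; the concavity-plus-coincidence heart of the argument requires no more regularity than that.
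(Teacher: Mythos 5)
Your argument is correct, and it takes a genuinely different route from the paper's. The paper splits the work into two lemmas: Lemma~\ref{keylemma2}, which ``linearizes'' $\mathcal{E}\circ P$ by showing that only the single term $\int[P(a+tu)-P(a)]\,(dd^cP(a))^d$ matters up to $o(t)$ (this uses concavity of $P$ and of $\mathcal{E}$ plus Proposition~\ref{diffprop} in an asymmetric two-sided argument about a limit $A$); and Lemma~\ref{mamass}, which shows $\int_{D(0)\setminus D(t)}(dd^cP(a))^d = O(t)$ via the $L^+$-comparison principle Proposition~\ref{L+comp} --- this is the one place where the $C^2$ (or $C^{1,1}$) smoothness of $u$ is actually consumed. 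You instead apply the concavity inequality (\ref{gconcave}) symmetrically in both directions to bracket $F(t+s)-F(t)$, then exploit that each Monge--Amp\`ere measure sits on its \emph{own} coincidence set $D(t)$ resp.\ $D(t+s)$ to collapse each bound exactly to $(d+1)s\int u\,(dd^c P(\cdot))^d$ with the respective measure, and close by invoking Bedford--Taylor weak-$*$ continuity of the Monge--Amp\`ere operator under the uniform convergence supplied by (\ref{loclip}). This replaces the ``single-measure plus mass estimate'' structure with a ``two-measure sandwich plus continuity'' structure: you never touch the comparison principle, which is why your argument dispenses with the smoothness of $u$ and runs on mere continuity. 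What the paper's route buys is self-containment within the $L^+$-comparison toolbox and independence from weak-$*$ continuity of $(dd^c\cdot)^d$ under uniform convergence (a standard but not elementary Bedford--Taylor fact); what your route buys is brevity, symmetry, and weaker regularity hypotheses. Both versions rely on the same three primitives --- concavity of $\mathcal{E}$, the coincidence principle (\ref{suppw}), and the Lipschitz estimate (\ref{loclip}) with (\ref{unbhyp}) keeping all coincidence sets in one compact --- but they assemble them in different orders. One small imprecision to tighten: the inequality $P(\phi)\le\phi$ holds only quasi-everywhere on $E$, not literally ``globally'' as you write; but since Monge--Amp\`ere measures of $L^+$ functions charge no pluripolar sets, the pointwise bounds you use are valid $\mu$-a.e., which is all the integration needs.
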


\begin{proposition}
\label{diffpropcor}
Let $v\in L^+(\CC^d)$. For an admissible weight $a$ on a compact set $E\subset \CC^d$ and $u\in C^2(E)$, let 
$$F(t):={\mathcal E}(P(a+tu),v)$$
for $t\in \RR$. Then
\begin{equation}
\label{Pdifftcor}
F'(t)= (d+1)\int_{\CC^d} u (dd^c P(a+tu))^d.
\end{equation}
\end{proposition}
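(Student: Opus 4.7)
The plan is to deduce Proposition~\ref{diffpropcor} from Proposition~\ref{mainprop} by a direct reduction. The essential point is that on a compact set $E$, adding a continuous function to an admissible weight produces another admissible weight. Specifically, since $u\in C^2(E)$ is bounded on the compact set $E$, the function $b:=a+u$ is lower semicontinuous (the sum of an lsc function and a continuous one) and
$$\{z\in E: e^{-b(z)}>0\} \;=\; \{z\in E : a(z)<+\infty\} \;=\; \{z\in E: e^{-a(z)}>0\},$$
which is non-pluripolar by admissibility of $a$; hence $b$ is an admissible weight on $E$. With $u=b-a$ holding identically, we have
$$F(t) \;=\; \mathcal{E}(P(a+t(b-a)),v),$$
which is precisely the functional to which Proposition~\ref{mainprop} applies.

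Next I would invoke Proposition~\ref{mainprop} with this choice of $(a,b)$. Because $E$ is compact, the auxiliary hypothesis (\ref{unbhyp}) is vacuous and no restriction $0\le t\le 1$ is imposed, so the derivative formula (\ref{Pdifft}) is valid for every $t\in\RR$. Reading it off immediately yields
$$F'(t) \;=\; (d+1)\int_{\CC^d}(b-a)\,(dd^c P(a+t(b-a)))^d \;=\; (d+1)\int_{\CC^d} u\,(dd^c P(a+tu))^d,$$
which is exactly (\ref{Pdifftcor}). Thus, modulo the regularity issue addressed below, Proposition~\ref{diffpropcor} is nothing more than the compact-set specialization of Proposition~\ref{mainprop} along the one-parameter family $a+tu$.

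The step I expect to be the main obstacle is purely a matter of regularity: Proposition~\ref{mainprop} is stated for admissible weights $a,b\in C^2(E)$, whereas Proposition~\ref{diffpropcor} only assumes $u\in C^2(E)$, with $a$ a general admissible weight. I would close this gap either by observing that the two ingredients one expects to drive the proof of Proposition~\ref{mainprop} — the concavity inequality (\ref{gconcave}) for $\mathcal{E}$ and the Lipschitz estimate (\ref{loclip})–(\ref{loclip2}) for $P$ — both see only the difference $u=b-a$, so that $C^2$-regularity of this difference alone should suffice; or, failing that, by the standard approximation scheme, namely regularizing $a$ from above by $C^2$-smooth admissible weights $a_\varepsilon\downarrow a$ (such a sequence being provided by the construction in Proposition~\ref{polypoly} of Appendix~1), setting $b_\varepsilon:=a_\varepsilon+u\in C^2(E)$, applying Proposition~\ref{mainprop} to the pair $(a_\varepsilon,b_\varepsilon)$, and then letting $\varepsilon\downarrow 0$. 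The passage to the limit on the left-hand side is handled by (\ref{downlemma}) (since $P(a_\varepsilon+tu)\downarrow P(a+tu)$), while the right-hand side is handled by the Bedford--Taylor continuity of Monge--Ampère measures along decreasing sequences of locally bounded psh functions, combined with the continuity of $u$ on $E$.
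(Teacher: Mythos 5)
Your instinct that on a compact set only the difference $u$ needs to be $C^2$ is correct, and the paper confirms it, but your proposed reduction runs in the wrong direction and both of your fallback plans have problems. The paper does not deduce Proposition~\ref{diffpropcor} from Proposition~\ref{mainprop}; it proves them simultaneously, and the $C^2$-hypothesis enters at exactly one place: the Monge--Amp\`ere mass estimate (\ref{item2}), namely $\lim_{t\to 0}\int_{D(0)\setminus D(t)}(dd^cP(a))^d=0$. That estimate is Lemma~\ref{mamass}, which is stated under precisely the hypotheses of Proposition~\ref{diffpropcor} ($E$ compact, $a$ an arbitrary admissible weight, $u\in C^2(E)$); its proof uses the $C^2$-regularity of $u$ to choose $M>0$ so large that $u+M\psi$ is psh, where $\psi=\frac{1}{2}\log(1+|z|^2)$, which then feeds into the comparison principle (Proposition~\ref{L+comp}). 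The unbounded case, Corollary~\ref{mamassbis}, is \emph{reduced} to Lemma~\ref{mamass}, so the paper's logical arrow points from the compact case with general admissible $a$ (Proposition~\ref{diffpropcor}) to Proposition~\ref{mainprop}, not the other way around. Your ``plan A'' reaches the right conclusion, but it points at the wrong steps: the concavity inequality (\ref{gconcave}) and the Lipschitz estimates (\ref{loclip})--(\ref{loclip2}) that you single out require no smoothness at all, nor does Lemma~\ref{keylemma2}; so the observation ``$C^2$-regularity of $u$ alone suffices'' is not a formality to be checked but precisely the content of Lemma~\ref{mamass}, and reducing to Proposition~\ref{mainprop} would only be circular.

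Your ``plan B'' cannot work as written. An admissible weight $a=-\log w$ is lower semicontinuous, and a pointwise-decreasing limit of continuous functions on a compact set is upper semicontinuous, so a sequence of continuous (let alone $C^2$) weights $a_\varepsilon\downarrow a$ exists only when $a$ is already continuous. Proposition~\ref{polypoly}, which you cite for this purpose, in fact produces the opposite monotonicity: continuous $w_j\downarrow w$, hence $Q_j=-\log w_j\uparrow Q$ and $V_{K_j,Q_j}\uparrow V_{K,Q}$, not a family of smooth weights decreasing to $a$ from above.
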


We prove Proposition \ref{mainprop} and Proposition \ref{diffpropcor} simultaneously.

\begin{proof} We may take $v=P(a)$. As in the proof of Proposition \ref{diffprop} we prove only the one-sided limit as $t\to 0^+$:  
\begin{equation}
\label{Pdiff0} 
F'(0):=\lim_{t\to 0^+}\frac{F(t)-F(0)}{t}= (d+1)\int_{\CC^d} u (dd^c P(a))^d.
\end{equation}
This implies (\ref{Pdifft}). For if $s$ is fixed,
$$G(t):=F(s+t)=   {\mathcal E}(P(a+(s+t)u),v))$$
$$= {\mathcal E}(P(a+su+tu),v))$$
and applying (\ref{Pdiff0}) to $G$ (so $a\to a+su$) we get 
$$G'(0)=F'(s)= (d+1)\int_{\CC^d} u (dd^c P(a+su))^d.$$

Note that $F(0)=0$ and to verify (\ref{Pdiff0}) it suffices to prove 
\begin{equation}
\label{smooth5.7at0}
\mathcal E(P(a+tu),P(a))=(d+1)t\int_{\CC^d} u(dd^cP(a))^d +o(t).
\end{equation}
We need two ingredients for (\ref{smooth5.7at0}): 
\begin{equation}
\label{item1}
\mathcal E(P(a+tu),P(a))=(d+1)\int_{\CC^d} [P(a+tu)-P(a)](dd^cP(a))^d +o(t)
\end{equation}
and
\begin{equation}
\label{item2}
\lim_{t\to 0} \int_{D(0)-D(t)} (dd^cP(a))^d  =0
\end{equation}
where
$$D(t):=\{z \in \CC^d: \ P(a+tu)(z) = (a+tu)(z)\}.$$
We will state and prove (\ref{item1}) and (\ref{item2}) as separate lemmas.

Given (\ref{item1}) and (\ref{item2}), and observing from (\ref{suppw}) that 
\begin{equation}
\label{support}
\hbox{supp}(dd^cP(a))^d  \subset D(0),
\end{equation}
(\ref{smooth5.7at0}) follows as in \cite{[BB]}, p. 28:
$$\mathcal E(P(a+tu),P(a))=(d+1)\int_{\CC^d} [P(a+tu)-P(a)](dd^cP(a))^d +o(t)$$
$$=(d+1)\int_{D(0)-D(t)} [P(a+tu)-P(a)](dd^cP(a))^d $$
$$+(d+1)\int_{D(0)\cap D(t)} [P(a+tu)-P(a)](dd^cP(a))^d +o(t)$$
$$=(d+1)\int_{D(0)-D(t)} [P(a+tu)-P(a)](dd^cP(a))^d$$
$$ +(d+1)t\int_{D(0)\cap D(t)} u(dd^cP(a))^d +o(t)$$
$$=(d+1)\int_{D(0)-D(t)} [P(a+tu)-P(a)-tu](dd^cP(a))^d$$
$$ +(d+1)t\int_{D(0)} u(dd^cP(a))^d +o(t)$$
since $P(a+tu)-P(a)= tu$ on $D(0)\cap D(t)$. Now (\ref{loclip}) or (\ref{loclip2}) implies 
$$|P(a+tu)-P(a)-tu| =0(t)$$
on the {\it bounded} set $D(0)-D(t)$
(recall if $E$ is unbounded we assume (\ref{unbhyp}) holds in the setting of Proposition \ref{mainprop}) and this fact, combined with (\ref{item2}) and (\ref{support}), finishes the proof. 
\end{proof}

In (\ref{item1}), since $(dd^cP(a))^d$ is supported in $D(0)$, 
$$\int_{\CC^d} [P(a+tu)-P(a)](dd^cP(a))^d= \int_{D(0)} [P(a+tu)-P(a)](dd^cP(a))^d;$$
and, on $D(t)\cap D(0)$, we have $P(a+tu)-P(a)=tu$. Then the  content of (\ref{item2}) is that the contribution to this integral on $D(0)- D(t)$ is negligible. It is in proving (\ref{item2}) that the $C^2-$smoothness (or at least $C^{1,1}-$smoothness) is needed.

\begin{lemma} 
\label{mamass}
Let $a$ be an admissible weight on a compact set $E$ and let $u\in C^2(E)$. Then
\begin{equation}
\lim_{t\to 0} \int_{D(0)-D(t)} (dd^cP(a))^d  =0
\end{equation}
where $D(t)=\{P(a +tu)=a +tu\}$ for $t\in \RR$.
\end{lemma}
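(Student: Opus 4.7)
Let $\nu:=(dd^cP(a))^d$; by (\ref{suppw}) and the subsequent remarks, $\nu$ is supported in $D(0)=\{P(a)=a\}$ up to a polar set. My plan is to reduce the assertion to a measure estimate on an ``almost-contact'' sublevel set, then to the case $a\in C^2$ by approximation, and finally to invoke Berman's $C^{1,1}$ regularity.

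First I would extract the set-theoretic containment. From (\ref{loclip2}), $|P(a+tu)-P(a)|\le C|t|$ with $C=\sup_E|u|$; combined with $P(a)=a$ on $D(0)$ and $P(a+tu)\le a+tu$ q.e.\ on $E$, the nonnegative gap
\[
g_t:=(a+tu)-P(a+tu)
\]
satisfies $0\le g_t\le 2C|t|$ q.e.\ on $D(0)$ and vanishes precisely where $z\in D(t)$. Modulo a $\nu$-null polar set this yields
\[
D(0)\setminus D(t)\ \subset\ D(0)\cap\{0<g_t\le 2C|t|\},
\]
so it suffices to show this sublevel set is $\nu$-negligible as $t\to 0$.

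Next I would reduce to $a\in C^2(E)$. Using Proposition \ref{polypoly}, take admissible weights $a_k\in C^2$ on locally regular compacta with $a_k\downarrow a$; (\ref{wtdapprox}) gives $P(a_k)\uparrow P(a)$, and Lemma \ref{BT63} together with Bedford--Taylor monotone convergence gives weak-$*$ convergence of $\nu_k:=(dd^cP(a_k))^d$ to $\nu$. The containment above holds at each level with constants uniformly controlled in $k$, so a limiting argument reduces the lemma to its smooth counterpart.

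For $a\in C^2$, Theorem \ref{bermancn} provides $P(a)\in C^{1,1}(\CC^d)$ and $\nu$ absolutely continuous, equal to $\chi_{D(0)\cap P}\det(dd^c a)\omega_d$ a.e.; the same holds for $P(a+tu)$ uniformly in small $t$. The Kinderlehrer--Stampacchia lemma (p.\ 53 of \cite{KS}), applied to the Lipschitz function $g_t$, gives $\nabla g_t=0$ a.e.\ on $\{g_t=0\}\supset D(t)$; combined with $\|g_t\|_{L^\infty(D(0))}=O(t)$ and uniform $C^{1,1}$ bounds on $g_t$, a covering argument near the free boundary $\partial D(0)$ shows $|\{0<g_t\le 2C|t|\}\cap D(0)|\to 0$, hence $\nu(D(0)\setminus D(t))\to 0$. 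The main obstacle will be precisely this final free-boundary estimate: the interior of $D(0)$ is stable under small perturbations of the obstacle by continuity, and the exterior $D(0)^c$ is $\nu$-irrelevant, but the boundary regime requires the uniform $C^{1,1}$ control on $P(a+tu)$ to rule out cusp-like sublevel sets, which is where the $C^2$-smoothness hypothesis on $u$ (and the smoothness of $a$ obtained after approximation) is essentially used.
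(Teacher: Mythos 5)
Your approach diverges substantially from the paper's, and several of its steps do not work as stated. The paper's argument is short and self-contained: observe that $D(0)\setminus D(t)\subset S:=\{P(a+tu)<P(a)+tu\}$ (your containment is essentially equivalent modulo a polar set), that $S\cap D(t)=\emptyset$ (since $P(a+tu)=a+tu\ge P(a)+tu$ on $D(t)$), and that $(dd^cP(a+tu))^d$ is carried by $D(t)$. One then extends $u$ to a compactly supported $C^\infty$ function, picks $M$ with $u+M\psi$ psh for $\psi=\tfrac12\log(1+|z|^2)$, rewrites $S=\{P(a+tu)+tM\psi<P(a)+t(u+M\psi)\}$ so that both sides (after dividing by $1+tM$) lie in $L^+(\CC^d)$, and applies Proposition~\ref{L+comp}; expanding $(dd^c[P(a+tu)+tM\psi])^d$ yields $\int_S(dd^cP(a))^d\le\int_S(dd^cP(a+tu))^d+O(t)=O(t)$. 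No regularity of $P(a)$ is needed, only $C^2$-smoothness of $u$, which is used exactly to produce the psh correction $M\psi$. The lemma is stated for an arbitrary admissible weight $a$ on a compact $E$, and the paper's proof works at that generality.

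Your proposal has three genuine gaps. First, the ``limiting argument'' reducing to $a\in C^2$ is not justified: weak-$*$ convergence of $\nu_k=(dd^cP(a_k))^d$ to $\nu$ gives no control over $\nu_k(D_k(0)\setminus D_k(t))$ versus $\nu(D(0)\setminus D(t))$, since the contact sets $D_k(0),D_k(t)$ change with $k$ and are neither open nor closed, and there is no uniformity in the estimates across $k$ that would allow passing to the limit. Second, Theorem~\ref{bermancn} is a statement about $P_{\CC^d}(\phi)$ for a global $C^{1,1}$ weight $\phi$ with the growth condition (\ref{stradm}); the weight $a$ lives on a compact set $E$, and you would need to extend it globally in a way that preserves both the extremal function and the $C^{1,1}$ hypothesis before Berman's theorem applies. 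Third, and most seriously, the asserted estimate $|\{0<g_t\le 2C|t|\}\cap D(0)|\to 0$ via a ``covering argument near the free boundary'' is not a consequence of the Kinderlehrer--Stampacchia lemma or of $C^{1,1}$ bounds alone: without a non-degeneracy hypothesis on the obstacle problem, a $C^{1,1}$ solution can have a sublevel set $\{0<g_t\le\delta\}$ whose Lebesgue measure does not go to $0$ with $\delta$. That free-boundary measure estimate is precisely what the comparison-principle trick in the paper is designed to avoid, and your proposal leaves it unproved.
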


\begin{proof} The hypothesis $u\in C^2(E)$ means that $u$ is the restriction to $E$ of a $C^{\infty}$ function (which we also denote by $u$) on $\CC^d$; clearly we can take this function to have compact support. We prove the result for $t>0$; i.e $t\to 0^+$. We can find $M>0$ sufficiently large depending on $u$ and its support so that $u+M\psi$ is psh where $\psi(z) = \frac{1}{2} \log (1+|z|^2)$. Observing that 
$$D(0)\setminus D(t)\subset S$$
where 
$$S:=\{P(a+tu)< P(a) +tu\}=\{P(a+tu)+tM\psi < P(a) + t(u+M\psi)\}$$
and
$$D(t) \cap \{P(a+tu)< P(a) +tu\}=\emptyset,$$
we have
$$\int_{D(0)-D(t)} (dd^cP(a))^d  \leq \int_S (dd^cP(a))^d $$
$$\leq \int_S [dd^c(P(a)+t(u+M\psi)]^d\leq \int_S [dd^c(P(a+tu)+tM\psi)]^d$$
$$=\int_S [dd^c(P(a+tu))]^d +0(t) =0(t).$$
Here, the inequality in the second line comes from the $L^+-$comparison principle, Proposition \ref{L+comp}, since each of $(\frac{1}{1+tM})[P(a+tu)+tM\psi]$ and $(\frac{1}{1+tM})[P(a) + t(u+M\psi)]$ belong to $L^+(\CC^d)$.
\end{proof}

\begin{corollary} 
\label{mamassbis}
Let $a,b\in C^2(E)$ be admissible weights on a closed, unbounded set $E$. If (\ref{unbhyp}) holds then
\begin{equation}
\lim_{t\to 0} \int_{D(0)-D(t)} (dd^cP(a))^d  =0
\end{equation}
where $D(t)=\{P(a +t(b-a))=a +t(b-a)\}$ for $0\leq t\leq 1$.
\end{corollary}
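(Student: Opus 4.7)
The plan is to mimic the argument of Lemma \ref{mamass} after localizing everything to the bounded set $B := \bigcup_{0 \le t \le 1} D(t)$ furnished by hypothesis (\ref{unbhyp}). The key observation driving the proof is that by (\ref{suppw}) the measure $(dd^c P(a))^d$ is supported on $D(0) \subset B$ and $(dd^c P(a+t(b-a)))^d$ is supported on $D(t) \subset B$, so only the values of $u := b-a$ near $B$ are relevant to the mass estimates. This means one can safely replace $u$ by any $C^2$ function that agrees with $u$ on a neighborhood of $B$ and has compact support, which is exactly what is needed to import the compact-case argument.

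Concretely, since $a, b \in C^2(E)$, I would extend $u$ to a $C^2$ function on $\CC^d$, fix $R > 0$ with $\overline{B} \subset \mathcal B_{R/2}$, and choose $\chi \in C^{\infty}_c(\CC^d)$ with $\chi \equiv 1$ on $\mathcal B_{R/2}$ and $\operatorname{supp}\chi \subset \mathcal B_R$. Setting $u_\chi := \chi u$, which is a compactly supported $C^2$ function, I can pick $M > 0$ large enough that $u_\chi + M\psi$ is psh on $\CC^d$, where $\psi(z) = \frac{1}{2}\log(1+|z|^2)$; in particular $u_\chi + M\psi \in L^+(\CC^d)$.

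I then introduce
$$S_\chi := \{P(a+tu) < P(a) + tu_\chi\} = \{P(a+tu) + tM\psi < P(a) + t(u_\chi + M\psi)\}$$
and verify $D(0) \setminus D(t) \subset S_\chi$: on this set $P(a) = a$, $P(a+tu) < a+tu$, and $u = u_\chi$ since $D(0)\setminus D(t) \subset B \subset \mathcal B_{R/2}$. Because both $\frac{1}{1+tM}(P(a+tu) + tM\psi)$ and $\frac{1}{1+tM}(P(a) + t(u_\chi+M\psi))$ lie in $L^+(\CC^d)$, the comparison principle Proposition \ref{L+comp} applies on $S_\chi$, and combined with the elementary monotonicity $(dd^c P(a))^d \le [dd^c(P(a) + t(u_\chi + M\psi))]^d$ it yields
$$\int_{S_\chi} (dd^c P(a))^d \le \int_{S_\chi}[dd^c(P(a+tu) + tM\psi)]^d = \int_{S_\chi}(dd^c P(a+tu))^d + O(t).$$
The main term on the right vanishes: $(dd^c P(a+tu))^d$ is supported on $D(t) \subset \mathcal B_{R/2}$, while on $S_\chi \cap \mathcal B_{R/2}$ the identity $u_\chi = u$ gives $P(a+tu) < P(a) + tu \le a+tu$, which rules out points of $D(t)$. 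Since $(dd^c P(a))^d$ is itself supported on $D(0) \subset \mathcal B_{R/2}$, it follows that $\int_{D(0)\setminus D(t)} (dd^c P(a))^d \le \int_{S_\chi} (dd^c P(a))^d \le O(t)$.

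The principal technical obstacle, as in Lemma \ref{mamass}, is arranging that the relevant functions lie in $L^+(\CC^d)$ so that the comparison principle applies; here this dictates building the cutoff around the bounded set $B$, which is exactly what hypothesis (\ref{unbhyp}) provides. The $O(t)$ control on the binomial cross terms in the expansion of $[dd^c(P(a+tu)+tM\psi)]^d$ is uniform in $t \in [0,1]$ because mixed Monge-Amp\`ere masses of $L^+(\CC^d)$ functions are bounded by $(2\pi)^d$, and along the convex combination $a+t(b-a)$ the admissibility growth can be estimated uniformly in $t$; the boundedness of $u$ on $B$ from (\ref{unbhyp}) ensures this uniformity survives the cutoff.
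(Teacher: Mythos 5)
Your proposal is correct and is essentially the paper's argument. The paper phrases the reduction more compactly: since $\bigcup_{0\le t\le 1}D(t)$ is bounded by (\ref{unbhyp}) and the extremal functions $P(a+t(b-a))$ and the measure $(dd^cP(a))^d$ depend only on the weights on this bounded set, one may simply modify $a,b$ outside a large ball so that $a=b$ there, making $u=b-a$ compactly supported, and then invoke Lemma \ref{mamass} directly; your cutoff $u_\chi=\chi u$ is that same reduction carried out explicitly with the comparison-principle computation re-run in place.
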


\begin{proof} First of all, $(dd^cP(a))^d$ has compact support. Also, by (\ref{unbhyp}), the extremal functions $P(a+t(b-a))$ for all $0\leq t\leq 1$ are independent of the values of $a,b$ outside a large ball. Thus  we may assume that $a=b$ outside a fixed ball. In other words, this  case is reduced to the case of Lemma \ref{mamass} where $u=b-a$.
\end{proof}

The content of (\ref{item1}), Lemma \ref{keylemma2} below, is that the contribution of each of the $d+1$ terms in the energy $\mathcal E(P(a+tu),P(a))$ is the same, up to $o(t)$, as that involving the pure Monge-Ampere term $(dd^cP(a))^d$. Again we write
$$F(t):=\mathcal E(P(a+tu)) =\mathcal E(P(a+tu),P(a))$$
$$=\int [P(a+tu)-P(a)][(dd^c P(a+tu))^d +... + (dd^c P(a))^d].$$
Another interpretation of (\ref{item1}) is that to prove the differentiability of $\mathcal E \circ P$, we can replace $\mathcal E$ by its ``linearization'' at $P(a)$. As in the previous arguments, we only give the proof at $t=0$ and for the one-sided limit in (\ref{Pdifftcor}) as $t\to 0^+$. Note in this next result we don't require smoothness of the perturbation $u$.

\begin{lemma} 
\label{keylemma2}
For an admissible weight $a$ on $E$ and $u\in C(E)$, let
$$F(t)=\mathcal E(P(a+tu)) $$
$$=\int [P(a+tu)-P(a)][(dd^c P(a+tu))^d +... + (dd^c P(a))^d]$$
and
$$G(t):= (d+1)\int [P(a+tu)-P(a)](dd^c P(a))^d.$$ 
Then
$$\lim_{t\to 0^+} \frac{F(t)-F(0)}{t}=\lim_{t\to 0^+} \frac{G(t)-G(0)}{t}=\int u(dd^cP(a))^d.$$
\end{lemma}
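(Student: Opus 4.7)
The plan is to compute $\lim_{t\to 0^+} G(t)/t$ directly, and then to pin down the same limit for $F(t)/t$ via a concavity sandwich against the ``symmetric'' counterpart of $G$. Throughout, write $v_t:=P(a+tu)-P(a)$; by (\ref{loclip2}) we have the global Lipschitz bound $\|v_t\|_\infty\le Ct$.

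\emph{Step 1: the limit of $G(t)/t$.} The key observation is that $(dd^cP(a))^d$ is carried by $D(0)=\{P(a)=a\}$, and that on $D(0)\cap D(t)$ both $P(a)=a$ and $P(a+tu)=a+tu$ hold, so $v_t=tu$ identically there. Splitting the integral defining $G(t)$ over $D(0)\cap D(t)$ and $D(0)\setminus D(t)$, the first piece becomes exactly $(d+1)t\int_{D(0)\cap D(t)} u\,(dd^cP(a))^d$, while the second is bounded by $(d+1)Ct\cdot\int_{D(0)\setminus D(t)}(dd^cP(a))^d$, which is $o(t)$ by Lemma \ref{mamass}. Discarding the tail inside the good piece costs only another $o(t)$, and one obtains $G(t)/t\to (d+1)\int u\,(dd^cP(a))^d$. (Consistency with Proposition \ref{diffpropcor} confirms that the $\int u(dd^cP(a))^d$ written in the statement should carry the factor $d+1$.)

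\emph{Step 2: sandwiching $F(t)$ by concavity.} Concavity of $\mathcal E(\cdot,P(a))$ (established just after Proposition \ref{cocycleprop}) applied to the line segment from $P(a)$ to $P(a+tu)$, together with the tangent-line inequality (\ref{gconcav})--(\ref{gconcave}) used in both directions, yields
\[(d+1)\int v_t\,(dd^cP(a+tu))^d \;\le\; F(t) \;\le\; (d+1)\int v_t\,(dd^cP(a))^d = G(t).\]
Write $H(t)$ for the left-hand quantity. If I can show $H(t)/t\to (d+1)\int u\,(dd^cP(a))^d$, then $F(t)/t$ is squeezed to the same limit as $G(t)/t$, which is the content of the lemma.

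\emph{Step 3: the main obstacle --- the symmetric version of Lemma \ref{mamass}.} To treat $H(t)$ by the Step~1 argument with $(dd^cP(a+tu))^d$ in place of $(dd^cP(a))^d$, I need
\[\int_{D(t)\setminus D(0)}(dd^cP(a+tu))^d\longrightarrow 0\quad\text{as }t\to 0^+.\]
I would mirror the proof of Lemma \ref{mamass}: set $S':=\{P(a+tu)>P(a)+tu\}$, observe that $D(t)\setminus D(0)\subset S'$ (on $D(t)\setminus D(0)$, $P(a+tu)=a+tu$ and $P(a)<a$, so $P(a+tu)-tu=a>P(a)$), while $S'\cap D(0)=\emptyset$ (on $D(0)$, $P(a+tu)\le a+tu=P(a)+tu$), choose $M$ so that $-u+M\psi$ is psh with $\psi(z)=\tfrac12\log(1+|z|^2)$, and apply the $L^+$-comparison principle (Proposition \ref{L+comp}) to the normalized $L^+$ functions $(1+tM)^{-1}[P(a+tu)+tM\psi]$ and $(1+tM)^{-1}[P(a)+t(u+M\psi)]$. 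Expanding both $d$-th Monge--Amp\`ere powers, the disjointness $S'\cap D(0)=\emptyset$ annihilates the leading term $\int_{S'}(dd^cP(a))^d$, leaving $\int_{D(t)\setminus D(0)}(dd^cP(a+tu))^d=O(t)=o(1)$. Finally, uniform convergence $P(a+tu)\to P(a)$ from (\ref{loclip2}) gives weak-$*$ convergence $(dd^cP(a+tu))^d\to(dd^cP(a))^d$, hence $\int u\,(dd^cP(a+tu))^d\to\int u\,(dd^cP(a))^d$, and the Step~1 argument transfers verbatim to deliver $H(t)/t\to(d+1)\int u\,(dd^cP(a))^d$, completing the proof.
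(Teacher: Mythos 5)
Your proof is correct in substance (apart from a small slip: in Step 3 you want $M$ large enough that $u+M\psi$, not $-u+M\psi$, is psh, so that $P(a)+t(u+M\psi)$ lies in $L^+(\CC^d)$ after normalization), and you have also correctly spotted that the statement is missing the factor $d+1$. However, you take a genuinely different route from the paper. You establish the two-sided sandwich $H(t)\le F(t)\le G(t)$ with $H(t)=(d+1)\int v_t\,(dd^cP(a+tu))^d$, and then compute $H(t)/t$ by a mirror of Lemma~\ref{mamass}, namely $\int_{D(t)\setminus D(0)}(dd^cP(a+tu))^d\to 0$, plus the weak-$*$ convergence $(dd^cP(a+tu))^d\to(dd^cP(a))^d$. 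The paper never touches $(dd^cP(a+tu))^d$: for the lower bound it first notes $G$ is concave (by concavity of $P$ and linearity of $f\mapsto\int f(dd^cP(a))^d$) so $A:=\lim G(t)/t$ exists; the upper bound $\limsup F(t)/t\le A$ follows from (\ref{gconcave}) exactly as in your proof; but for $\liminf F(t)/t\ge A$ it fixes a small auxiliary $\delta$ with $G(\delta)/\delta\ge A-\epsilon$, applies the differentiability of $\mathcal E$ along the fixed segment from $P(a)$ to $P(a+\delta u)$ (Proposition~\ref{diffprop}), then pushes the estimate from the line $(1-t)P(a)+tP(a+\delta u)$ up to $P(a+t\delta u)$ via concavity of $P$ and monotonicity of $\mathcal E$. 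Only afterward does the paper identify $A$, using Lemma~\ref{mamass} as in your Step~1. The trade-off: the paper's two-scale trick avoids any new comparison-principle lemma and, for the existence and equality of the two limits, uses only $u\in C(E)$; your version needs the extra symmetric version of Lemma~\ref{mamass} (and hence $u\in C^{1,1}$ or $C^2$) throughout, though this costs nothing in practice since Lemma~\ref{mamass} already imposes that regularity in every application.
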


\begin{proof} Note that $F(0)=\mathcal E(P(a))=0$ and $G(0)=0$. Next, by concavity of $P$ (recall (\ref{projcon})) and linearity of 
$f\to \int f (dd^c P(a))^d$, the function $G(t)$ is concave so that 
$$A:=\lim_{t\to 0^+} \frac{G(t)-G(0)}{t}$$
exists. By concavity of $\mathcal E$, we have (recall (\ref{diff02}))
$$\mathcal E(P(a+tu))\leq \mathcal E(P(a))+<\mathcal E'(P(a)), P(a+tu)-P(a)>;$$
i.e., from (\ref{gconcave}) with $u_1=P(a), \ u_2 = P(a+tu)$ and $v= P(a)$, 
$$\mathcal E(P(a+tu))\leq \mathcal E(P(a))+ (d+1)\int [P(a+tu)-P(a)](dd^c P(a))^d].$$
Thus
$$\limsup_{t\to 0^+} \frac{F(t)-F(0)}{t}\leq A.$$
We prove 
$$\liminf_{t\to 0^+} \frac{F(t)-F(0)}{t}\geq A.$$
Since $A:=\lim_{t\to 0^+} \frac{G(t)-G(0)}{t}$ exists, given $\epsilon >0$ we can choose $\delta >0$ sufficiently small so that 
$$\frac{G(\delta)-G(0)}{\delta}=\frac{d+1}{\delta}\int [P(a+\delta u)-P(a)](dd^c P(a))^d\geq A -\epsilon;$$
i.e.,
$$(d+1)\int [P(a+\delta u)-P(a)](dd^c P(a))^d\geq \delta (A -\epsilon).$$
From Proposition \ref{diffprop}, for $t>0$ sufficiently small we have
$$\frac{\mathcal E(P(a) +t[P(a+\delta u)-P(a)])-\mathcal E(P(a))}{t}$$
$$\geq (d+1)\int [P(a+\delta u)-P(a)](dd^cP(a))^d-\delta \epsilon;$$
i.e.,
$$\mathcal E((1-t)P(a) +tP(a+\delta u))=\mathcal E(P(a) +t[P(a+\delta u)-P(a)])$$
$$\geq \mathcal E(P(a))+t(d+1) \int [P(a+\delta u)-P(a)](dd^cP(a))^d-t\delta \epsilon.$$
Combining these last two inequalities, we have
$$\mathcal E((1-t)P(a) +tP(a+\delta u))\geq \mathcal E(P(a))+t\delta A -2t\delta \epsilon.$$
By concavity of $P$,
$$P(a+t\delta u)=P((1-t)a +t(a+\delta u ))\geq (1-t)P(a) +tP(a +\delta u)$$
so that, by monotonicity of $\mathcal E$,
$$\mathcal E (P(a+t\delta u))\geq \mathcal E((1-t)P(a) +tP(a +\delta u))\geq \mathcal E(P(a))+t\delta A -2t\delta \epsilon$$
for $t>0$ sufficiently small. Thus,
$$\liminf_{t\to 0^+} \frac{F(t)-F(0)}{t}\geq A - 2\epsilon$$
for all $\epsilon >0$, yielding the result.

We now finish the proof of Proposition \ref{mainprop} and Proposition  \ref{diffpropcor} by verifying that $A= \int u(dd^cP(a))^d$. The proof was essentially given in the verification of (\ref{smooth5.7at0}) assuming (\ref{item1}) and (\ref{item2}); for the reader's convenience, we recall the details. We write $S_a:=$supp$(dd^cP(a))^d$.
For each $t$, set
$$D(t):=\{z\in \CC^d: P(a+tu)(z) = a(z) +tu(z)\}.$$
This is a bounded set. Since $P(a)=a \ (dd^cP(a))^d \ \hbox{a.e. on } \ S_a\subset D(0),$
$$\int [P(a+tu)-P(a)] (dd^cP(a))^d = \int_{S_a} [P(a+tu)-P(a)] (dd^cP(a))^d$$
$$=\int_{D(t)\cap S_a} [P(a+tu)-P(a)] (dd^cP(a))^d$$
$$ + \int_{S_a\setminus D(t)} [P(a+tu)-P(a)](dd^cP(a))^d$$
$$=\int_{D(t)\cap S_a} [a+tu-a] (dd^cP(a))^d + \int_{S_a\setminus D(t)} [P(a+tu)-P(a)] (dd^cP(a))^d$$
$$=\int_{D(t)\cap S_a} tu (dd^cP(a))^d + \int_{S_a\setminus D(t)} [P(a+tu)-P(a)] (dd^cP(a))^d$$
$$=\int_{S_a}tu (dd^cP(a))^d + \int_{S_a\setminus D(t)} [P(a+tu)-P(a)-tu] (dd^cP(a))^d.$$
Now we use the observation (\ref{loclip}) (or (\ref{loclip2})) to see that 
$$|P(a+tu) - P(a)-tu|=0(t)$$
on the bounded set $S_a\setminus D(t)$; then the conclusion of the lemma follows from Lemma \ref{mamass}.
\end{proof}

\begin{remark}
There is a nice interpretation of the differentiabilty of $\mathcal E\circ P$ in one dimension (see section 9.3 of \cite{[BB]}). Indeed, if $d=1$ and  $K$ is a closed subset of the unit disc in $\CC$ with admissible weight $Q=-\log w$, then, for $T=\{z\in \CC:|z|=1\}$ we have $V_T(z)=\log^+|z|$ so that $V_T=0$ on $K\cup T$. Taking $v=V_T$, we have
$$(\mathcal E\circ P)(Q)=\mathcal E(V_{K,Q}^*,V_T)=\int_{\CC}(V_{K,Q}^*-V_T)dd^c(V_{K,Q}^*+V_T)$$
$$=\int_{\CC}V_{K,Q}^*dd^c(V_{K,Q}^*)+\int_{\CC}V_{K,Q}^*dd^cV_T$$
$$=\int_{\CC}Qdd^c(V_{K,Q}^*)+\rho_{K,Q}$$
where $$\rho_{K,Q}=\lim_{|z|\to \infty} [V_{K,Q}^*(z)-\log |z|]$$ is the Robin constant of $V_{K,Q}^*$. Let $\mathcal M(K)$ denote the probability measures supported in $K$. It is classical that $\mathcal E(V_{K,Q}^*,V_T)$ coincides with the minimal {\it weighted logarithmic energy}
$$\inf_{\mu\in \mathcal M(K)} \int_K \int_K \log \frac{1}{|x-y|w(x)w(y)}d\mu(x)d\mu(y)$$
$$=\inf_{\mu\in \mathcal M(K)} \bigl[\mathcal I(\mu)+2\int_K Qd\mu\bigr]$$
over $\mu \in \mathcal M(K)$ where $\mathcal I(\mu)=\int_K \int_K \log \frac{1}{|x-y|}d\mu(x)d\mu(y)$ is the (unweighted) logarithmic energy of $\mu$. It is also classical that 
$$\mu\in \mathcal M(K) \to \mathcal I(\mu)\in \RR$$
is strictly convex on $\mathcal M(K)$; i.e., if $0<t<1$, then 
$$\mathcal I(t\mu_1+(1-t)\mu_2) < t\mathcal I(\mu_1)+(1-t) \mathcal I(\mu_2)$$
for $\mu_1,\mu_2\in \mathcal M(K)$. In a sense that can be made precise, the strict convexity of $\mu\to \mathcal I(\mu)$ on $\mathcal M(K)$ is related to the differentiability of the {\it Legendre transform} $\mathcal I^*$ of $\mathcal I$ on $C(K)$, where, for $p\in C(K)$,
$$\mathcal I^*(p):=\sup_{\mu\in \mathcal M(K)}[\int _K pd\mu -\mathcal I(\mu)]=-\inf_{\mu\in \mathcal M(K)}[\mathcal I(\mu)-\int_K pd\mu].$$
Setting $p=-2Q$, we have
$$\mathcal I^*(-2Q)=-\inf_{\mu\in \mathcal M(K)}[\mathcal I(\mu)+2\int_K Qd\mu]=-\mathcal E(V_{K,Q}^*,V_T)=-(\mathcal E\circ P)(Q).$$

\end{remark}

We record an integrated version of Proposition \ref{mainprop} and Proposition \ref{diffpropcor} which we will use. 

\begin{proposition} 
\label{smoothample}
For admissible weights $a,b\in C^2(E)$ on an unbounded closed set $E$ satisfying (\ref{unbhyp}),
\begin{equation}
\label{smooth5.7}
\mathcal E(P(b),P(a))=(d+1)\int_{t=0}^1 dt \int_{\CC^d} (b-a)(dd^cP(a+t(b-a)))^d;
\end{equation}
and for a compact set $E$ with admissible weight $a$ and $u\in C^2(E)$, 
\begin{equation}
\label{smooth5.8}
\mathcal E(P(a+u),P(a))=(d+1)\int_{t=0}^1 dt \int_{\CC^d} u(dd^cP(a+tu))^d.
\end{equation}
\end{proposition}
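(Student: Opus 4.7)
The plan is to view this as an immediate consequence of the fundamental theorem of calculus applied to Proposition \ref{mainprop} and Proposition \ref{diffpropcor}. In both cases, choose $v := P(a)$ and define, for $t \in [0,1]$,
$$F(t) := \mathcal E(P(a+tu),P(a)),$$
where in the unbounded case $u := b-a$ (and hypothesis (\ref{unbhyp}) holds), and in the compact case $u$ is the given $C^2(E)$ function. Then $F(0) = \mathcal E(P(a),P(a)) = 0$ by antisymmetry, while $F(1)$ equals $\mathcal E(P(b),P(a))$ or $\mathcal E(P(a+u),P(a))$ respectively. Thus (\ref{smooth5.7}) and (\ref{smooth5.8}) amount to the identity $F(1) = \int_0^1 F'(t)\, dt$.

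By Proposition \ref{mainprop} (unbounded case, using (\ref{unbhyp}) and $0 \le t \le 1$) or Proposition \ref{diffpropcor} (compact case), $F$ is differentiable on $[0,1]$ with
$$F'(t) = (d+1)\int_{\CC^d} u\,(dd^c P(a+tu))^d.$$
To apply the fundamental theorem of calculus, it suffices to check that $F'$ is continuous (or even just bounded and measurable). I would verify continuity as follows. The Lipschitz estimate (\ref{loclip}) or (\ref{loclip2}) shows that $t \mapsto P(a+tu)$ is uniformly Lipschitz in $t$ into $C_{\mathrm{loc}}(\CC^d)$, so $P(a+tu) \to P(a+t_0 u)$ locally uniformly as $t \to t_0$. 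Standard continuity of the Monge--Amp\`ere operator along uniformly convergent sequences in $L^+(\CC^d)$ then gives $(dd^c P(a+tu))^d \to (dd^c P(a+t_0 u))^d$ weak-$*$. Since the supports $D(t) \supset \mathrm{supp}(dd^cP(a+tu))^d$ are contained in a fixed compact subset of $\CC^d$ (bounded by hypothesis in the unbounded case via (\ref{unbhyp}), and automatic when $E$ is compact together with the growth of $P(a+tu)$ outside $E$), and $u$ is continuous on this compact set, we obtain
$$\int_{\CC^d} u\,(dd^c P(a+tu))^d \longrightarrow \int_{\CC^d} u\,(dd^c P(a+t_0u))^d,$$
i.e., $F'$ is continuous on $[0,1]$.

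Granted the continuity of $F'$, the fundamental theorem of calculus yields
$$\mathcal E(P(a+u),P(a)) = F(1) - F(0) = \int_0^1 F'(t)\,dt = (d+1)\int_0^1\!\! dt\int_{\CC^d} u\,(dd^c P(a+tu))^d,$$
which is (\ref{smooth5.8}); setting $u = b-a$ gives (\ref{smooth5.7}). The main technical obstacle is really already absorbed into Propositions \ref{mainprop} and \ref{diffpropcor} (specifically Lemma \ref{mamass} and the concavity argument of Lemma \ref{keylemma2}); beyond that, the only new issue is the weak-$*$ continuity of $t \mapsto (dd^c P(a+tu))^d$, which rests on the uniform Lipschitz control of $P(a+tu)$ in $t$ provided by (\ref{loclip})/(\ref{loclip2}) together with the uniform compactness of the supports.
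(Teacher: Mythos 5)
Your proof is essentially the same as the paper's: both define $F(t)=\mathcal E(P(a+tu),P(a))$ with $v=P(a)$, read off $F'(t)$ from Propositions \ref{mainprop} and \ref{diffpropcor}, and integrate over $[0,1]$. The paper is terser and omits the explicit justification that $F'$ is integrable/continuous, which you supply via the Lipschitz estimate and weak-$*$ continuity of the Monge--Amp\`ere operator (it could also be deduced from the concavity of $F$, since an everywhere-differentiable concave function has continuous derivative), but this is a technical detail and the core argument is identical.
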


\begin{proof} We prove (\ref{smooth5.7}) as (\ref{smooth5.8}) is similar. We begin with Proposition \ref{mainprop} using $v=P(a)$ so that 
$F(t)={\mathcal E}(P(a+t(b-a)),P(a)))$ and (\ref{Pdifft}) becomes
$$F'(t)=(d+1)\int_{\CC^d} (b-a)(dd^cP(a+t(b-a)))^d.$$
Integrating this expression from $t=0$ to $t=1$ gives (\ref{smooth5.7}) since $F(1)-F(0)=\mathcal E(P(b),P(a))$.
\end{proof}

\subsection {Proof of the Main Theorem.} \label{pomt}

Following the ideas in \cite{[BB]}, \cite{BBnew}, \cite{[BB2]} and \cite{[BBN]}, given a closed set $K$, an admissible weight $w$ on $K$, and a function $u\in C(K)$, we consider the weight $w_t(z):=w(z)\exp(-tu(z)),$ $t\in\RR,$ and let $\{\mu_n\}$ be a sequence of measures on $K$. Fixing a basis $\beta_n:=\{p_1,...,p_N\}$ of ${\mathcal P}_n$, we set
\begin{equation}\label{fn}
f_n(t):=-{(d+1)\over 2 dnN}\log\,{\rm det}(G_n^{\mu_n,w_t})
\end{equation}
where $G_n^{\mu_n,w_t}=G_n^{\mu_n,w_t}(\beta_n)$ and we begin with the following general result (see Lemma 6.4 in [BB]).

\begin{lemma} \label{1stderiv}We have
\[ f_n'(t)={d+1\over dN}\int_K u(z)B_n^{\mu_n,w_t}(z)d\mu_n.\]
\end{lemma}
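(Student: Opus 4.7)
The plan is to differentiate $\log\det(G_n^{\mu_n,w_t})$ directly using Jacobi's formula and then rewrite the resulting trace as an integral of the Bergman function by invoking the identity (\ref{OtherKn}).

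First I would write out the entries of the Gram matrix explicitly. Since $w_t^{2n}(z) = w(z)^{2n} e^{-2ntu(z)}$, the $(i,j)$-entry of $G_n^{\mu_n,w_t}(\beta_n)$ is
\[
G_{ij}(t) = \int_K p_i(z)\overline{p_j(z)}\, w(z)^{2n} e^{-2ntu(z)}\, d\mu_n(z),
\]
so, differentiating under the integral sign (which is routine since $K$ may be assumed compact for the weighted theory, or by the growth of $w$ in the unbounded case combined with the fact that $u\in C(K)$ together with any needed integrability),
\[
\dot G_{ij}(t) = -2n \int_K p_i\overline{p_j}\, u\, w_t^{2n}\, d\mu_n.
\]
In matrix form, with $P(z)$ the column vector of $\{p_1,\dots,p_N\}$ as in (\ref{P}),
\[
\dot G_n^{\mu_n,w_t} = -2n\int_K u(z)\, P(z)P(z)^*\, w_t(z)^{2n}\, d\mu_n(z).
\]

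Next I would apply Jacobi's formula $\frac{d}{dt}\log\det G(t) = \operatorname{tr}\bigl(G(t)^{-1}\dot G(t)\bigr)$, valid because $G_n^{\mu_n,w_t}$ is positive definite (so invertible), together with linearity of trace, to obtain
\[
\frac{d}{dt}\log\det G_n^{\mu_n,w_t} = -2n \int_K u(z)\, \operatorname{tr}\!\bigl((G_n^{\mu_n,w_t})^{-1} P(z) P(z)^*\bigr)\, w_t(z)^{2n}\, d\mu_n.
\]
Then the cyclic property of trace gives $\operatorname{tr}\!\bigl((G_n^{\mu_n,w_t})^{-1} P P^*\bigr) = P^*(G_n^{\mu_n,w_t})^{-1}P$, which is a scalar. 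At this point I would invoke the key identity (\ref{OtherKn}),
\[
w_t(z)^{2n}\, P(z)^*\bigl(G_n^{\mu_n,w_t}\bigr)^{-1}P(z) = B_n^{\mu_n,w_t}(z),
\]
so that
\[
\frac{d}{dt}\log\det G_n^{\mu_n,w_t} = -2n \int_K u(z)\, B_n^{\mu_n,w_t}(z)\, d\mu_n(z).
\]
Finally, multiplying by the prefactor $-\frac{d+1}{2dnN}$ from the definition (\ref{fn}) of $f_n(t)$ cancels the $-2n$ and yields the claimed formula.

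The proof is essentially formal once one has the machinery in place, so there is no real obstacle; the only mild subtlety is justifying interchange of $d/dt$ with the integral, which is immediate on a compact set and, in the unbounded setting, follows from the growth hypothesis on $w$ and $u\in C(K)$ forcing exponential decay of the integrand uniformly on compact $t$-intervals. The substantive content is purely algebraic: Jacobi's formula plus the reproducing-kernel identity (\ref{OtherKn}) reduces the derivative of $\log\det G_n^{\mu_n,w_t}$ to an integral of the weighted Bergman function against $u$.
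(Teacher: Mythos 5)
Your proof is correct and follows essentially the same route as the paper: differentiating $\log\det G_n^{\mu_n,w_t}$ to get $\operatorname{trace}\bigl((G_n^{\mu_n,w_t})^{-1}\dot G_n^{\mu_n,w_t}\bigr)$ (the paper derives this via $\log\det = \operatorname{trace}\log$, you invoke Jacobi's formula directly, but these are the same identity), then computing $\dot G$, cycling the trace, and invoking (\ref{OtherKn}). The normalization check at the end is also right.
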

\begin{proof}
Recall that $G_n^{\mu_n,w_t}$ is a positive definite Hermitian matrix; hence it can be diagonalized by a unitary matrix and we can define $\log (G_n^{\mu_n,w_t})$. Using $\log\,{\rm det}(G_n^{\mu_n,w_t})={\rm trace}\log(G_n^{\mu_n,w_t})$, we calculate
\begin{eqnarray*}
\frac{2dnN}{d+1}f_n'(t)&=&-{d\over dt}{\rm trace}\left(\log(G_n^{\mu_n,w_t})\right) \\
&=& -{\rm trace} \left({d\over dt}\log(G_n^{\mu_n,w_t})\right) \\
&=& -{\rm trace}\left( (G_n^{\mu_n,w_t})^{-1}{d\over dt} G_n^{\mu_n,w_t}\right) \\
\end{eqnarray*}
$$=2n\, {\rm trace}  \left( (G_n^{\mu_n,w_t})^{-1} \left[\int_K p_i(z)\overline{p_j(z)}u(z)w(z)^{2n}\exp(-2ntu(z))d\mu_n\right]\right).$$
As in the proof of Proposition \ref{KW} we use $${\rm trace} (ABC) = {\rm trace} (CAB)= CAB$$ to write the previous line as
\begin{eqnarray*}
&=&2n\int_K P^*(z)(G_n^{\mu_n,w_t})^{-1}P(z) u(z)w(z)^{2n}\exp(-2ntu(z))d\mu_n\\
&=&2n\int_K u(z) P^*(z)(G_n^{\mu_n,w_t})^{-1}P(z) w_t(z)^{2n} d\mu_n \\
&=& 2n\int_K u(z) B_n^{\mu_n,w_t}(z) d\mu_n
\end{eqnarray*}
where the last equality follows from the remark (\ref{OtherKn}):
$$
w^{2n}P^*(G_n^{\mu_n,w_t})^{-1}P = B_n^{\mu_n,w_t}.
$$
\end{proof}

We are ready for the proof of Theorem \ref{keycn}.

\begin{proof} We begin in the $L^2-$case  $E=E'=\CC^d$ and $\phi, \phi'\in C^2(\CC^d)$ strongly admissible. We note that (\ref{unbhyp}) holds for then all of the weights $\phi+t(\phi'-\phi)$ are strongly admissible with a uniform $\epsilon$ (recall (\ref{stradm})). 
Here, we are using $d\mu=d\mu'=\omega_d$; i.e., Lebesgue measure; recall from (\ref{weightedleb}) that we have a weighted Bernstein-Markov property in this setting. Take $u=\phi'-\phi$ which is, in particular, continuous. {\it We first assume that $\phi'=\phi$ outside a ball ${\mathcal B}_R$ for some $R$}; i.e., $u$ has compact support. For $0\leq t\leq 1$ let 
$$\phi_t:=\phi +tu= \phi +t(\phi'-\phi)=(1-t)\phi + t\phi'$$
so that $\phi_0=\phi$ and $\phi_1=\phi'$; equivalently, $w_t(z):=w(z)\exp(-tu(z))$ (note $w_0 =w=e^{-\phi}$ and $w_1=w'=e^{-\phi'}$). Then from Theorem \ref{bermancn} and remark (\ref{weakstarleb}), for each $t$, 
$$\frac{1}{N}B_{n,\phi+tu}\cdot  \omega_d \to \frac{1}{(2\pi)^d}(dd^c  P(\phi+tu))^d \ \hbox{weak}-*.$$
Now set $$f_n(t):= -{(d+1)\over 2 dnN}\log\,{\rm det}(G_n^{\mu,w_t}(\beta_n))$$ where $\mu=\mu_n:= \omega_d$ for all $n$ and the basis $\beta_n:=\{p_1,...,p_N\}$ of ${\mathcal P}_n$ is chosen to be an orthonormal basis with respect to the weighted $L^2-$norm $p\to ||w^np||_{L^2(\mu)}$. Then $G_n^{\mu,w}(\beta_n)$ is the $N\times N$ identity matrix so that we have $f_n(0)=0$; and, using Lemma \ref{1stderiv} and the fact that $u$ has compact support, 
$$\lim_{n\to \infty}\frac{d}{d+1} f_n'(t) = \int u\frac{1}{(2\pi)^d}(dd^c  P(\phi+tu))^d $$
(recall our notation for the $n-$th Bergman function in using Lebesgue measure and a global, admissible weight $\phi+tu$ on $\CC^d$ is $B_{n,\phi+tu}$). 
We now integrate the corresponding expression for $f_n'(t)$ from $t=0$ to $t=1$:
$$\frac{d}{d+1}[f_n(1) -f_n(0)]=\frac{d}{d+1}[f_n(1)]= \frac{-1}{2nN}\log\,{\rm det}(G_n^{\mu,w'}(\beta_n))$$
$$=\frac{-1}{2nN}[\mathcal B^{2}(\CC^d,\mu,n\phi):\mathcal B^{2}(\CC^d,\mu,n\phi')] \ \hbox{(from (\ref{gramvolume}))}$$
$$=\frac{1}{N}\int_{t=0}^1 dt \int  B_{n,\phi+tu}(\phi-\phi')\omega_d$$
$$\to \int_{t=0}^1 dt \int (\phi-\phi')\frac{1}{(2\pi)^d}(dd^c  P(\phi+tu))^d.$$
But by (\ref{smooth5.7}), since (\ref{unbhyp}) holds, 
$$(d+1)\int_{t=0}^1 dt \int (\phi-\phi')\frac{1}{(2\pi)^d}(dd^c  P(\phi+tu))^d=\frac{1}{(2\pi)^d}\mathcal E(P(\phi'),P(\phi))$$
which proves Theorem \ref{keycn} in the $L^2-$case when $E=E'=\CC^d$ with $\phi, \phi'\in C^2(\CC^d)$ strongly admissible, $\phi'=\phi$ outside a ball ${\mathcal B}_R$, and $d\mu=d\mu'=\omega_d$. By the weighted Bernstein-Markov property this also proves the $L^{\infty}-$case when $E=E'=\CC^d$ and $\phi, \phi'\in C^2(\CC^d)$ are strongly admissible with $\phi'=\phi$ outside a ball ${\mathcal B}_R$. 

Next, we claim that the $L^{\infty}-$case when $E=E'=\CC^d$ and $\phi, \phi'\in C^2(\CC^d)$ are strongly admissible follows, without the assumption that $\phi'=\phi$ outside a ball ${\mathcal B}_R$. For recall from (\ref{equivdef}) that $$P(\phi)=P_{S_w}(\phi|_{S_w})=\sup \{\frac{1}{{\rm deg }p} \log |p|: p\in \cup_n \mathcal P_n, \ ||pe^{-{\rm deg }p \phi}||_{S_w}\leq 1\}$$
where $S_w=\hbox{supp}(dd^cP(\phi))^d$ is compact; moreover, 
for $p_n\in \mathcal P_n$, 
$$||p_ne^{-n\phi}||_{S_w}=||p_ne^{-n\phi}||_{\CC^d}$$
so that 
$${\mathcal B}^{\infty}(S_w,n\phi|_{S_w})={\mathcal B}^{\infty}(\CC^d,n\phi).$$
Thus modifyiing $\phi, \phi'$ outside a large ball in such a way to make them equal outside a perhaps larger ball, we don't change the $L^{\infty}-$ball volume ratios nor the extremal functions $P(\phi), P(\phi')$. By the weighted Bernstein-Markov property (\ref{weightedleb}) this also proves the $L^{\infty}-$case when $E=E'=\CC^d$ and $\phi, \phi'\in C^2(\CC^d)$ are strongly admissible, without the assumption that $\phi'=\phi$ outside a ball ${\mathcal B}_R$.

To prove Theorem \ref{keycn} in the general case when $E,E'\subset \CC^d$ are closed with admissible weights $\phi,\phi'$, we consider the $L^{\infty}$ situation only; the $L^2-$case follows from the definition of the weighted Bernstein-Markov property. We claim that by the cocycle property for the ball volume ratios $[A:B]$ and energies $\mathcal E(u_1,u_2)$, we may assume that one of the sets is $\CC^d$  
with a strongly admissible $C^2(\CC^d)$ weight $\hat \phi$. For, using the notation $P_E(\phi):=V_{E,\phi}^*$, we have
$$\mathcal E(P_E(\phi),P_{E'}(\phi')) + \mathcal E(P_{E'}(\phi'),P_{\CC^d}(\hat \phi)) + \mathcal E(P_{\CC^d}(\hat \phi),P_E(\phi))=0$$
so that
$$\mathcal E(P_E(\phi),P_{E'}(\phi')) =- \mathcal E(P_{E'}(\phi'),P_{\CC^d}(\hat \phi)) + \mathcal E(P_E(\phi),P_{\CC^d}(\hat \phi)).$$
Both terms on the right have the second term being $P_{\CC^d}(\hat \phi)$. Similarly, with respect to the ball volume ratios, for each $n$ we have
$$[{\mathcal B}^{\infty}(E,n\phi):{\mathcal B}^{\infty}(E',n\phi')] $$
$$=-[{\mathcal B}^{\infty}(E',n\phi'): {\mathcal B}^{\infty}(\CC^d,n\hat \phi)]+ [{\mathcal B}^{\infty}(E,n\phi):{\mathcal B}^{\infty}(\CC^d,n\hat \phi)].$$

Now to deduce the case where one of the sets is $\CC^d$  
with a strongly admissible $C^2(\CC^d)$ weight $\hat \phi$ and the other is a general closed set $E$ with admissible weight $\phi$ from the case where both sets are $\CC^d$ 
with strongly admissible $C^2(\CC^d)$ weights $\hat \phi, \psi$, we first observe that we may assume $E$ is compact (i.e., bounded). For
recall again from (\ref{equivdef}) that if $w=e^{-\phi}$, 
$$P_E(\phi)=P_{S_w}(\phi|_{S_w})=\sup \{\frac{1}{{\rm deg }p} \log |p|: p\in \cup_n \mathcal P_n, \ ||pe^{-{\rm deg }p \phi}||_{S_w}\leq 1\}$$
where $S_w=\hbox{supp}(dd^cP_E(\phi))^d$ is compact; moreover, 
for $p_n\in \mathcal P_n$, 
$$||p_ne^{-n\phi}||_{S_w}=||p_ne^{-n\phi}||_E$$
so that 
$${\mathcal B}^{\infty}(S_w,n\phi|_{S_w})={\mathcal B}^{\infty}(E,n\phi).$$

Thus we assume $E$ is compact; since $V_{E,\phi}^*\in L^+(\CC^d)$, we can also assume $\phi$ is bounded above on $E$. We take a large ball $\mathcal B_R$ containing $E$ and extend $\phi$ from $E$ to $\hat \psi$ on $\CC^d$:
$$\hat \psi := \phi \ \hbox{on} \ E; \ \hat \psi =2\log R  \ \hbox{on} \ \mathcal B_R -E; \ \hat \psi =2\log |z| \ \hbox{on} \ \CC^d- \mathcal B_R$$
We have $\hat \psi$ is lsc and by taking $R$ sufficiently big $P_{\CC^d}(\hat \psi)=P_E(\phi)$; then   we take a sequence of strongly admissible $C^2(\CC^d)$ weights $\{\phi_j\}$ with $\phi_j \uparrow \hat \psi$. We can apply the first case of Theorem 3.1 to $(\CC^d, \phi_j)$ and $(\CC^d,\hat \phi)$ to conclude
$$\frac{1}{2nN} [\mathcal B^{\infty}(\CC^d,n \phi_j):\mathcal B^{\infty}(\CC^d,n\hat \phi)]\to \frac{1}{(d+1)(2\pi)^d}\mathcal E(P_{\CC^d}(\phi_j), P_{\CC^d}(\hat \phi))$$
as $n\to \infty$. But $\phi_j \uparrow \hat \psi$ implies $P_{\CC^d}(\phi_j)\uparrow P_{\CC^d}(\hat \psi)=P_E(\phi)$ and hence 
\begin{equation}
\label{ephij}
\mathcal E(P_{\CC^d}(\phi_j), P_{\CC^d}(\hat \phi)) \ \hbox{converges to} \ \mathcal E(P_E(\phi), P_{\CC^d}(\hat \phi))
\end{equation}
as $j\to \infty$ by Lemma \ref{BT63}. We want to conclude that
\begin{equation}
\label{ephin}
\frac{1}{2nN} [\mathcal B^{\infty}(E,n \phi):\mathcal B^{\infty}(\CC^d,n\hat \phi)]\to \frac{1}{(d+1)(2\pi)^d}\mathcal E(P_E(\phi), P_{\CC^d}(\hat \phi))
\end{equation}
as $n\to \infty$. To make this precise, first observe that 
$$\frac{1}{(d+1)(2\pi)^d}\mathcal E(P_{\CC^d}(\phi_j), P_{\CC^d}(\hat \phi))$$
$$=\lim_{n\to \infty} \frac{1}{2nN} [\mathcal B^{\infty}(\CC^d,n \phi_j):\mathcal B^{\infty}(\CC^d,n\hat \phi)]$$
$$\leq \liminf_{n\to \infty} \frac{1}{2nN} [\mathcal B^{\infty}(E,n \phi):\mathcal B^{\infty}(\CC^d,n\hat \phi)]$$
$$\leq \limsup_{n\to \infty} \frac{1}{2nN} [\mathcal B^{\infty}(E,n \phi):\mathcal B^{\infty}(\CC^d,n\hat \phi)]$$
since $P_{\CC^d}(\phi_j)\uparrow P_E(\phi)$ implies from (\ref{bvmon}) that 
$$[\mathcal B^{\infty}(\CC^d,n \phi_j):\mathcal B^{\infty}(\CC^d,n\hat \phi)] \leq [\mathcal B^{\infty}(E,n \phi):\mathcal B^{\infty}(\CC^d,n\hat \phi)].$$
But if we take a sequence of smooth, admissible weights $\{\psi_j\}$ on $\CC^d$ with $\psi_j \downarrow P_E(\phi)$ -- for example, we may take $\psi_j =(1+\epsilon_j)[(P_E(\phi))_{\epsilon_j}]$ where $(P_E(\phi))_{\epsilon_j}$ is a smoothing of $P_E(\phi)$ -- then $P_{\CC^d}(\psi_j)\downarrow P_E(\phi)$ and
$$\limsup_{n\to \infty} \frac{1}{2nN} [\mathcal B^{\infty}(E,n \phi):\mathcal B^{\infty}(\CC^d,n\hat \phi)]$$
$$\leq \lim_{n\to \infty} \frac{1}{2nN} [\mathcal B^{\infty}(\CC^d,n \psi_j):\mathcal B^{\infty}(\CC^d,n\hat \phi)]$$
again by (\ref{bvmon}); this limit equals
$$\frac{1}{(d+1)(2\pi)^d}\mathcal E(P_{\CC^d}(\psi_j), P_{\CC^d}(\hat \phi))$$
by applying the first case of Theorem 3.1 this time to $(\CC^d, \psi_j)$ and $(\CC^d,\hat \phi)$. Now 
\begin{equation}
\label{epsi}
\mathcal E(P_{\CC^d}(\psi_j), P_{\CC^d}(\hat \phi)) \ \hbox{converges to} \ \mathcal E(P_E(\phi), P_{\CC^d}(\hat \phi))
\end{equation}
as $j\to \infty$ by (\ref{downlemma}). Then (\ref{ephij}) and (\ref{epsi}) imply (\ref{ephin}) which completes the proof of Theorem \ref{keycn}.
\end{proof}

\section{Proof of Rumely.}
\label{bf}
We use Proposition \ref{weightedtd}, in conjunction with Theorem \ref{keycn} and the observation (\ref{gramvolume}), to prove an ``energy version'' of Rumely's formula.

\begin{theorem} \label{energyrumely}
Let $K\subset \CC^d$ be compact and $w$ an admissible weight on $K$. Then
\begin{equation} \label{enrum}
-\log \delta^w(K)=  \frac{1}{d(2\pi)^d}\mathcal E(V^*_{K,Q},V_T).
\end{equation}
\end{theorem}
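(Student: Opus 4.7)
The plan is to chain together three previously established results: Proposition \ref{weightedtd}, which identifies $\log\delta^w(K)$ as the limit of (rescaled) Gram-determinant logarithms; the ball-volume/Gram-determinant identity (\ref{gramvolume}); and the $L^2$-version of the main result Theorem \ref{keycn}, which computes ball-volume-ratio limits in terms of the Monge-Amp\`ere energy $\mathcal{E}$. The unit torus $T$ serves naturally as the reference set, since $V_T\in L^+(\CC^d)$, the standard monomial basis $\beta_n=\{e_1,\dots,e_N\}$ is orthonormal with respect to $\mu_T$ on $\mathcal{P}_n$, and $(T,\mu_T)$ satisfies the (unweighted) Bernstein-Markov property.

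First I would treat the case where $K$ is locally regular and $w$ is continuous, so that $\nu:=\mu_{K,Q}$ satisfies the weighted Bernstein-Markov property. Proposition \ref{weightedtd} then gives
\[
\lim_{n\to\infty}\frac{d+1}{2dnN}\log\det G_n^{\nu,w}(\beta_n)=\log\delta^w(K).
\]
Because $\beta_n$ is $\mu_T$-orthonormal, identity (\ref{gramvolume}) rewrites the Gram determinant as a ball-volume ratio,
\[
\log\det G_n^{\nu,w}(\beta_n)=[\mathcal{B}^2(T,\mu_T,0):\mathcal{B}^2(K,\nu,nQ)]=-[\mathcal{B}^2(K,\nu,nQ):\mathcal{B}^2(T,\mu_T,0)],
\]
while the $L^2$ version of Theorem \ref{keycn} (applicable since both triples satisfy weighted Bernstein-Markov) yields
\[
\lim_{n\to\infty}\frac{1}{2nN}[\mathcal{B}^2(K,\nu,nQ):\mathcal{B}^2(T,\mu_T,0)]=\frac{1}{(d+1)(2\pi)^d}\,\mathcal{E}(V^*_{K,Q},V_T).
\]
Multiplying this by $(d+1)/d$ and matching with the Proposition \ref{weightedtd} limit produces (\ref{enrum}) in this regular continuous case.

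For a general compact $K$ with admissible weight $w$, I would invoke the approximation (\ref{wtdapprox}): choose locally regular compacta $K_j\downarrow K$ and continuous admissible weights $w_j\downarrow w$ on $K_j$ with $V^*_{K_j,Q_j}\uparrow V^*_{K,Q}$. Applying the already-proven regular case of (\ref{enrum}) to each pair $(K_j,w_j)$, I would pass to the limit. On the right-hand side, Lemma \ref{BT63} (used with the fixed second slot $v=V_T$) yields $\mathcal{E}(V^*_{K_j,Q_j},V_T)\to\mathcal{E}(V^*_{K,Q},V_T)$. On the left-hand side one needs the corresponding transfinite-diameter convergence $\delta^{w_j}(K_j)\to\delta^w(K)$.

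The main obstacle will be this last convergence. At each fixed $n$ the monotonicity $\delta^{w_j,n}(K_j)\downarrow\delta^{w,n}(K)$ is immediate because the weighted Vandermonde maxima $W_n(K_j)$ decrease under $(K_j,w_j)\downarrow(K,w)$; the delicate point is interchanging this decreasing convergence with the $n\to\infty$ limit defining $\delta^w$. I would handle this by appealing to Zaharjuta's decomposition of $\log\delta^w(K)$ into an integral of directional Chebyshev densities (as sketched in Appendix 1), each of which is itself monotone in $(K,w)$, so that a dominated/monotone convergence argument legitimately swaps the two limits and thereby extends (\ref{enrum}) to arbitrary compact $K$ with admissible weight $w$.
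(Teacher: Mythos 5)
Your plan mirrors the paper's proof step for step: Proposition \ref{weightedtd} identifies $\log\delta^w(K)$ as the limiting rescaled Gram log-determinant, (\ref{gramvolume}) converts this to a ball-volume ratio against the reference $(T,\mu_T,0)$, the $L^2$ case of Theorem \ref{keycn} evaluates the limit as $\frac{1}{(d+1)(2\pi)^d}\mathcal E(V_T,V^*_{K,Q})$, and the general compact $K$ with admissible $w$ is then handled by locally regular $K_j\downarrow K$ with continuous admissible $w_j\downarrow w$, using (\ref{wtdapprox}) and Lemma \ref{BT63} on the energy side. One caution on the last step: it is $d^w(K)$, not $\delta^w(K)$, that decomposes directly as an integral of Zaharjuta directional Chebyshev densities; the paper's Proposition \ref{polypoly} proves $\delta^{w_j}(K_j)\to\delta^w(K)$ by passing to the decreasing circled sets $F_j(D)\subset\CC^{d+1}$, invoking (\ref{homvswtd}) to identify $\delta^{w_j}(K_j)$ with an unweighted transfinite diameter in one higher dimension, and then applying the unweighted monotone-convergence result (\ref{decrlimit2}), which is itself the Chebyshev-constant argument you have in mind. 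So your sketch is essentially right but should be routed through that circled-set reduction rather than applied to $\delta^w$ in $\CC^d$ directly.
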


\begin{proof} Note that for the unit torus $T$, 
$$V_T(z_1,...,z_d)=\max_{j=1,...,d} \log^+|z_j|$$
and that the standard basis monomials $e_1,...,e_N$ form an orthonormal basis $\beta_n$ for $\mathcal P_n$ with respect to $\mu_T:=\frac{1}{(2\pi)^d}(dd^cV_T)^d$, which is the standard Haar measure on $T$. 

We first assume that there exists a measure $\nu$ on $K$ such that 
$(K,\nu,Q)$ satisfies a weighted Bernstein-Markov property. Now from (\ref{gramvolume}), 
$$\log \det G_n^{\nu,w}(\beta_n) = [\mathcal B^{2}(T,\mu_T,n\cdot 0):\mathcal B^{2}(K,\nu,-n\log w)]$$
so that, on the one hand, by Proposition \ref{weightedtd},  
$$\lim_{n\to \infty} \frac{d+1}{2dnN} \cdot \log \det G_n^{\nu,w}(\beta_n) = \log \delta^w(K);$$
while on the other hand, by Theorem \ref{keycn}, 
$$\lim_{n\to \infty}\frac{1}{2nN} [\mathcal B^{2}(T,\mu_T,n\cdot 0):\mathcal B^{2}(K,\nu,-n\log w)]$$
$$= \frac{1}{(d+1)(2\pi)^d}\mathcal E(V_T, V^*_{K,Q})$$
so that, since $\mathcal E(V_T, V^*_{K,Q})=-\mathcal E(V^*_{K,Q},V_T)$, we obtain 
$$-\log \delta^w(K)=  \frac{1}{d(2\pi)^d}\mathcal E(V^*_{K,Q},V_T)$$
as desired.

In the general case, we can find a sequence of locally regular compacta $\{K_j\}$ decreasing to $K$ and a sequence of weights 
$\{w_j\}$ with $w_j$ continuous and admissible on $K_j$ such that $w_{j+1}\leq w_j|_{K_{j+1}}$ and $w_j \downarrow w$ on $K$. Then by (\ref{wtdapprox}) we have
$$V_{K_j,Q_j} \uparrow V_{K,Q}$$
so that from Lemma \ref{BT63} we have
$$\lim_{j\to \infty} \mathcal E(V^*_{K_j,Q_j},V_T) = \mathcal E(V^*_{K,Q},V_T)$$
where $Q_j := -\log w_j$. 
Thus to verify (\ref{enrum}), it suffices to prove that 
\begin{equation} \label{decrlimit}\lim_{j\to \infty}\delta^{w_j}(K_j) = \delta^w(K).
\end{equation}
We defer the proof of (\ref{decrlimit}) to Appendix 1 (Proposition \ref{polypoly}).
\end{proof}

\section{Asymptotic weighted Fekete measures, weighted optimal measures and strong Bergman asymptotics.}\label{sec:fob}

We will apply the following calculus lemma to an appropriate sequence of real-valued functions $\{f_n\}$ in order to prove the main applications of the differentiability result, Proposition \ref{diffpropcor}.

\begin{lemma}\label{calc}
Let $f_n$ be a sequence of concave functions on $\RR$ and let $g$ be a function on $\RR$. Suppose
$$\liminf f_n(t) \geq g(t) \ \hbox{for all} \ t \ \hbox{and} \ \lim f_n(0) = g(0)$$
and that $f_n$ and $g$ are differentiable at $0$. Then $\lim f_n'(0) = g'(0)$.
\end{lemma}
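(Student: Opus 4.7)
The plan is to exploit concavity in its most direct form: for a concave function $f_n$ differentiable at $0$, every secant line through $(0,f_n(0))$ lies on or below the tangent line, yielding the two-sided pinching
\[
\frac{f_n(t)-f_n(0)}{t}\leq f_n'(0)\leq \frac{f_n(s)-f_n(0)}{s}\qquad (s<0<t).
\]
Here I use that the difference quotient of a concave function is decreasing in the argument, so for $s<0<t$ the right-hand quotient exceeds $f_n'(0)$ and the left-hand quotient is below it. This sandwich converts the hypothesis on function values into a hypothesis on derivatives, and the whole proof reduces to passing to the limit in $n$ and then in $t,s\to 0$.

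First I would fix $t>0$ and use the left inequality. Taking $\liminf_n$ and using $\lim_n f_n(0)=g(0)$ together with $\liminf_n f_n(t)\geq g(t)$ gives
\[
\liminf_n f_n'(0)\geq \liminf_n\frac{f_n(t)-f_n(0)}{t}\geq \frac{g(t)-g(0)}{t}.
\]
Letting $t\to 0^+$ and using the differentiability of $g$ at $0$ yields $\liminf_n f_n'(0)\geq g'(0)$.

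Next I would fix $s<0$ and use the right inequality. The only real care is that dividing by the negative number $s$ reverses inequalities, so $\liminf_n f_n(s)\geq g(s)$ becomes $\limsup_n \tfrac{f_n(s)-g(0)}{s}\leq \tfrac{g(s)-g(0)}{s}$; combined with $\lim_n f_n(0)=g(0)$ this gives
\[
\limsup_n f_n'(0)\leq \limsup_n \frac{f_n(s)-f_n(0)}{s}\leq \frac{g(s)-g(0)}{s},
\]
and letting $s\to 0^-$ produces $\limsup_n f_n'(0)\leq g'(0)$. Together with the previous paragraph this gives $\lim_n f_n'(0)=g'(0)$.

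There is no serious obstacle here; the only thing to be careful about is the direction of the inequality when $s<0$, and the correct bookkeeping of which side of $0$ the difference quotient is taken on (concavity gives the lower bound on $f_n'(0)$ from the right and the upper bound from the left, which is exactly what matches the one-sided hypothesis $\liminf f_n(t)\geq g(t)$). Note that the full strength of concavity on all of $\RR$ is not needed: concavity on any neighborhood of $0$ suffices, which is how the lemma will be applied to the sequence $f_n$ in (\ref{fn}).
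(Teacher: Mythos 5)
Your proof is correct and takes essentially the same route as the paper: both arguments use concavity of $f_n$ to compare $f_n'(0)$ with difference quotients at $0$ (equivalently, the tangent-line inequality $f_n(0)+tf_n'(0)\geq f_n(t)$), then pass to the limit in $n$ and let $t\to 0^\pm$. Your exposition of the $t<0$ case is in fact a bit cleaner than the paper's, which inserts a redundant $\limsup_n tf_n'(0)\geq\liminf_n tf_n'(0)$ step before dividing by the negative $t$.
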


\begin{proof}
 By concavity of $f_n$, we have
$$f_n(0) +t f' _n(0) \geq f_n(t).$$
From the hypotheses, we see that  
$$\liminf_{n \to \infty}tf'_n(0)\geq g(t)-g(0).$$ 
For $t>0$, we get
$$\liminf_{n \to \infty}f'_n(0)\geq \lim_{t\to 0^+}\frac{g(t)-g(0)}{t}
=g'(0);$$
for $t<0$, we observe first that 
$$\limsup_{n \to \infty}tf'_n(0) \geq \liminf_{n \to \infty}tf'_n(0)\geq g(t)-g(0)$$
so
$$\limsup_{n \to \infty}f'_n(0)\leq \lim_{t\to 0^-}\frac{g(t)-g(0)}{t}
=g'(0).$$
\end{proof}

\noindent Note that here ``differentiable at the origin'' means the usual (two-sided) limit of the difference quotients exists; the conclusion is not true with one-sided limits.

As in Lemma \ref{1stderiv} in section \ref{pomt}, given a closed set $K$, an admissible weight $w$ on $K$, and a function $u\in C(K)$, we consider the weight $w_t(z):=w(z)\exp(-tu(z)),$ $t\in\RR,$ and let $\{\mu_n\}$ be a sequence of measures on $K$. Fixing a basis $\beta_n:=\{p_1,...,p_N\}$ of ${\mathcal P}_n$, define (see (\ref{fn}))
$$f_n(t)=-{(d+1)\over 2 dnN}\log\,{\rm det}(G_n^{\mu_n,w_t})$$
where $G_n^{\mu_n,w_t}=G_n^{\mu_n,w_t}(\beta_n)$. Then $f_n(0)= -{(d+1)\over 2 dnN}\log\,{\rm det}(G_n^{\mu_n,w})$. 
The next result was proved in a different way in [BN], Lemma 2.2. The proof we present is in \cite{bblw}.

\begin{lemma} \label{2ndderiv}
The functions $f_n(t)$ are concave, i.e., $f_n''(t)\le0.$
\end{lemma}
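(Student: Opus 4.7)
The plan is to realize $\det G_n^{\mu_n,w_t}$, up to a positive factor independent of $t$, as a partition function $Z_n(t)=\int_{K^N} e^{tX}\,d\rho$ and then invoke the classical convexity of the logarithm of such a moment generating function; then $f_n$, being a negative multiple of this quantity, is concave.

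First I would take $\beta_n$ to be the standard monomial basis $\{e_1,\dots,e_N\}$ and apply the Vandermonde/Gram-matrix identity used in the proof of Proposition \ref{weightedtd} (see (\ref{zn})) to write
$$N!\,\det G_n^{\mu_n,w_t}(\beta_n)=\int_{K^N}|VDM(z_1,\dots,z_N)|^2\prod_{i=1}^Nw_t(z_i)^{2n}\,d\mu_n(z_1)\cdots d\mu_n(z_N).$$
Since $w_t=we^{-tu}$, the right-hand side factors as
$$Z_n(t):=\int_{K^N}e^{tX(z_1,\dots,z_N)}\,d\rho(z_1,\dots,z_N),$$
where $X(z_1,\dots,z_N):=-2n\sum_{i=1}^Nu(z_i)$ is fixed and $d\rho:=|VDM|^2\prod_{i=1}^Nw(z_i)^{2n}\,d\mu_n^N$ is a fixed positive (finite) measure on $K^N$, independent of $t$.

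Next I would apply the Cauchy--Schwarz inequality for the measure $d\rho$ to the functions $e^{tX/2}$ and $e^{sX/2}$: for all $s,t\in\RR$,
$$Z_n\!\left(\frac{t+s}{2}\right)=\int_{K^N}e^{tX/2}\,e^{sX/2}\,d\rho\leq Z_n(t)^{1/2}\,Z_n(s)^{1/2},$$
so $\log Z_n$ is midpoint-convex and, by continuity in $t$, convex. Equivalently, differentiating twice under the integral sign (immediate on compact $K$; on unbounded $K$ one uses the admissibility/growth property (\ref{grprop}) of $w$ together with the continuity of $u$ to obtain uniform domination on compact $t$-intervals) yields $(\log Z_n)''(t)=\mathrm{Var}_{\nu_t}(X)\geq 0$, where $\nu_t:=Z_n(t)^{-1}e^{tX}d\rho$ is a probability measure on $K^N$.

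Finally, a change of basis in $\mathcal P_n$ multiplies $\det G_n^{\mu_n,w_t}(\beta_n)$ by a factor $|\det A|^2$ that is independent of $t$ (see (\ref{grambasis})), so convexity of $t\mapsto\log\det G_n^{\mu_n,w_t}$ is basis-independent. Therefore $f_n(t)=-\frac{d+1}{2dnN}\log\det G_n^{\mu_n,w_t}$ is a negative multiple of a convex function, and $f_n''(t)\leq 0$. The only mildly delicate step is justifying differentiation under the integral (or, alternatively, integrability of $Z_n(t)$) on unbounded $K$, but this is routine from the growth of the admissible weight $w$ and continuity of $u$; the Cauchy--Schwarz route in fact yields the concavity of $f_n$ without ever invoking smoothness.
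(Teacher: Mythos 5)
Your proof is correct, and it takes a genuinely different route from the paper's. The paper works entirely at the linear-algebraic level: after choosing a basis making $G_n^{\mu_n,w_t}(C_n)=I$, it writes $G(h)=\exp(F(h))$, differentiates the matrix exponential twice, and identifies $g_n''(0)$ as a sum of terms of the form $-\bigl(\|2nu\,q_i\|^2_{L^2}-\sum_j|\langle 2nu\,q_i,q_j\rangle|^2\bigr)$, which is $\leq 0$ by Bessel's inequality. You instead pass through the Andr\'eief--Heine identity (\ref{zn}) to realize $N!\det G_n^{\mu_n,w_t}$ as the partition function $Z_n(t)=\int_{K^N}e^{tX}\,d\rho$ with $X=-2n\sum_i u(z_i)$ and $\rho$ independent of $t$, and then derive log-convexity of $Z_n$ from Cauchy--Schwarz (equivalently, $(\log Z_n)''=\operatorname{Var}_{\nu_t}(X)\geq 0$). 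This is the statistical-mechanics viewpoint, and it is arguably cleaner: it avoids the matrix-exponential calculus entirely and exposes the concavity as a special case of H\"older's inequality for moment generating functions. What you lose, compared with the paper, is the explicit expression for $f_n''(t)$ as a Bessel-type deficit, which is not needed here but can be informative. Two small housekeeping points: for the argument to be meaningful you should note that $\mu_n$ is non-degenerate on $\mathcal P_n$, so $\rho\not\equiv 0$ and $\det G_n^{\mu_n,w_t}>0$ for every $t$; and in the application (Section on optimal/Fekete measures) $K$ is compact, so the integrability of $Z_n(t)$ and the justification of differentiation under the integral are immediate, as you anticipate.
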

\begin{proof} First, let
\[g_n(h):=\frac{2dnN}{d+1}f_n(t+h)\]
so that $\displaystyle{f_n''(t)={(d+1)\over 2 dnN}g_n''(0).}$ Also, note that if we change the
basis $B_n=\{p_1,\cdots,p_N\}$ to $C_n:=\{q_1,\cdots,q_N\}$ by $p_i=\sum_{j=1}^Na_{ij}q_j,$ 
then the Gram matrices transform (recall (\ref{grambasis}) by
\[G_n^{\mu_n,w_t}(B_n)=AG_n^{\mu_n,w_t}(C_n)A^*\]
where $A=[a_{ij}]\in \CC^{N\times N}.$ Hence,
$$ g_n(h)=-\log({\rm det}(G_n^{\mu_n,w_{t+h}}(B_n)))$$
$$=-\log({\rm det}(G_n^{\mu_n,w_{t+h}}(C_n)))-\log(|{\rm det}(A)|^2)$$
and we see that the derivatives of $g_n$ are independent of the basis chosen.

Let us choose $C_n$ to be an orthonormal basis for ${\mathcal P}_n$ with respect to the
inner-product $\langle \cdot,\cdot\rangle_{\mu_n,nw_t}$ (recall (\ref{Wip})). 

Now, for convenience, write $G(h)=G_n^{\mu_n,w_{t+h}}$ and set $F(h)=\log(G(h))$ so
that $G(h)=\exp(F(h)).$ By our choice of basis $C_n$ we have $G(0)=I\in \CC^{N\times N},$ 
the identity matrix, and
$F(0)=[0]\in\CC^{N\times N},$ the zero matrix.
Then, (see e.g. [Bh, p. 311]),
\[{dG\over dh}={d\over dh}\exp(F(h))=\int_0^1e^{(1-s)F(h)}{dF\over dh}e^{sF(h)}ds.\]
In particular
\[{dG\over dh}(0)={dF\over dh}(0).\]
Further, 
\begin{eqnarray*}
{d^2G\over dh^2}&=&\int_0^1 \left\{\left[{d\over dh}e^{(1-s)F(h)}\right]{dF\over dh}e^{sF(h)}
+e^{(1-s)F(h)}{d^2F\over dh^2}e^{sF(h)} \right. \\
&& \left. \quad +e^{(1-s)F(h)}{dF\over dh}\left[{d\over dh}e^{sF(h)}\right] \right\}ds.
\end{eqnarray*}
Evaluating at $h=0,$ using the fact that $F(0)=[0],$ we obtain
\begin{eqnarray*}
{d^2G\over dh^2}(0)&=&\int_0^1 \left\{(1-s){dF\over dh}(0)\times {dF\over dh}(0)\times I
+I\times {d^2F\over dh^2}(0)\times I \right.\\
&&\left. \quad+I\times {dF\over dh}(0)\times s{dF\over dh}(0)\, \right\}ds\\
&=&\int_0^1 \left\{(1-s+s)\left({dF\over dh}(0)\right)^2+{d^2F\over dh^2}(0)\right\}\,ds \\
&=&\left({dF\over dh}(0)\right)^2+{d^2F\over dh^2}(0).
\end{eqnarray*}
Hence,
$${d^2F\over dh^2}(0){d^2G\over dh^2}(0)-({dF\over dh}(0))^2$$
$$=[\int_Kq_i(z)\overline{q_j(z)}(-2nu(z))^2w_t(z)^{2n}d\mu_n]$$
$$-[\int_Kq_i(z)\overline{q_j(z)}(-2nu(z))w_t(z)^{2n}d\mu_n]^2.$$
Since $g_n'(h)=\frac{d}{dh}[-\log({\rm det}(G(h))]$ and $$\log({\rm det}(G(h))={\rm trace}(\log(G(h)))={\rm trace}(F(h))$$ it follows that
\begin{eqnarray*}
g_n''(0)&=&-{\rm trace}\left(\left[\int_Kq_i(z)\overline{q_j(z)}(-2nu(z))^2w_t(z)^{2n}d\mu_n\right]\right)\\
&&\qquad+{\rm trace}\left(\left[\int_Kq_i(z)\overline{q_j(z)}(-2nu(z))w_t(z)^{2n}d\mu_n\right]^2\right)\\
&=&-\sum_{i=1}^N\int_K|q_i(z)|^2w_t(z)^{2n}(2nu(z))^2d\mu_n \\
&&\qquad+\sum_{i=1}^N\sum_{j=1}^N\left|\int_Kq_i(z)\overline{q_j(z)}w_t(z)^{2n}(2nu(z))d\mu_n\right|^2\\
&=&-\sum_{i=1}^N \left\{\int_K|q_i(z)|^2w_t(z)^{2n}(2nu(z))^2d\mu_n - \right. \\
&&\qquad  \left.\sum_{j=1}^N \left|\int_Kq_i(z)\overline{q_j(z)}w_t(z)^{2n}(2nu(z))d\mu_n\right|^2\right\}.
\end{eqnarray*}But notice that $\displaystyle{\int_Kq_i(z)\overline{q_j(z)}w_t(z)^{2n}(2nu(z))d\mu_n}$ is the
$j$th Fourier coefficient of the function $2nu(z)q_i(z)$ with respect to the orthonormal basis
$C_n,$ and also that $\displaystyle{\int_K|q_i(z)|^2w_t(z)^{2n}(2nu(z))^2d\mu_n}$ is the $L^2$ norm
squared of this same function. Hence, by Parseval's inequality, 
$g_n''(0)\le 0$. 
\end{proof}

Now let $\mu$ be a probability measure on $K$ and let $u\in C^2(K)$. Define 
$$g(t)=-\log(\delta^{w_t}(K))$$
so that $g(0)=-\log(\delta^{w}(K))$
From Proposition \ref{diffpropcor} and Theorem \ref{energyrumely},
$$g'(0)= {d+1\over d(2\pi)^d}\int_Ku(z)(dd^cV_{K,Q}^*)^d.$$
Note that for each $n$, $\mu_n$ is a candidate to be an optimal measure of order $n$ for $K$ and $w_t$. {\it For the rest of this section, in computing Gram matrices, we fix the standard monomial basis $\beta_n=\{e_1,...,e_N\}$ of ${\mathcal P}_n$}. Thus, if $\mu_n^t$ is an optimal measure of order $n$ for $K$ and $w_t$, we have 
$$\det G_n^{\mu_n,w_t} \leq \det G_n^{\mu_n^t,w_t}$$
and, from Proposition \ref{tfd},
$$\lim_{n\to \infty}  \frac{d+1}{2dnN} \cdot \log \det G_n^{\mu_n^t,w_t}= \log \delta^{w_t}(K).$$
Thus with 
$$f_n(t):=-{(d+1)\over 2 dnN}\log\,{\rm det}(G_n^{\mu_n,w_t})$$
as in (\ref{fn}),
$$\liminf f_n(t) \geq g(t) \ \hbox{for all} \ t.$$
From Lemma \ref{1stderiv}, we have
$$f_n'(0)={d+1\over dN}\int_K u(z)B_n^{\mu_n,w}(z)d\mu_n.$$
Using Lemma \ref{calc}, we have the following general result.

\begin{proposition}
\label{genres}
Let $K\subset \CC^d$ be compact with admissible weight $w$. Let $\{\mu_n\}$ be a sequence of probability measures on $K$ with the property that 
\begin{equation}\label{fnhyp}
\lim_{n\to \infty}{(d+1)\over 2 dnN}\log\,{\rm det}(G_n^{\mu_n,w})= \log(\delta^{w}(K))
\end{equation}
i.e., $\lim_{n\to \infty}f_n(0)=g(0)$. Then 
\begin{equation}\label{strongasym}
\frac{1}{N}B_n^{\mu_n,w} d\mu_n \to \mu_{K,Q}=\frac{1}{(2\pi)^d}(dd^cV_{K,Q}^*)^d \ \hbox{weak-}*.
\end{equation}
\end{proposition}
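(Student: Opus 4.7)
The plan is to apply the calculus Lemma \ref{calc} to the pair of functions $f_n(t)$ defined in (\ref{fn}) and $g(t) := -\log\delta^{w_t}(K)$, where $w_t := w\exp(-tu)$ for a fixed $u\in C^2(K)$. I need to check the four hypotheses of that lemma at $t=0$: concavity of each $f_n$, convergence $f_n(0)\to g(0)$, differentiability of $g$ at $0$, and the lower-bound inequality $\liminf_n f_n(t)\ge g(t)$ for every $t\in\RR$. Concavity is exactly Lemma \ref{2ndderiv}; the convergence $f_n(0)\to g(0)$ is the standing hypothesis (\ref{fnhyp}); and differentiability of $g$ at $0$ follows from Theorem \ref{energyrumely} together with Proposition \ref{diffpropcor} (applied with $v=V_T$, $a=Q$, perturbation $u$): writing $g(t)=\frac{1}{d(2\pi)^d}\mathcal E(P(Q+tu),V_T)$, we obtain
$$g'(0)=\frac{d+1}{d(2\pi)^d}\int_K u\,(dd^cV_{K,Q}^*)^d.$$

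The $\liminf$ inequality is the one remaining point. For each $n$ and each $t$, the measure $\mu_n$ is a candidate among probability measures for the optimal Gram determinant with respect to $w_t$, so if $\mu_n^t$ is an actual optimal measure of order $n$ for $(K,w_t)$, then $\det G_n^{\mu_n,w_t}\le \det G_n^{\mu_n^t,w_t}$, hence
$$f_n(t)\ge -\frac{d+1}{2dnN}\log\det G_n^{\mu_n^t,w_t}.$$
By Proposition \ref{tfd} the right-hand side converges to $-\log\delta^{w_t}(K)=g(t)$, giving the required inequality. Lemma \ref{calc} then yields $\lim_n f_n'(0)=g'(0)$, and Lemma \ref{1stderiv} identifies $f_n'(0)=\frac{d+1}{dN}\int_K u\,B_n^{\mu_n,w}\,d\mu_n$. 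Dividing by $(d+1)/d$ produces
$$\lim_{n\to\infty}\int_K u\,\frac{B_n^{\mu_n,w}}{N}\,d\mu_n \;=\; \int_K u\,\frac{(dd^cV_{K,Q}^*)^d}{(2\pi)^d}$$
for every $u\in C^2(K)$.

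It remains to upgrade this from test functions $u\in C^2(K)$ to arbitrary $u\in C(K)$. Orthonormality of $\{q_j^{(n)}\}$ gives $\int_K B_n^{\mu_n,w}\,d\mu_n=N$, so $\frac{1}{N}B_n^{\mu_n,w}\,d\mu_n$ is a probability measure supported on the compact set $K$; likewise $\mu_{K,Q}$ is a probability measure. Since restrictions to $K$ of functions in $C^2(\CC^d)$ are uniformly dense in $C(K)$, a standard $3\epsilon$ argument using the uniform bound $|\int u\,d\nu|\le\|u\|_K$ for any probability measure $\nu$ on $K$ extends the convergence to all $u\in C(K)$, which is exactly the weak-$*$ convergence (\ref{strongasym}).

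The step I expect to be most delicate is the $\liminf_n f_n(t)\ge g(t)$ inequality, since it requires invoking Proposition \ref{tfd} at an arbitrary perturbed weight $w_t$ rather than only at $w$; one must note that $w_t$ is again an admissible weight (it is usc and its positivity set agrees with that of $w$ up to the continuous factor $e^{-tu}$), so all the hypotheses needed to apply Proposition \ref{tfd} to $(K,w_t)$ are inherited from $(K,w)$.
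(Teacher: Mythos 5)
Your proposal is correct and follows the paper's argument essentially line for line: you fix $u\in C^2(K)$, set up $f_n(t)$ and $g(t)=-\log\delta^{w_t}(K)$, verify the hypotheses of Lemma \ref{calc} via Lemma \ref{2ndderiv}, the standing assumption (\ref{fnhyp}), Proposition \ref{tfd} with the optimality comparison $\det G_n^{\mu_n,w_t}\le\det G_n^{\mu_n^t,w_t}$, and Proposition \ref{diffpropcor} together with Theorem \ref{energyrumely}, then identify $f_n'(0)$ by Lemma \ref{1stderiv} and close by the density of $C^2$ test functions for probability measures on the compact $K$. This is precisely the paper's proof; the only cosmetic difference is that you spell out the final density reduction that the paper disposes of in a one-line remark.
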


Note that since all $\mu_n$ are probability measures on $K$, to verify weak-* convergence, it suffices to test with $C^2-$functions on $K$. In particular, from Proposition \ref{weightedtd} we have the general strong Bergman asymptotic result.

\begin{corollary} \label{strongasymp}
{\bf [Strong Bergman Asymptotics]} If $(K,\mu,w)$ satisfies a weighted Bernstein-Markov inequality, then 
$$\frac{1}{N}B_n^{\mu,w} d\mu \to \mu_{K,Q} \ \hbox{weak-}*.
$$
\end{corollary}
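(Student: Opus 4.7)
The plan is to apply Proposition \ref{genres} to the constant sequence $\mu_n := \mu$ for all $n$. That proposition already delivers the desired weak-$*$ convergence $\frac{1}{N}B_n^{\mu_n,w}\,d\mu_n \to \mu_{K,Q}$, provided one can verify its single hypothesis (\ref{fnhyp}), namely
$$\lim_{n\to\infty}\frac{d+1}{2dnN}\log\det(G_n^{\mu,w}) \;=\; \log \delta^w(K),$$
where $G_n^{\mu,w}$ is computed in the standard monomial basis $\beta_n = \{e_1,\dots,e_N\}$ fixed in this section.

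The verification is immediate from Proposition \ref{weightedtd}. Its hypotheses — $K$ compact, $w$ an admissible weight on $K$, and $(K,\mu,Q)$ satisfying a weighted Bernstein-Markov property — are precisely what we are given here, and its conclusion is exactly the required asymptotic. Recall that the proof of Proposition \ref{weightedtd} compared the partition function $Z_n(K,w,\mu)$ with the weighted diameter $\delta^{w,n}(K)^{2dnN/(d+1)}$ via a two-sided inequality whose multiplicative error is controlled by $M_n^{2N}\mu(K)^N$; since $M_n^{1/n}\to 1$ and $\delta^{w,n}(K)^{(d+1)/(dnN)} \to \delta^w(K)$ by (\ref{deltaw}), one obtains the stated limit, and the identity (\ref{zn}) then transfers the conclusion from $Z_n$ to $\det G_n^{\mu,w}$.

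With (\ref{fnhyp}) in hand for $\mu_n = \mu$, Proposition \ref{genres} applies and produces the corollary. There is no real obstacle at this level: the substantive work has already been done upstream, in (i) Proposition \ref{weightedtd}, which supplies the required Gram determinant asymptotics, and (ii) Proposition \ref{genres} itself, whose proof assembles the concavity lemma \ref{2ndderiv}, the one-sided calculus lemma \ref{calc}, the first-derivative identity in Lemma \ref{1stderiv}, and — most crucially — the differentiability of $\mathcal{E}\circ P$ (Proposition \ref{diffpropcor}) combined with the energy form of Rumely's formula (Theorem \ref{energyrumely}). The corollary is a one-step specialization of this machinery to the case in which the sequence of measures is constant.
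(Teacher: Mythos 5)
Your proposal is correct and is essentially the paper's own argument: the paper's proof of Corollary~\ref{strongasymp} is the single sentence ``In particular, from Proposition~\ref{weightedtd} we have the general strong Bergman asymptotic result,'' i.e., exactly your step of taking $\mu_n\equiv\mu$ in Proposition~\ref{genres} and verifying hypothesis~(\ref{fnhyp}) via Proposition~\ref{weightedtd}. The only detail worth flagging is that Proposition~\ref{genres} is stated for a sequence of \emph{probability} measures while the Bernstein-Markov hypothesis allows any finite $\mu$; since $B_n^{\mu,w}\,d\mu$ is invariant under rescaling $\mu$ and the Gram-determinant asymptotics are unaffected by the normalizing constant, this is harmless, but a one-line remark about replacing $\mu$ with $\mu/\mu(K)$ would make the application airtight.
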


Next, suppose $\mu_n$ is an optimal measure of order $n$ for $K$ and $w$. 

\begin{corollary}
{\bf [Weighted Optimal Measures]} 
Let $K\subset \CC^d$ be compact with admissible weight $w$. Let $\{\mu_n\}$ be a sequence of optimal measures for $K,w$. Then
$$\mu_n  \to \mu_{K,Q} \ \hbox{weak-}*.
$$
\end{corollary}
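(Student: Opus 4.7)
The plan is to deduce this corollary by combining the strong Bergman asymptotics framework (Proposition \ref{genres}) with the two key properties of weighted optimal measures already established: the Gram determinant asymptotics (Proposition \ref{tfd}) and the Kiefer--Wolfowitz-type mass identity (Lemma \ref{MaxIsN}).

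First I would verify that the sequence of optimal measures $\{\mu_n\}$ satisfies the hypothesis (\ref{fnhyp}) of Proposition \ref{genres}. This is exactly the content of Proposition \ref{tfd}, which computes
\[
\lim_{n\to\infty} {\rm det}(G_n^{\mu_n,w})^{\frac{d+1}{2dnN}} = \delta^w(K),
\]
so that $\lim_{n\to\infty} f_n(0) = g(0)$ in the notation preceding Proposition \ref{genres}. Hence Proposition \ref{genres} applies and gives
\[
\frac{1}{N} B_n^{\mu_n,w}\, d\mu_n \;\to\; \mu_{K,Q} \quad \text{weak-}*.
\]

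Next I would invoke Lemma \ref{MaxIsN}, the key structural property of optimal measures: since each $\mu_n$ is optimal for $K$ and $w$ of degree $n$, one has
\[
B_n^{\mu_n,w}(z) = N \quad \mu_n\text{-a.e. on } K.
\]
Consequently the measure $\tfrac{1}{N} B_n^{\mu_n,w}\, d\mu_n$ coincides identically with $\mu_n$, and the weak-$*$ limit above reads $\mu_n \to \mu_{K,Q}$.

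There is essentially no obstacle here: the whole content is in the two propositions already proved. The only point requiring a brief remark is that Proposition \ref{genres} was stated with test functions of class $C^2$, but since all the $\mu_n$ and $\mu_{K,Q}$ are probability measures on the compact set $K$, testing against $C^2$ functions is enough to conclude weak-$*$ convergence of probability measures on $K$ (by density of $C^2(K)$ in $C(K)$ together with the uniform total mass bound). Thus the corollary follows in a single short argument, with Proposition \ref{tfd} supplying the input hypothesis and Lemma \ref{MaxIsN} identifying the asymptotic Bergman-weighted measure with $\mu_n$ itself.
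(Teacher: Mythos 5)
Your proof is correct and matches the paper's argument: the paper's one-line version also reduces to Proposition \ref{genres} (whose hypothesis (\ref{fnhyp}) is supplied by Proposition \ref{tfd}) together with the identity $B_n^{\mu_n,w}=N$ $\mu_n$-a.e. from Lemma \ref{MaxIsN}, which turns $\frac{1}{N}B_n^{\mu_n,w}\,d\mu_n$ into $\mu_n$. The paper then gives an ``Equivalently'' paragraph spelling out the same chain $f_n'(0)\to g'(0)$ via Lemma \ref{calc}, but this is the same proof unpacked, not a different route.
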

\begin{proof} We have $B_n^{\mu_n,w}=N$ a.e. $\mu_n$ on $K$ from Lemma \ref{MaxIsN} so that the result follows immediately from Proposition \ref{genres}, specifically, equation (\ref{strongasym}). Equivalently, using results from subsection \ref{gramsec}, 
$$f_n'(0)={d+1\over dN}\int_K u(z)B_n^{\mu_n,w}(z)d\mu_n={d+1\over d}\int_K u(z)d\mu_n.$$
Again, if $\mu_n^t$ is an optimal measure of order $n$ for $K$ and $w_t$, we have 
$$\det G_n^{\mu_n,w_t} \leq \det G_n^{\mu_n^t,w_t}$$
and hence
$$\liminf f_n(t) \geq g(t) \ \hbox{for all} \ t;$$
and for $t=0$ by Proposition \ref{tfd}
\[\lim_{n\to\infty}f_n(0)=-\log(\delta^{w}(K))=g(0).\]
By Theorem \ref{energyrumely}, 
\begin{equation}\label{RumelyFormula}
-\log(\delta^w(K))={1\over d (2\pi)^d}{\mathcal E}(V_{K,Q}^*,V_T).
\end{equation}
Thus from Lemma \ref{calc} and Proposition \ref{diffpropcor}
$$\lim_{n\to \infty} f_n'(0)=g'(0)={d+1\over d(2\pi)^d}\int_Ku(z)(dd^cV_{K,Q}^*)^d$$
which says that $\mu_n\to \mu_{K,Q}$ weak-*.
\end{proof}

Finally, we prove a strengthened version of the weighted Fekete conjecture.

\begin{corollary} \label{asympwtdfek} {\bf [Asymptotic Weighted Fekete Points]} Let $K\subset \CC^d$ be compact with admissible weight $w$. For each $n$, take points $x_1^{(n)},x_2^{(n)},\cdots,x_N^{(n)}\in K$ for which 
\begin{equation}\label{wam}
 \lim_{n\to \infty}\bigl[|VDM(x_1^{(n)},\cdots,x_N^{(n)})|w(x_1^{(n)})^nw(x_2^{(n)})^n\cdots w(x_N^{(n)})^n\bigr]^{{(d+1)\over  dnN}}=\delta^w(K)
\end{equation}
({\it asymptotically} weighted Fekete points) and let $\mu_n:= \frac{1}{N}\sum_{j=1}^N \delta_{x_j^{(n)}}$. Then
$$
\mu_n \to \mu_{K,Q} \ \hbox{weak}-*.
$$
\end{corollary}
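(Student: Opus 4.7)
The plan is to deduce this from the strong Bergman asymptotics packaged in Proposition \ref{genres}, applied to the discrete measures $\mu_n := \frac{1}{N}\sum_{j=1}^{N}\delta_{x_j^{(n)}}$. The two things that have to happen are: verify the hypothesis (\ref{fnhyp}) of Proposition \ref{genres}, and then check that, for these particular discrete $\mu_n$, the weak-$*$ limit $\frac{1}{N}B_n^{\mu_n,w}d\mu_n$ is actually just $\mu_n$ itself.

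For the first step I would borrow the computation already used in the proof of Proposition \ref{tfd}: with respect to the standard monomial basis,
$$\det(G_n^{\mu_n,w}) \;=\; \frac{1}{N^N}\,|VDM(x_1^{(n)},\ldots,x_N^{(n)})|^{2}\,w(x_1^{(n)})^{2n}\cdots w(x_N^{(n)})^{2n}.$$
Taking $(d+1)/(2dnN)$ times the logarithm splits this into a term $-\frac{(d+1)\log N}{2dn}$, which vanishes as $n\to\infty$ since $N\asymp n^d/d!$, and twice $\frac{d+1}{dnN}\log\bigl[|VDM|\,w(x_1^{(n)})^n\cdots w(x_N^{(n)})^n\bigr]$, which by the asymptotic weighted Fekete hypothesis (\ref{wam}) tends to $\log\delta^w(K)$. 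Thus (\ref{fnhyp}) holds and Proposition \ref{genres} delivers $\tfrac{1}{N}B_n^{\mu_n,w}\,d\mu_n \to \mu_{K,Q}$ weak-$*$.

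For the second step, observe that $K$ non-pluripolar gives $\delta^w(K)>0$, so (\ref{wam}) forces $VDM(x_1^{(n)},\ldots,x_N^{(n)})\neq 0$ for all sufficiently large $n$; equivalently, the configuration $\{x_j^{(n)}\}$ is unisolvent for $\mathcal{P}_n$. In particular, for each $j$ one can pick $p_j\in\mathcal{P}_n$ with $p_j(x_k^{(n)})=\delta_{jk}$. The reproducing property of $K_n^{\mu_n,w}$, applied to $p_j$ at $z=x_j^{(n)}$, becomes
$$1 \;=\; \frac{1}{N}\,K_n^{\mu_n,w}(x_j^{(n)},x_j^{(n)})\,w(x_j^{(n)})^{2n} \;=\; \frac{1}{N}\,B_n^{\mu_n,w}(x_j^{(n)}),$$
so $B_n^{\mu_n,w}(x_j^{(n)})=N$ at every atom of $\mu_n$, and therefore $\tfrac{1}{N}B_n^{\mu_n,w}\,d\mu_n = \mu_n$. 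Combining the two steps yields $\mu_n\to\mu_{K,Q}$ weak-$*$.

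The real work is already inside Proposition \ref{genres} (and hence inside the differentiability of $\mathcal{E}\circ P$ and the energy-form Rumely formula (\ref{RumelyFormula})); once that is taken as given, the only delicate point in the present deduction is the identification $\tfrac{1}{N}B_n^{\mu_n,w}d\mu_n=\mu_n$, which hinges on the coincidence $\#\{x_j^{(n)}\} = N = \dim\mathcal{P}_n$ together with unisolvence. I expect this identification, rather than the verification of (\ref{fnhyp}), to be the subtlest step to state cleanly, since it is exactly where the special structure of Fekete-type configurations enters.
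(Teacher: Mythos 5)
Your proof is correct and follows the same route as the paper's (first) argument: apply Proposition \ref{genres} to the equal-weight atomic measures $\mu_n$, then note that $B_n^{\mu_n,w}\equiv N$ on the atoms so that $\tfrac{1}{N}B_n^{\mu_n,w}\,d\mu_n=\mu_n$. You have merely made explicit two things the paper leaves as remarks: the verification of hypothesis (\ref{fnhyp}) via $\det G_n^{\mu_n,w}=N^{-N}|VDM|^2\prod_k w(x_k^{(n)})^{2n}$, and the ``direct calculation'' that $B_n^{\mu_n,w}(x_j^{(n)})=N$, which you do cleanly using unisolvence and the reproducing kernel.
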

\begin{proof} By direct calculation, we have $B_n^{\mu_n,w}(x_j^{(n)})=N$ for $j=1,...,N$ and hence a.e. $\mu_n$ on $K$. Indeed, this property holds for {\it any} discrete, equally weighted measure $\mu_n:= \frac{1}{N}\sum_{j=1}^N \delta_{x_j^{(n)}}$ with $$|VDM(x_1^{(n)},\cdots,x_N^{(n)})|w(x_1^{(n)})^nw(x_2^{(n)})^n\cdots w(x_N^{(n)})^n\not =0.$$ Again, the result follows immediately from Proposition \ref{genres}, specifically, equation (\ref{strongasym}). Alternately, if $\mu_n^t$ is an optimal measure of order $n$ for $K$ and $w_t$, we have 
$$\det G_n^{\mu_n,w_t} \leq \det G_n^{\mu_n^t,w_t}$$
and hence
$$\liminf f_n(t) \geq g(t) \ \hbox{for all} \ t;$$
finally, by definition of asymptotically weighted Fekete points, 
$$\lim_{n\to\infty}f_n(0)=-\log(\delta^{w}(K))=g(0).$$
Thus the same proof as in the previous proposition is valid to show $\mu_n\to \mu_{K,Q}$ weak-*. Indeed, in this case, the reader should note that the functions $f_n(t)$ are {\it affine} in $t$ so that $f_n''(t)=0$ is immediate.
\end{proof}

\section{Appendix 1: Transfinite diameter notions in $\CC^d$.}
	
	This section is adapted from \cite{[BL]}. We begin by considering a function $Y$ from the set of multiindices $\alpha \in {\bf N}^d$ to the nonnegative real numbers satisfying:
\begin{equation} \label{subadd} Y(\alpha + \beta) \leq Y(\alpha)\cdot Y(\beta) \ \hbox{for all} \ \alpha, \ \beta \in {\bf N}^d. \end{equation}
We call a function $Y$ satisfying (\ref{subadd}) {\it submultiplicative};  we have three main examples below. Let $e_1(z),...,e_j(z),...$ be a listing of the monomials
$\{e_i(z)=z^{\alpha(i)}=z_1^{\alpha_1}\cdots z_d^{\alpha_d}\}$ in
$\CC^d$ indexed using a lexicographic ordering on the multiindices $\alpha=\alpha(i)=(\alpha_1,...,\alpha_d)\in {\bf N}^d$, but with deg$e_i=|\alpha(i)|$ nondecreasing. We write $|\alpha|:=\sum_{j=1}^d\alpha_j$. 

We define the following integers:
\begin{enumerate}
\item $m_n^{(d)}=m_n:=$ the number of monomials $e_i(z)$ of degree at most $n$ in $d$ variables;
\item $h_n^{(d)}=h_n:=$ the number of monomials $e_i(z)$ of degree exactly $n$ in $d$ variables;
\item $l_n^{(d)}=l_n:=$ the sum of the degrees of the $m_n$ monomials $e_i(z)$ of degree at most $n$ in $d$ variables.
\end{enumerate}
Note that $m_n^{(d)} ={d+n \choose n}$ and $l_n=\frac{d+1}{dnN}$. We will also need 
$$r_n^{(d)}=r_n:=nh_n^{(d)}=n(m_n^{(d)}-m_{n-1}^{(d)})$$
which is the sum of the degrees of the $h_n$ monomials $e_i(z)$ of degree exactly $n$ in $d$ variables.

\noindent We have the following relations:

\begin{equation} \label{precrucial} m_n^{(d)} ={d+n \choose n}; \ h_n^{(d)}=m_n^{(d)}-m_{n-1}^{(d)}= {d-1+n \choose n}; \end{equation}
\begin{equation} \label{crucial} h_n^{(d+1)} =  {d+n \choose n}= m_n^{(d)}; \ l_n^{(d)} = d{d+n \choose d+1}=(\frac{d}{d+1})\cdot nm_n^{(d)}; \end{equation}
and
\begin{equation} \label{postcrucial} l_n^{(d)}=\sum_{k=1}^nr_k^{(d)}=\sum_{k=1}^n kh_k^{(d)}.\end{equation}
The elementary fact that the dimension of the space of homogeneous polynomials of degree $n$ in $d+1$ variables equals the dimension of the space of polynomials of degree at most $n$ in $d$ variables will be  crucial.

Let $K\subset \CC^d$ be compact. Here are three natural constructions associated to $K$:

\begin{enumerate}
\item {\it Chebyshev constants}: Define the class of polynomials $$P_i=P(\alpha(i)):=\{e_i(z)+\sum_{j<i}c_je_j(z)\};  $$
and the Chebyshev constants
$$Y_1(\alpha):= \inf \{||p||_K:p\in P_i\}.$$ 
We write $t_{\alpha,K}:=t_{\alpha(i),K}$ for a Chebyshev polynomial; i.e., $t_{\alpha,K} \in P(\alpha(i))$ and $||t_{\alpha,K}||_K=Y_1(\alpha)$.  
\item {\it Homogeneous Chebyshev constants}: Define the class of homogeneous polynomials $$P_i^{(H)}=P^{(H)}(\alpha(i)):=\{e_i(z)+\sum_{j<i, \ {\rm deg}(e_j)={\rm deg}(e_i)}c_je_j(z)\};$$
and the homogeneous Chebyshev constants
$$Y_2(\alpha):= \inf \{||p||_K:p\in P^{(H)}_i\}.$$
We write $t^{(H)}_{\alpha,K}:=t^{(H)}_{\alpha(i),K}$ for a homogeneous Chebyshev polynomial; i.e., $t^{(H)}_{\alpha,K} \in P^{(H)}(\alpha(i))$ and $||t^{(H)}_{\alpha,K}||_K=Y_2(\alpha)$.
\item {\it Weighted Chebyshev constants}: Let $w$ be an admissible weight function on $K$ and let 
$$Y_3(\alpha):= \inf \{||w^{|\alpha (i)|}p||_K: \ p\in P_i\}$$ 
be the weighted Chebyshev constants. Note we use the polynomial classes $P_i$ as in (1). We write $t^w_{\alpha,K}$ for a weighted Chebyshev polynomial; i.e., $t^w_{\alpha,K}$ is of the form $w^{\alpha(i)}p$ with $p\in P(\alpha(i))$ and 
$||t^w_{\alpha,K}||_K= Y_3(\alpha)$.
\end{enumerate}

Let
$\Sigma$ denote the standard
$(d-1)-$simplex  in $\RR^d$; i.e.,
$$\Sigma = \{\theta =(\theta_1,...,\theta_d)\in \RR^d: \sum_{j=1}^d\theta_j=1, \ \theta_j\geq 0, \ j=1,...,d\},$$
and let
$$\Sigma^0 :=\{\theta \in \Sigma:  \ \theta_j > 0, \ j=1,...,d\}.$$
Given a submultiplicative function $Y(\alpha)$, define, as with the above examples, a new function  
\begin{equation} \label{tau}\tau( \alpha):= Y(\alpha)^{1/|\alpha|}.\end{equation} 
An examination of lemmas 1, 2, 3, 5, and 6 in \cite {zah} shows that (\ref{subadd}) is the only property of the numbers $Y( \alpha)$ needed to establish those lemmas. That is,
we have the following results for $Y:{\bf N}^d \to \RR^+$ satisfying (\ref{subadd}) and the associated function $\tau( \alpha)$ in (\ref{tau}).

\begin{lemma} For  all $\theta \in \Sigma^0$, the limit
$$T(Y,\theta):= \lim_{\alpha/|\alpha|\to \theta} Y(\alpha)^{1/|\alpha|}=\lim_{\alpha/|\alpha|\to \theta}\tau(\alpha)$$
exists. \end{lemma}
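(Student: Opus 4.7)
The plan is to recognize this as a multidimensional Fekete-type theorem for subadditive functions. Setting $f(\alpha):=\log Y(\alpha)$ converts (\ref{subadd}) into the subadditivity condition $f(\alpha+\beta)\le f(\alpha)+f(\beta)$, and I write $g(\alpha):=f(\alpha)/|\alpha|$, so the claim reduces to showing $g(\alpha)\to \log T(Y,\theta)$ as $\alpha/|\alpha|\to\theta$ with $\theta\in \Sigma^0$. Iterating submultiplicativity along the coordinate directions gives $Y(\alpha)\le\prod_j Y(e_j)^{\alpha_j}$ (with $e_j$ the $j$-th standard basis vector), so $g$ admits a uniform upper bound $C:=\max_j\log Y(e_j)$; this finite ceiling is what the argument will exploit.

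Fix $\theta \in \Sigma^0$ and set $L:=\liminf_{\alpha/|\alpha|\to\theta} g(\alpha) \in [-\infty, C]$. It suffices to prove $\limsup_{\alpha/|\alpha|\to\theta} g(\alpha) \le L$, from which the lemma follows. Choose a sequence $\{\alpha^{(k)}\}$ with $\alpha^{(k)}/|\alpha^{(k)}|\to\theta$ and $g(\alpha^{(k)})\to L$. For any $\beta$ whose direction is close to $\theta$ and whose magnitude is much larger than $|\alpha^{(k)}|$, I would perform a Euclidean-style decomposition $\beta = m\alpha^{(k)} + r$ with $m := \min_j\lfloor \beta_j/\alpha^{(k)}_j\rfloor$ and $r := \beta - m\alpha^{(k)} \in {\bf N}^d$. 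Iterated subadditivity ($m$ times on $\alpha^{(k)}$, then once for $r$), combined with $|\beta| = m|\alpha^{(k)}|+|r|$, yields
$$g(\beta)\ \le\ \frac{m|\alpha^{(k)}|}{|\beta|}\,g(\alpha^{(k)}) + \frac{|r|}{|\beta|}\cdot\frac{f(r)}{|r|}.$$
The geometric heart of the proof is that, because $\theta\in\Sigma^0$ has all coordinates strictly positive, when both $\alpha^{(k)}/|\alpha^{(k)}|$ and $\beta/|\beta|$ lie within some small $\delta$ of $\theta$, the ratios $(\beta_j/|\beta|)/(\alpha^{(k)}_j/|\alpha^{(k)}|)$ are all within $O(\delta/\min_j\theta_j)$ of $1$, and hence $m|\alpha^{(k)}|/|\beta| \ge 1-O(\delta)$ and $|r|/|\beta| = O(\delta)$. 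Applying $f(r)/|r| \le C$ gives $g(\beta)\le g(\alpha^{(k)}) + O(\delta)\bigl(|g(\alpha^{(k)})|+C\bigr)$ uniformly on a neighborhood of $\theta$ at scale $\gg |\alpha^{(k)}|$.

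Letting $|\beta|\to\infty$ first, then $k\to\infty$ with a diagonal choice that makes $\delta_k|g(\alpha^{(k)})|\to 0$ (which covers the case $L=-\infty$), produces $\limsup g(\beta) \le L$, so the limit exists. The main obstacle, and the reason the hypothesis $\theta\in\Sigma^0$ is essential rather than cosmetic, lies precisely in the decomposition step: if some coordinate $\theta_j$ vanished, then along approximating sequences one could have $\alpha^{(k)}_j = 0$ (or tiny) while $\beta_j$ is not, which forces $m=0$ in $\min_j\lfloor\beta_j/\alpha^{(k)}_j\rfloor$ and destroys the comparison; the argument genuinely needs the ability to cover $\beta$ by a large integer multiple of $\alpha^{(k)}$ with negligible residue, a property available only in the interior of the simplex.
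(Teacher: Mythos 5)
The paper does not prove this lemma; it explicitly defers to Lemmas 1, 2, 3, 5, 6 of Zaharjuta \cite{zah}, remarking only that submultiplicativity of $Y$ is the sole property used in those proofs. Your reconstruction is the standard multidimensional Fekete-lemma argument, which is indeed what Zaharjuta does, and it is correct. The key steps --- the decomposition $\beta = m\alpha^{(k)}+r$ with $m := \min_j\lfloor\beta_j/\alpha_j^{(k)}\rfloor$, the estimate $m|\alpha^{(k)}|/|\beta| \ge 1 - O(\delta) - |\alpha^{(k)}|/|\beta|$ valid once both directions lie within $\delta$ of $\theta\in\Sigma^0$, and the convex-combination bound $g(\beta) \le (1-\epsilon)\,g(\alpha^{(k)}) + \epsilon C$ with $\epsilon = |r|/|\beta|$ --- are all sound, and you correctly identify precisely where interiority of $\theta$ is essential (namely, to guarantee that $\beta_j/\alpha_j^{(k)}$ is uniformly comparable to $|\beta|/|\alpha^{(k)}|$ across $j$, so that $m$ captures nearly all of $\beta$). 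Two small points a complete write-up should make explicit, neither of which affects the substance: (i) for $k$ large one has $\alpha_j^{(k)}>0$ for every $j$ (again by $\theta\in\Sigma^0$), so the floors are defined; (ii) the degenerate case where some $Y(e_j)=0$ makes $C=-\infty$, but then submultiplicativity forces $Y(\alpha)=0$ for every $\alpha$ with $\alpha_j>0$, hence $\tau\equiv 0$ near direction $\theta$ and the limit is trivially $0$. Your diagonalization remark to cover $L=-\infty$ is also the right device: one must let the neighborhood shrink fast enough relative to $|g(\alpha^{(k)})|$, which you can do since the $\delta$-scale is free.
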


\begin{lemma} The function $\theta \to T(Y,\theta)$ is log-convex on $\Sigma^0$ (and hence continuous). \end{lemma}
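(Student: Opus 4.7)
The plan is to deduce log-convexity of $\theta \mapsto T(Y,\theta)$ directly from the submultiplicativity (\ref{subadd}) by combining multiindices that realize the two values on the right-hand side. Fix $\theta^{(1)},\theta^{(2)}\in\Sigma^{0}$ and $s\in(0,1)$; I would like to prove
$$T(Y,s\theta^{(1)}+(1-s)\theta^{(2)})\leq T(Y,\theta^{(1)})^{s}\,T(Y,\theta^{(2)})^{1-s}.$$
By the previous lemma (existence of the limit in each direction), for any $\varepsilon>0$ I can pick multiindices $\alpha,\beta$ with all entries positive such that $\alpha/|\alpha|$ is as close as desired to $\theta^{(1)}$ and $\beta/|\beta|$ as close as desired to $\theta^{(2)}$, and simultaneously
$$Y(\alpha)^{1/|\alpha|}\leq T(Y,\theta^{(1)})+\varepsilon,\qquad Y(\beta)^{1/|\beta|}\leq T(Y,\theta^{(2)})+\varepsilon.$$

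The core step is to iterate (\ref{subadd}): for any positive integers $p,q$, submultiplicativity gives $Y(p\alpha)\leq Y(\alpha)^{p}$, $Y(q\beta)\leq Y(\beta)^{q}$, and hence
$$Y(p\alpha+q\beta)\leq Y(p\alpha)\,Y(q\beta)\leq Y(\alpha)^{p}\,Y(\beta)^{q}.$$
Write $\gamma:=p\alpha+q\beta$, so $|\gamma|=p|\alpha|+q|\beta|$. Taking $1/|\gamma|$-th roots,
$$Y(\gamma)^{1/|\gamma|}\leq\bigl(Y(\alpha)^{1/|\alpha|}\bigr)^{p|\alpha|/|\gamma|}\bigl(Y(\beta)^{1/|\beta|}\bigr)^{q|\beta|/|\gamma|}.$$
Now I would choose the pair $(p,q)$ so that $p|\alpha|/|\gamma|$ is as close as desired to $s$ (hence $q|\beta|/|\gamma|$ close to $1-s$); because $|\alpha|$ and $|\beta|$ are fixed positive integers, this only requires a rational approximation of $s/(1-s)$ by $q|\beta|/(p|\alpha|)$, which we can arrange with $p,q$ large. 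With this choice the normalized multiindex
$$\frac{\gamma}{|\gamma|}=\frac{p|\alpha|}{|\gamma|}\cdot\frac{\alpha}{|\alpha|}+\frac{q|\beta|}{|\gamma|}\cdot\frac{\beta}{|\beta|}$$
can be made arbitrarily close to $s\theta^{(1)}+(1-s)\theta^{(2)}$, and $|\gamma|\to\infty$ as $p,q\to\infty$.

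Letting $\varepsilon\downarrow 0$ and passing to a sequence $\gamma_{k}$ constructed as above with $\gamma_{k}/|\gamma_{k}|\to s\theta^{(1)}+(1-s)\theta^{(2)}$ and $|\gamma_{k}|\to\infty$, the previous lemma gives $Y(\gamma_{k})^{1/|\gamma_{k}|}\to T(Y,s\theta^{(1)}+(1-s)\theta^{(2)})$, while the right-hand side of the displayed inequality tends to $T(Y,\theta^{(1)})^{s}T(Y,\theta^{(2)})^{1-s}$. This gives the log-convexity, and continuity on the open simplex follows from the standard fact that a convex function on a convex open subset of $\RR^{d-1}$ is automatically continuous there (it is locally bounded above by evaluation at a few nearby points, and a locally bounded convex function is locally Lipschitz).

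The main obstacle I anticipate is organizing the approximation: one must choose $(\alpha,\beta,p,q)$ so that \emph{simultaneously} the Chebyshev-type quantities $Y(\alpha)^{1/|\alpha|}$, $Y(\beta)^{1/|\beta|}$ are close to the target values, the directions $\alpha/|\alpha|,\beta/|\beta|$ are close to $\theta^{(1)},\theta^{(2)}$, and the weights $p|\alpha|/|\gamma|,q|\beta|/|\gamma|$ are close to $s,1-s$. The fact that $\theta^{(1)},\theta^{(2)}$ lie in $\Sigma^{0}$ is essential: it ensures the approximating $\alpha,\beta$ have strictly positive entries, so the integer combinations $p\alpha+q\beta$ really do approach any desired point of $\Sigma^{0}$ and the previous lemma applies along the constructed sequence.
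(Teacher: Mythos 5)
Your proposal is correct and reconstructs the standard Zaharjuta argument that the paper outsources (the paper does not prove this lemma but cites lemmas 1–6 of \cite{zah} and notes that submultiplicativity is the only property used). The key step — iterating (\ref{subadd}) to get $Y(p\alpha+q\beta)\le Y(\alpha)^p\,Y(\beta)^q$, taking $|\gamma|$-th roots, and steering the weight $p|\alpha|/|\gamma|$ toward $s$ via integer approximation while simultaneously tightening $\alpha/|\alpha|\to\theta^{(1)}$ and $\beta/|\beta|\to\theta^{(2)}$ — is exactly the intended mechanism, and the appeal to Lemma 7.1 to identify $\lim Y(\gamma_k)^{1/|\gamma_k|}$ with $T(Y,s\theta^{(1)}+(1-s)\theta^{(2)})$ is legitimate. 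One presentational point worth tightening: with $\alpha,\beta$ fixed, varying $p,q$ only steers the barycentric weight, not the directions $\alpha/|\alpha|,\beta/|\beta|$; so the phrase ``$\gamma/|\gamma|$ can be made arbitrarily close'' really requires the double limit you describe at the end (first $\varepsilon\downarrow 0$ improving $\alpha,\beta$, then $p,q$ chosen accordingly). As written, the final sentence makes clear you understand this, so the argument stands.
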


\begin{lemma}  Given $b\in \partial \Sigma$,
$$\liminf_{\theta \to b, \ \theta \in \Sigma^0}
T(Y,\theta) =\liminf_{i\to \infty, \ \alpha(i)/|\alpha(i)|\to b}\tau(\alpha(i)).$$
\end{lemma}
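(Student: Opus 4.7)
The plan is to show both inequalities separately. Let $A := \liminf_{\theta \to b,\,\theta \in \Sigma^0} T(Y,\theta)$ and $B := \liminf_{i\to\infty,\,\alpha(i)/|\alpha(i)|\to b} \tau(\alpha(i))$. The direction $A \geq B$ is a quick diagonal extraction: pick $\theta_n \in \Sigma^0$ with $\theta_n \to b$ and $T(Y,\theta_n) \to A$; since the first lemma of this section identifies $T(Y,\theta_n)$ as a limit along $\alpha/|\alpha| \to \theta_n$, for each $n$ I can choose a multiindex $\alpha^{(n)}$ with $|\alpha^{(n)}/|\alpha^{(n)}| - \theta_n| + |\tau(\alpha^{(n)}) - T(Y,\theta_n)| < 1/n$, and the triangle inequality then gives $\alpha^{(n)}/|\alpha^{(n)}| \to b$ with $\tau(\alpha^{(n)}) \to A$, so $B \leq A$.

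The hard part is $A \leq B$. Fix a sequence $\alpha_k$ with $\alpha_k/|\alpha_k| \to b$ and $\tau(\alpha_k) \to B$ (WLOG $|\alpha_k| \to \infty$), and fix an auxiliary $\alpha_0 \in {\bf N}^d$ with all strictly positive coordinates so that $\theta_0 := \alpha_0/|\alpha_0| \in \Sigma^0$. For each $t \in (0,1)$, set $m_k := \lfloor t|\alpha_k|/((1-t)|\alpha_0|)\rfloor$ and form the perturbed multiindex $\alpha_k^t := \alpha_k + m_k\alpha_0$. A direct computation gives $|\alpha_k|/|\alpha_k^t| \to 1-t$ and $m_k|\alpha_0|/|\alpha_k^t| \to t$ as $k \to \infty$, hence
$$\frac{\alpha_k^t}{|\alpha_k^t|} = \frac{|\alpha_k|}{|\alpha_k^t|}\cdot\frac{\alpha_k}{|\alpha_k|} + \frac{m_k|\alpha_0|}{|\alpha_k^t|}\cdot\theta_0 \longrightarrow \theta^{(t)} := (1-t)\,b + t\,\theta_0 \in \Sigma^0.$$
Iterated submultiplicativity (\ref{subadd}) yields $Y(\alpha_k^t) \leq Y(\alpha_k)\,Y(\alpha_0)^{m_k}$, which in $\tau$-form reads
$$\tau(\alpha_k^t) \leq \tau(\alpha_k)^{|\alpha_k|/|\alpha_k^t|}\,\tau(\alpha_0)^{m_k|\alpha_0|/|\alpha_k^t|} \longrightarrow B^{1-t}\,\tau(\alpha_0)^t.$$
Since $\alpha_k^t/|\alpha_k^t| \to \theta^{(t)}$, the first lemma gives $\tau(\alpha_k^t) \to T(Y,\theta^{(t)})$, hence $T(Y,\theta^{(t)}) \leq B^{1-t}\,\tau(\alpha_0)^t$. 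Letting $t \to 0^+$ drives $\theta^{(t)} \to b$ and the right-hand side to $B$, yielding $A \leq B$.

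The main obstacle is conceptual rather than purely computational: $B$ records the behavior of $\tau$ along multiindex sequences asymptotically approaching the boundary direction $b$, while $A$ records the behavior of $T$ on genuine interior directions near $b$, and a priori these see different subsets of ${\bf N}^d$. The bridge is the perturbation $\alpha_k \mapsto \alpha_k + m_k\alpha_0$: it tilts a boundary-approaching multiindex toward a prescribed interior direction $\theta_0$, calibrating the multiplicity $m_k$ so that the renormalized multiindex lands near the precise point $(1-t)b + t\theta_0$; the additive submultiplicativity hypothesis then converts into the multiplicative (geometric-mean) bound on $\tau(\alpha_k^t)$ that is precisely tight enough to survive the final limit $t \to 0^+$. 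Degenerate edge cases ($B = 0$, or the finiteness of $\tau(\alpha_0)$, the latter following from the pointwise estimate $Y(\alpha) \leq \prod_j Y(\varepsilon_j)^{\alpha_j}$ against the standard basis $\varepsilon_j$ of ${\bf N}^d$) are absorbed by standard conventions and do not obstruct the argument.
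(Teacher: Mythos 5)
Your proof is correct. Note that the paper itself does not supply a proof of this lemma: it simply observes that Zaharjuta's original arguments (Lemmas 1, 2, 3, 5, 6 of \cite{zah}) use nothing about the numbers $Y(\alpha)$ beyond submultiplicativity (\ref{subadd}), and refers the reader there. Your argument reconstructs that classical mechanism self-containedly: the easy direction $B\leq A$ is the diagonal extraction you describe, and for $A\leq B$ the calibrated perturbation $\alpha_k\mapsto\alpha_k+m_k\alpha_0$, with $m_k$ tuned so that $\alpha_k^t/|\alpha_k^t|\to(1-t)b+t\theta_0\in\Sigma^0$, converts submultiplicativity into the geometric-mean bound $T(Y,(1-t)b+t\theta_0)\leq B^{1-t}\tau(\alpha_0)^t$, after which $t\downarrow 0$ closes the gap. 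The auxiliary inputs you invoke are all legitimately available at this point in the section: the existence of the limit $T(Y,\theta)$ for interior $\theta$ is the first lemma, and $\tau(\alpha_0)<\infty$ follows from $Y(\alpha)\leq\prod_j Y(\varepsilon_j)^{\alpha_j}$, which also shows $\tau$ (hence $T$, hence $A$ and $B$) is uniformly bounded so no issues with $\infty$ arise. I see no gaps.
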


\begin{lemma}  Let $\theta(k):= \alpha(k)/|\alpha(k)|$ for $k=1,2,...$ and let $Q$ be a compact subset of $\Sigma^0$. Then
$$\limsup_{|\alpha|\to \infty} \{\log \tau(\alpha(k)) - \log T (Y(\theta(k))): |\alpha(k)|=\alpha, \ \theta(k)\in Q\}=0.$$
\end{lemma}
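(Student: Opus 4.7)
The goal is to upgrade the pointwise convergence $\tau(\alpha(k)) \to T(Y,\theta(k))$ supplied by Lemma 1 to uniform convergence over $\theta(k)\in Q$, where $Q\Subset\Sigma^0$. Unwinding the $\limsup$ of the indicated set, we must show: for every $\epsilon>0$ there exists $A_0$ so that whenever $|\alpha(k)|\geq A_0$ and $\theta(k)\in Q$, one has $\log\tau(\alpha(k))-\log T(Y,\theta(k))\leq \epsilon$. (The reverse inequality follows easily from Lemma 1 and continuity of $T(Y,\cdot)$ from Lemma 2.) The strategy is a standard localize/cover argument based on submultiplicativity.

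Fix $\epsilon>0$. By Lemma 2 the map $\theta\mapsto \log T(Y,\theta)$ is continuous, hence uniformly continuous on $Q$; choose $\delta>0$ so that $|\log T(Y,\theta)-\log T(Y,\theta')|<\epsilon/3$ whenever $\theta,\theta'\in Q$ with $|\theta-\theta'|<\delta$. For each fixed $\theta^0\in Q$ use Lemma 1 to pick $d$ multiindices $\alpha^{0,1},\ldots,\alpha^{0,d}$, each of equal length $M=M(\theta^0)$, with each $\alpha^{0,i}/M$ within $\delta/2$ of $\theta^0$, such that the normalized directions $\{\alpha^{0,i}/M\}_{i=1}^d$ affinely span a simplex whose interior contains $\theta^0$, and simultaneously $\log\tau(\alpha^{0,i})<\log T(Y,\theta^0)+\epsilon/3$.

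The main technical step is the following decomposition: any $\alpha$ with $\alpha/|\alpha|$ sufficiently close to $\theta^0$ and $|\alpha|$ large admits a representation
\begin{equation*}
\alpha=q_1\alpha^{0,1}+\cdots+q_d\alpha^{0,d}+r,\qquad q_i\in\ZZ_{\geq 0},\quad |r|\leq C(\theta^0),
\end{equation*}
where $C(\theta^0)$ is independent of $\alpha$. This is where the bulk of the work lies and is the expected obstacle: one solves the linear system $\alpha\approx\sum_i q_i\alpha^{0,i}$ over $\RR_{\geq 0}$ using the fact that $\theta^0$ lies in the interior of the convex hull of the $\alpha^{0,i}/M$, then rounds the real coefficients to non-negative integers at the cost of a bounded lattice remainder $r$. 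Once this decomposition is in hand, submultiplicativity yields
\begin{equation*}
Y(\alpha)\leq \prod_{i=1}^d Y(\alpha^{0,i})^{q_i}\cdot Y(r),
\end{equation*}
and since $Y(r)\leq\prod_j Y(\mathrm{e}_j)^{r_j}$ is bounded by a constant $C'(\theta^0)$ for $|r|\leq C(\theta^0)$, taking logs and dividing by $|\alpha|$ gives
\begin{equation*}
\log\tau(\alpha)\leq \sum_{i=1}^d \frac{q_iM}{|\alpha|}\log\tau(\alpha^{0,i})+\frac{\log C'(\theta^0)}{|\alpha|}.
\end{equation*}
As $|\alpha|\to\infty$ with $\alpha/|\alpha|$ near $\theta^0$, the weights $q_iM/|\alpha|$ are nonnegative with sum tending to $1$, and the final term vanishes; combined with the choice of the $\alpha^{0,i}$ this forces $\log\tau(\alpha)\leq \log T(Y,\theta^0)+2\epsilon/3$ for $|\alpha|$ larger than some threshold $A_0(\theta^0)$.

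Finally, cover $Q$ by finitely many neighborhoods $U_1,\ldots,U_m$ arising from base points $\theta^{0,1},\ldots,\theta^{0,m}$, each of diameter less than $\delta$, and set $A_0:=\max_j A_0(\theta^{0,j})$. For any $\alpha(k)$ with $\theta(k)\in Q$ and $|\alpha(k)|\geq A_0$, pick $j$ with $\theta(k)\in U_j$; the previous step gives $\log\tau(\alpha(k))\leq \log T(Y,\theta^{0,j})+2\epsilon/3$, and uniform continuity on $Q$ gives $\log T(Y,\theta^{0,j})\leq \log T(Y,\theta(k))+\epsilon/3$. Adding, $\log\tau(\alpha(k))-\log T(Y,\theta(k))\leq \epsilon$, finishing the proof. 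The only nontrivial input beyond the earlier lemmas is the bounded-remainder lattice decomposition, which is essentially combinatorial but requires care so that $C(\theta^0)$ (and hence the threshold $A_0$) remains uniform as $\theta^0$ ranges over $Q$.
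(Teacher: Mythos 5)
The paper does not give a self-contained proof of this lemma: it defers to Zaharjuta's original paper, and the argument there is precisely the submultiplicativity/lattice-decomposition scheme you sketch (choose ``reference'' multiindices $\alpha^{0,i}$ of common length $M$ whose normalized directions form a small simplex around $\theta^0$ with $\tau(\alpha^{0,i})$ controlled, decompose any nearby $\alpha$ as $\sum q_i\alpha^{0,i}+r$ with $q_i\in\ZZ_{\geq 0}$ and $|r|\leq dM$ by rounding barycentric coordinates, apply submultiplicativity, and finish by a compactness cover of $Q$). Your proof is therefore correct and takes essentially the same route; the one detail worth streamlining is that the ``reverse inequality'' needs neither Lemma 2 nor any cover: submultiplicativity gives $\tau(k\alpha)\leq\tau(\alpha)$ for all $k$, and since $\tau(k\alpha)\to T(Y,\alpha/|\alpha|)$ by the first lemma, one has $\log\tau(\alpha)-\log T(Y,\theta(\alpha))\geq 0$ for \emph{every} $\alpha$, so only the upper bound requires the decomposition argument.
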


\begin{lemma} \label{lemma2.5} Define 
$$\tau(Y):= \exp \bigl[\frac{1}{\hbox{meas}(\Sigma)}\int_{\Sigma} \log T(Y,\theta)d\theta\bigr]$$
Then
$$\lim_{d\to \infty} \frac{1}{h_n}\sum_{|\alpha|=d}\log \tau(\alpha)= \log \tau (Y);$$
i.e., using (\ref{tau}),
$$\lim_{d\to \infty}\bigl[\prod_{|\alpha|= d}Y(\alpha)\bigr]^{1/dh_n} = \ \tau (Y).$$
 \end{lemma}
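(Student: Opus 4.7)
The plan is a Riemann-sum argument. The lattice points $\{\alpha/|\alpha| : |\alpha|=n,\ \alpha \in {\bf N}^d\}$ become equidistributed on $\Sigma$ as $n \to \infty$, so the discrete average $\frac{1}{h_n}\sum_{|\alpha|=n}\log\tau(\alpha)$ should converge to the integral of $\log T(Y,\cdot)$ over $\Sigma$ (normalized by $\hbox{meas}(\Sigma)$). Since by the previous lemma the values $\log \tau(\alpha)$ are uniformly well-approximated by $\log T(Y,\alpha/|\alpha|)$ on any compact $Q \subset \Sigma^0$, the substance of the proof is to (i) approximate the discrete sum by the Riemann sum for $\log T$, and (ii) control what happens near $\partial \Sigma$.

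First I would extract a uniform upper bound. Iterated submultiplicativity gives $Y(\alpha) \le \prod_{i=1}^d Y(e_i)^{\alpha_i}$ for the standard unit multi-indices $e_i$, hence
\[\log\tau(\alpha) \le \sum_{i=1}^d \frac{\alpha_i}{|\alpha|}\log Y(e_i) \le M\]
for a constant $M$ depending only on $Y(e_1),\ldots,Y(e_d)$. Passing to the limit in Lemma 1 yields $\log T(Y,\theta)\le M$ on $\Sigma^0$. Next, fix $\epsilon>0$, choose a compact $Q \subset \Sigma^0$ with $\hbox{meas}(\Sigma \setminus Q)<\epsilon$, and combine the uniform convergence from Lemma 4, the continuity of $\log T$ on $\Sigma^0$ (from convexity, Lemma 2), and the standard lattice-point count
\[\#\{\alpha:|\alpha|=n,\ \alpha/|\alpha|\in Q\}/h_n \longrightarrow \hbox{meas}(Q)/\hbox{meas}(\Sigma)\]
to conclude that the interior portion satisfies
\[\frac{1}{h_n}\sum_{|\alpha|=n,\ \alpha/|\alpha|\in Q}\log\tau(\alpha) \longrightarrow \frac{1}{\hbox{meas}(\Sigma)}\int_Q \log T(Y,\theta)\,d\theta.\]
The contribution from $\alpha$ with $\alpha/|\alpha|\notin Q$ is bounded above by $(M+o(1))\cdot\epsilon$, and exhausting $Q \uparrow \Sigma^0$ together with monotone convergence on the upper truncations of $\log T$ gives
\[\limsup_n \frac{1}{h_n}\sum_{|\alpha|=n}\log\tau(\alpha) \le \frac{1}{\hbox{meas}(\Sigma)}\int_\Sigma \log T(Y,\theta)\,d\theta.\]

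The main obstacle is the matching liminf bound: both $\log T(Y,\theta)$ and the individual values $\log\tau(\alpha)$ can tend to $-\infty$ as $\alpha/|\alpha|$ approaches $\partial\Sigma$, so the boundary terms cannot simply be discarded. If $\int_\Sigma \log T\,d\theta = -\infty$ the liminf inequality is vacuous, so assume it is finite; convexity plus the upper bound $M$ then ensures $L^1$-integrability of $\log T$ on $\Sigma$. Choose $Q$ with $\int_{\Sigma\setminus Q}|\log T|\,d\theta < \epsilon$. Lemma 4 applied on $Q$ gives $\frac{1}{h_n}\sum_{\alpha/|\alpha|\in Q}\log\tau(\alpha) \ge \frac{1}{\hbox{meas}(\Sigma)}\int_Q \log T\,d\theta - \epsilon$ for $n$ large. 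The remaining boundary sum is handled by Lemma 3, which identifies $\liminf_{\theta\to b}\log T(Y,\theta)$ with $\liminf_{\alpha/|\alpha|\to b}\log\tau(\alpha)$ along each boundary direction $b$; together with the upper bound $\log\tau(\alpha)\le M$ and the fact that at most an $\epsilon$-fraction of the lattice points lie in $\Sigma\setminus Q$, this forces the boundary sum to contribute no less than $\frac{1}{\hbox{meas}(\Sigma)}\int_{\Sigma\setminus Q}\log T\,d\theta - O(\epsilon)$ in the limit. Letting $\epsilon \to 0$ closes the liminf estimate and completes the proof.
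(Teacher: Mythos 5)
The paper does not give its own proof of this lemma: it states it as part of Zaharjuta's machinery and simply refers to lemmas 1, 2, 3, 5 and 6 of \cite{zah}, observing that submultiplicativity is the only property of $Y$ they use. So there is no in-paper argument to match against; I can only assess your proposal on its own terms.

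Your Riemann-sum framework and the limsup half are essentially correct. The uniform upper bound $\log\tau(\alpha)\le M$ from iterated submultiplicativity, the lattice-point equidistribution, and the control over a compact $Q\subset\Sigma^0$ via the paper's fourth Lemma do give
$\limsup_n\frac{1}{h_n}\sum_{|\alpha|=n}\log\tau(\alpha)\le\frac{1}{\hbox{meas}(\Sigma)}\int_\Sigma\log T(Y,\theta)\,d\theta$. This also already disposes of the case $\log\tau(Y)=-\infty$, since the upper bound forces the full limit to $-\infty$.

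The liminf half has a genuine gap, and it is exactly the hard part of the lemma. You claim that Lemma 3 "forces the boundary sum to contribute no less than $\frac{1}{\hbox{meas}(\Sigma)}\int_{\Sigma\setminus Q}\log T\,d\theta-O(\epsilon)$." But Lemma 3 only identifies $\liminf_{\theta\to b}\log T(Y,\theta)$ with $\liminf_{\alpha/|\alpha|\to b}\log\tau(\alpha)$ along sequences approaching a fixed boundary direction $b$; it gives no quantitative, uniform-in-$n$ lower bound on $\log\tau(\alpha)$ for the roughly $\epsilon h_n$ lattice points with $\alpha/|\alpha|\in\Sigma\setminus Q$, and it does not control the rate at which either quantity tends to $-\infty$. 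Because $\log\tau$ is only bounded \emph{above}, these boundary terms could in principle drag $\frac{1}{h_n}\sum\log\tau(\alpha)$ far below $\int\log T$, and matching two liminfs that may both equal $-\infty$ does nothing to preclude this. The correct treatment requires an actual lower bound on $\log\tau(\alpha)$ near $\partial\Sigma$ derived from submultiplicativity itself (e.g., using $Y(\alpha)\ge Y(\alpha+\beta)/Y(\beta)$ to push $\alpha$ into the interior at a controlled cost) combined with the convexity of $\log T$ to estimate the aggregate sum — precisely the role played by the lemma of \cite{zah} the paper points to. A secondary, smaller issue: Lemma 4 as stated only asserts $\limsup(\log\tau-\log T)\le 0$ on compact $Q$, a one-sided bound, so even the interior liminf estimate you assert as "uniform convergence" needs a separate justification (for instance by combining Lemma 1, continuity of $\log T$ on $\Sigma^0$ from convexity, and a compactness argument).
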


One can incorporate all of the $Y(\alpha)'$s for $|\alpha|\leq d$; this is the content of the next result. 

\begin{theorem}\label{geommean} We have 
$$\lim_{d\to \infty}\bigl[\prod_{|\alpha|\leq d}Y(\alpha)\bigr]^{1/l_n} \ \hbox{exists and equals} \ \tau (Y).$$
\end{theorem}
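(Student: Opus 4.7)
The plan is to deduce Theorem \ref{geommean} from Lemma \ref{lemma2.5} by a Cesàro-type averaging argument using the identity $l_n=\sum_{k=1}^n k h_k$ from (\ref{postcrucial}). First I would take logarithms and reorganize the product by homogeneous degree:
$$\frac{1}{l_n}\log\prod_{|\alpha|\leq n}Y(\alpha)=\frac{1}{l_n}\sum_{k=0}^n\sum_{|\alpha|=k}\log Y(\alpha)=\sum_{k=1}^n\frac{kh_k}{l_n}\,a_k,$$
where $a_k:=\frac{1}{kh_k}\sum_{|\alpha|=k}\log Y(\alpha)=\frac{1}{h_k}\sum_{|\alpha|=k}\log\tau(\alpha)$ (the $k=0$ term drops since $Y(0)=1$). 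By Lemma \ref{lemma2.5}, $a_k\to\log\tau(Y)=:L$.

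Next I would verify that the weights $w_k^{(n)}:=kh_k/l_n$ define a regular summation method. By construction $\sum_{k=1}^n w_k^{(n)}=1$, and from (\ref{precrucial})--(\ref{crucial}) we have $kh_k=O(k^d)$ while $l_n=d\binom{d+n}{d+1}\sim c_d\,n^{d+1}$; in particular for each fixed $k$, $w_k^{(n)}\to 0$ as $n\to\infty$. Thus for any fixed $K$, $l_K/l_n\to 0$.

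The main step is then the standard Toeplitz/Cesàro estimate. Given $\varepsilon>0$, choose $K$ so that $|a_k-L|<\varepsilon$ for all $k>K$. Split
$$\sum_{k=1}^n\frac{kh_k}{l_n}(a_k-L)=\frac{1}{l_n}\sum_{k=1}^K kh_k(a_k-L)+\frac{1}{l_n}\sum_{k=K+1}^n kh_k(a_k-L).$$
The first sum is bounded in absolute value by $M\cdot l_K/l_n$, where $M:=\max_{k\leq K}|a_k-L|$ is a fixed (finite) number since each $a_k$ involves only finitely many monomials of positive $Y$; this tends to $0$ as $n\to\infty$. The second sum is bounded by $\varepsilon\cdot (l_n-l_K)/l_n\leq\varepsilon$. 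Letting $n\to\infty$ and then $\varepsilon\to 0$ gives
$$\lim_{n\to\infty}\frac{1}{l_n}\log\prod_{|\alpha|\leq n}Y(\alpha)=L=\log\tau(Y),$$
i.e., the claimed limit exists and equals $\tau(Y)$.

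\textbf{Main obstacle.} There is no serious difficulty once Lemma \ref{lemma2.5} is in hand; the only minor subtlety is ensuring the $a_k$'s are finite-valued (so $\log$ makes sense) and that the ``early'' terms $k\leq K$ do not spoil the estimate. Both are handled automatically: submultiplicativity gives $Y(\alpha)\leq \prod_j Y(e_j)^{\alpha_j}$ so $\tau$ is bounded above, and the finitely many terms $k\leq K$ contribute at most a bounded multiple of $l_K/l_n\to 0$. The heart of the argument is purely the Toeplitz/Silverman lemma applied to the summation matrix with row entries $kh_k/l_n$.
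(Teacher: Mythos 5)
Your proof is correct and is essentially the same argument as the paper's. The paper constructs the auxiliary sequence in which $\log\tau^0_k$ is repeated $r_k=kh_k$ times and then invokes the fact that the arithmetic mean of the first $l_n$ terms of a convergent sequence converges to the same limit; your version instead verifies the Toeplitz/Silverman regularity conditions directly for the weights $kh_k/l_n$. These are two equivalent phrasings of the same Cesàro-averaging step, so the two proofs differ only in presentation.
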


\begin{proof} Define the geometric means 
$$\tau^0_d := \bigl(\prod_{|\alpha| =d}\tau(\alpha)\bigr)^{1/h_n}, \ d=1,2,...$$
The sequence
$$\log \tau^0_1,\log \tau^0_1,... (r_1 \ \hbox{times}), ... , \log \tau^0_d, \log \tau^0_d,... (r_d \ \hbox{times}), ...$$
converges to $\log \tau(Y)$ by the previous lemma; hence the arithmetic mean of the first $l_n = \sum_{k=1}^d r_k$ terms (see (\ref{postcrucial})) converges to $\log \tau(Y)$ as well. Exponentiating this arithmetic mean gives
\begin{equation} \label{mainest}\bigl(\prod_{k=1}^d (\tau^0_k)^{r_k} \bigr)^{1/l_n}= \bigl(\prod_{k=1}^d\prod_{|\alpha|=k}\tau(\alpha)^k\bigr)^{1/l_n}=\bigl(\prod_{|\alpha|\leq d}Y(\alpha)\bigr)^{1/l_n}\end{equation}
and the result follows.
\end{proof}

Returning to our examples (1)-(3), example (1) was the original setting of Zaharjuta \cite{zah} which he utilized to prove the existence of the limit in the definition of the transfinite diameter $\delta(K)$ of a compact set $K\subset \CC^d$. Recall the notation
$$V_n =V_n(K):=\max_{\zeta_1,...,\zeta_n\in K}|VDM(\zeta_1,...,\zeta_n)|$$
and
\begin{equation} \label{tdlim}\delta(K)=\lim_{d\to \infty}V_{m_n}^{1/l_n}. \end{equation}
Zaharjuta \cite{zah} showed that the limit exists by showing that one has
\begin{equation} \label{zahthm}\delta(K)=\exp\bigl[\frac{1}{\hbox{meas}(\Sigma)}\int_{\Sigma^0}\log {\tau(K,\theta)}d\theta \bigr]  \end{equation} 
where $\tau(K,\theta)=T(Y_1,\theta)$ from (1); i.e., the right-hand-side of (\ref{zahthm}) is $\tau(Y_1)$. This follows from Theorem \ref{geommean} for $Y=Y_1$ and the estimate
$$\bigl(\prod_{k=1}^d (\tau^0_k)^{r_k} \bigr)^{1/l_n} \leq V_{m_n}^{1/l_n}   \leq (m_n!)^{1/l_n} \bigl(\prod_{k=1}^d (\tau^0_k)^{r_k} \bigr)^{1/l_n}$$
in \cite{zah} (compare (\ref{mainest})).

For a compact {\it circled} set $K\subset \CC^d$; i.e., $z\in K$ if and only if $e^{i\phi}z\in K, \ \phi \in [0,2\pi]$, one need only
consider homogeneous polynomials in the definition of the directional Chebyshev constants $\tau(K,\theta)$. In other words, in the notation of (1) and (2), $Y_1(\alpha)=Y_2(\alpha)$ for all $\alpha$ so that 
$$T(Y_1,\theta)=T(Y_2,\theta) \ \hbox{for circled sets} \  K.$$
This is because for such a set, if we write a polynomial $p$ of degree $d$ as $p=\sum_{j=0}^dH_j$ where $H_j$ is a homogeneous
polynomial of degree $j$, then, from the Cauchy integral formula, $||H_j||_K\leq ||p||_K, \ j=0,...,d$. Moreover, a slight modification of Zaharjuta's arguments prove the existence of the limit of appropriate roots of maximal {\it homogeneous} Vandermonde determinants; i.e., the homogeneous transfinite diameter $d^{(H)}(K)$ of a compact set (cf., \cite{jed}). From the above remarks, it follows that 
\begin{equation} \label{circled} \hbox{for circled sets} \  K, \  \delta(K)=d^{(H)}(K). \end{equation}
Since we will be using the homogeneous transfinite diameter, we amplify the discussion. We relabel the standard basis monomials $\{e_i^{(H,d)}(z)=z^{\alpha(i)}=z_1^{\alpha_1}\cdots z_d^{\alpha_d}\}$ where $|\alpha(i)|=d, \ i=1,...,h_n$, we define the $d-$homogeneous Vandermonde determinant 
\begin{equation} \label{vdmh}VDMH_d((\zeta_1,...,\zeta_{h_n}):=\det \bigl[e_i^{(H,d)}(\zeta_j)\bigr]_{i,j=1,...,h_n}.\end{equation}
Then 
\begin{equation} \label{htdlim}d^{(H)}(K)=\lim_{d\to \infty}\bigl[\max_{\zeta_1,...,\zeta_{h_n}\in K}|VDMH_d(\zeta_1,...,\zeta_{h_n})|\bigr]^{1/dh_n} \end{equation} is the homogeneous transfinite diameter of $K$; the limit exists and equals 
$$\exp\bigl[\frac{1}{\hbox{meas}(\Sigma)}\int_{\Sigma^0}\log {T(Y_2,\theta)}d\theta \bigr]$$
where $T(Y_2,\theta)$ comes from (2).

Finally, related to example (3), there are similar properties for the weighted version of directional Chebyshev constants and transfinite diameter. To define weighted notions, let $K\subset {\bf
C}^d$ be closed and let $w$ be an admissible weight function on $K$. We define a {\it weighted transfinite diameter}
$$d^w(K):=\exp\bigl[\frac{1}{\hbox{meas}(\Sigma)}\int_{\Sigma^0}\log {\tau^w(K,\theta)}d\theta \bigr] $$
as in \cite{bloomlev2} where $\tau^w(K,\theta)= T(Y_3,\theta)$ from (3); i.e., the right-hand-side of this equation is the quantity $\tau(Y_3)$.

We remark that if $\{K_j\}$ is a decreasing sequence of locally regular compacta with $K_j \downarrow K$, and if $w_j$ is a continuous admissible weight function on $K_j$ with $w_j \downarrow w$ on $K$ where $w$ is an admissible weight function on $K$, then the argument in Proposition 7.5 of  \cite{bloomlev2} shows that $\lim_{j\to \infty} \tau^{w_j}(K_j,\theta) = \tau^w(K,\theta)$ for all $\theta \in \Sigma^0$  and hence
\begin{equation}\label{decrlimit1} \lim_{j\to \infty} d^{w_j}(K_j) = d^w(K). \end{equation}
In particular, (\ref{decrlimit1}) holds in the unweighted case ($w\equiv 1$) for any decreasing sequence $\{K_j\}$ of compacta with $K_j \downarrow K$; i.e., 
\begin{equation}\label{decrlimit2} \lim_{j\to \infty} \delta (K_j) = \delta(K) \end{equation}
(cf., \cite{bloomlev2} equation (1.13)). Another useful fact is that 
\begin{equation} \label{hateqn} \delta(K)=\delta (\hat K) \ \hbox{and} \  d^{(H)}(K)=d^{(H)}(\hat K) \end{equation}
for $K$ compact where
$$\hat K :=\{z\in \CC^d: |p(z)|\leq ||p||_K, \ \hbox{all polynomials} \ p\}$$ 
is the polynomial hull of $K$.

Another natural definition of a weighted transfinite diameter has been described using weighted Vandermonde determinants. Let $K\subset \CC^d$ be compact and let
$w$ be an admissible weight function on
$K$.  We write
\begin{equation} \label{wn} W_n:=\max_{\zeta_1,...,\zeta_n\in K}|W(\zeta_1,...,\zeta_n)|
\end{equation}
and, apriori, 
\begin{equation}  \delta^w(K):=\limsup_{d\to \infty}W_{m_n}^{1/ l_n}. \end{equation}
To illustrate a useful technique, we give another proof of the existence of this limit.
		
	\begin{proposition} \label{wtdlimit} Let $K\subset \CC^d$ be a compact set with an admissible weight function $w$. The limit 
	$$\lim_{d\to \infty} \bigl[\max_{\lambda^{(i)}\in K}|VDM(\lambda^{(1)},...,\lambda^{(m_n^{(d)})})|\cdot w(\lambda^{(1)})^d\cdots w(\lambda^{(m_n^{(d)})})^d\bigr] ^{1/l_n^{(d)}}$$
	exists and equals $\delta^w(K)$.
		\end{proposition}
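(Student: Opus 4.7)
The plan is to apply the Zaharjuta submultiplicative framework of Theorem \ref{geommean} to the weighted Chebyshev constants $Y_3(\alpha)=\inf\{\|w^{|\alpha|}p\|_K : p\in P_\alpha\}$. First I would verify the submultiplicativity hypothesis (\ref{subadd}) for $Y_3$: given near-optimal weighted Chebyshev polynomials $t^w_{\alpha,K}=w^{|\alpha|}p_\alpha$ and $t^w_{\beta,K}=w^{|\beta|}p_\beta$, the product $p_\alpha p_\beta$ has leading monomial $z^{\alpha+\beta}$, and (in the lex-with-degree ordering) all other monomials have multi-indices below $\alpha+\beta$, so $p_\alpha p_\beta\in P_{\alpha+\beta}$ and $Y_3(\alpha+\beta)\le\|w^{|\alpha+\beta|}p_\alpha p_\beta\|_K\le Y_3(\alpha)Y_3(\beta)$. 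Theorem \ref{geommean} then supplies the convergence of $\bigl[\prod_{|\alpha|\le n}Y_3(\alpha)\bigr]^{1/l_n^{(d)}}$ to $\tau(Y_3)=d^w(K)$.

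Next I would compare $W_n(K)$ with $\prod_{|\alpha|\le n}Y_3(\alpha)$ via elementary row operations and Hadamard-type estimates. Factoring $\prod_j w(\lambda^{(j)})^n$ into the columns and replacing each monomial row $e_i$ by $w^{-|\alpha(i)|}t^w_{\alpha(i),K}\in P_{\alpha(i)}$ (which leaves the Vandermonde determinant unchanged), we obtain
\[
W(\lambda^{(1)},\ldots,\lambda^{(m_n^{(d)})})=\det\bigl[t^w_{\alpha(i),K}(\lambda^{(j)})\,w(\lambda^{(j)})^{n-|\alpha(i)|}\bigr]_{i,j}.
\]
Permanent expansion together with $|t^w_{\alpha(i),K}|\le Y_3(\alpha(i))$ on $K$ gives an upper bound of the form $W_n\le m_n!\cdot\prod_{|\alpha|\le n}Y_3(\alpha)\cdot R_n^{+}$, where $R_n^{+}$ collects the residual weight factors $w(\lambda^{(j)})^{n-|\alpha(i)|}$. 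A matching lower bound is obtained at a weighted Fekete configuration $\{\lambda^{(j)}_\ast\}$ by a variational argument: freezing all but one point and using the extremal property of the maximum produces weighted-Chebyshev-type control on the cofactors, and iterating over $j$ yields $W_n\ge\prod_{|\alpha|\le n}Y_3(\alpha)\cdot R_n^{-}$. Taking $(1/l_n^{(d)})$-th roots, $(m_n!)^{1/l_n^{(d)}}\to 1$ and the Chebyshev-product factor converges by the first step.

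The principal obstacle is controlling the residual weight factors $R_n^{\pm}$, whose combined exponent is $\sum_i(n-|\alpha(i)|)=nm_n^{(d)}/(d+1)$. This is the \emph{weight mismatch} between the uniform $w^n$ present in $W_n$ and the multi-index-dependent $w^{|\alpha|}$ in $Y_3(\alpha)$, and it is precisely the source of the nontrivial correction factor $\exp\bigl(-\tfrac{1}{d}\int_K Q\,(dd^cV_{K,Q}^*)^d\bigr)$ relating $d^w(K)=\tau(Y_3)$ to $\delta^w(K)$ in (\ref{dwdeltw}). The technical heart of the argument is to show, via the log-convexity of the directional Chebyshev constants $T(Y_3,\theta)$ on $\Sigma^0$ together with the averaging over $\alpha/|\alpha|\in\Sigma$, that $R_n^{+}$ and $R_n^{-}$ have the same $(1/l_n^{(d)})$-th root limit. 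Once this is established, the upper and lower bounds on $W_n^{1/l_n^{(d)}}$ coincide in the limit, the limit in (\ref{deltaw}) exists, and by definition equals $\delta^w(K)$.
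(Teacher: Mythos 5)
Your approach has a genuine gap, and the gap is exactly the one you flag at the end — but it is not a technical loose end, it is a circular dependency. Applying the Zaharjuta machinery (Theorem \ref{geommean}) to the weighted Chebyshev constants $Y_3$ gives convergence of $\bigl[\prod_{|\alpha|\le n}Y_3(\alpha)\bigr]^{1/l_n^{(d)}}$ to $\tau(Y_3)=d^w(K)$, which is a \emph{different} weighted transfinite diameter from $\delta^w(K)$. The two differ by the factor $\exp\bigl(-\tfrac{1}{d}\int_K Q\,(dd^cV_{K,Q}^*)^d\bigr)$ of (\ref{dwdeltw}), which is nontrivial in general. Your residual weight factors $R_n^{\pm}$ would therefore have to converge, after taking $l_n^{(d)}$-th roots, to precisely this quantity. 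But $\int_K Q\,(dd^cV_{K,Q}^*)^d$ is an integral against the weighted equilibrium measure, and the only way to extract it from the residuals $\prod_{i,j}w(\lambda^{(j)})^{n-|\alpha(i)|}$ at a Fekete array is to know that the Fekete measures converge weak-* to $\mu_{K,Q}$ — which is one of the main theorems of the paper (Corollary \ref{asympwtdfek}) and is proved \emph{using} the existence of the limit in (\ref{deltaw}). Log-convexity of $\theta\mapsto T(Y_3,\theta)$ alone cannot supply this; it controls directional Chebyshev asymptotics, not the spatial distribution of Fekete points. There is also a subsidiary problem: after replacing rows by weighted Chebyshev polynomials, the entry $t^w_{\alpha(i),K}(\lambda^{(j)})\,w(\lambda^{(j)})^{n-|\alpha(i)|}$ carries a factor depending on both $i$ and $j$, so the Hadamard-style bound $W_n\le m_n!\prod Y_3(\alpha)$ fails unless one first normalizes $w\le 1$, and the matching lower bound via your ``variational argument'' is not actually produced.

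The paper avoids all of this with the Bloom--Siciak homogenization: it identifies the weighted Vandermonde determinant $|VDM|\cdot\prod w(\lambda^{(j)})^n$ with the maximal \emph{homogeneous} Vandermonde determinant of the circled compact $F(D)\subset\CC^{d+1}$ of (\ref{FD}), and then invokes the already-known existence (Jedrzejowski) of the homogeneous transfinite diameter $d^{(H)}(F(D))$. This converts the weighted problem in $\CC^d$ into an unweighted homogeneous problem in $\CC^{d+1}$, where the weight is absorbed into the set itself via $|t|=w(\lambda)$, and the ``weight mismatch'' you encounter simply never appears. If you want a Chebyshev-theoretic route, the right move is to homogenize first (i.e., apply the Zaharjuta framework to $Y_2$ for $F(D)$ in $\CC^{d+1}$), not to fight the mismatch directly in $\CC^d$.
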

		
		\begin{proof} Following \cite{bloom}, we define the circled set 
	$$F=F(K,w):=\{(t,z)=(t,t \lambda )\in \CC^{d+1}: \lambda \in K, \ |t|=w(\lambda)\}.$$
	We first relate weighted Vandermonde determinants for $K$ with homogeneous Vandermonde determinants for the compact set 
\begin{equation} 
\label{FD}F(D):=\{(t,z)=(t,t \lambda )\in \CC^{d+1}: \lambda \in K, \ |t|\leq w(\lambda)\}.\end{equation}
Note that $F\subset \bar F \subset F(D) \subset \hat {\bar F}$ (cf., \cite{bloom}, (2.4)) where $\hat {\bar F}$ is the polynomial hull of $\bar F$ (recall (\ref{hateqn})); thus 
\begin{equation} 
\label{fd} d^{(H)}(\bar F) =d^{(H)}(F(D)). 
\end{equation} 
To this end, for each positive integer $d$, choose 
	$$m_n^{(d)}=  {n+d \choose d}$$
	(recall (\ref{precrucial})) points $\{(t_i,z^{(i)})\}_{i=1,...,m_n^{(d)}}=\{(t_i,t_i\lambda^{(i)})\}_{i=1,...,m_n^{(d)}}$ in $F(D)$ and form the $d-$homogeneous Vandermonde determinant
	$$VDMH_d((t_1,z^{(1)}),...,(t_{m_n^{(d)}},z^{(m_n^{(d)})})).$$ 
	We extend the lexicographical order of the monomials in $\CC^d$ to $\CC^{d+1}$ by letting $t$ precede any of $z_1,...,z_d$. Writing the standard basis monomials of degree $d$ in 
	$\CC^{d+1}$ as 
	$$\{t^{d-j} e_{k}^{(H,d)}(z): j=0,...,d; \ k=1,...,h_j\};$$ 
	i.e., for each power $d-j$ of $t$, we multiply by the standard basis monomials of degree $j$ in $\CC^{d}$, and dropping the superscript $(d)$ in $m_n^{(d)}$, we have the $d-$homogeneous Vandermonde matrix
$$\left[\begin{array}{ccccc}
 t_1^d &t_2^d&\ldots  &t_{m_n}^d\\
 t_1^{d-1}e_{2}(z^{(1)})  &t_2^{d-1}e_{2}(z^{(2)})&\ldots  &t_{m_n}^{d-1}e_{2}(z^{(m_n)})\\
  \vdots  & \vdots & \ddots  & \vdots \\
e_{m_n}(z^{(1)}) &e_{m_n}(z^{(2)} ) &\ldots  &e_{m_n}(z^{(m_n)})
\end{array}\right]$$
$$=\left[\begin{array}{ccccc}
 t_1^d &t_2^d&\ldots  &t_{m_n}^d\\
 t_1^{d-1}z_1^{(1)}  &t_2^{d-1}z_1^{(2)}&\ldots  &t_{m_n}^{d-1}z_1^{(m_n)}\\
  \vdots  & \vdots & \ddots  & \vdots \\
(z^{(1)}_d)^d &(z^{(2)}_d)^d  &\ldots  &(z^{(m_n)}_d)^d
\end{array}\right].$$
	Factoring $t_i^d$ out of the $i-$th column, we obtain
	$$VDMH_d((t_1,z^{(1)}),...,(t_{m_n},z^{(m_n)}))=t_1^d\cdots t_{m_n}^d\cdot VDM(\lambda^{(1)},...,\lambda^{(m_n)});$$
thus, writing $|A|:=|\det A|$ for a square matrix $A$,  
\begin{align} \label{matrix}
\left|\begin{array}{ccccc}
 t_1^d &t_2^d&\ldots  &t_{m_n}^d\\
 t_1^{d-1}z_1^{(1)}  &t_2^{d-1}z_1^{(2)}&\ldots  &t_{m_n}^{d-1}z_1^{(m_n)}\\
  \vdots  & \vdots & \ddots  & \vdots \\
(z^{(1)}_d)^d &(z^{(2)}_d)^d  &\ldots  &(z^{(m_n)}_d)^d
\end{array}\right|\end{align}
$$=|t_1|^d\cdots |t_{m_n}|^d \left|\begin{array}{ccccc}
 1 &1 &\ldots  &1\\
 \lambda^{(1)}_1 &\lambda^{(2)}_1 &\ldots  &\lambda^{(m_n)}_1\\
  \vdots  & \vdots & \ddots  & \vdots \\
(\lambda^{(1)}_d)^d &(\lambda^{(2)}_d)^d &\ldots  &(\lambda^{(m_n)}_d)^d
\end{array}\right|,$$
where $\lambda^{(j)}_k= z^{(j)}_k/t_j$ provided $t_j\not = 0$.  By definition of $F(D)$, since $(t_i,z^{(i)})=(t_i,t_i\lambda^{(i)})\in F(D)$, we have $ |t_i|\leq w(\lambda^{(i)})$. Clearly the maximum of $$|VDMH_d((t_1,z^{(1)}),...,(t_{m_n},z^{(m_n)}))|$$ over points in $F(D)$ will occur when all $|t_j|=w(\lambda^{(j)})>0$ (recall $w$ is an admissible weight) so that from (\ref{matrix})
	$$\max_{(t_i,z^{(i)})\in F(D)}|VDMH_d((t_1,z^{(1)}),...,(t_{m_n},z^{(m_n)}))|=$$
	$$\max_{\lambda^{(i)}\in K}|VDM(\lambda^{(1)},...,\lambda^{(m_n)})|\cdot w(\lambda^{(1)})^d\cdots w(\lambda^{(m_n)})^d.$$
	As mentioned in the discussion of (\ref{htdlim}) the limit
	$$\lim_{d\to \infty} \bigl[\max_{(t_i,z^{(i)})\in F(D)}|VDMH_d((t_1,z^{(1)}),...,(t_{m_n},z^{(m_n)}))|\bigr]^{1/dh_n^{(d+1)}}$$
	exists \cite{jed} and equals $d^{(H)}(F(D))$; thus the limit 
	$$\lim_{d\to \infty} \bigl[\max_{\lambda^{(i)}\in K}|VDM(\lambda^{(1)},...,\lambda^{(m_n)})|\cdot w(\lambda^{(1)})^d\cdots w(\lambda^{(m_n)})^d\bigr] ^{1/l_n^{(d)}}$$
	exists and equals $\delta^w(K)$. \end{proof}
	
	\begin{corollary} For $K\subset \CC^d$ a nonpluripolar compact set with an admissible weight function $w$ and $$F=F(K,w):=\{(t,z)=(t,t \lambda )\in \CC^{d+1}: \lambda \in K, \ |t|=w(\lambda)\},$$
		\begin{equation} \label{homvswtd} \delta^w(K)=d^{(H)}(\bar F)^{\frac{d+1}{d}}=\delta(\bar F)^{\frac{d+1}{d}}. \end{equation}
		\end{corollary}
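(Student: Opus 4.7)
The plan is to piggyback on the computation already carried out in the proof of Proposition \ref{wtdlimit} and simply compare the two exponents $1/l_n^{(d)}$ and $1/(n h_n^{(d+1)})$. First I would recall that the proof of Proposition \ref{wtdlimit} showed
$$
\max_{\lambda^{(i)}\in K}|VDM(\lambda^{(1)},\dots,\lambda^{(m_n)})|\,w(\lambda^{(1)})^n\cdots w(\lambda^{(m_n)})^n
= \max_{(t_i,z^{(i)})\in F(D)}|VDMH_n((t_1,z^{(1)}),\dots,(t_{m_n},z^{(m_n)}))|,
$$
and that the $\big(\cdot\big)^{1/l_n^{(d)}}$ root of the left-hand side converges to $\delta^w(K)$.

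Next I would invoke the combinatorial identity from (\ref{crucial}): $h_n^{(d+1)}=m_n^{(d)}$ and $l_n^{(d)}=\tfrac{d}{d+1}\,n\,m_n^{(d)}=\tfrac{d}{d+1}\,n\,h_n^{(d+1)}$. Hence
$$
\frac{1}{l_n^{(d)}} \;=\; \frac{d+1}{d}\cdot\frac{1}{n\,h_n^{(d+1)}},
$$
so the exponent defining $\delta^w(K)$ differs from the exponent defining $d^{(H)}(F(D))$ (apply (\ref{htdlim}) in $\CC^{d+1}$) by the factor $(d+1)/d$. Taking the limit therefore yields $\delta^w(K)=d^{(H)}(F(D))^{(d+1)/d}$.

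To replace $F(D)$ by $\bar F$, I would use the chain $\bar F\subset F(D)\subset \hat{\bar F}$ noted in the proof of Proposition \ref{wtdlimit} together with the polynomial-hull invariance (\ref{hateqn}) (applied to $d^{(H)}$, which is the point of equation (\ref{fd})): both $d^{(H)}(\bar F)$ and $d^{(H)}(F(D))$ coincide with $d^{(H)}(\hat{\bar F})$, giving $d^{(H)}(\bar F)=d^{(H)}(F(D))$. Finally, because $F$ is visibly circled in $\CC^{d+1}$ (the phase of $t$ is free and $z=t\lambda$ is homogeneous of degree $1$ in $t$), so is $\bar F$, and (\ref{circled}) gives $\delta(\bar F)=d^{(H)}(\bar F)$. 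Assembling the three identities produces
$$
\delta^w(K)\;=\;d^{(H)}(F(D))^{(d+1)/d}\;=\;d^{(H)}(\bar F)^{(d+1)/d}\;=\;\delta(\bar F)^{(d+1)/d}.
$$

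There is no real obstacle here; the only thing to watch is that the exponent identity $l_n^{(d)}=\tfrac{d}{d+1}\,n\,h_n^{(d+1)}$ is applied correctly so that the factor $(d+1)/d$ ends up on the right side of the equation, and that $d^{(H)}$ is well-defined on $\bar F$ (which is handled by passing to the polynomial hull via (\ref{hateqn})). Everything else is bookkeeping on quantities already computed.
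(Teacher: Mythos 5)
Your proposal is correct and follows essentially the same route as the paper: the proof of Proposition \ref{wtdlimit} already establishes that the maximal weighted Vandermonde over $K$ equals the maximal homogeneous Vandermonde over $F(D)$, the exponent bookkeeping $l_n^{(d)}=\frac{d}{d+1}\,n\,h_n^{(d+1)}$ from (\ref{crucial}) produces the $(d+1)/d$ power, and (\ref{fd}) together with (\ref{circled}) handles the passage from $F(D)$ to $\bar F$ and from $d^{(H)}$ to $\delta$. There is no substantive difference from the paper's argument.
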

		\begin{proof} The first equality follows from the proof of Proposition \ref{wtdlimit} using (\ref{fd}) and the relation 
	$$l_n^{(d)}=(\frac{d}{d+1})\cdot dh_n^{(d+1)}$$
	(see (\ref{crucial})). The second equality is (\ref{circled}). 
	\end{proof}

We use this corollary to prove (\ref{decrlimit}).

\begin{proposition} \label{polypoly}
Let $K\subset \CC^d$ be compact and let $w$ be an admissible weight on $K$. There exist a sequence of locally regular compacta $\{K_j\}$ decreasing to $K$ and a sequence of weights 
$\{w_j\}$ with $w_j$ continuous and admissible on $K_j$ such that $w_{j+1}\leq w_j|_{K_{j+1}}$ and $w_j \downarrow w$ on $K$, and we have 
\begin{equation}\label{wtddecrlimi}
\lim_{j\to \infty}\delta^{w_j}(K_j) = \delta^w(K).
\end{equation}
\end{proposition}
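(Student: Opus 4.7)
The plan has two nearly independent parts: (i) construct the approximating data $(K_j, w_j)$ with the required monotonicity, and (ii) prove the convergence by lifting to a problem in $\CC^{d+1}$ where I can invoke the \emph{unweighted} decreasing-limit result (\ref{decrlimit2}).

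For step (i), I would take $K_j := \{z \in \CC^d : {\rm dist}(z, K) \leq 1/j\}$, which are fat compacta known to be locally regular and satisfy $K_j \downarrow K$. Since $Q := -\log w$ is lower semicontinuous on $K$ (extended by $+\infty$ off $K$ to a lsc function on $\CC^d$), its infimal convolutions $\tilde Q_j(z) := \inf_y \{Q(y) + j|z-y|\}$ are Lipschitz on $\CC^d$ and satisfy $\tilde Q_j \uparrow Q$ pointwise. Defining $Q_j := \tilde Q_j|_{K_j}$ and $w_j := e^{-Q_j}$ yields continuous, strictly positive (hence admissible) weights on $K_j$ with $w_{j+1} \leq w_j|_{K_{j+1}}$ and $w_j \downarrow w$ on $K$.

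For step (ii), the lifting construction of the appendix gives, for any compact $E \subset \CC^d$ with admissible weight $v$,
\[
\delta^v(E)^{d/(d+1)} = d^{(H)}(F(D;E,v)) = \delta(F(D;E,v)),
\]
where $F(D;E,v) := \{(t,t\lambda) \in \CC^{d+1} : \lambda \in E,\ |t| \leq v(\lambda)\}$; the first equality is (\ref{homvswtd}) combined with (\ref{fd}), and the second is (\ref{circled}), using that $F(D;E,v)$ is circled. This reduces (\ref{wtddecrlimi}) to the \emph{unweighted} statement $\delta(F(D;K_j,w_j)) \to \delta(F(D;K,w))$, to which (\ref{decrlimit2}) applies provided $F(D;K_j,w_j) \downarrow F(D;K,w)$ as compact sets.

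The main work -- and the single delicate point -- is to verify this decreasing intersection. Compactness of each $F(D;K_j,w_j)$ is immediate from continuity of $w_j$ and compactness of $K_j$; the nesting $F(D;K_{j+1},w_{j+1}) \subset F(D;K_j,w_j)$ uses exactly the monotonicity hypotheses on $(K_j,w_j)$. For the intersection, given $(t,z) \in \bigcap_j F(D;K_j,w_j)$ with $t \neq 0$, the relation $z = t\lambda$ uniquely determines $\lambda = z/t$, forcing $\lambda \in \bigcap_j K_j = K$ and $|t| \leq w_j(\lambda)$ for all $j$, hence $|t| \leq \lim_j w_j(\lambda) = w(\lambda)$; the case $t = 0$ yields $(0,0) \in F(D;K,w)$ trivially. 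Once $F(D;K_j,w_j) \downarrow F(D;K,w)$ is established, (\ref{decrlimit2}) gives the unweighted convergence, and raising to the $(d+1)/d$-power via the displayed identity yields (\ref{wtddecrlimi}).
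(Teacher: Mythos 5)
Your proposal follows the same core route as the paper: construct approximating pairs $(K_j,w_j)$, lift to the circled sets $F(D;K_j,w_j)$ in $\CC^{d+1}$, and reduce to the unweighted decreasing-limit statement (\ref{decrlimit2}) via (\ref{homvswtd}), (\ref{fd}) and (\ref{circled}). Your step (ii) — including the explicit verification that $\bigcap_j F(D;K_j,w_j)=F(D;K,w)$, with the case split on $t=0$ versus $t\ne 0$ — is correct and in fact a bit more careful than the paper, which simply asserts that the $F_j(D)$ decrease.

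Where you diverge from the paper is in step (i). The paper constructs $K_j$ as polynomial polyhedra (or, for non–polynomially-convex $K$, as finite unions of closures of $C^1$-domains) and justifies local regularity by citing \cite{[P]} and \cite{NQD}; the approximating weights are obtained by a Tietze-type decreasing approximation of the usc extension of $w$ by zero. You instead take $K_j=\{z:\mathrm{dist}(z,K)\le 1/j\}$ and build $w_j$ from the infimal convolutions of the lsc function $Q=-\log w$. The infimal-convolution construction is fine: $\tilde Q_j$ is $j$-Lipschitz, bounded below by $\inf_K Q$, and increases pointwise to $Q$, so $w_j:=e^{-\tilde Q_j|_{K_j}}$ is continuous, strictly positive, decreasing in $j$, and admissible. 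But the assertion that the fattened sets $K_{1/j}$ are \emph{locally} regular is stated as ``known'' without support, and it is not covered by what the paper cites: the paper's remark near (\ref{wtdapprox}) asserts only that $K_\epsilon$ is \emph{regular}, which is weaker, and the references \cite{[P]}, \cite{NQD} address polyhedra and $C^1$-domains, not $\epsilon$-neighborhoods. The claim is nonetheless true — each $z_0\in K_\epsilon$ lies on a closed ball of radius $\epsilon$ contained in $K_\epsilon$, so $K_\epsilon$ satisfies an interior ball (hence interior cone) condition at every point, which implies local $L$-regularity — but you should say this, since it is the one place where your construction departs from the paper and requires its own justification. With that sentence added, your proof is complete and parallels the paper's argument with a somewhat more self-contained choice of the approximating data.
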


\begin{proof} If $K$ is polynomially convex, we can take the  $\{K_j\}$ to be polynomial polyhedra; it is known (cf., \cite{[P]}) that any nonempty analytic polyhedron $P$ in $\CC^d$ is locally regular at every point in $\overline{P}$. Note that the extension of
$w$ by $0$ on
$K_1\setminus K$ is usc on
$K_1$ so that we can find a sequence of continuous functions
$\{\phi_j\}$ on $K_1$ which decrease to $w$ on $K$ and to $0$ on $K_1\setminus K$. Define $w_j:=\phi_j|_{K_j}$ so that $w_j$ is a continuous, admissible
weight function on the locally regular set $K_j$ (when necessary, we consider $w_j$ as an admissible weight on
$K_1$ by setting $w_j =0$ on $K_1\setminus K_j$). Writing $Q_j:=-\log w_j$, we have $V_{K_j,Q_j}$ is continuous \cite{sic}. 

If $K$ is not necessarily polynomially convex, a recent result of N. Q. Dieu \cite{NQD} can be used to show that the closure $\overline D$ of any bounded domain $D\subset \CC^d$ with $C^1-$boundary is locally regular at each point of $\overline D$. (We remark that the weaker result that $\overline D$ is {\it regular} follows from arguments in \cite{[P]}; cf., \cite{[K]} p. 202). Thus we take a decreasing sequence $\{K_j\}$ with each $K_j$ being a finite union of closures of bounded domains with $C^1-$boundary and repeat the construction of $\{w_j\}$.

 Since $w_{j+1}\leq w_j|_{K_{j+1}}$ the circled sets 
$$F_j(D):=\{(t,z)=(t,t \lambda )\in \CC^{d+1}: \lambda \in K_j, \ |t|\leq w_j(\lambda)\}$$
are decreasing and the result follows from (\ref{homvswtd}) and (\ref{decrlimit2}). 
\end{proof}

\section{Appendix 2: Relations between Rumely formulas, and $d^w(K)$ vs. $\delta^w(K)$.}\label{sec:rimplieswr}
As mentioned, the Rumely formulas which we proved are symmetrized versions of Rumely's original formulas, due to Demarco and Rumely \cite{demrum}. We begin by proving the equivalence of these 
formulas.  First, recall for $u\in L^+(\CC^d)$, we defined
$$\rho_u(z):=\limsup_{|\lambda| \to \infty} [u(\lambda z)- \log |\lambda|]$$
and 
$$\tilde \rho_u(z):=\limsup_{|\lambda| \to \infty} [u(\lambda z)- \log |\lambda z|];$$
the latter we can consider as a function on $\PP^{d-1}$. We make the convention in this section, until Corollary \ref{lastcor}, that $dd^c = \frac{1}{2\pi}(2i \partial \bar \partial)$ so that in any dimension $d=1,2,...$, 
$$\int_{\CC^d} (dd^cu)^d =1$$
for any $u\in L^+(\CC^d)$. In terms of the Robin function $\rho_K$, Rumely's original formula (cf., \cite{rumely}) takes the form 
\begin{align} \label{rum}  -\log \delta(K) = \frac{1}{d}\bigl[\int_{\CC^{d-1}}\rho_K(1,t_2,...,t_d)(dd^c\rho_K(1,t_2,...,t_d))^{d-1}\end{align}
$$+ \int_{\CC^{d-2}}\rho_K(0,1,t_3,...,t_d)(dd^c\rho_K(0,1,t_3,...,t_d))^{d-2}$$
$$+\cdots + \int_{\CC}\rho_K(0,..,0,1,t_d)(dd^c\rho_K(0,..,0,1,t_d) + \rho_K(0,..,0,1)\bigr].$$
To relate this with (\ref{unwtdtruerumely}), we prove a generalization of this equivalence. Taking $u=V_K^*$ and $v=V_T$ in (\ref{symm}) below (recall $T$ is the unit torus so that $\rho_{T}(z_1,...,z_d)=\max [\log|z_1|,...,\log |z_d|]$) proves that (\ref{unwtdtruerumely}), with ``$2\pi$'' replaced by ``$1$,'' equals (\ref{rum}). 

\begin{proposition}
\label{demrumsym}
Let $u,v\in L^+(\CC^d)$. Then
\begin{equation}\label{symm}\int_{\PP^{d-1}}[\tilde \rho_{u}- \tilde \rho_{v}]\sum_{j=0}^{d-1}(dd^c \tilde \rho_{u}+\omega)^j \wedge (dd^c \tilde \rho_{v}+\omega)^{d-j-1}\end{equation}
$$=\int_{\CC^{d-1}}\rho_{u}(1,t_2,...,t_d)(dd^c\rho_{u}(1,t_2,...,t_d))^{d-1} $$ $$ +\int_{\CC^{d-2}}\rho_{u}(0,1,t_3,...,t_d)(dd^c\rho_{u}(0,1,t_3,...,t_d))^{d-2}$$ $$ +\cdots + \rho_{u}(0,0,...,0,1)$$
$$-\int_{\CC^{d-1}}\rho_{v}(1,t_2,...,t_d)(dd^c\rho_{v}(1,t_2,...,t_d))^{d-1} $$ $$ -\int_{\CC^{d-2}}\rho_{v}(0,1,t_3,...,t_d)(dd^c\rho_{v}(0,1,t_3,...,t_d))^{d-2}$$ $$ -\cdots - \rho_{v}(0,0,...,0,1).$$
\end{proposition}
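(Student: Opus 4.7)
The plan is to prove Proposition \ref{demrumsym} by induction on $d$, using the Bedford--Taylor integration-by-parts formula (\ref{bedtayfor}) applied in dimension $d-1$ as the principal tool (rather than Proposition \ref{reduction} itself, since the restricted Robin functions that appear in the recursion will not generally have compactly supported Monge--Amp\`ere masses). The base case $d=1$ is immediate: $\PP^0$ is a single point, the empty-product sum equals $1$, and both sides reduce to $\tilde\rho_u(1) - \tilde\rho_v(1) = \rho_u(1) - \rho_v(1)$.

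For the inductive step, I first restrict the $\PP^{d-1}$-integral to the affine chart $U_1 = \{z_1 \neq 0\} \cong \CC^{d-1}$ with coordinates $t = (t_2,\ldots,t_d)$. Here $\omega = dd^c\log|(1,t)|$, so writing $f(t) := \rho_u(1,t)$ and $g(t) := \rho_v(1,t)$, the identity $\tilde\rho_u(1,t) = \rho_u(1,t) - \log|(1,t)|$ gives $dd^c\tilde\rho_u + \omega = dd^c f$ and $dd^c\tilde\rho_v + \omega = dd^c g$ on $U_1$, while $\tilde\rho_u - \tilde\rho_v = f - g$ (the $\log|(1,t)|$ terms cancel). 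Since $\tilde\rho_u, \tilde\rho_v$ are bounded $\omega$-psh on $\PP^{d-1}$, their mixed Bedford--Taylor products do not charge the pluripolar analytic hyperplane $H_1 = \{z_1 = 0\}$, and the left-hand side of the proposition becomes
\[
\int_{\CC^{d-1}} (f - g)\sum_{j=0}^{d-1}(dd^c f)^j \wedge (dd^c g)^{d-1-j}, \qquad f, g \in L^+(\CC^{d-1}).
\]

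Setting $I_{a,b} := \int_{\CC^{d-1}} f(dd^c f)^a \wedge (dd^c g)^b$ and $J_{a,b} := \int_{\CC^{d-1}} g(dd^c f)^a \wedge (dd^c g)^b$ for $a+b = d-1$, I apply (\ref{bedtayfor}) in $\CC^{d-1}$ with $v_1 = f$, $v_2 = g$, $T = (dd^c f)^{j-1}\wedge(dd^c g)^{d-1-j}$ ($j = 1,\ldots,d-1$) to obtain
\[
I_{j-1,d-j} - J_{j,d-1-j} = \int_{\PP^{d-2}}(\tilde\rho_f - \tilde\rho_g)(dd^c\tilde\rho_f + \omega')^{j-1}\wedge (dd^c\tilde\rho_g + \omega')^{d-1-j},
\]
where $\omega'$ is the Fubini--Study form on $\PP^{d-2}$. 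Solving for $J_{j,d-1-j}$, substituting into $\sum_{j=0}^{d-1}(I_{j,d-1-j} - J_{j,d-1-j})$, and telescoping the inner sum $\sum_{j=1}^{d-1}(I_{j,d-1-j} - I_{j-1,d-j}) = I_{d-1,0} - I_{0,d-1}$ gives
\[
\int_{\CC^{d-1}} f(dd^c f)^{d-1} - \int_{\CC^{d-1}} g(dd^c g)^{d-1} + \int_{\PP^{d-2}}(\tilde\rho_f - \tilde\rho_g)\sum_{k=0}^{d-2}(dd^c\tilde\rho_f + \omega')^k\wedge (dd^c\tilde\rho_g + \omega')^{d-2-k}.
\]
The first two terms are exactly the $k=1$ line of the right-hand side. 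For the remaining $\PP^{d-2}$-integral I identify the Robin function of $f$: log-homogeneity of $\rho_u$ gives $f(\lambda t) - \log|\lambda| = \rho_u(1/\lambda, t)$, hence $\rho_f(t) = \rho_u(0,t)$ (as psh functions on $\CC^{d-1}$), and consequently $\tilde\rho_f$ on $\PP^{d-2}$ coincides with the boundary restriction of $\tilde\rho_u$ to $H_1 \cong \PP^{d-2}$. The induction hypothesis applied to $f, g$ then produces the remaining terms $\int_{\CC^{d-k}}\rho_u(0,\ldots,0,1,t_{k+1},\ldots,t_d)(dd^c\rho_u(0,\ldots,0,1,\cdot))^{d-k}$ for $k = 2,\ldots,d$, each with one additional leading zero per recursion step, minus the analogous $v$-terms.

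The main obstacle is the careful identification of restricted Robin functions across hyperplane slices --- specifically verifying $\rho_f = \rho_u(0,\cdot)$ and, more generally, that the projectivized Robin function of $f$ on $\PP^{d-2}$ matches the restriction of $\tilde\rho_u$ to $H_1$ --- so that the iterated patterns $(0,\ldots,0,1,t_{k+1},\ldots,t_d)$ on the right-hand side emerge naturally from the recursion. Once this identification is made, the algebraic telescoping and the repeated applications of (\ref{bedtayfor}) are routine bookkeeping; handling pluripolar hyperplanes is harmless because bounded $\omega$-psh functions' Bedford--Taylor products do not charge them.
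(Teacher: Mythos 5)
Your induction-on-dimension proof is correct, but it takes a genuinely different route from the paper's. The paper works entirely on $\PP^{d-1}$ by introducing the coordinate-wise Robin functions $g_j$ and the currents $T_j = [Z_j=0]$, exploits the identity $dd^c g_j = T_u - T_j$ to rewrite the right-hand side (\ref{mess}) of (\ref{symm}) as $\int_{\PP^{d-1}}\sum_{j}g_u\,\omega^j\wedge T_u^{d-1-j}$ plus a term that is independent of $u$, and then observes that the $u$-independent part cancels on taking the difference of the $u$ and $v$ versions. You instead descend one dimension at a time: restrict the $\PP^{d-1}$-integral to the chart $U_1\cong\CC^{d-1}$ (the hyperplane $H_1$ being negligible because bounded $\omega$-psh functions' Bedford--Taylor products do not charge pluripolar sets), identify $dd^c\tilde\rho_u+\omega=dd^c f$ with $f=\rho_u(1,\cdot)\in L^+(\CC^{d-1})$, apply (\ref{bedtayfor}) in $\CC^{d-1}$ to each mixed term and telescope, and then feed the resulting $\PP^{d-2}$-integral back into the inductive hypothesis using the identification $\rho_f=\rho_u(0,\cdot)$. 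Both approaches ultimately rest on the Bedford--Taylor formula, but yours trades the single clever current-theoretic manipulation on $\PP^{d-1}$ for a clean recursive structure; the cost is having to verify the restriction identity $\rho_f = \rho_u(0,\cdot)$ and $\tilde\rho_f = \tilde\rho_u|_{H_1}$ at each stage (which the paper also uses, though more implicitly in the rewriting steps it leaves as exercises). Two minor points worth making explicit if you flesh this out: (i) you are implicitly using the normalization $dd^c=\frac{1}{2\pi}(2i\partial\bar\partial)$ in force in this appendix, under which (\ref{bedtayfor}) holds without the $2\pi$ factor; and (ii) the equality $\rho_f(t)=\rho_u(0,t)$ should be stated as an identity of psh functions -- the pointwise $\limsup$ computation gives it off a pluripolar set, and then upper semicontinuity plus the sub-mean-value property upgrades this to equality everywhere.
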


\begin{proof} 
We first relate the $\CC^d$ integrals in (\ref{symm}) to $\PP^{d-1}$ integrals. We consider $\PP^d=\CC^d\cup \PP^{d-1}$ where, if $[Z_0:\cdots:Z_d]$ are homogeneous coordinates on $\PP^d$, we identify $\CC^d$ with $U_0:=\{Z_0\not = 0\}$ and $\PP^{d-1}$ with the hyperplane $\{Z_0=0\}$. On $U_0$ we have local coordinates 
$$z:=(z_1,...,z_d):=(Z_1/Z_0,...,Z_d/Z_0).$$ With a slight abuse of notation, we write
$$\tilde \rho_u([z_1:\cdots:z_d])=\rho_u(z_1,...,z_d);$$
we give a local coordinate relationship in equation (\ref{tilderho}) below.

Writing $U_j:=\{Z_j\not = 0\}$, on 
$U_0\cap U_1$, for example, by letting
$$t_0=Z_0/Z_1, \ t_2=Z_2/Z_1=(Z_2/Z_0)t_0, \ ..., \ t_d=Z_d/Z_1=(Z_d/Z_0)t_0,$$
we have
$$(z_1,...,z_d)=(1/t_0,t_2/t_0,...,t_d/t_0).$$
Thus we see that for $u\in L^+(\CC^d)$,
$$u(1/t_0,t_2/t_0,...,t_d/t_0)+\log |t_0| = u(z_1,...,z_d)-\log |z_1|$$
which extends (since $u\in L^+(\CC^d)$) across $t_0=0$ via 
$$\limsup_{t_0\to 0}[u(1/t_0,t_2/t_0,...,t_d/t_0)+\log |t_0|]$$
$$=\limsup_{|z_1|\to \infty}[u(z_1,...,z_d)-\log |z_1|]=\rho_u(1,t_2,...,t_d).
$$
In particular, in local coordinates $t_2,...,t_d$ on $\PP^{d-1}\cap U_1$ with $|t|^2=|t_2|^2+\cdots |t_d|^2$, we have 
\begin{equation}\label{tilderho}
\tilde \rho_u([1:t_2:\cdots :t_d])=\rho_u(1,t_2,...,t_d)- \frac{1}{2}\log(1+|t|^2)
\end{equation}
so that
$$
dd^c \tilde \rho_u +\omega = dd^c \rho_u(1,t_2,...,t_d).
$$

We can use this to rewrite
\begin{equation}\label{mess}\int_{\CC^{d-1}}\rho_{u}(1,t_2,...,t_d)(dd^c\rho_{u}(1,t_2,...,t_d))^{d-1} \end{equation} $$ +\int_{\CC^{d-2}}\rho_{u}(0,1,t_3,...,t_d)(dd^c\rho_{u}(0,1,t_3,...,t_d))^{d-2}$$ $$ +\cdots + \rho_{u}(0,0,...,0,1)$$
as integrals over $\PP^{d-1}$. 
Following \cite{demrum}, we introduce coordinate-wise Robin functions on $\PP^{d-1}$. We write the projectivized Robin function as 
$$g_u([z_1:\cdots:z_d]):= \tilde \rho_u([z_1:\cdots:z_d]) = \limsup_{|\lambda|\to \infty}[u(\lambda z)-\log |\lambda z|]$$
and, for $j=1,...,d$ we define the coordinate-wise Robin functions
$$g_j([z_1:\cdots:z_d])=g_{u,j}([z_1:\cdots:z_d]):= \limsup_{|\lambda|\to \infty}[u(\lambda z)-\log |\lambda z_j|].$$
Clearly on $\PP^{d-1}\cap U_j$, we have
$$g_u = g_j -\phi$$
where $\phi$ is a potential for $\omega$; i.e., $dd^c\phi =\omega$. This relation, together with (\ref{tilderho}), shows that on $\PP^{d-1}\cap U_1$, we have
$$g_1 = \rho_u(1,t_2,...,t_d).$$

We write $T_j:=[Z_j=0]$ for the current of integration over the hyperplane $\{Z_j=0\}$. 
It is straightforward to see that
$$dd^cg_j + T_j$$
is a globally defined $(1,1)-$current on $\PP^{d-1}$; indeed, writing 
$$T_u:=dd^c g_u +\omega,$$
we have
$$dd^cg_j +T_j= T_u.$$
In particular, 
$$T_u=dd^cg_j \ \hbox{on} \ \PP^{d-1}\cap U_j.$$
We leave it as an exercise to show that (\ref{mess}) can be rewritten as
\begin{equation}\label{mess2}\int_{\PP^{d-1}}\bigl[g_1T_u^{d-1}+g_2T_u^{d-2}\wedge T_1 +\cdots + g_dT_1\wedge \cdots \wedge T_{d-1}\bigr].\end{equation}
We give the proof in the case $d=2$. We want to show 
\begin{equation}\label{mess3}\int_{\CC}\rho_{u}(1,t)dd^c\rho_{u}(1,t) +\rho_u(0,1) = \int_{\PP}(g_1T_u+g_2T_1).\end{equation}
We saw that on $\CC=\PP\cap U_1$, $g_1 = \rho_u(1,t)$ and $$T_u=dd^c g_u +\omega=dd^c g_1 =dd^c\rho_{u}(1,t) $$
so that 
$$\int_{\CC}\rho_{u}(1,t)dd^c\rho_{u}(1,t)=\int_{\PP}g_1T_u.$$
In a similar fashion, we can write $g_2$ in local coordinates on $\CC=\PP\cap U_2$ as $g_2 =\rho_u(t,1)$ so that $g_2 T_1 =\rho_u(0,1)$, yielding (\ref{mess3}).

Next, using the identities
$$dd^cg_j = T_u -T_j, \ j=1,...,d,$$
for any $n=0,1,...,d$ we can rewrite
$$\omega^n\wedge T_u^{d-n-1} -\omega^n \wedge T_1 \wedge \cdots \wedge T_{d-n-1}$$
as
$$\omega^n\wedge \bigl[ dd^c g_1\wedge T_u^{d-n-2}+dd^c g_2\wedge T_u^{d-n-3}\wedge T_1 +$$
$$\cdots + dd^cg_{d-n-1}\wedge T_1 \wedge \cdots \wedge T_{d-n-2}\bigr].$$
Integrate $g_u$ with respect to this $(d-1,d-1)$ current over $\PP^{d-1}$:
$$\int_{\PP^{d-1}} g_u[\omega^n\wedge T_u^{d-n-1} -\omega^n \wedge T_1 \wedge \cdots \wedge T_{d-n-1}];$$
the integral is finite as it is a difference; then using the identity and 
integrating by parts to move $dd^c$ from $g_j$ to $g_u$ and using $dd^cg_u =T_u -\omega$, we get  
$$\int_{\PP^{d-1}}\sum_{j=0}^{d-1} g_{j+1}(T_u-\omega)\wedge \omega^n\wedge T_u^{d-n-j-2}\wedge T_1 \wedge \cdots \wedge T_j.$$
Now we leave it as an exercise, using this calculation, to show that (\ref{mess2}) can be rewritten as 
$$ \int_{\PP^{d-1}}\sum_{j=0}^{d-1} g_{j+1} T_u^{d-1-j}\wedge T_1 \wedge \cdots \wedge T_j$$
$$=\int_{\PP^{d-1}}\sum_{j=0}^{d-1}g_u\omega^j \wedge T_u^{d-j-1} $$
$$+\int_{\PP^{d-1}}\sum_{j=0}^{d-1}(g_{d-j}-g_u)\omega^j \wedge T_1 \wedge \cdots \wedge T_{d-j-1}$$
(see \cite{demrum}, p. 149). But since $g_k-g_u= \log |z| -\log |z_k|$ this last sum is {\it independent of $u$}. Thus if we apply this rewriting of (\ref{mess2}) to two functions  $u$ and $v$ in $L^+(\CC^d)$, the difference becomes 
$$\int_{\PP^{d-1}}\sum_{j=0}^{d-1}g_u\omega^j \wedge T_u^{d-j-1} -\int_{\PP^{d-1}}\sum_{j=0}^{d-1}g_v\omega^j \wedge T_v^{d-j-1} .$$
However, using $\omega = T_v -dd^cg_v$ in the {\it first} sum, and using $\omega = T_u -dd^cg_u$ in the {\it second} sum, it is straightforward to rewrite this difference as
$$\int_{\PP^{d-1}}\sum_{j=0}^{d-1}(g_u - g_v) T_u^{d-j-1} \wedge T_v^j$$
which gives (\ref{symm}).
\end{proof}

We next prove ({\ref{dwdeltw}), the relationship between the two weighted transfinite diameters $d^w(K)$ and $\delta^w(K)$, following \cite{[BL]}. We use the version of Rumely's formula in (\ref{rum}). We begin by rewriting (\ref{rum}) for {\it regular circled} sets $K$ using an observation of Sione Ma'u. Note that for such sets, $V_K^*=\rho_K^+:=\max(\rho_K,0)$ (cf., \cite{blm}, Lemma 5.1). If we intersect $K$ with a hyperplane ${\mathcal H}$ through the origin, e.g., by rotating coordinates, we take ${\mathcal H}=\{z=(z_1,...,z_d)\in \CC^d: z_1 =0\}$, then $K\cap {\mathcal H}$ is a regular, compact, circled set in $\CC^{d-1}$ (which we identify with ${\mathcal H}$). Moreover, we have
$$\rho_{{\mathcal H}\cap K} (z_2,...,z_d)=\rho_K(0,z_2,...,z_d)$$
since each side is logarithmically homogeneous and vanishes for all points $(z_2,...,z_d)\in \partial ({\mathcal H}\cap K)$. Thus the terms 
$$\int_{\CC^{d-2}}\rho_K(0,1,t_3,...,t_d)(dd^c\rho_K(0,1,t_3,...,t_d))^{d-2}$$
$$+\cdots + \int_{\CC}\rho_K(0,..,0,1,t_d)(dd^c\rho_K(0,..,0,1,t_d) + \rho_K(0,..,0,1)$$
in (\ref{rum}) are seen to equal 
$$(d-1)\delta^{\CC^{d-1}}({\mathcal H}\cap K)$$ (where we temporarily write $\delta^{\CC^{d-1}}$ to denote the transfinite diameter in $\CC^{d-1}$ for emphasis) by applying (\ref{rum}) in $\CC^{d-1}$ to the set ${\mathcal H}\cap K$. Hence we have
\begin{align}  \label{newrum}  -\log \delta(K) = \frac{1}{d}\int_{\CC^{d-1}}\rho_K(1,t_2,...,t_d)(dd^c\rho_K(1,t_2,...,t_d))^{d-1} \end{align}
$$+\bigl(\frac{d-1}{d}\bigr)[-\log \delta^{\CC^{d-1}}({\mathcal H}\cap K)].$$

	\begin{theorem} 
	\label{dwdelw}
	For $K\subset \CC^d$ a nonpluripolar compact set with an admissible weight function $w$,
	 \begin{equation} \label{conj1}\delta^w(K)=[\exp {(-\int_{K} Q(dd^cV^*_{K,Q})^{d})}]^{1/d}\cdot d^w(K).\end{equation}
	\end{theorem}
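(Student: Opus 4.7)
The plan is to derive (\ref{conj1}) by lifting to the compact circled set $\bar F\subset\CC^{d+1}$ of (\ref{FD}) and applying the unsymmetrized Rumely formula (\ref{rum}) there. By (\ref{homvswtd}),
$$-\log\delta^{w}(K)=\frac{d+1}{d}\bigl[-\log\delta(\bar F)\bigr],$$
so the task reduces to evaluating $-\log\delta(\bar F)$ via (\ref{rum}) and translating each of the $d+1$ resulting summands into a weighted pluripotential-theoretic quantity for $(K,Q)$.

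The two required translation identities, standard in the circled-set approach to weighted pluripotential theory (cf.~\cite{bloom}), are the lifting formulas
$$V^{*}_{\bar F}(t,z)=\log|t|+V^{*}_{K,Q}(z/t)\quad(t\neq 0),\qquad \rho_{\bar F}=V^{*}_{\bar F}\text{ on }\CC^{d+1}.$$
The first follows from the description $\bar F=\overline{\{(t,t\lambda):\lambda\in K,\,|t|=w(\lambda)\}}$: any $u\in L(\CC^{d+1})$ with $u\leq 0$ on $\bar F$ yields, via $\zeta\mapsto u(1,\zeta)$ combined with a scaling argument, an element of $L(\CC^{d})$ bounded by $Q$ on $K$, and conversely any competitor $v$ for $V^{*}_{K,Q}$ lifts to $(t,z)\mapsto\log|t|+v(z/t)\in L(\CC^{d+1})$ bounded above by $0$ on $\bar F$. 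The second is inherited from $\bar F$ being circled. Specializing at $t=1$ gives $\rho_{\bar F}(1,z)=V^{*}_{K,Q}(z)$; for the $t=0$ slice, letting $t\to 0$ with $z\neq 0$ fixed in the lifting formula and combining log-homogeneity of $\rho_{K,Q}$ with the asymptotic $V^{*}_{K,Q}(\mu v)-\log|\mu|\to\rho_{K,Q}(v)$ as $|\mu|\to\infty$ yields $\rho_{\bar F}(0,z)=\rho_{K,Q}(z)$.

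Applying (\ref{rum}) to $\bar F$ with coordinates ordered $t,z_{1},\ldots,z_{d}$ on $\CC^{d+1}$ then produces
$$-\log\delta(\bar F)=\frac{1}{d+1}\Bigl[\int_{\CC^{d}}V^{*}_{K,Q}\,(dd^{c}V^{*}_{K,Q})^{d}+S(K,Q)\Bigr],$$
where $S(K,Q)$ denotes the $d$ remaining summands of (\ref{rum}) but with $\rho_{K}$ replaced throughout by $\rho_{K,Q}$. By Proposition~\ref{demrumsym} applied to $u=V^{*}_{K,Q},\,v=V_{T}$, together with the vanishing of the torus contribution (which follows from (\ref{rum}) applied to $T$, for which $\delta(T)=1$), one has $S(K,Q)=-d\log d^{w}(K)$. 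Multiplying the above display by $(d+1)/d$ and invoking $V^{*}_{K,Q}=Q$ q.e.\ on $S_{w}\supset\text{supp}(dd^{c}V^{*}_{K,Q})^{d}$ together with the Appendix~2 normalization (under which $(dd^{c}V^{*}_{K,Q})^{d}$ is a probability measure supported in $K$) gives
$$-\log\delta^{w}(K)=\frac{1}{d}\int_{K}Q\,(dd^{c}V^{*}_{K,Q})^{d}-\log d^{w}(K),$$
which exponentiates to (\ref{conj1}).

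The main technical obstacle will be the $t\to 0$ step establishing $\rho_{\bar F}(0,z)=\rho_{K,Q}(z)$: the convergence $V^{*}_{K,Q}(\mu v)-\log|\mu|\to\rho_{K,Q}(v)$ is only a $\limsup$ in general and must be upgraded via upper-semicontinuous regularization on $\CC^{d+1}$ in order to obtain the pointwise identity needed to feed into (\ref{rum}). Everything else --- the bookkeeping between the two Rumely formulas and the handling of normalization conventions --- is routine.
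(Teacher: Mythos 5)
Your overall architecture — lift to the circled set $\bar F\subset\CC^{d+1}$, apply the unsymmetrized Rumely formula (\ref{rum}) there, read off the first summand as $\int V^*_{K,Q}(dd^cV^*_{K,Q})^d$ via $\rho_{\bar F}(1,z)=V^*_{K,Q}(z)$, and use (\ref{homvswtd}) to convert $\delta(\bar F)$ to $\delta^w(K)$ — matches the paper's proof exactly. But the step where you claim $S(K,Q)=-d\log d^w(K)$ is a genuine gap, and it is the step that carries the whole theorem.

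Proposition~\ref{demrumsym} together with the vanishing of the torus terms only rewrites $S(K,Q)$ as the symmetrized integral $\int_{\PP^{d-1}}[\tilde\rho_{K,Q}-\tilde\rho_T]\sum_{j}(dd^c\tilde\rho_{K,Q}+\omega)^j\wedge(dd^c\tilde\rho_T+\omega)^{d-1-j}$. Identifying that expression with $-d\log d^w(K)$ is precisely the content of (\ref{realtruerumely}), i.e.~Corollary~\ref{lastcor} — but in the paper's logical order Corollary~\ref{lastcor} is derived \emph{from} Theorem~\ref{dwdelw}, so invoking it here would be circular. What the paper actually does is recognize $S(K,Q)$ not via symmetrization but by applying (\ref{rum}) one dimension down on the $t=0$ slice $\bar F\cap\mathcal H$ (this is the content of the rewriting (\ref{newrum})), obtaining $S(K,Q)=d\cdot\bigl[-\log\delta(\bar F\cap\mathcal H)\bigr]$. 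The theorem is then completed by the identity $\bar F\cap\mathcal H=K^w_\rho:=\{\rho_{K,Q}\le 0\}$ together with the relation (\ref{nontriv}), $d^w(K)=\delta(K^w_\rho)$, which is imported from \cite{bloomlev2} and which the paper explicitly flags as ``highly nontrivial'' while omitting its proof. Nothing in your argument substitutes for this identification of the directional-Chebyshev diameter $d^w(K)$ with the Fekete diameter of the slice $K^w_\rho$; it is the irreducible external input.

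Two further points. First, the torus vanishing you want is not simply ``$\delta(T)=1$'': each individual unsymmetrized summand for $T$ must vanish, which is true because $\rho_T$ is identically $0$ on each coordinate torus where the corresponding Monge--Amp\`ere mass lives, but your one-line justification should be tightened. Second, you correctly identify the $t\to 0$ continuity of $\rho_{\bar F}$ as a difficulty, but you leave it unresolved; the paper sidesteps it by first proving the result when $K$ is locally regular and $Q$ is continuous (in which case $\rho_F$ is globally continuous by \cite{bloomlev2} and \cite{blm}) and then passing to general $(K,w)$ via the decreasing approximation of Proposition~\ref{polypoly}, using (\ref{wtddecrlimi}), (\ref{decrlimit1}), and monotone convergence of $V_{K_j,Q_j}(dd^cV_{K_j,Q_j})^d$. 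You would need some equivalent regularization step for a complete argument.
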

\begin{proof} We first assume that $K$ is locally regular and $Q$ is continuous so that $V_{K,Q}=V_{K,Q}^*$. As before, we define the circled set 
	$$F=F(K,w):=\{(t,z)=(t,t \lambda )\in \CC^{d+1}: \lambda \in K, \ |t|=w(\lambda)\}.$$
	We claim this is a regular compact set; i.e., $V_F$ is continuous. First of all, $V_F^*(t,z)=\max[\rho_F(t,z),0]$ (cf., Proposition 2.2 of \cite{bloom}) so that it suffices to verify that $\rho_F(t,z)$ is continuous. From Theorem 2.1 and Corollary 2.1 of \cite{bloom}, 
	\begin{equation} \label{relation}V_{K,Q}(\lambda)=  \rho_F(1,\lambda) \ \hbox{on} \ \CC^d\end{equation}  
which implies, by the logarithmic homogeneity of $\rho_F$, that $\rho_F(t,z)$ is continuous on $\CC^{d+1}\setminus \{t=0\}$. Corollary 2.1 and equation (2.6) in \cite{bloom} give that
\begin{equation} \label{relation2}\rho_F(0,\lambda)= \rho_{K,Q}(\lambda) \ \hbox{for} \ \lambda \in \CC^d \end{equation}
and $\rho_{K,Q}$ is continuous by Theorem 2.5 of \cite{bloomlev2}. Moreover, the limit exists in the definition of $\rho_{K,Q}$:
$$\rho_{K,Q}(\lambda):=\limsup_{|t|\to \infty} [V_{K,Q}(t\lambda)-\log |t|]=\lim_{|t|\to \infty} [V_{K,Q}(t\lambda)-\log |t|];$$
and the limit is uniform in $\lambda$ (cf., Corollary 4.4 of \cite{blm}) which implies, from (\ref{relation}) and (\ref{relation2}), that $\lim_{t\to 0} \rho_F(t,\lambda)=\rho_F(0,\lambda)$ so that $\rho_F(t,z)$ is continuous. 
In particular, from Theorem 2.5 in Appendix B of \cite{safftotik},
$$V_{K,Q}(\lambda)=Q(\lambda) = \rho_F(1,\lambda) \ \hbox{on the support of} \  (dd^cV_{K,Q})^{d}$$ so that
\begin{equation} \label{qandv} \int_{K} Q(\lambda)(dd^cV_{K,Q}(\lambda))^{d}=\int_{\CC^{d}} \rho_F(1,\lambda)(dd^c\rho_F(1,\lambda))^{d}.\end{equation}
 On the other hand, $K^w_{\rho}:=\{\lambda\in \CC^{d}: \rho_{K,Q}(\lambda)\leq 0\}$ is a circled set, and, according to eqn. (3.14) in \cite{bloomlev2}, 
\begin{equation}\label{nontriv}
d^w(K)=\delta(K^w_{\rho}).
\end{equation}
We remark that (\ref{nontriv}) is highly nontrivial; however, as Theorem \ref{dwdelw} is not our main objective in these notes, we omit the proof. But 
$$\rho_{K,Q}(\lambda)=\limsup_{|t|\to \infty} [V_{K,Q}(t\lambda)-\log |t|]$$
$$=\limsup_{|t|\to \infty} [\rho_F(1,t\lambda)-\log |t|]=\limsup_{|t|\to \infty} \rho_F(1/t,\lambda)=\rho_F(0,\lambda).$$
Thus
$$K^w_{\rho}=\{\lambda\in \CC^{d}: \rho_F(0,\lambda)\leq 0\}=F\cap {\mathcal H}$$
where ${\mathcal H}=\{(t,z)\in \CC^{d+1}: t=0\}$ and hence 
\begin{equation} \label{step1} d^w(K)=\delta(K^w_{\rho})=\delta(F\cap {\mathcal H}).\end{equation}
From (\ref{newrum}) applied to $F\subset \CC^{d+1}$,
\begin{equation} \label{step2} -\log \delta(F)=  \frac{1}{d+1}\int_{\CC^{d}}\rho_F(1,\lambda)(dd^c\rho_F(1,\lambda))^{d}\end{equation}
$$+ (\frac{d}{d+1})[-\log \delta(F\cap {\mathcal H})].$$
Finally, from (\ref{homvswtd}) (note $F=\bar F$ since $w$ is continuous),
\begin{equation} \label{step3}  \delta^w(K)=\delta(F)^{\frac{d+1}{d}};\end{equation}
putting together (\ref{qandv}), (\ref{step1}),  (\ref{step2}) and (\ref{step3}) gives the result if $K$ is locally regular and $Q$ is continuous. 

The general case follows from approximation. Using Proposition \ref{polypoly} we can take a sequence of locally regular compacta $\{K_j\}$ decreasing to $K$ and a sequence of weight functions $\{w_j\}$ with $w_j$ continuous and admissible on $K_j$ and $w_j \downarrow w$ on $K$ and by (\ref{wtddecrlimi}) we have 
$$\lim_{j\to \infty} \delta^{w_j}(K_j) =\delta^w(K).$$  From (\ref{decrlimit1}) we have
$$
\lim_{j\to \infty} d^{w_j}(K_j) =d^w(K). 
$$
Applying (\ref{conj1}) to $K_j, w_j, Q_j$ and using these facts, we conclude that 
$$\lim_{j\to \infty} \int_{K_j} Q_j(dd^cV_{K_j,Q_j})^{d}=\lim_{j\to \infty} \int_{K_j} V_{K_j,Q_j}(dd^cV_{K_j,Q_j})^{d}$$
exists. Since $V_{K_j,Q_j}\uparrow V_{K,Q}^*$ a.e., by Lemma 3.6.2 of \cite{[K]}, the measures $V_{K_j,Q_j}(dd^cV_{K_j,Q_j})^{d}$ converge weak-* to $V_{K,Q}^*(dd^cV^*_{K,Q})^{d}$ so that
$$\lim_{j\to \infty} \int_{K_j} Q_j(dd^cV_{K_j,Q_j})^{d}= \int_{K} Q(dd^cV^*_{K,Q})^{d},$$ completing the proof of (\ref{conj1}).
\end{proof}

As an immediate corollary, we obtain the symmetrized version of Rumely's original weighted formula, equation (\ref{realtruerumely}). Here we revert to the convention used prior to this appendix that $dd^c = 2i \partial \bar \partial$.

\begin{corollary}\label{lastcor} Let $K\subset \CC^d$ be compact and $w$ be an admissible weight. Then
$$-\log d^w(K)=\frac{1}{d(2\pi)^{d-1}} \int_{\PP^{d-1}}[\tilde \rho_{K,Q}- \tilde \rho_{T}]\sum_{j=0}^{d-1}(dd^c \tilde \rho_{K,Q}+\omega)^j \wedge (dd^c \tilde \rho_{T}+\omega)^{d-j-1}.$$
\end{corollary}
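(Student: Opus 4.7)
The plan is to derive this formula by combining three ingredients already in the excerpt: the energy version of Rumely's formula (Theorem~\ref{energyrumely}), the Bedford--Taylor-based reduction of the energy (Proposition~\ref{reduction}), and the relation (\ref{dwdeltw}) between $\delta^w(K)$ and $d^w(K)$ proved as Theorem~\ref{dwdelw}. The two weighted-energy expressions will differ by exactly the term $\int_K Q(dd^c V^*_{K,Q})^d$, which will cancel against the analogous term coming from (\ref{dwdeltw}).

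First I would apply Proposition~\ref{reduction} with $u=V^*_{K,Q}$ and $v=V_T$: both lie in $L^+(\CC^d)$, and the Monge--Amp\`ere measures $(dd^c V^*_{K,Q})^d$ and $(dd^c V_T)^d$ are compactly supported (on $S_w$ by (\ref{suppw}), and on the unit torus $T$, respectively). This yields
\begin{align*}
\mathcal E(V^*_{K,Q},V_T) &= \int_{\CC^d} V^*_{K,Q}(dd^c V^*_{K,Q})^d - \int_{\CC^d} V_T(dd^c V_T)^d \\
&\quad + 2\pi\int_{\PP^{d-1}}[\tilde\rho_{K,Q}-\tilde\rho_T]\sum_{j=0}^{d-1}(dd^c\tilde\rho_{K,Q}+\omega)^j\wedge(dd^c\tilde\rho_T+\omega)^{d-j-1}.
\end{align*}
The $V_T$ integral is zero because $V_T\equiv 0$ on its Monge--Amp\`ere support $T$. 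The first integral equals $\int_K Q(dd^c V^*_{K,Q})^d$, since $V^*_{K,Q}=Q$ quasi-everywhere on $S_w$ and $(dd^c V^*_{K,Q})^d$ puts no mass on pluripolar sets. Combining with Theorem~\ref{energyrumely} gives
$$-\log\delta^w(K)=\frac{1}{d(2\pi)^d}\int_K Q(dd^c V^*_{K,Q})^d+\frac{1}{d(2\pi)^{d-1}}\int_{\PP^{d-1}}[\tilde\rho_{K,Q}-\tilde\rho_T]\sum_j(\cdots).$$

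Second, I would take logarithms of (\ref{dwdeltw}), which after tracking the normalization between Appendix~2 and the standard $dd^c=2i\partial\bar\partial$ used in the statement of the corollary reads
$$-\log d^w(K)=-\log\delta^w(K)-\frac{1}{d(2\pi)^d}\int_K Q(dd^c V^*_{K,Q})^d.$$
Substituting the previous display, the two $\int_K Q(dd^c V^*_{K,Q})^d$ terms cancel exactly and leave precisely the right-hand side of Corollary~\ref{lastcor}.

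The main obstacle is purely bookkeeping: Theorem~\ref{dwdelw} is proved in the Appendix~2 convention $dd^c=\frac{1}{2\pi}(2i\partial\bar\partial)$ where every Monge--Amp\`ere of an $L^+$ function has total mass one, whereas Corollary~\ref{lastcor}, Theorem~\ref{energyrumely}, and Proposition~\ref{reduction} use the standard convention in which the $2\pi$ factors in the Bedford--Taylor integral formula (\ref{bedtayfor}) and in the mass $(2\pi)^d$ of $(dd^c V^*_{K,Q})^d$ become visible. One must verify that the $1/(d(2\pi)^d)$ factor multiplying $\int_K Q(dd^c V^*_{K,Q})^d$ on each side of the subtraction matches, so that cancellation is exact. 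Once the normalization is pinned down, the remainder is purely algebraic, and the cited hypotheses of Proposition~\ref{reduction} (compactness of the relevant supports, membership in $L^+(\CC^d)$) are immediate.
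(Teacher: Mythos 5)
Your proposal is correct and matches the paper's intended derivation: the paper states Corollary~\ref{lastcor} as ``an immediate corollary'' of Theorem~\ref{dwdelw}, meaning precisely the combination of Theorem~\ref{energyrumely}, Proposition~\ref{reduction}, and the $\delta^w$--$d^w$ relation that you carry out. Your normalization bookkeeping (including the $(2\pi)^d$ shift between the Appendix~2 convention and the standard $dd^c=2i\partial\bar\partial$) is exactly what is needed for the cancellation of the $\int_K Q(dd^c V^*_{K,Q})^d$ terms, and the supplementary checks on compact supports and on $\int V_T(dd^cV_T)^d=0$ are all in order.
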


\noindent Another proof of Theorem \ref{dwdelw} has been given by Nystrom in \cite{ny}.
\bigskip

\end{document}